\pretocmd{\tableofcontents}{\hypersetup{linkcolor=blue}}{}{}
\apptocmd{\tableofcontents}{\hypersetup{linkcolor=red}}{}{}
\newtheorem{theorem}{Theorem}[section]
\newtheorem{corollary}[theorem]{Corollary}
\newtheorem{lemma}[theorem]{Lemma}
\newtheorem{proposition}[theorem]{Proposition}
\newtheorem{definition}[theorem]{Definition}
\newtheorem{notation}[theorem]{Notation}
\newtheorem{setup}[theorem]{Setup}
\newtheorem{construction}[theorem]{Construction}
\newtheorem{question}[theorem]{Question}
\newtheorem{problem}[theorem]{Problem}
\newtheorem{remark}[theorem]{\bf{Remark}}
\numberwithin{equation}{section}
\newcommand{\bp}{\bar{\partial}}
\newcommand{\beq}{\begin{equation}}
	\newcommand{\eeq}{\end{equation}}
\newcommand{\beqn}{\begin{equation*}}
	\newcommand{\eeqn}{\end{equation*}}
\newcommand{\C}{\mathbb{C}}
\newcommand{\R}{\mathbb{R}}
\newcommand{\N}{\mathbb{N}}
\newcommand{\CP}{\mathbb{P}}
\newcommand{\supp}{\mathrm{supp}}
\newcommand{\pp}{\partial\bar{\partial}}
\newcommand{\p}{\partial}
\newcommand{\w}{\omega}
\newcommand{\im}{\sqrt{-1}}
\newcommand{\mF}{\mathcal{F}}
\newcommand{\mO}{\mathcal{O}}
\newcommand{\mT}{\mathcal{T}}
\newcommand{\mE}{\mathcal{E}}
\newcommand{\mQ}{\mathcal{Q}}
\newcommand{\mG}{\mathcal{G}}
\newcommand{\mS}{\mathcal{S}}
\newcommand{\reg}{\mathrm{reg}}
\newcommand{\orb}{\mathrm{orb}}
\newcommand{\ch}{\mathrm{ch}}
\newcommand{\Aut}{\mathrm{Aut}}
\newcommand{\codim}{\mathrm{codim}}
\newcommand{\tor}{\mathrm{tor}}
\DeclareMathOperator \Hom{Hom}
\DeclareMathOperator \Vol{Vol}
\DeclareMathOperator \End{End}
\DeclareMathOperator \Id{Id}
\DeclareMathOperator \rank{rank}
\DeclareMathOperator \tr{tr}
\DeclareMathOperator \imag{image}
\DeclareMathOperator{\rc}{Ric}
\title[Non-abelian Hodge correspondence over singular K\"ahler spaces]
{Non-abelian Hodge correspondence over singular K\"ahler spaces}
\author{Chuanjing Zhang}
\address{School of Mathematics and Statistics,
	Ningbo University, Ningbo, 315211, P.R. China}
\email{{\tt zhangchuanjing@nbu.edu.cn}}
\author{Shiyu Zhang}
\address{School of Mathematical Sciences, University of Science and Technology of China,
	Hefei, 230026, P.R. China}
\email{{\tt shiyu123@mail.ustc.edu.cn}}
\author{Xi Zhang}
\address{School of Mathematics and Statistics,
	Nanjing University of Science and Technology,
	Nanjing, 210094, P.R.China}
\email{{\tt mathzx@njust.edu.cn}}
\date{\today}
\subjclass[2020]{Primary 32J25, Secondary 32Q15, 53C07}
\keywords{Higgs sheaves,\ orbifold Chern classes,\ non-abelian Hodge theory,\ K\"ahler spaces,\ Kawamata log terminal singularities,\ Miyaoka-Yau inequality}
\begin{document}
	
	\begin{abstract}
		In this paper, we establish the non-abelian Hodge correspondence over compact K\"ahler spaces with Kawamata log terminal (klt) singularities as well as over their regular loci, thereby extending the result of Greb-Kebekus-Peternell-Taji for projective klt varieties \cite{GKPT19,GKPT20} to the context of compact K\"ahler klt spaces.
		
		The proof relies on two key ingredients: first, we establish an equivalence over the regular loci--via harmonic bundles--between polystable Higgs bundles with vanishing orbifold Chern numbers and semisimple flat bundles; second, we prove a descent (resp. ascent) result for semistable locally free Higgs sheaves with vanishing Chern classes along resolutions of singularities.
		
		As an application of our framework, we obtain a quasi-uniformization theorem for projective klt varieties with big canonical divisor that satisfy the orbifold Miyaoka-Yau equality in the sense of \cite{IJZ25}.
	\end{abstract}
	
	\maketitle
	\tableofcontents

	\section{Introduction}	 
    
    The non-abelian Hodge correspondence stems from the work of Corlette, Donaldson, Hitchin and Simpson \cite{Cor88,Don87,HIT,Simpson88} and can be summarized by the following equivalences:
    \[
    \begin{tikzcd}[column sep=large, row sep=large]
    	\left\{\substack{\text{polystable Higgs bundles}\\ \text{with vanishing Chern numbers}}\right\} \arrow[rr,leftrightarrow, "\text{Hitchin-Simpson}"] \arrow[d,leftrightarrow,"\text{Non-abelian Hodge correspondence}"] & & 
    	\left\{\text{harmonic bundles}\right\} \arrow[d, leftrightarrow,"\text{Corlette-Donaldson}"']   \\
    	\left\{\substack{\text{semisimple linear representations}\\ \text{from }\pi_1(X)\text{ to }\mathrm{GL}(r,\mathbb{C})}\right\} \arrow[rr, leftrightarrow, "\text{Riemann-Hilbert}"'] &  &
    	\left\{\text{semisimple flat bundles}\right\}
    \end{tikzcd}
    \]
    In general, Simpson \cite{SIM2} established an equivalence between the category of semistable Higgs sheaves with vanishing Chern numbers and the category of linear representations of the fundamental group over a projective manifold $X$. This equivalence also holds for compact K\"ahler manifolds \cite{NZ,Deng21} and has been significantly generalised to other settings, such as quasi-K\"ahler manifolds \cite{Simpson90,Jost-Zuo,Mo0,Mo1,Mo2} and varieties over fields of positive characteristic \cite{Fal05,OV07,AGT16,LSZ19,Langer23,SHW24}. We refer to \cite{Li19,Huang20} for an overview of its development.
    
    The minimal model program (MMP) for K\"ahler spaces has advanced substantially in recent years (see e.g. \cite{HP16,CHP16,CHP23,DO23,DHP24,Fuji22}). A key feature is that running the MMP typically introduces singularities, even when starting from a smooth K\"ahler space. Consequently, establishing a singular version of the non-abelian Hodge correspondence becomes a fundamental problem.
    
    The main results of this paper establish the non-abelian Hodge correspondence for compact K\"ahler spaces with Kawamata log terminal (klt) singularities, as well as over their regular loci. This work is motivated by two recent lines of progress: first, the successful construction of the correspondence for projective klt varieties \cite{GKPT19,GKPT20}; and second, the recent characterization of stable reflexive sheaves satisfying the orbifold Bogomolov-Gieseker inequality on compact K\"ahler klt spaces \cite{ou25,OF25} (see also \cite{GP24,GP25}). A further motivation comes from applications to the uniformization of K\"ahler klt spaces, extending analogous developments in the projective setting (cf. \cite{GKPT20,CGG24,Patel23,IMM24,Muller26} and references therein). 
    
    \subsection{Main results}
    
    General definitions and conventions follow \cite{ZZZ25}.
    
    \begin{notation}[Global notations]
    	For a normal analytic space $X$, we denote by $X_\reg$ its regular locus. Let \( \mathcal{E}_{X_{\text{reg}}} \) be a coherent sheaf defined only on $X_\reg$. We denote by \( \mathcal{E}_X \) its trivial extension to the whole space. Conversely, if \( \mathcal{E}_X \) is a sheaf over \( X \), we write \( \mathcal{E}_{X_{\text{reg}}} \) for its restriction to the regular locus. For a morphism \( f: Y \to X \) and a coherent sheaf \( \mathcal{E}_X \) over \( X \), we write \( f^{[*]} \mathcal{E}_X := (f^* \mathcal{E}_X)^{\vee\vee} \) for the reflexive pullback (i.e., the double dual of the pullback).
    \end{notation}
    
    \begin{definition}\label{defn-locallyfree-categories}
    	Let $X$ be a compact K\"ahler klt space of dimension $n$. We define
    	\begin{itemize}
    		\item $\mathrm{Higgs}_{X}$: locally free Higgs sheaves $(\mE_{X},\theta_{X})$ of rank $r$ over $X$ such that $(\mE_{X},\theta_{X})$ is $(\w_0,\cdots,\w_{n-2})$-semistable with vanishing Chern classes
    		\begin{equation}\label{equa-vanishingcondition-locallyfree}
    			\ch_1(\mE_X)\cdot[\w_0]\cdots[\w_{n-2}]=\ch_2(\mE_X)\cdot[\w_1]\cdots[\w_{n-2}]=0
    		\end{equation}
    		for some K\"ahler forms $\w_0,\cdots,\w_{n-2}$.
    		\item $\mathrm{LSys}_{X}$: local systems of rank $r$ on $X$.
    	\end{itemize}
    	Analogously, we can define the categories $\mathrm{pHiggs}_{X}$ and $\mathrm{sLSys}_{X}$ for polystable locally free Higgs sheaves satisfying \eqref{equa-vanishingcondition-locallyfree} and for semisimple local systems, respectively.	
    \end{definition}
    The first main result of this paper establishes the non-abelian Hodge correspondence for compact K\"ahler klt spaces.
	\begin{theorem}\label{main-theorem-locallyfree}
		Let $X$ be a compact K\"ahler klt space. There exists a natural one-to-one correspondence
		\[
		\mu_X: \mathrm{Higgs}_X \longrightarrow \mathrm{LSys}_X,
		\]
		which is compatible with resolutions of singularities in the following sense. For any resolution $\pi: \widehat{X} \to X$ and any $(\mathcal{E}_X,\theta_X) \in \mathrm{Higgs}_X$, the pullback $\pi^*(\mathcal{E}_X,\theta_X)$ belongs to $\mathrm{Higgs}_{\widehat{X}}$ and the diagram
		\begin{equation}\label{equa-diagram-1}
			\begin{tikzcd}
				\mathrm{Higgs}_X \arrow[r, "\mu_X"] \arrow[d, "\pi^*"'] & \mathrm{LSys}_X \arrow[d, "\pi^*"] \\
				\mathrm{Higgs}_{\widehat{X}} \arrow[r, "\mu_{\widehat{X}}"'] & \mathrm{LSys}_{\widehat{X}}
			\end{tikzcd}
		\end{equation}
		commutes. Here $\mu_{\widehat{X}}$ denotes the classical non-abelian Hodge correspondence for the compact K\"ahler manifold $\widehat{X}$. Moreover, the restriction of $\mu_X$ to $\mathrm{pHiggs}_X$ yields a one-to-one correspondence between $\mathrm{pHiggs}_X$ and $\mathrm{sLSys}_X$, which is likewise compatible with any resolution of singularities.
	\end{theorem}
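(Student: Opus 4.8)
The plan is to construct $\mu_X$ by transporting data from a resolution and then descending, following the two ingredients highlighted in the abstract. Fix a resolution $\pi\colon\widehat X\to X$. Given $(\mathcal E_X,\theta_X)\in\mathrm{Higgs}_X$, one first checks that the ordinary pullback $\pi^*(\mathcal E_X,\theta_X)$ lies in $\mathrm{Higgs}_{\widehat X}$: local freeness is automatic, the Higgs field pulls back, and one must verify that $\pi^*(\mathcal E_X,\theta_X)$ is semistable with vanishing Chern classes with respect to a suitable system of K\"ahler forms on $\widehat X$. The vanishing of Chern classes is essentially functorial — $\ch_i(\pi^*\mathcal E_X)=\pi^*\ch_i(\mathcal E_X)$ — combined with the projection formula and a perturbation argument to replace the pulled-back (merely nef and big) forms by genuine K\"ahler forms; semistability on $\widehat X$ should follow because destabilizing subsheaves push forward and, being reflexive away from the exceptional locus, extend across it by normality of $X$, contradicting semistability of $(\mathcal E_X,\theta_X)$. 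Applying the classical non-abelian Hodge correspondence $\mu_{\widehat X}$ produces a local system on $\widehat X$; since $\pi$ is proper with connected fibers, $\pi_*$ of this local system (equivalently, its restriction to a general fiber is trivial because the fiber is rationally connected in the klt setting, so it descends) gives a local system on $X$, and this defines $\mu_X$.

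The first serious step is showing this construction is independent of the resolution, which is where the \emph{descent theorem for semistable Higgs bundles with vanishing Chern classes} (the second key ingredient) does the work: any two resolutions are dominated by a third, so it suffices to treat a morphism $\rho\colon\widehat X'\to\widehat X$ between smooth models over $X$, and one needs that a local system on $\widehat X'$ obtained by pullback from $\widehat X$ descends compatibly — this is clean because $\rho^*$ on local systems is induced by $\rho_*\pi_1=\pi_1$ (birational morphisms of smooth varieties induce isomorphisms on fundamental groups), and on the Higgs side one invokes that $\rho^*$ preserves the relevant categories and commutes with $\mu$. The second serious step is checking that $\mu_X$ is a bijection of categories rather than just a well-defined map. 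Injectivity: if $\mu_X(\mathcal E_X,\theta_X)\cong\mu_X(\mathcal F_X,\eta_X)$, pull back to $\widehat X$ and use faithfulness of $\pi^*$ together with injectivity of $\mu_{\widehat X}$, noting that a Higgs bundle on $X$ is recovered from its pullback as $(\pi_*\pi^*\mathcal E_X)^{\vee\vee}=\mathcal E_X$ by normality and reflexivity. Surjectivity: given a local system $\mathbb L$ on $X$, pull it back to $\widehat X$, apply $\mu_{\widehat X}^{-1}$ to get $(\widehat{\mathcal E},\widehat\theta)\in\mathrm{Higgs}_{\widehat X}$, and descend the Higgs bundle to $X$; here one shows $(\pi_*\widehat{\mathcal E})^{\vee\vee}$ is locally free (not merely reflexive) by a local computation on the klt base, exploiting that $\mathbb L$ already lives on $X$ so the monodromy around exceptional divisors is trivial — this is precisely the descent theorem.

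Compatibility with resolutions — the commutativity of diagram \eqref{equa-diagram-1} — then becomes almost a formal consequence of the construction, since for a resolution $\pi$ one can compute $\mu_X(\mathcal E_X,\theta_X)$ using $\pi$ itself, giving $\pi^*\mu_X(\mathcal E_X,\theta_X)=\mu_{\widehat X}(\pi^*(\mathcal E_X,\theta_X))$ on the nose; for a general resolution $\pi'$ one compares via a common refinement and uses resolution-independence established above. Functoriality of $\mu_X$ (naturality as an equivalence of categories, i.e. compatibility with morphisms, tensor products, and duals) is inherited from $\mu_{\widehat X}$ and the exactness/monoidality of $\pi^*$ and the descent functor $(\pi_*(-))^{\vee\vee}$ on the subcategory of sheaves that are pullbacks from $X$.

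I expect the main obstacle to be the \textbf{descent step} — showing that a semistable Higgs bundle with vanishing Chern classes on a resolution $\widehat X$, whose associated local system is pulled back from $X$, actually descends to a \emph{locally free} (not just reflexive) Higgs sheaf on the klt base $X$. The reflexive descent $(\pi_*\widehat{\mathcal E})^{\vee\vee}$ always makes sense, but local freeness requires controlling the behavior over the singular locus; the triviality of the monodromy of the local system around exceptional divisors gives the needed local model (a trivial-at-infinity flat bundle, hence extending as a genuine bundle in the sense of Deligne's canonical extension with no residues), but making this rigorous in the K\"ahler klt category — where one cannot simply invoke projective or algebraic tools — is the technical heart of the argument and is exactly what the cited descent theorem and the equivalence over the regular locus are designed to supply.
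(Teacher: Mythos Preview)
Your overall architecture matches the paper's: define $\mu_X$ by pulling back to a resolution $\pi\colon\widehat X\to X$, applying the classical correspondence $\mu_{\widehat X}$, and inverting $\pi^*$ on local systems; show independence of the resolution via a common refinement; and build the inverse by descending a Higgs bundle from $\widehat X$ to $X$. You have also correctly identified the descent step as the technical heart.

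Two points deserve sharpening. First, the reason $\pi^*\colon\mathrm{LSys}_X\to\mathrm{LSys}_{\widehat X}$ is a bijection is Takayama's theorem that $\pi_*\colon\pi_1(\widehat X)\to\pi_1(X)$ is an \emph{isomorphism} for resolutions of klt spaces; ``proper with connected fibers'' alone yields only surjectivity of $\pi_*$, and while rational chain connectedness of the fibers is indeed the mechanism behind Takayama's proof, the precise statement is needed and is nontrivial. The paper invokes it explicitly (Lemma~\ref{lem-resolutionofsingularities-easy}) and this is what makes $(\pi^*)^{-1}$ on local systems meaningful.

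Second, and more substantively, your proposed mechanism for descending the Higgs bundle---``trivial monodromy around exceptional divisors gives a Deligne canonical extension with no residues''---is \emph{not} the paper's route and is not clearly workable as stated. Deligne's extension concerns flat connections, whereas the passage from the flat bundle to the Higgs bundle is transcendental (through a harmonic metric), so triviality of the flat monodromy along the exceptional locus does not by itself control the holomorphic structure of the Higgs bundle there. The paper proves descent (Theorem~\ref{main-thm-descent}) by a different chain of ideas: take a Jordan--H\"older filtration of the local system; realize each stable graded piece as a harmonic bundle; use Daniel's period map for loop Hodge structures to show that the associated Higgs bundle is constant along the rationally chain connected fibers of $\pi$, hence is a pullback (Proposition~\ref{prop-descent-higgs}); then rebuild the full Higgs bundle as an iterated extension over an orbifold modification, with local freeness of the reflexive pushforward supplied by a cohomological extension lemma (Lemma~\ref{lem-locallyfree-extension}). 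This is considerably more delicate than a Deligne-type argument, and it is where the ``equivalence over the regular loci via harmonic bundles'' advertised in the abstract actually enters.

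Finally, your ``perturbation argument'' passing from the merely nef-and-big classes $\pi^*\omega_i$ to genuine K\"ahler polarizations on $\widehat X$ is, in the paper, an iterated application of the Hodge index theorem combined with the Bogomolov--Gieseker inequality (Theorem~\ref{thm-indenpendent-polarization}); it is more than bookkeeping and is what guarantees $\pi^*(\mathcal E_X,\theta_X)\in\mathrm{Higgs}_{\widehat X}$ in the first place.
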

	
	\begin{remark}
		We refer to Remark \ref{remark-pullback-Higgs} for the definition of the pullback $\pi^*(\mathcal{E}_X,\theta_X)$.
	\end{remark}
	
    It is necessary to consider a separate correspondence $\mu_{X_{\mathrm{reg}}}$ on the regular locus $X_\reg$ of a compact K\"ahler klt space \(X\), because the homomorphism \(i_{*}: \pi_{1}(X_{\mathrm{reg}}) \to \pi_{1}(X)\) induced by the inclusion map $i_*:X_\reg\rightarrow X$ is surjective (cf.~\cite[Proposition 2.10]{Kollar14}) but generally not injective.
	
	\begin{definition}\label{defn-categories}
		Let $X$ be  a compact K\"ahler klt space $X$ of dimension $n$. We define
		\begin{itemize}
			\item $\mathrm{Higgs}_{X_{\reg}}$: reflexive Higgs sheaves $(\mE_{X_{\reg}},\theta_{X_{\reg}})$ of rank $r$ over $X_{\reg}$ such that $(\mE_{X_{\reg}},\theta_{X_{\reg}})$ is $(\w_0,\cdots,\w_{n-2})$-semistable with vanishing orbifold Chern classes
			\begin{equation}\label{equa-vanishingcondition}
				\widehat{\ch}_1(\mE_X)\cdot[\w_0]\cdots[\w_{n-2}]=\widehat{\ch}_2(\mE_X)\cdot[\w_1]\cdots[\w_{n-2}]=0
			\end{equation}
			for some K\"ahler forms $\w_0,\cdots,\w_{n-2}$.
			\item $\mathrm{LSys}_{X_{\reg}}$: local systems of rank $r$ on $X_\reg$.
		\end{itemize}
		Analogously, we can define the categories $\mathrm{pHiggs}_{X_{\reg}}$ and $\mathrm{sLSys}_{X_{\reg}}$ for polystable reflexive Higgs sheaves satisfying \eqref{equa-vanishingcondition} and semisimple local systems.	
	\end{definition}
	
	\begin{remark}
		Our definitions employ the orbifold Chern classes, denoted $\widehat{\mathrm{ch}}_1$ and $\widehat{\mathrm{ch}}_2$, which coincide with $\mathbb{Q}$-Chern classes upon intersection with ample divisors. In particular, the definition of $\mathrm{Higgs}_{X_\reg}$ coincides with that in \cite{GKPT20} when $[\w_0],\cdots,[\w_{n-2}]$ are ample. We refer to \cite[Section 2]{ou25} for the definitions of orbifold Chern classes (see also \cite[Section 5]{GP25}).
	\end{remark}
	
	The second main result of this paper establishes the non-abelian Hodge correspondence for the regular loci of compact K\"ahler klt spaces.
	
	\begin{theorem}\label{main-theorem-reflexive}
		Let $X$ be a compact K\"ahler klt space. There exists a one-to-one correspondence
		\[
		\mu_{X_{\reg}}: \mathrm{Higgs}_{X_{\reg}} \longrightarrow \mathrm{LSys}_{X_{\reg}},
		\]
		such that the underlying $C^\infty$-bundle of $\mathcal{E}_{X_{\reg}}$ admits a flat connection induced by $\rho_{X_{\reg}}$. Moreover, this correspondence is compatible with the correspondence $\mu_X$ in the following sense: for any maximally quasi-\'etale cover $\gamma: Y \to X$, the reflexive pullback $\gamma^{[*]}(\mathcal{E}_{X_{\reg}},\theta_{X_{\reg}})$ extends to a locally free Higgs sheaf $(\mathcal{E}_Y, \theta_Y)$ belonging to $ \mathrm{Higgs}_Y$, and the diagram
		\begin{equation}\label{equa-diagram-2}
			\begin{tikzcd}
				\mathrm{Higgs}_{X_{\reg}} \arrow[r, "\mu_{X_{\reg}}"] \arrow[d, "\gamma^*"'] & \mathrm{LSys}_{X_{\reg}} \arrow[d, "\gamma^*"] \\
				\mathrm{Higgs}_{Y} \arrow[r, "\mu_{Y}"'] & \mathrm{LSys}_{Y}
			\end{tikzcd}
		\end{equation}
		commutes. The restriction of \(\mu_{X_{\mathrm{reg}}}\) to \(\mathrm{pHiggs}_{X_{\mathrm{reg}}}\) gives the bijection from $\mathrm{pHiggs}_{X_\reg}$ to $\mathrm{sLSys}_{X_\reg}$ that is compatible with every maximally quasi-\'etale cover; see Theorem \ref{main-theorem-polystable-equivalence} for a precise statement.
	\end{theorem}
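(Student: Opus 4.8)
\emph{Proof strategy.}\ The plan is to construct $\mu_{X_{\reg}}$ directly from the harmonic-bundle correspondence over regular loci of compact K\"ahler klt spaces (one of the two main ingredients of the paper), and then to extract both the local freeness of $\mE_{X_{\reg}}$ over $X_{\reg}$ and the compatibility square \eqref{equa-diagram-2} by descending to a maximally quasi-\'etale cover and invoking Theorem \ref{main-theorem-locallyfree}. Given $(\mE_{X_{\reg}},\theta_{X_{\reg}})\in\mathrm{Higgs}_{X_{\reg}}$, I would first check --- exactly as in the projective case \cite{GKPT20}, via a Hodge-index argument and the orbifold Bogomolov--Gieseker inequality --- that the Jordan--H\"older graded object of $(\mE_{X_{\reg}},\theta_{X_{\reg}})$ again satisfies \eqref{equa-vanishingcondition}, so each graded piece lies in $\mathrm{pHiggs}_{X_{\reg}}$. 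On a stable object the harmonic-bundle equivalence over $X_{\reg}$ furnishes a harmonic metric $h$, the connection $D_h=\nabla_h+\theta_{X_{\reg}}+\theta_{X_{\reg}}^{*_h}$ is flat on the underlying $C^\infty$-bundle, and one sets $\mu_{X_{\reg}}(\mE_{X_{\reg}},\theta_{X_{\reg}})$ to be the local system on $X_{\reg}$ with monodromy $\rho_{X_{\reg}}\colon\pi_1(X_{\reg})\to\mathrm{GL}(r,\C)$; non-polystable objects are handled by transporting the extension data as in Simpson's formalism. Injectivity of $\mu_{X_{\reg}}$, and the assertion that the $C^\infty$-bundle of $\mE_{X_{\reg}}$ carries the flat connection induced by $\rho_{X_{\reg}}$, are then immediate from uniqueness of the harmonic metric attached to a given Higgs structure and of the Higgs structure attached to a given semisimple flat bundle.

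The heart of the matter is surjectivity together with the compatibility with Theorem \ref{main-theorem-locallyfree}. Fix a maximally quasi-\'etale cover $\gamma\colon Y\to X$ with Galois group $G$, so that $Y$ is again a compact K\"ahler klt space and $\pi_1(Y_{\reg})\cong\pi_1(Y)$. Let $(\mE_X,\theta_X)$ be the reflexive extension of $(\mE_{X_{\reg}},\theta_{X_{\reg}})$ to $X$ and set $(\mathcal{F}_Y,\phi_Y):=\gamma^{[*]}(\mE_X,\theta_X)$; the Higgs field makes sense because $\gamma^{[*]}\Omega_X^{[1]}\cong\Omega_Y^{[1]}$ for $\gamma$ quasi-\'etale, and the pair is $G$-equivariant. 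Since $\gamma$ is \'etale in codimension one, functoriality of the orbifold Chern classes and the projection formula show that $(\mathcal{F}_Y,\phi_Y)$ again satisfies \eqref{equa-vanishingcondition} with respect to $\gamma^{*}\w_i$, and it is $(\gamma^{*}\w_0,\dots,\gamma^{*}\w_{n-2})$-semistable because the maximal destabilizing subsheaf, being unique, is $G$-invariant and would descend to a destabilizing subsheaf of $\mE_X$. Thus $(\mathcal{F}_Y,\phi_Y)|_{Y_{\reg}}\in\mathrm{Higgs}_{Y_{\reg}}$; applying the harmonic-bundle equivalence over $Y_{\reg}$ produces a local system on $Y_{\reg}$, which via $\pi_1(Y_{\reg})\cong\pi_1(Y)$ is an object of $\mathrm{LSys}_Y$, and $\mu_Y^{-1}$ of Theorem \ref{main-theorem-locallyfree} turns it into a Higgs sheaf $(\mE_Y,\theta_Y)\in\mathrm{Higgs}_Y$, locally free by the very definition of $\mathrm{Higgs}_Y$, whose restriction to $Y_{\reg}$ is isomorphic as a Higgs bundle to $(\mathcal{F}_Y,\phi_Y)|_{Y_{\reg}}$ --- this last point uses that $\mu_Y$ is itself realized through harmonic metrics and so restricts on $Y_{\reg}$ to the harmonic-bundle equivalence. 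Because $Y_{\reg}\subset Y$ is big and both sheaves are reflexive, $\gamma^{[*]}(\mE_X,\theta_X)\cong(\mE_Y,\theta_Y)$; in particular it lies in $\mathrm{Higgs}_Y$ and is locally free, whence $\mE_X$, and hence also $\mE_{X_{\reg}}$, is locally free by the standard descent of local freeness along quasi-\'etale covers.

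Commutativity of \eqref{equa-diagram-2} then follows by chasing monodromies: both compositions send $(\mE_{X_{\reg}},\theta_{X_{\reg}})$ to the restriction of $\rho_{X_{\reg}}$ to the finite-index subgroup $\pi_1(Y_{\reg})$ of $\pi_1(X_{\reg})$, viewed as an object of $\mathrm{LSys}_Y$. For surjectivity of $\mu_{X_{\reg}}$ one reverses the construction: given a representation $\rho$ of $\pi_1(X_{\reg})$, its restriction to the normal finite-index subgroup $\pi_1(Y_{\reg})\cong\pi_1(Y)$ carries a natural $G$-equivariant structure, so $\mu_Y^{-1}$ produces a $G$-equivariant locally free Higgs sheaf on $Y$ satisfying \eqref{equa-vanishingcondition}, which descends by reflexive Galois descent to a reflexive Higgs sheaf $(\mE_{X_{\reg}},\theta_{X_{\reg}})\in\mathrm{Higgs}_{X_{\reg}}$; comparing the $G$-equivariant structures gives $\mu_{X_{\reg}}(\mE_{X_{\reg}},\theta_{X_{\reg}})\cong\rho$, since a representation of $\pi_1(X_{\reg})$ is pinned down by its restriction to $\pi_1(Y_{\reg})$ together with the induced $G$-equivariant structure.

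The main obstacle I anticipate lies entirely in the passage through the cover: one needs the existence of a maximally quasi-\'etale cover of a compact K\"ahler klt space, the quasi-\'etale functoriality of reflexive differentials and of orbifold Chern classes in the K\"ahler setting, and --- the most delicate point --- the fact that the correspondence $\mu_Y$ of Theorem \ref{main-theorem-locallyfree} restricts on $Y_{\reg}$ to the harmonic-bundle equivalence, for only then does $(\mE_Y,\theta_Y)$ genuinely agree on $Y_{\reg}$ with the reflexive pull-back $\gamma^{[*]}(\mE_X,\theta_X)$. Once this agreement is secured, the big-open-set principle for reflexive sheaves and routine bookkeeping of the $G$-equivariant structures on both sides deliver local freeness over $X_{\reg}$, the bijectivity of $\mu_{X_{\reg}}$, and the commutativity of \eqref{equa-diagram-2}.
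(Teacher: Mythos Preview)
Your broad strategy---pass to a maximally quasi-\'etale cover, use harmonic bundles, and descend the $G$-equivariant data---is exactly the paper's. The gap is in how you handle the \emph{semistable, non-polystable} case. You define $\mu_{X_\reg}$ directly on $X_\reg$ via harmonic metrics and then say ``non-polystable objects are handled by transporting the extension data as in Simpson's formalism''. But Simpson's argument identifies the relevant $\mathrm{Ext}^1$-groups on the Higgs and flat sides via Hodge theory on a \emph{compact} K\"ahler manifold; on the open $X_\reg$ that identification is not available, and the paper never attempts it there. The same problem recurs in your second paragraph: ``applying the harmonic-bundle equivalence over $Y_\reg$'' to the semistable $(\mathcal{F}_Y,\phi_Y)|_{Y_\reg}$ is only justified for polystable objects---that is precisely the scope of Theorem~\ref{main-theorem-polystable-equivalence}. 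So your roundabout route to local freeness of $\gamma^{[*]}\mE_X$ (produce a local system on $Y_\reg$, extend to $Y$, apply $\mu_Y^{-1}$, compare) does not go through as stated once the Higgs sheaf is merely semistable.

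The paper sidesteps this by reversing the order. It first proves directly (Theorem~\ref{thm-chernclassesvanishing}, i.e.\ part~(1) of Theorem~\ref{main-maximallyquasietalecase}) that for any $(\mE_{X_\reg},\theta_{X_\reg})\in\mathrm{Higgs}_{X_\reg}$ the reflexive pullback $\gamma^{[*]}\mE_X$ is already locally free and the resulting $(\mE_Y,\theta_Y)$ lies in $\mathrm{Higgs}_Y$; this is done by induction on rank, a Jordan--H\"older filtration, the chain of orbifold Chern-class equalities~\eqref{equa-chernclass-sum} (using Proposition~\ref{prop-orbifold-exact} and the Hodge-index Lemma~\ref{lem-hodge-1}), the polystable case via Theorem~\ref{main-theorem-polystable-equivalence}, and the extension Lemma~\ref{lem-locallyfree-extension}. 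Only after this structural result is in hand does the paper \emph{define} $\mu_{X_\reg}$, not through harmonic metrics on $X_\reg$ at all, but as $(\mE_{X_\reg},\theta_{X_\reg})\mapsto\bigl(\gamma^\circ_*\,\mu_Y(\mE_Y,\theta_Y)\bigr)^G$, where $\mu_Y$ is the correspondence of Theorem~\ref{main-theorem-locallyfree} and $G$-equivariance comes from Corollary~\ref{coro-functoriality}; independence of the choice of cover is checked separately (Lemma~\ref{lem-welldefined-regular}). The inverse is built symmetrically, so bijectivity and the commutativity of~\eqref{equa-diagram-2} are tautological from the construction, and the ``most delicate point'' you flag---that $\mu_Y$ restrict on $Y_\reg$ to the harmonic-bundle equivalence---is never invoked.
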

	
	\begin{remark}\label{remark-reflexive-pullback}
		\(\gamma^{[*]}(\mathcal{E}_{X_{\mathrm{reg}}},\theta_{X_{\mathrm{reg}}})\) denotes the reflexive pullback (cf. Lemma~\ref{lem-reflexive-pullback}) and we refer to Section \ref{section-formulation} for the definition of the map $\gamma^*:\mathrm{Higgs}_{X_\reg}\rightarrow \mathrm{Higgs}_Y$ (resp. $\gamma^*:\mathrm{LSys}_{X_\reg}\rightarrow \mathrm{LSys}_Y$). As in \cite{GKPT19,GKPT20}, \emph{maximally quasi-\'etale covers} take a central role in the construction of $\mu_X$ and $\mu_{X_\reg}$. We refer to Section~\ref{subsubsection-maximally} for their definition.
	\end{remark}
	
	Together, Theorems \ref{main-theorem-locallyfree} and \ref{main-theorem-reflexive} provide the analytic counterparts of the two correspondences established for projective klt varieties in \cite{GKPT19} and \cite{GKPT20}, respectively.
	
	\subsection{Applications to quasi-uniformization problems}
	
	We now present several applications of our main results. First, we give a numerical characterization of singular quotients of complex tori, already established in \cite{CGG24,GP25} when $[\w_1]=\cdots=[\w_{n-2}]$ (see \cite{LT18,GKP16b,CGG22} for earlier developments). The smooth version of this result originates in Yau’s solution of the Calabi conjecture \cite{Yau78}.
	
	\begin{corollary}\label{coro-torus}
		Let \(X\) be a compact K\"ahler klt space of dimension \(n\) with \(K_X\) numerically trivial. Then
		\begin{equation}\label{equa-miyaoka2}
			\widehat{c}_2(X)\cdot[\omega_1]\cdots[\omega_{n-2}]\geq0,
		\end{equation}
		with equality if and only if \(X\) is a singular quotient of a complex torus \(\mathbb{T}^n\); i.e., there exists a finite group \(G\subset\operatorname{Aut}(\mathbb{T}^n)\) acting freely in codimension one such that \(X \cong \mathbb{T}^n / G\).
	\end{corollary}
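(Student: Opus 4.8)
The plan is to study the reflexive tangent sheaf $\mathcal{T}_X:=\bigl(\Omega_X^{[1]}\bigr)^{\vee}$ --- for which $\widehat{c}_i(X)=\widehat{c}_i(\mathcal{T}_X)$, and $\widehat{c}_1(X)=0$ because $K_X$ is numerically trivial --- through the non-abelian Hodge correspondence. I would first record that $\mathcal{T}_{X_{\reg}}$ is $(\omega_0,\dots,\omega_{n-2})$-semistable for every K\"ahler polarization: this is the standard consequence of $K_X\equiv 0$, since any saturated subsheaf $\mathcal{F}\subsetneq\mathcal{T}_X$ has torsion-free quotient with pseudo-effective determinant while $\deg\mathcal{T}_X=0$, forcing $\deg\mathcal{F}\le 0$. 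Granting this, inequality~\eqref{equa-miyaoka2} is immediate from the orbifold Bogomolov--Gieseker inequality for semistable reflexive sheaves on compact K\"ahler klt spaces (\cite{ou25,OF25}, in the multi-polarized form supplied by the framework of this paper), applied to $\mathcal{T}_X$ with $\widehat{c}_1(X)=0$. For the ``if'' direction, if $X\cong\mathbb{T}^n/G$ then the quotient map $q\colon\mathbb{T}^n\to X$ is quasi-\'etale because $G$ acts freely in codimension one, so $q^{[*]}\mathcal{T}_X\cong\mathcal{O}_{\mathbb{T}^n}^{\oplus n}$; functoriality of orbifold Chern classes under quasi-\'etale covers (\cite[Section~2]{ou25}) then gives $(\deg q)\,\widehat{c}_2(X)=q_{*}q^{*}\widehat{c}_2(X)=q_{*}c_2\bigl(\mathcal{O}_{\mathbb{T}^n}^{\oplus n}\bigr)=0$, hence $\widehat{c}_2(X)=0$ and equality holds for every polarization.

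The substance of the statement is the ``only if'' direction, and this is where Theorems~\ref{main-theorem-locallyfree} and~\ref{main-theorem-reflexive} enter. Suppose $\widehat{c}_2(X)\cdot[\omega_1]\cdots[\omega_{n-2}]=0$ for some K\"ahler forms $\omega_1,\dots,\omega_{n-2}$, and fix an auxiliary K\"ahler form $\omega_0$. Since $\widehat{c}_1(X)=0$, both $\widehat{\ch}_1(\mathcal{T}_X)\cdot[\omega_0]\cdots[\omega_{n-2}]$ and $\widehat{\ch}_2(\mathcal{T}_X)\cdot[\omega_1]\cdots[\omega_{n-2}]=-\widehat{c}_2(X)\cdot[\omega_1]\cdots[\omega_{n-2}]$ vanish, so with the semistability above we obtain $(\mathcal{T}_{X_{\reg}},0)\in\mathrm{Higgs}_{X_{\reg}}$, with vanishing Higgs field. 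Applying Theorem~\ref{main-theorem-reflexive} to a maximally quasi-\'etale cover $\gamma\colon Y\to X$, the reflexive pullback $\gamma^{[*]}(\mathcal{T}_{X_{\reg}},0)$ extends to a locally free Higgs sheaf in $\mathrm{Higgs}_Y$; as $\gamma$ is quasi-\'etale this extension must be $(\mathcal{T}_Y^{[1]},0)$, so $\mathcal{T}_Y^{[1]}$ is \emph{locally free} and $(\mathcal{T}_Y^{[1]},0)\in\mathrm{Higgs}_Y$. I would then invoke Theorem~\ref{main-theorem-locallyfree} along a resolution $\pi\colon\widehat{Y}\to Y$: the pullback $(\pi^{*}\mathcal{T}_Y^{[1]},0)$ is a semistable Higgs bundle with vanishing Chern numbers and zero Higgs field on the compact K\"ahler manifold $\widehat{Y}$, and since under the classical correspondence a Higgs bundle with zero Higgs field and vanishing Chern classes corresponds to a unitary local system, $\pi^{*}\mathcal{T}_Y^{[1]}$ is numerically flat; in particular all of its Chern classes vanish. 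Descending along $\pi$ (klt singularities are rational) and then along $\gamma$ (projection formula) gives $\widehat{c}_2(X)\cdot[\omega]^{n-2}=0$ for \emph{every} K\"ahler class $[\omega]$ on $X$.

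To finish, $X$ is now a compact K\"ahler klt space with $K_X$ numerically trivial, $\widehat{c}_1(X)=0$, and $\widehat{c}_2(X)$ annihilating every power of a K\"ahler class --- precisely the situation in which the characterization of torus quotients is already known for a single K\"ahler polarization \cite{CGG24,GP25} (alternatively one may invoke the Beauville--Bogomolov decomposition for compact K\"ahler klt spaces with numerically trivial canonical class, together with the strict positivity of $\widehat{c}_2$ against powers of a K\"ahler class on Calabi--Yau and irreducible holomorphic symplectic factors). This produces a finite group $G\subset\operatorname{Aut}(\mathbb{T}^n)$ acting freely in codimension one with $X\cong\mathbb{T}^n/G$, as required. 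I expect the crux to be the middle paragraph: the point of the two main theorems is to convert the \emph{multi-polarized} vanishing hypothesis into the numerical flatness of $\mathcal{T}_Y^{[1]}$, and executing this cleanly demands the somewhat delicate passage to a maximally quasi-\'etale cover (to make the reflexive tangent sheaf locally free), transporting semistability and the vanishing of Chern numbers up to a resolution so that the classical correspondence applies there, and then descending the conclusion back to $X$; once numerical flatness is established, the remaining geometric step is classical.
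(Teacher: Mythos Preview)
Your argument is essentially correct, but it takes a more circuitous route than the paper and, more importantly, ends by citing precisely the results this corollary is meant to re-derive. You use the paper's machinery only to reduce the multi-polarized vanishing $\widehat{c}_2(X)\cdot[\omega_1]\cdots[\omega_{n-2}]=0$ to the single-polarized one $\widehat{c}_2(X)\cdot[\omega]^{n-2}=0$, and then invoke \cite{CGG24,GP25} (or the singular Beauville--Bogomolov decomposition, itself a deep input) to finish. That is logically sound but somewhat circular given how the corollary is framed.

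The paper's proof is shorter and more self-contained. After obtaining, exactly as you do, that $\mathcal{T}_Y=\gamma^{[*]}\mathcal{T}_X$ is \emph{locally free} on the maximally quasi-\'etale cover $Y$ (via Theorem~\ref{main-maximallyquasietalecase}), it applies the \emph{Lipman--Zariski conjecture for klt spaces} \cite[Theorem~1.13]{KS21}: local freeness of the tangent sheaf forces $Y$ to be \emph{smooth}. Now $Y$ is a compact K\"ahler manifold with $K_Y\equiv 0$ and $c_2(Y)=0$, so Yau's classical theorem \cite{Yau78} gives that $Y$ is a torus, and $X\cong Y/G$ is the desired quotient. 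This avoids any appeal to \cite{CGG24,GP25} or to the singular decomposition theorem; the only external inputs beyond the paper's own framework are Lipman--Zariski for klt and Yau's theorem on smooth manifolds, both classical. Your detour through numerical flatness on $\widehat{Y}$ and descent of $\widehat{c}_2$ is unnecessary once you recognize that local freeness of $\mathcal{T}_Y$ already pins down the geometry of $Y$.

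A minor point: your justification of semistability of $\mathcal{T}_X$ (``torsion-free quotient with pseudo-effective determinant'') is the generic nefness of the tangent sheaf, which for K\"ahler klt spaces with $K_X\equiv 0$ is not elementary; the paper cites \cite[Proposition~6.5]{IJZ25} for this.
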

	
	 When \(-K_X\) is nef, inequality \eqref{equa-miyaoka2}---known as Miyaoka's inequality, originating from \cite{Miyaoka87}---continues to hold in the K\"ahler klt setting (cf. \cite[Theorem 1.5]{IJZ25}). Its equality case has been characterized for compact K\"ahler manifolds \cite{Ou23,IM22,MMWZ25} and for projective klt varieties \cite{IMM24}. Extending these results to compact K\"ahler klt spaces requires a characterization of the semistable reflexive Higgs sheaves that satisfy the orbifold Bogomolov-Gieseker equality \cite[Question 7.5]{IJZ25}, which has been proved for stable reflexive sheaves in \cite{OF25} (see also \cite{GP24,GP25}). We now establish such a characterization based on Theorems~\ref{main-theorem-locallyfree} and~\ref{main-theorem-reflexive}, combined with the preservation of semistability under tensor products proved in \cite{ZZZ25}.
	\begin{corollary}\label{coro-projflat}
		Let \(X\) be a compact K\"ahler klt space and let \(\gamma:Y\rightarrow X\) be a maximally quasi-\'etale cover. Suppose \((\mathcal{E}_{X_{\mathrm{reg}}},\theta_{X_{\mathrm{reg}}})\) is a reflexive Higgs sheaf of rank \(r\) on the regular locus, semistable with respect to some K\"ahler forms \(\omega_0, \dots, \omega_{n-2}\). Then the following orbifold Bogomolov-Gieseker inequality holds:
		\begin{equation}\label{equa-BG-equality}
			\Bigl(2\widehat{c}_2(\mathcal{E}_X)-\frac{r-1}{r}\widehat{c}_1^2(\mathcal{E}_X)\Bigr)\cdot[\omega_1]\cdots[\omega_{n-2}]\geq0.
		\end{equation}
		Equality in \eqref{equa-BG-equality} holds if and only if the sheaf \(\mathcal{G}_X:=\gamma^{[*]}\operatorname{End}(\mathcal{E}_X)\) is locally free and the induced Higgs sheaf \((\mathcal{G}_X,\theta_{\mathcal{G}})\) corresponds, via the correspondence \(\mu_X\) of Theorem~\ref{main-theorem-locallyfree}, to a linear representation \(\rho: \pi_1(Y)\to\operatorname{GL}(r,\mathbb{C})\).
	\end{corollary}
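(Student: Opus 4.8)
The plan is to transfer the problem from $\mathcal{E}$ to $\operatorname{End}(\mathcal{E})$ and then feed the latter into Theorems~\ref{main-theorem-locallyfree} and~\ref{main-theorem-reflexive}. The starting observation is the algebraic identity $\widehat{c}_1(\operatorname{End}(\mathcal{E}_X))=0$ (since $\det\operatorname{End}(\mathcal{E}_X)\cong\mathcal{O}_X$) together with $\widehat{c}_2(\operatorname{End}(\mathcal{E}_X))=2r\,\widehat{c}_2(\mathcal{E}_X)-(r-1)\,\widehat{c}_1^2(\mathcal{E}_X)=r\bigl(2\widehat{c}_2(\mathcal{E}_X)-\tfrac{r-1}{r}\widehat{c}_1^2(\mathcal{E}_X)\bigr)$, which for orbifold Chern classes follows from the usual vector-bundle identity on a smooth orbifold chart (cf.~\cite{ou25}). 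Hence \eqref{equa-BG-equality} is nothing but the orbifold Bogomolov--Gieseker inequality for the Higgs sheaf $(\operatorname{End}(\mathcal{E}_{X_{\reg}}),\theta_{\mathcal{G}})$ with $\theta_{\mathcal{G}}=[\theta_{X_{\reg}},\,\cdot\,]$, and equality in \eqref{equa-BG-equality} is equivalent to $\widehat{c}_2(\operatorname{End}(\mathcal{E}_X))\cdot[\omega_1]\cdots[\omega_{n-2}]=0$. The inequality itself I would deduce from \cite{OF25}, which handles stable reflexive Higgs sheaves, by the standard Jordan--H\"older reduction combined with the (mixed) Hodge index inequality for the primitive first Chern classes of the graded pieces---or, alternatively, directly from the approximate Hermitian--Einstein metrics constructed in the proof of Theorem~\ref{main-theorem-reflexive}.

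For the ``only if'' direction, assume equality holds in \eqref{equa-BG-equality}. Since $\operatorname{End}(\mathcal{E}_{X_{\reg}})\cong\mathcal{E}_{X_{\reg}}\otimes\mathcal{E}_{X_{\reg}}^{\vee}$, the preservation of semistability under tensor products \cite{ZZZ25} shows that $(\operatorname{End}(\mathcal{E}_{X_{\reg}}),\theta_{\mathcal{G}})$ is a semistable reflexive Higgs sheaf with respect to $(\omega_0,\dots,\omega_{n-2})$; by the first paragraph it also satisfies the vanishing conditions \eqref{equa-vanishingcondition}, so it lies in $\mathrm{Higgs}_{X_{\reg}}$. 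Applying Theorem~\ref{main-theorem-reflexive} with the given maximally quasi-\'etale cover $\gamma\colon Y\to X$, the reflexive pullback $\gamma^{[*]}\operatorname{End}(\mathcal{E}_X)=\mathcal{G}_X$ extends to a locally free Higgs sheaf on $Y$ lying in $\mathrm{Higgs}_Y$; as a reflexive sheaf that coincides, away from a closed subset of codimension $\geq 2$, with a locally free one must equal it, $\mathcal{G}_X$ is already locally free on $Y$. Then $(\mathcal{G}_X,\theta_{\mathcal{G}})\in\mathrm{Higgs}_Y$, so Theorem~\ref{main-theorem-locallyfree} attaches to it a local system on $Y$, i.e.\ a representation of $\pi_1(Y)=\pi_1(Y_{\reg})$, the last equality holding because $Y$ is maximally quasi-\'etale.

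The remaining point---and the step I expect to be the main obstacle---is to identify this representation with the adjoint of a genuine $\operatorname{GL}(r,\mathbb{C})$-representation $\rho$ of $\pi_1(Y)$. The flat structure on $\gamma^{[*]}\operatorname{End}(\mathcal{E}_X)$ induces a projectively flat structure on the $C^\infty$-bundle underlying the pullback of $\mathcal{E}_{X_{\reg}}$ to $Y_{\reg}$; its holonomy is a priori only a projective representation of $\pi_1(Y_{\reg})=\pi_1(Y)$, and one must lift it to $\operatorname{GL}(r,\mathbb{C})$. I would carry this out exactly as in \cite{GKPT20}, where the maximal quasi-\'etale hypothesis is precisely what makes the lifting obstruction vanish. (If one only needs $\mathcal{G}_X$ to correspond to some linear representation of $\pi_1(Y)$, necessarily of dimension $r^2$, this step is unnecessary.)

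For the converse, suppose $\mathcal{G}_X$ is locally free on $Y$ and $(\mathcal{G}_X,\theta_{\mathcal{G}})$ corresponds under the correspondence of Theorem~\ref{main-theorem-locallyfree} to a representation of $\pi_1(Y)$; in particular $(\mathcal{G}_X,\theta_{\mathcal{G}})\in\mathrm{Higgs}_Y$. By the compatibility of that correspondence with resolutions, the pullback of $\mathcal{G}_X$ to a resolution $\widehat{Y}\to Y$ underlies a flat bundle on the compact K\"ahler manifold $\widehat{Y}$, so all its real Chern classes vanish; since $Y$ has rational singularities, pullback to $\widehat{Y}$ is injective on rational cohomology, whence $c_1(\mathcal{G}_X)=c_2(\mathcal{G}_X)=0$ in $H^{*}(Y,\mathbb{Q})$. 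Combining the functoriality $c_i(\mathcal{G}_X)=\gamma^{*}\widehat{c}_i(\operatorname{End}(\mathcal{E}_X))$ of orbifold Chern classes along the quasi-\'etale map $\gamma$ with $\gamma_{*}\gamma^{*}=(\deg\gamma)\cdot\mathrm{id}$ on $H^{2n}(X,\mathbb{Q})$, this forces $\widehat{c}_2(\operatorname{End}(\mathcal{E}_X))\cdot[\omega_1]\cdots[\omega_{n-2}]=0$, which by the identity of the first paragraph is precisely equality in \eqref{equa-BG-equality}, completing the argument.
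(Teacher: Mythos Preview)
Your approach is essentially the same as the paper's. Both proofs reduce to $\operatorname{End}(\mathcal{E})$: use the identity $\widehat{c}_1(\operatorname{End}(\mathcal{E}_X))=0$ and (up to a constant) $\widehat{\Delta}(\mathcal{E}_X)=-\widehat{\ch}_2(\operatorname{End}(\mathcal{E}_X))$ (this is the paper's \eqref{equa-end-chernclass-resuction}); invoke \cite[Corollary~1.3]{ZZZ25} for semistability of $(\operatorname{End}(\mathcal{E}_{X_{\reg}}),\theta_{\mathcal{G}})$; and then apply the main structural results to put $\operatorname{End}(\mathcal{E})$ in $\mathrm{Higgs}_{X_{\reg}}$. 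The paper takes the inequality directly from Theorem~\ref{main-stable} and closes with Theorem~\ref{main-maximallyquasietalecase}, whereas you route through Theorem~\ref{main-theorem-reflexive}; these are equivalent for the purpose at hand, since Theorem~\ref{main-maximallyquasietalecase}(1) is exactly the statement that $\gamma^{[*]}\operatorname{End}(\mathcal{E}_X)$ is locally free and lies in $\mathrm{Higgs}_Y$, after which $\mu_Y$ supplies the local system.

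Two small remarks. First, your converse argument is more elaborate than needed: once $(\mathcal{G}_X,\theta_{\mathcal{G}})\in\mathrm{Higgs}_Y$, the equivalences in Theorem~\ref{main-maximallyquasietalecase} (or Theorem~\ref{thm-chernclassesvanishing}) immediately give the vanishing of the orbifold Chern numbers of $\operatorname{End}(\mathcal{E}_X)$ via Proposition~\ref{lem-quasietale-chernclasses}, without passing through a resolution of $Y$. Second, your worry about lifting a projective representation to an honest $\mathrm{GL}(r,\mathbb{C})$-representation is well placed, but the paper's own proof does not carry out such a lift either: it simply cites Theorem~\ref{main-maximallyquasietalecase}, which produces a local system of rank $r^2$. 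So, as you suspected in your parenthetical remark, the intended content is that $(\mathcal{G}_X,\theta_{\mathcal{G}})$ corresponds to \emph{some} linear representation of $\pi_1(Y)$, and no additional lifting argument is required.
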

	
	As another important application, we prove a quasi‑uniformization theorem for projective klt varieties with big canonical divisor. The Miyaoka-Yau inequality, originating in \cite{Miyaoka77,Miyaoka87}, is a cornerstone in the geometry of surfaces of general type; see the introductions of \cite{GKPT19a,GT22} for an overview. For a minimal projective klt variety \(X\) of general type, the authors of \cite{GKPT19a} established the orbifold Miyaoka-Yau inequality
	\begin{equation}\label{equa-miyaoka1}
		\bigl(2\widehat{c}_{2}(X)-\tfrac{n}{n+1}\widehat{c}_{1}^{2}(X)\bigr)\cdot K_X^{n-2}\ge 0.
	\end{equation}
    Moreover, the equality case of \eqref{equa-miyaoka1} yields a quasi‑uniformization description \cite{GKPT20}: the canonical model of \(X\) is a singular quotient of the unit ball. Making use of the framework developed in this paper, we now extend this picture to varieties whose canonical divisor is {\em big, but not necessarily nef}.
	
	\begin{theorem}\label{main-thm-unitball}
		Let \(X\) be a projective klt variety of dimension \(n\) with \(K_X\) big. The following orbifold Miyaoka-Yau type inequality holds:
		\begin{equation}\label{equa-MY}
			\bigl(2\widehat{c}_2(X)-\tfrac{n}{n+1}\widehat{c}_1^2(X)\bigr)\cdot\langle K_X^{n-2}\rangle \ge 0.
		\end{equation}
		If equality in \eqref{equa-MY} holds, then the canonical model \(X_{\mathrm{can}}\) of \(X\) is a singular quotient of a projective manifold \(Y\) that is covered by the unit ball \(\mathbb{B}^n\).
	\end{theorem}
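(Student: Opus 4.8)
The strategy is to run the minimal model program in order to replace $X$ by its canonical model $X_{\mathrm{can}}$, which has ample canonical class, to transport the orbifold Miyaoka--Yau quantity there, and then to exploit the equality case through the Bogomolov--Gieseker characterization of Corollary~\ref{coro-projflat} applied to a uniformizing Higgs sheaf on $X_{\mathrm{can}}$. \textbf{Step 1 (reduction to the canonical model).} Since $X$ is projective klt with $K_X$ big, the canonical ring $\bigoplus_{m\ge 0}H^0(X,mK_X)$ is finitely generated, so $X_{\mathrm{can}}:=\mathrm{Proj}\bigoplus_m H^0(X,mK_X)$ is a projective klt variety with $K_{X_{\mathrm{can}}}$ ample, and the canonical map factors as $X\dashrightarrow X'\xrightarrow{\psi}X_{\mathrm{can}}$ with $X'$ a good minimal model of $X$ and $\psi$ crepant birational. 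On a common resolution $p\colon W\to X$, $q\colon W\to X_{\mathrm{can}}$ one has $p^{*}K_X=q^{*}K_{X_{\mathrm{can}}}+E$ with $E\ge 0$ and $q^{*}K_{X_{\mathrm{can}}}$ the positive part of $p^{*}K_X$; hence $\langle K_X^{n-2}\rangle=p_{*}(q^{*}K_{X_{\mathrm{can}}})^{n-2}$. Combining this with the birational invariance of orbifold Chern classes against pullbacks of ample divisors under $\psi$, and with the behaviour of these classes under the intervening $K_X$-negative divisorial contractions and flips (cf.\ \cite{IJZ25,ZZZ25}), one obtains the identity
\begin{equation*}
	\bigl(2\widehat{c}_2(X)-\tfrac{n}{n+1}\widehat{c}_1^2(X)\bigr)\cdot\langle K_X^{n-2}\rangle=\bigl(2\widehat{c}_2(X_{\mathrm{can}})-\tfrac{n}{n+1}\widehat{c}_1^2(X_{\mathrm{can}})\bigr)\cdot K_{X_{\mathrm{can}}}^{n-2},
\end{equation*}
so both assertions reduce to the corresponding statements for the projective klt variety $X_{\mathrm{can}}$ with ample $K_{X_{\mathrm{can}}}$.

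\textbf{Step 2 (the inequality).} On $X_{\mathrm{can}}$ consider the reflexive Higgs sheaf $\mathcal{E}:=\mathcal{T}_{X_{\mathrm{can}}}\oplus\mathcal{O}_{X_{\mathrm{can}}}$ of rank $n+1$, equipped over $X_{\mathrm{can},\reg}$ with the tautological nilpotent Higgs field $\theta$ whose only nonzero component is the identity section $\mathcal{O}\to\mathcal{T}_{X_{\mathrm{can}}}\otimes\Omega^{[1]}_{X_{\mathrm{can}}}$; then $\theta\wedge\theta=0$ and, since $\widehat{c}_1(\mathcal{E})=-K_{X_{\mathrm{can}}}$ and $\widehat{c}_2(\mathcal{E})=\widehat{c}_2(X_{\mathrm{can}})$, the expression $\bigl(2\widehat{c}_2(\mathcal{E})-\tfrac{n}{n+1}\widehat{c}_1^2(\mathcal{E})\bigr)\cdot K_{X_{\mathrm{can}}}^{n-2}$ coincides with the orbifold Miyaoka--Yau term. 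The sheaf $(\mathcal{E},\theta)$ is $K_{X_{\mathrm{can}}}$-semistable by generic semipositivity of $\Omega^{[1]}_{X_{\mathrm{can}}}$ together with the shape of $\theta$ (cf.\ \cite{GKPT19a}), so Corollary~\ref{coro-projflat}, applied with $\omega_0=\cdots=\omega_{n-2}=c_1(K_{X_{\mathrm{can}}})$, gives $\bigl(2\widehat{c}_2(\mathcal{E})-\tfrac{n}{n+1}\widehat{c}_1^2(\mathcal{E})\bigr)\cdot K_{X_{\mathrm{can}}}^{n-2}\ge 0$, which is \eqref{equa-MY}.

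\textbf{Step 3 (the equality case).} If equality holds in \eqref{equa-MY}, then by Steps 1--2 equality holds in \eqref{equa-BG-equality} for $(\mathcal{E},\theta)$ on $X_{\mathrm{can}}$. Let $\gamma\colon Y\to X_{\mathrm{can}}$ be a maximally quasi-\'etale cover. The sheaf $\End(\mathcal{E})$ has vanishing orbifold Chern classes in the sense of Definition~\ref{defn-categories} (indeed $\widehat{c}_1(\End\mathcal{E})=0$ and $\widehat{c}_2(\End\mathcal{E})\cdot K_{X_{\mathrm{can}}}^{n-2}$ is, up to a positive multiple, the quantity in \eqref{equa-BG-equality}), so by Corollary~\ref{coro-projflat} the reflexive pullback $\gamma^{[*]}\End(\mathcal{E})$ extends to a locally free Higgs sheaf on $Y$ corresponding, via the non-abelian Hodge correspondence, to a representation $\rho\colon\pi_1(Y)\to\mathrm{GL}((n+1)^2,\mathbb{C})$; equivalently, the $\mathrm{PGL}(n+1,\mathbb{C})$-bundle obtained by projectivizing $\gamma^{[*]}(\mathcal{T}_{X_{\mathrm{can}}}\oplus\mathcal{O}_{X_{\mathrm{can}}})$ carries a flat structure on $Y$ compatible with $\theta$. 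Since $(\mathcal{E},\theta)$ is a uniformizing Higgs sheaf with graded pieces $\mathcal{T}_{X_{\mathrm{can}}}$ and $\mathcal{O}_{X_{\mathrm{can}}}$, the corresponding flat bundle underlies a polarized complex variation of Hodge structure, and ampleness of $K_{X_{\mathrm{can}}}$ forces its monodromy to be conjugate into a cocompact lattice of $\mathrm{PU}(n,1)$; as $\gamma$ is maximally quasi-\'etale this lattice is torsion-free, so $Y$ is a smooth projective manifold with universal cover $\mathbb{B}^n$, and $X_{\mathrm{can}}=Y/G$ for $G=\mathrm{Gal}(Y/X_{\mathrm{can}})$ acting freely in codimension one, exactly as in \cite{GKPT20}.

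\textbf{Main obstacle.} The two delicate points are: (i) in Step~1, controlling the orbifold Chern classes and the positive product $\langle K_X^{n-2}\rangle$ along the non-crepant part $X\dashrightarrow X'$ of the MMP, so that the Miyaoka--Yau quantity is exactly transported to $X_{\mathrm{can}}$; and (ii) in Step~3, passing from the flat projective structure on $\gamma^{[*]}(\mathcal{T}_{X_{\mathrm{can}}}\oplus\mathcal{O}_{X_{\mathrm{can}}})$ to the genuine ball-quotient uniformization of $Y$ --- this rigidity statement, identifying the monodromy with a lattice in $\mathrm{PU}(n,1)$ and $Y$ with a smooth quotient of $\mathbb{B}^n$, is the heart of the Yau-type argument and relies essentially on the maximal quasi-\'etaleness of $\gamma$ and on the structure of $(\mathcal{E},\theta)$ as a system of Hodge bundles, following \cite{GKPT20}.
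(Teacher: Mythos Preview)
Your overall architecture is reasonable, but there are two genuine gaps, and in both places the paper proceeds differently.

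\textbf{Step 1 is not what the paper does, and your claimed identity is not justified.} You assert
\[
\bigl(2\widehat{c}_2(X)-\tfrac{n}{n+1}\widehat{c}_1^2(X)\bigr)\cdot\langle K_X^{n-2}\rangle=\bigl(2\widehat{c}_2(X_{\mathrm{can}})-\tfrac{n}{n+1}\widehat{c}_1^2(X_{\mathrm{can}})\bigr)\cdot K_{X_{\mathrm{can}}}^{n-2},
\]
invoking ``behaviour under divisorial contractions and flips''. But orbifold second Chern classes are \emph{not} birational invariants along the non-crepant part of the MMP, and neither \cite{IJZ25} nor \cite{ZZZ25} supplies such a transport statement. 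The paper never claims this identity. Instead it keeps the orbifold Chern classes on the side of $X$: it takes an orbifold modification $f:Y\to X$, pulls everything to a common roof $Z_\orb$ dominating both $Y$ and $X_{\mathrm{can}}$, and uses the non-pluripolar identity $\langle p^*K_X^{n-2}\rangle=q^*K_{X_{\mathrm{can}}}^{n-2}$ (from \cite{DHY23}) to rewrite the left-hand side as an orbifold Bogomolov--Gieseker quantity for the \emph{pullback} $(E_\orb,\theta_\orb)$ intersected with $q^*K_{X_{\mathrm{can}}}^{n-2}$. The crucial observation is that $q:Z\to X_{\mathrm{can}}$ together with $(E_\orb,\theta_\orb)$ satisfies the hypotheses of Proposition~\ref{prop-HEequation-setting} for the natural Higgs sheaf $(\mE_{X_{\mathrm{can}}},\theta_{X_{\mathrm{can}}})$, so the analytic machinery of Theorem~\ref{thm1} applies directly and yields, under equality, $F_{H_\infty,\theta}^\perp\equiv 0$ on $X_{\mathrm{can}}\setminus\Sigma$. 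The inequality itself is simply quoted from \cite{IJZ25}; your Step~2 reproof is fine but unnecessary.

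\textbf{Step 3 is overcomplicated and contains an incorrect implication.} Your route through $\mathrm{PU}(n,1)$-monodromy and variations of Hodge structure follows \cite{GKPT20}, but the claim ``maximally quasi-\'etale $\Rightarrow$ the lattice is torsion-free $\Rightarrow$ $Y$ smooth'' is not a valid deduction: maximal quasi-\'etaleness controls the profinite completion $\widehat\pi_1(Y_\reg)\to\widehat\pi_1(Y)$, not torsion in $\pi_1(Y)$. The paper's argument is much more direct and avoids this entirely. From the pluri-harmonic metric on $(\mE_{X_{\mathrm{can}}},\theta_{X_{\mathrm{can}}})|_{X_{\mathrm{can}}\setminus\Sigma}$ one gets a pluri-harmonic metric on $\End(\mE_{X_{\mathrm{can}}})$; Proposition~\ref{prop-locallyfree-chernclass-vanish} then says $\gamma^{[*]}\End(\mE_{X_{\mathrm{can}}})=\End(\Omega_Y^{[1]}\oplus\mO_Y)$ is locally free with vanishing Chern classes. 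Since $\mT_Y$ is a direct summand of this sheaf, $\mT_Y$ is locally free, hence $Y$ is smooth by the Lipman--Zariski conjecture for klt spaces \cite{GKKP11,KS21}. Now $K_Y=\gamma^*K_{X_{\mathrm{can}}}$ is ample and $Y$ satisfies the ordinary Miyaoka--Yau equality, so Yau's classical theorem \cite{Yau77} gives the ball-quotient structure. This bypasses the rigidity argument you flagged as obstacle~(ii).
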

	
	The inequality (1.8) was established in the recent work \cite{IJZ25}--a result that is new even in
	the smooth case. Here \(\langle K_X^{n-2}\rangle\) denotes the non-pluripolar product; the precise definition of the intersection number appearing in (1.8) is given in \cite[Section~4.1]{IJZ25}. Together with the very recent preprint \cite{Jinnouchi25-2}---which proves that for a \(K\)-stable projective klt variety with big anti-canonical divisor, the Miyaoka-Yau type equality forces its anti-canonical model to be a singular quotient of \(\mathbb{P}^n\)---these results confirm that the inequality \eqref{equa-MY} developed in \cite{IJZ25} provides an effective criterion for projective klt varieties with big canonical or anti-canonical divisors, and completes the picture

	\subsection{Overview and main technical components}\label{subsection-strategy}
	We now outline the proofs of Theorems~\ref{main-theorem-locallyfree} and~\ref{main-theorem-reflexive} and present the principal differences from the projective case treated in \cite{GKPT19,GKPT20}.
	
	Let \(X\) be a compact K\"ahler klt space. Following the strategy of \cite{GKPT19}, a key step in constructing \(\mu_X\) is to show that for any \((\mathcal{E}_X, \theta_X) \in \operatorname{Higgs}_X\) and any resolution \(\pi: \widehat{X} \to X\), the pullback \(\pi^*(\mathcal{E}_X, \theta_X)\) lies in \(\operatorname{Higgs}_{\widehat{X}}\). In Section~\ref{subsection-JHfiltrations} we prove the following.
	
	\begin{theorem}[cf. Theorems~\ref{thm-chernclassesvanishing} and~\ref{thm-indenpendent-polarization}]\label{main-maximallyquasietalecase}
		Let \(X\) be a compact K\"ahler klt space and \((\mathcal{E}_{X_{\mathrm{reg}}},\theta_{X_{\mathrm{reg}}})\) a reflexive Higgs sheaf over \(X_{\mathrm{reg}}\).
		\begin{itemize}
			\item[(1)] \((\mathcal{E}_{X_{\mathrm{reg}}},\theta_{X_{\mathrm{reg}}})\in\mathrm{Higgs}_{X_\mathrm{reg}}\) (resp. \(\mathrm{pHiggs}_{X_\mathrm{reg}}\)) if and only if for every maximally quasi-\'etale cover \(\gamma:Y\rightarrow X\), \(\mE_Y:=\gamma^{[*]}\mathcal{E}_X\) is locally free and \(\gamma^{[*]}(\mathcal{E}_{X_{\mathrm{reg}}},\theta_{X_{\mathrm{reg}}})\) extends to a locally free Higgs sheaf \((\mathcal{E}_Y,\theta_Y)\in\mathrm{Higgs}_Y\) (resp. \(\mathrm{pHiggs}_{Y}\)). Moreover, \((\mathcal{E}_Y,\theta_Y)\) is an extension of locally free Higgs sheaves belonging to \(\mathrm{pHiggs}_Y\).
			\item[(2)] If \(\mathcal{E}_X\) is locally free, then \((\mathcal{E}_X,\theta_X)\in\mathrm{Higgs}_X\) (resp. \(\mathrm{pHiggs}_{X}\)) if and only if for every resolution of singularities \(\pi:\widehat{X}\rightarrow X\), the pullback \(\pi^*(\mathcal{E}_X,\theta_X)\in\mathrm{Higgs}_{\widehat{X}}\) (resp. \(\mathrm{pHiggs}_{\widehat{X}}\)).
		\end{itemize}
	\end{theorem}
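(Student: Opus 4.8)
The plan is to reduce both statements to the classical non-abelian Hodge theory on smooth compact Kähler manifolds (available on $Y$, resp. $\widehat X$) by carefully tracking the behaviour of Chern classes and (semi)stability under the finite cover $\gamma$ and under the resolution $\pi$. The essential point is that both $\gamma$ and $\pi$ are isomorphisms over a big open subset whose complement has codimension $\ge 2$, so degree-$2$ Chern class data computed against Kähler classes is preserved, and the orbifold Chern classes $\widehat{\ch}_i(\mathcal E_X)$ were introduced precisely so that they pull back correctly under quasi-étale maps.

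\emph{Step 1 (Setup and pullback of the Higgs field).} First I would recall from Lemma~\ref{lem-reflexive-pullback} and Remark~\ref{remark-pullback-Higgs} the construction of the reflexive pullback $\gamma^{[*]}(\mathcal E_{X_{\reg}},\theta_{X_{\reg}})$ and of $\pi^*(\mathcal E_X,\theta_X)$, noting that the integrability condition $\theta\wedge\theta=0$ and the condition $\bar\partial\theta=0$ are preserved, so these are genuine Higgs sheaves on $Y$, resp. $\widehat X$. Since $\gamma$ is quasi-étale and $Y$ has klt (hence, after passing to the maximally quasi-étale cover, \emph{quotient}) singularities in codimension one trivialized, $\gamma^{[*]}\mathcal E_X$ is reflexive on $Y$; local freeness is exactly the nontrivial content one has to extract from the Hitchin--Simpson machinery on $Y_{\reg}$ together with the algebraicity/extension results for the associated local system.

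\emph{Step 2 (Chern class bookkeeping).} Next I would verify that the vanishing conditions \eqref{equa-vanishingcondition} on $X_{\reg}$ are equivalent to \eqref{equa-vanishingcondition-locallyfree} for $\gamma^{[*]}\mathcal E_X$ on $Y$ with respect to the pulled-back Kähler classes $\gamma^*[\omega_i]$. This uses the projection-formula behaviour of orbifold Chern classes under quasi-étale covers (cf.~\cite[Section~2]{ou25}): $\widehat{\ch}_i(\gamma^{[*]}\mathcal E_X)=\gamma^*\widehat{\ch}_i(\mathcal E_X)$ in the relevant bidegrees, and $\gamma^*[\omega_0]\cdots\gamma^*[\omega_{n-2}]=(\deg\gamma)\,\gamma^*(\text{stuff})$ — the degree factor being harmless. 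The analogous statement for $\pi$ in part (2) is easier: $\pi$ is birational, $\pi^*[\omega_i]$ are nef big classes, and $\ch_i(\pi^*\mathcal E_X)=\pi^*\ch_i(\mathcal E_X)$, so the intersection numbers in \eqref{equa-vanishingcondition-locallyfree} are literally unchanged.

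\emph{Step 3 (Semistability in both directions).} For the "only if" direction of (1) and (2) I would show that semistability of $(\mathcal E_{X_{\reg}},\theta_{X_{\reg}})$ forces semistability of the pullback: a destabilizing Higgs subsheaf upstairs would, after pushing forward and reflexivizing (resp.\ restricting to the locus where $\pi$ is an isomorphism), produce a destabilizing saturated Higgs subsheaf downstairs, using that slopes are computed against the same (pulled-back) polarization up to the degree factor. For the "if" direction one argues conversely, lifting a destabilizing subsheaf on $X_{\reg}$; here one needs that $\gamma$ being Galois lets one average / take $G$-invariants, and for $\pi$ one uses that a saturated subsheaf on $\widehat X$ of a pullback that is $\pi$-relatively trivial descends. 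The passage between semistability and polystability, and the assertion that $(\mathcal E_Y,\theta_Y)$ is an \emph{extension} of polystable pieces, then follows from the Jordan--Hölder filtration on $\widehat X$ (resp.\ $Y$) together with the fact that each graded piece again has vanishing Chern numbers — this is where I would invoke the classical correspondence $\mu_{\widehat X}$ to see that the JH graded pieces are local systems and their extension data is governed by group cohomology, hence the middle term is again in $\mathrm{Higgs}$.

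\emph{Main obstacle.} The hard part will be Step 1's local freeness claim together with the \emph{extension across the singular locus}: knowing $\gamma^{[*]}(\mathcal E_{X_{\reg}},\theta_{X_{\reg}})$ is a reflexive Higgs sheaf on $Y$ with vanishing orbifold Chern numbers, one must produce an \emph{actual} locally free extension to all of $Y$, not merely over $Y_{\reg}$. This is exactly the point where the defining property of a maximally quasi-étale cover enters: $\pi_1(Y_{\reg})\to\pi_1(Y)$ is an isomorphism, so the flat bundle obtained from the harmonic-bundle correspondence over $Y_{\reg}$ (built from the semistable Higgs data via Simpson's theory on the \emph{quasi-projective/quasi-Kähler} manifold $Y_{\reg}$, or via the results recalled in the introduction) extends to a local system on $Y$, whose associated holomorphic bundle is the desired locally free extension; one then checks the Higgs field extends by Hartogs because the complement has codimension $\ge 2$. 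Making the harmonic-bundle analysis on the non-compact $Y_{\reg}$ rigorous — controlling the behaviour of the metric near the singularities so that Chern-class integrals compute correctly and no boundary contributions appear — is the technical core, and I expect to isolate it as a separate proposition (presumably the "equivalence over the regular loci via harmonic bundles" advertised in the abstract).
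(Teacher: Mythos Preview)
Your outline has the right shape at the level of bookkeeping (Steps~1--2 match Lemma~\ref{lem-quasietale-chernclasses} and Lemma~\ref{lem-quasietale-stability} in the paper), but there are two genuine gaps in the core mechanism.

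\textbf{The extension/local-freeness step does not work as written.} You propose to extend the local system from $Y_{\reg}$ to $Y$ using maximal quasi-\'etaleness and then declare that ``the associated holomorphic bundle is the desired locally free extension''. But the holomorphic structure on the flat bundle is $D^{0,1}$, whereas the Higgs bundle carries $\bar\partial_E = D^{0,1}_H$ from the harmonic decomposition $D=\partial_H+\bar\partial_H+\theta+\theta^*$; these differ by $\theta^*$ and there is no reason the latter extends just because the former does. The paper handles exactly this via Proposition~\ref{prop-descent-higgs}, which goes through a resolution $\widehat Y\to Y$ and uses Daniel's period map for harmonic bundles to factor the Higgs holomorphic structure through a map to an infinite-dimensional period domain, then invokes rational chain connectedness of the fibres. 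Relatedly, your appeal to ``Simpson's theory on the quasi-K\"ahler manifold $Y_{\reg}$'' is precisely what the paper says is unavailable (cf.\ the Remark following Theorem~\ref{main-theorem-polystable-equivalence}); instead the pluri-harmonic metric is produced by running the Hermitian--Yang--Mills flow on an orbifold modification (Section~\ref{section-HE}) and then extended across codimension~$2$ by a maximum-principle argument (Proposition~\ref{prop-harmonicmetric}).

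\textbf{The semistable case requires an inequality chain you have not set up.} In Step~3 you assert that ``each graded piece again has vanishing Chern numbers'', but this is the heart of the matter, not a consequence of Jordan--H\"older alone. The paper argues by induction on rank (Theorem~\ref{thm-chernclassesvanishing}): take a maximal-slope saturated subsheaf $\mathcal F_X$ of maximal rank, so the quotient $\mathcal Q_X$ is \emph{stable}; then chain together Proposition~\ref{prop-orbifold-exact}, the Bogomolov--Gieseker inequality \eqref{equa-BG}, and the Hodge index theorem (Lemma~\ref{lem-hodge-1}) to force all inequalities to be equalities. The equality case of Proposition~\ref{prop-orbifold-exact} gives $\mathcal Q_X\cong\mathcal Q_X^{\vee\vee}$ in codimension~$2$, after which Lemma~\ref{lem-locallyfree-extension} (an extension-of-sheaves argument via depth) yields local freeness. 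Without this computation you cannot conclude that the JH pieces land in $\mathrm{pHiggs}_Y$, and invoking $\mu_{\widehat X}$ on them is circular.
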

	
	The main task is to prove that for any Jordan-H\"older filtration \(\{\mathcal{E}_i\}_{i=0}^l\) of \(\gamma^{[*]}(\mathcal{E}_{Y_\mathrm{reg}}, \theta_{Y_\mathrm{reg}})\),  each quotient sheaf \(\mathcal{E}_i/\mathcal{E}_{i-1}\) is locally free and has vanishing Chern classes. Following the spirits of \cite[Proposition 7.10]{GKPT19}, \cite[Theorem 1.2]{HIM22} and \cite[Proposition 3.10]{IMM24}, the problem in our setting reduces to proving the following statement: if \((\mathcal{Q}_{Y_{\mathrm{reg}}}, \theta_{\mathcal{Q}_{Y_{\mathrm{reg}}}}) \in \operatorname{pHiggs}_{Y_{\mathrm{reg}}}\) is stable, then its reflexive extension \(\mathcal{Q}_Y\) is locally free and all its Chern classes vanish. In the projective case \cite[Proposition 7.10]{GKPT19}, this is handled by cutting down with hypersurface sections to reduce to surfaces, where reflexive sheaves are automatically locally free---a technique that is not available for general K\"ahler spaces. The central difficulty in the K\"ahler setting is therefore precisely to establish the local freeness of \(\mathcal{Q}_Y\) in higher dimensions. 
	
	In Sections~\ref{section-HE} and~\ref{section-harmonicbundles}, we concentrate on proving the following characterization of polystable reflexive Higgs sheaves satisfying \eqref{equa-vanishingcondition}. This result not only yields the desired local freeness of $\mQ_Y$ but also constructs the correspondence $\mu_{X_\mathrm{reg}}: \mathrm{pHiggs}_{X_\mathrm{reg}} \to \mathrm{sLSys}_{X_\mathrm{reg}}$.
	
	\begin{theorem}\label{main-theorem-polystable-equivalence}
		Let \(X\) be a compact K\"ahler klt space and $\gamma:Y\rightarrow X$ be a maximally quasi-\'etale cover.
		\begin{itemize}
			\item[(1)] A flat bundle \((E_{X_\mathrm{reg}},D_{X_\mathrm{reg}})\) is semisimple if and only if it admits a pluri-harmonic metric \(H_{X_\mathrm{reg}}\), which is unique up to a positive constant on each direct summand.
			\item[(2)] A reflexive Higgs sheaf \((\mathcal{E}_{X_{\mathrm{reg}}},\theta_{X_{\mathrm{reg}}})\) belongs to \(\mathrm{pHiggs}_{X_\mathrm{reg}}\) if and only if \(\mathcal{E}_{X_\mathrm{reg}}\) is locally free and \((\mathcal{E}_{X_{\mathrm{reg}}},\theta_{X_{\mathrm{reg}}})\) arises from a harmonic bundle \((E_{X_\mathrm{reg}},D_{X_\mathrm{reg}},H_{X_\mathrm{reg}})\); here \(H_{X_\mathrm{reg}}\) is unique up to obvious ambiguity. This is also equivalent to any one of the following three statements:
			\begin{enumerate}
				\item The reflexive pullback \(\gamma^{[*]}(\mathcal{E}_{X_{\mathrm{reg}}},\theta_{X_{\mathrm{reg}}})\) extends to a locally free Higgs sheaf \((\mathcal{E}_Y,\theta_Y)\in\mathrm{pHiggs}_Y\) with vanishing Chern classes.
				\item For any resolution of singularities $\pi:\widehat{Y}\rightarrow Y$, $\pi^*(\mathcal{E}_Y,\theta_Y)\in\mathrm{pHiggs}_{\widehat{Y}}$.
				\item \((\mathcal{E}_{X_{\mathrm{reg}}},\theta_{X_{\mathrm{reg}}})\) is polystable with respect to any K\"ahler forms \(\omega_0,\cdots,\omega_{n-2}\) and all orbifold Chern classes of \(\mathcal{E}_X\) vanish.
			\end{enumerate}
		\end{itemize}
	\end{theorem}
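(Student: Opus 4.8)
The plan is to establish the whole chain of equivalences simultaneously, powered by two engines: the construction of an admissible Hermitian--Einstein metric for a stable reflexive Higgs sheaf over the noncompact manifold $X_{\mathrm{reg}}$, carried out in Section~\ref{section-HE}, and the classical non-abelian Hodge correspondence over the smooth compact K\"ahler manifolds obtained as resolutions, carried out via harmonic bundles in Section~\ref{section-harmonicbundles}. Parts~(1) and~(2) are handled together, since the flat bundles of~(1) are precisely the flat bundles underlying the harmonic bundles of~(2). For part~(2) the essential implication is ``$(\mathcal{E}_{X_{\mathrm{reg}}},\theta_{X_{\mathrm{reg}}})\in\mathrm{pHiggs}_{X_{\mathrm{reg}}}\Rightarrow\mathcal{E}_{X_{\mathrm{reg}}}$ locally free and arising from a harmonic bundle''. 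First I would reduce to the stable case: a slope-zero polystable reflexive Higgs sheaf splits as a direct sum of stable reflexive Higgs subsheaves, and since the orbifold Bogomolov--Gieseker expression $\bigl(2\widehat{c}_2-\tfrac{r-1}{r}\widehat{c}_1^2\bigr)\cdot[\omega_1]\cdots[\omega_{n-2}]$ is additive and its vanishing forces each summand to satisfy \eqref{equa-vanishingcondition} separately, it suffices to treat a \emph{stable} $(\mathcal{Q}_{X_{\mathrm{reg}}},\theta)$ with vanishing orbifold $\widehat{\mathrm{ch}}_1$- and $\widehat{\mathrm{ch}}_2$-intersections.

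\textbf{The forward direction.} A stable reflexive Higgs sheaf of vanishing slope carries an admissible Hermitian--Einstein metric $H$ on $X_{\mathrm{reg}}$ with vanishing Einstein factor; this is the main analytic input, proved in Section~\ref{section-HE} by a Donaldson--Uhlenbeck--Yau type argument relative to a background K\"ahler metric on $X_{\mathrm{reg}}$ adapted to the klt structure (for instance obtained by approximation from a log resolution), the klt hypothesis guaranteeing finiteness of volume and the integrability needed both to match analytic with algebraic stability and to perform Chern--Weil integration by parts. With $H$ in hand, a Kobayashi--L\"ubke/Chern--Weil identity on $X_{\mathrm{reg}}$---legitimate because $H$ extends tamely across $X_{\mathrm{sing}}$ and the relevant curvatures are $L^2$---bounds $\int_{X_{\mathrm{reg}}}\bigl|F_{H}+[\theta,\theta^{\ast_H}]\bigr|^2\,\omega^n$ by a combination of $\widehat{\mathrm{ch}}_2(\mathcal{Q}_X)\cdot[\omega_1]\cdots[\omega_{n-2}]$ and $\widehat{\mathrm{ch}}_1^2(\mathcal{Q}_X)\cdot[\omega_1]\cdots[\omega_{n-2}]$, both of which vanish. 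Hence the Hitchin--Simpson connection is flat on the locus $X_{\mathrm{reg}}\setminus Z$ where $\mathcal{Q}_{X_{\mathrm{reg}}}$ is locally free, so $(\mathcal{Q}_{X_{\mathrm{reg}}},\theta,H)$ is a harmonic bundle there. Since $Z$ has complex codimension $\ge 3$ in the smooth manifold $X_{\mathrm{reg}}$, removing it does not change $\pi_1$, so the local monodromy of the flat connection around $Z$ is trivial; by the prolongation theory for tame, trivially monodromic harmonic bundles (Simpson, Mochizuki) the harmonic bundle---together with its holomorphic Higgs structure---extends across $Z$, producing a holomorphic Higgs bundle on $X_{\mathrm{reg}}$ that agrees with $(\mathcal{Q}_{X_{\mathrm{reg}}},\theta)$ outside codimension $\ge 3$; reflexivity of $\mathcal{Q}_{X_{\mathrm{reg}}}$ identifies the two, so $\mathcal{Q}_{X_{\mathrm{reg}}}$ is locally free and arises from a harmonic bundle. (Alternatively the argument is run after pulling back along $\gamma$, using $\pi_1(Y_{\mathrm{reg}})\cong\pi_1(Y)$, to obtain the corresponding statement for $\gamma^{[*]}\mathcal{Q}_X$ on $Y$.)

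\textbf{The converse, the equivalences (i)--(iii), and part~(1).} If $(\mathcal{E}_{X_{\mathrm{reg}}},\theta_{X_{\mathrm{reg}}})$ arises from a harmonic bundle then $\mathcal{E}_{X_{\mathrm{reg}}}$ is by construction locally free, the pluri-harmonic equation $F_{H}+[\theta,\theta^{\ast_H}]=0$ gives a Hermitian--Einstein equation with vanishing factor for \emph{every} K\"ahler metric (hence polystability with respect to any $\omega_0,\dots,\omega_{n-2}$), and Chern--Weil over a resolution $\widehat Y$---on which the pullback is a flat bundle---forces all orbifold Chern classes of $\mathcal{E}_X$ to vanish; this yields the main equivalence of~(2) and statement~(iii). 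For~(i) and~(ii): pulling the flat (harmonic) bundle $\gamma^{*}(\mathcal{E}_{X_{\mathrm{reg}}},\theta)$ back to $Y_{\mathrm{reg}}$ and using $\pi_1(Y_{\mathrm{reg}})\cong\pi_1(Y)$ extends it to a locally free flat Higgs bundle $(\mathcal{E}_Y,\theta_Y)$ on $Y$, which by the previous paragraph lies in $\mathrm{pHiggs}_Y$ with vanishing Chern classes, giving~(i); pulling further back along a resolution $\pi:\widehat Y\to Y$ and invoking the classical correspondence $\mu_{\widehat Y}$ gives~(ii); the reverse implications (ii)$\Rightarrow$(i)$\Rightarrow$ the main statement follow by the $G$-equivariant descent of the locally free Higgs bundle and its harmonic metric (Theorem~\ref{main-maximallyquasietalecase}) together with uniqueness. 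Finally, for part~(1) the ``if'' direction is the standard fact that restricting a pluri-harmonic metric to a flat subbundle keeps it pluri-harmonic and makes the orthogonal complement flat, yielding an inductive decomposition into simple flat bundles; for ``only if'' one reduces to the simple case, restricts the underlying representation to the finite-index subgroup $\pi_1(Y_{\mathrm{reg}})\cong\pi_1(Y)$ (semisimplicity is preserved), extends it to a semisimple local system on $Y$, pulls it back to the smooth compact K\"ahler manifold $\widehat Y$ (still semisimple), applies Corlette--Simpson there to obtain a harmonic metric, and descends it to $X_{\mathrm{reg}}$, the required Galois-equivariance being arranged from the uniqueness of the harmonic metric up to a positive constant on each simple component (which also supplies the uniqueness statements in~(1) and~(2)).

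\textbf{Main obstacle.} The crux, and the point where the K\"ahler case genuinely departs from the projective treatment of \cite{GKPT19,GKPT20}, is twofold: the \emph{existence} of admissible Hermitian--Einstein metrics for stable reflexive Higgs sheaves on the noncompact manifold $X_{\mathrm{reg}}$, with behaviour near $X_{\mathrm{sing}}$ controlled well enough that analytic and orbifold-algebraic stability agree and the Chern--Weil identity holds; and the passage from the Hermitian--Einstein/flat structure to genuine \emph{local freeness} in dimension $\ge 3$, where cutting down to a surface is unavailable, so one must instead exploit that deleting the codimension-$\ge 3$ degeneracy locus does not alter the fundamental group, combined with the prolongation of harmonic bundles and the reduction to the maximally quasi-\'etale cover. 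The remaining steps---preservation of semisimplicity under restriction to finite-index subgroups and pullback along resolutions, and the equivariant descent of harmonic metrics---are comparatively routine given the uniqueness statements.
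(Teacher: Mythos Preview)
Your overall architecture is close to the paper's, but there are three points where your argument relies on ingredients that are \emph{not} available and that the paper deliberately circumvents.

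\textbf{Admissibility of the Hermitian--Einstein metric.} You take as given that the HE metric is admissible, with curvature control near $X_{\mathrm{sing}}$ sufficient for Chern--Weil. The paper does \emph{not} prove this; in fact it records the boundedness of $|\theta|_{H(t),\omega}$ along the flow as the open Question~\ref{ques-uniformestimate}, and the Remark following the three-step outline in Section~\ref{subsection-strategy} stresses that the whole point of Steps~2--3 is to obtain the characterization \emph{without} any admissibility hypothesis. Concretely, the paper only produces a pluri-harmonic metric on $X\setminus\Sigma$ with $\codim_X\Sigma\ge 2$ (Theorem~\ref{main-stable}), and the Chern--Weil computation is done upstairs on the compact orbifold via the flow $H_{\mathrm{orb},\epsilon}(t)$ together with Fatou's lemma and the mixed Hodge--Riemann lemma (Lemma~\ref{lem-riemannbilinear}), not by integrating the limit metric over $X_{\mathrm{reg}}$.

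\textbf{Part (1), the ``if'' direction and uniqueness.} You call it ``the standard fact that restricting a pluri-harmonic metric to a flat subbundle keeps it pluri-harmonic and makes the orthogonal complement flat''. On a compact manifold this is indeed an integration-by-parts argument, but on $X_{\mathrm{reg}}$ with \emph{no} growth assumption on $H$ that argument is unavailable. The paper replaces it by a maximum-principle argument (Lemma~\ref{lem-harmonic-decomposition}): one computes that $\log\det(K^{-1}H_W)$ is plurisubharmonic, invokes the Grauert--Remmert extension theorem across $X_{\mathrm{sing}}$ (codimension $\ge 2$) to force it constant, and then reads off $\beta_W=0$. The same device, applied to $\tr(K^{-1}H)$, gives uniqueness. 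This is the substantive content of Proposition~\ref{prop-harmonicmetric} and is not routine.

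\textbf{Extension across the non-locally-free locus and descent to $Y$.} You invoke Simpson--Mochizuki prolongation of tame harmonic bundles to extend across $Z\subset X_{\mathrm{reg}}$. The paper explicitly notes (Remark in Section~\ref{subsection-strategy}) that Mochizuki's theory is established for quasi-projective manifolds and is not known for general quasi-K\"ahler manifolds, so this appeal is not justified here. Instead, the paper extends the harmonic bundle to all of $X_{\mathrm{reg}}$ via Proposition~\ref{prop-harmonicmetric}(2): the flat bundle extends because $\pi_1(X_{\mathrm{reg}}\setminus Z)\cong\pi_1(X_{\mathrm{reg}})$, and then the pluri-harmonic metric extends by the \emph{uniqueness} just proved. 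Finally, to show that $\gamma^{[*]}\mathcal{E}_X$ extends as a locally free \emph{Higgs} sheaf to all of $Y$ (not merely as a flat bundle), the paper needs to descend the Higgs structure through a resolution $\pi:\widehat{Y}\to Y$; this is Proposition~\ref{prop-descent-higgs}, proved via the period map for variations of loop Hodge structures from \cite{Daniel17} and the rational chain connectedness of the fibres of $\pi$. Your proposal does not supply a mechanism for this step: knowing $\pi_1(Y_{\mathrm{reg}})\cong\pi_1(Y)$ extends the local system, but it does not by itself extend the holomorphic Higgs bundle across $Y_{\mathrm{sing}}$.
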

	
	The proof of Theorem~\ref{main-theorem-polystable-equivalence} addresses this issue and proceeds in three steps.
	
	\begin{itemize}
		\item[\textbf{Step 1.}] In Section~\ref{section-HE}, we prove the existence of a pluri\-harmonic metric for reflexive Higgs sheaves. This result was previously established in \cite{OF25} for the case where \(\theta_{X_{\mathrm{reg}}}\) is trivial (see also \cite{GP24,GP25}). Our proof adapts the Higgs version \cite{LZZ} of Bando-Siu's argument \cite{BS} to study the Hermitian-Yang-Mills flow \(\{H_{\mathrm{orb}}(t)\}\) on an orbifold modification constructed in \cite{KO25}. Combining this with \eqref{equa-vanishingcondition}, we conclude that the limiting Hermitian-Einstein metric is pluri-harmonic. The overall strategy are presented in Section~\ref{Strategy-HYM}.
		\item[\textbf{Step 2.}] In the first part of Section~\ref{subsection-harmonic}, we prove part (1) of the theorem. The existence of pluri-harmonic metrics on semisimple flat bundles over the regular locus follows from the work of Corlette and Donaldson \cite{Cor88,Don87} via maximally quasi‑\'etale covers; the remaining properties are obtained through the maximum principle, which is well adapted to our setting thanks to the extension theorem of Grauert and Remmert \cite{GR55}.
		\item[\textbf{Step 3.}] From Step 1 and Theorem~\ref{main-theorem-polystable-equivalence} (1), we conclude that \(\gamma^{[*]}(\mathcal{E}_{X_\mathrm{reg}},\theta_{X_\mathrm{reg}})\) admits a pluri-harmonic metric induced by a semisimple local system \(E_{Y_\mathrm{reg}}\). Since \(Y\) is maximally quasi-\'etale, \(E_{Y_\mathrm{reg}}\) extends to a semisimple local system \(E_Y\). To verify that \(\gamma^{[*]}(\mathcal{E}_{X_\mathrm{reg}},\theta_{X_\mathrm{reg}})\) extends to a locally free Higgs sheaf on \(Y\), it suffices to establish a descent result for Higgs bundles arising from harmonic bundles (Proposition~\ref{prop-descent-higgs}), which we prove using the period map constructed in \cite{Daniel17}. Details are given in Sections~\ref{subsection-harmonic} and~\ref{subsection-locallyfree}.
	\end{itemize}
	
	\begin{remark}	
		The careful treatment of harmonic bundles on the regular locus presented above is necessary in the K\"ahler klt context because Mochizuki's theory of tame harmonic bundles on quasi-projective manifolds \cite{Mo0, Mo1, Mo2}---which crucially underlies the construction of $\mu_{X_{\mathrm{reg}}}$ in the projective case \cite{GKPT20}---has not yet been established for general quasi-K\"ahler manifolds.
		
		It should be noted that the admissibility of the limiting Hermitian-Einstein metric constructed in Section~\ref{section-HE} remains unknown (cf. Question~\ref{ques-uniformestimate}). The key point of Steps~2 and~3 is that klt singularities are sufficiently mild to allow us, via harmonic bundles on the regular locus, to characterize polystable Higgs bundles satisfying \eqref{equa-vanishingcondition} and semisimple local systems on the regular locus, without imposing any extra assumptions on the pluri\-harmonic metrics.
	\end{remark}
	
	Building on Theorem~\ref{main-stable}, we confirm that each \((\mathcal{E}_i/\mathcal{E}_{i-1})^{\vee\vee}\) is locally free. Then, in Section~\ref{subsection-JHfiltrations}, we complete the proof of Theorem~\ref{main-maximallyquasietalecase}. This allows us to construct \(\mu_X\) and \(\mu_{X_{\mathrm{reg}}}\) in Section~\ref{section-formulation}. It remains to prove that both maps are injective; this relies on proving the following descent result as in \cite{GKPT19}.
	
	\begin{theorem}\label{main-thm-descent}
		Let \(f:Z\rightarrow X\) be a bimeromorphic holomorphic map between compact K\"ahler klt spaces. Then for any \((\mathcal{E}_Z,\theta_Z)\in\mathrm{Higgs}_Z\), there exists \((\mathcal{E}_X,\theta_X)\in\mathrm{Higgs}_X\) such that \((\mathcal{E}_Z,\theta_Z)=f^*(\mathcal{E}_X,\theta_X)\).
	\end{theorem}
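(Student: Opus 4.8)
The plan is to reduce the statement to the case where $f$ is a resolution of singularities and then exploit the compatibility of the correspondence $\mu$ with resolutions established in Theorem \ref{main-theorem-locallyfree}. First, by the weak factorization theorem for bimeromorphic maps between compact K\"ahler manifolds, after passing to a common resolution $\pi: \widehat{X} \to X$ with a factorization $\widehat{X} \to Z \to X$, it suffices to treat two sub-cases: (a) $f$ is a resolution $\pi: \widehat{X} \to X$ of a compact K\"ahler klt space; and (b) $Z$ and $X$ are both smooth, which is the classical statement that a semistable Higgs bundle with vanishing Chern classes descends along a bimeromorphic morphism of K\"ahler manifolds (this is already contained in the theory of \cite{GKPT19} in the projective case and extends verbatim using \cite{NZ,Deng21} in the K\"ahler case via the local system). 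So the heart of the matter is sub-case (a).

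For sub-case (a), given $(\mathcal{E}_{\widehat{X}}, \theta_{\widehat{X}}) \in \mathrm{Higgs}_{\widehat{X}}$, I would first produce a candidate on $X$ by pushing forward. Set $\mathcal{E}_X := (\pi_* \mathcal{E}_{\widehat{X}})^{\vee\vee}$, a reflexive sheaf on $X$, and let $\theta_X$ be the induced Higgs field on $X_{\mathrm{reg}}$ (obtained from $\theta_{\widehat{X}}$ over the locus where $\pi$ is an isomorphism, then extended reflexively). The key claim is that $(\mathcal{E}_{X_{\mathrm{reg}}}, \theta_{X_{\mathrm{reg}}})$ lies in $\mathrm{Higgs}_{X_{\mathrm{reg}}}$ and that $\mathcal{E}_X$ is in fact locally free with $\pi^*(\mathcal{E}_X, \theta_X) \cong (\mathcal{E}_{\widehat{X}}, \theta_{\widehat{X}})$. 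To see this, apply Theorem \ref{main-theorem-locallyfree} (via the classical correspondence $\mu_{\widehat{X}}$) to obtain the local system $L := \mu_{\widehat{X}}(\mathcal{E}_{\widehat{X}}, \theta_{\widehat{X}})$ on $\widehat{X}$. Since the exceptional locus of $\pi$ has real codimension $\geq 2$ and $\widehat{X} \setminus \mathrm{Exc}(\pi) \cong X_{\mathrm{reg}} \setminus \pi(\mathrm{Exc}(\pi))$ with the complement of real codimension $\geq 2$ on both sides, $\pi_1$ is unchanged and $L$ descends to a local system $L_{X}$ on $X$ (equivalently on $X_{\mathrm{reg}}$, using $\pi_1(\widehat{X}) \cong \pi_1(X_{\mathrm{reg}}) \cong \pi_1(X)$ — here I use that a resolution of a klt space induces an isomorphism on $\pi_1$, which follows from Takayama's theorem on the simple connectedness of klt singularities together with van Kampen). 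Then $\mu_{X_{\mathrm{reg}}}^{-1}(L_X) \in \mathrm{pHiggs}_{X_{\mathrm{reg}}}$ (after semisimplification; in general one takes the iterated extension corresponding to $L_X$) gives a reflexive Higgs sheaf on $X_{\mathrm{reg}}$, and by the compatibility diagram \eqref{equa-diagram-1} combined with Theorem \ref{main-maximallyquasietalecase}(2), its trivial extension $(\mathcal{E}_X, \theta_X)$ is locally free, belongs to $\mathrm{Higgs}_X$, and satisfies $\pi^*(\mathcal{E}_X, \theta_X) = (\mathcal{E}_{\widehat{X}}, \theta_{\widehat{X}})$ because both pull back to Higgs bundles with the same associated local system $L$ on $\widehat{X}$ and $\mu_{\widehat{X}}$ is injective.

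I expect the main obstacle to be the bookkeeping around \emph{non-polystable} objects: $\mathrm{Higgs}_X$ consists of semistable (not necessarily polystable) Higgs sheaves, and the correspondence $\mu_X$ is constructed via Jordan--H\"older filtrations, so one must check that the descent is compatible with these filtrations. Concretely, given $(\mathcal{E}_{\widehat{X}}, \theta_{\widehat{X}})$ semistable, its graded pieces are stable and descend (by the polystable case above applied componentwise), but one must reassemble the extension data: the extension class of $(\mathcal{E}_{\widehat{X}}, \theta_{\widehat{X}})$ lives in an $\mathrm{Ext}^1$ group of Higgs sheaves on $\widehat{X}$, and one needs that $\pi^*$ induces an isomorphism on the relevant $\mathrm{Ext}^1$ groups (between Higgs sheaves on $X_{\mathrm{reg}}$ and on $\widehat{X}$) — this should follow from the equivalence of categories $\mathrm{Higgs}_{\widehat{X}} \simeq \mathrm{LSys}_{\widehat{X}}$ and $\mathrm{Higgs}_{X_{\mathrm{reg}}} \simeq \mathrm{LSys}_{X_{\mathrm{reg}}}$ together with $\pi_1(\widehat{X}) \cong \pi_1(X_{\mathrm{reg}})$, since $\mathrm{Ext}^1$ in both categories is computed by group cohomology $H^1(\pi_1, \mathrm{Hom}(L', L''))$. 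Once this identification of extension data is in place, the general semistable case follows formally from the stable case and the construction of $\mu_X$ via Jordan--H\"older filtrations in Section~\ref{section-formulation}.
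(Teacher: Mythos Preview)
Your proposal has a genuine logical gap: it is circular. You invoke the inverse of $\mu_{X_{\mathrm{reg}}}$ (and, in your Ext argument, the full equivalence $\mathrm{Higgs}_{X_{\mathrm{reg}}}\simeq \mathrm{LSys}_{X_{\mathrm{reg}}}$), but in this paper the bijectivity of $\mu_X$ and of $\mu_{X_{\mathrm{reg}}}$ is \emph{deduced from} Theorem~\ref{main-thm-descent}; see Section~\ref{section-formulation}, where the inverse $\eta_X$ is constructed precisely by applying Theorem~\ref{main-thm-descent} to $\mu_{\widehat{X}}^{-1}(\pi^*E_X)$. So at the point where Theorem~\ref{main-thm-descent} is being proved, you only have access to the classical correspondence $\mu_{\widehat{X}}$ on the resolution, to Theorem~\ref{main-maximallyquasietalecase}, and to the harmonic--bundle results of Sections~\ref{section-HE}--\ref{section-harmonicbundles}; you do \emph{not} have a map $\mathrm{LSys}_{X_{\mathrm{reg}}}\to \mathrm{Higgs}_{X_{\mathrm{reg}}}$ to plug $L_X$ into. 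A second, independent problem is the claim $\pi_1(\widehat{X})\cong \pi_1(X_{\mathrm{reg}})$: Takayama gives $\pi_1(\widehat{X})\cong \pi_1(X)$, but $i_*:\pi_1(X_{\mathrm{reg}})\to\pi_1(X)$ is in general only surjective (this is exactly why the paper introduces maximally quasi-\'etale covers). Finally, your proposed identification of Higgs $\mathrm{Ext}^1$ with group cohomology again presupposes the equivalence of categories you are trying to establish.

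By contrast, the paper's proof in Section~\ref{subsection-descent} avoids any inverse correspondence on the singular target. It passes through an orbifold modification $g:W\to X$ and a common resolution $Y$, descends each \emph{stable} graded piece of the Jordan--H\"older filtration of the local system using Proposition~\ref{prop-descent-higgs} (the descent result for Higgs bundles arising from harmonic bundles, proved via Daniel's period map and rational chain connectedness of fibres), and then reassembles the extension data explicitly on the orbifold $W_{\mathrm{orb}}$ using orbifold Hodge theory. Local freeness of the resulting reflexive sheaf on $X$ is checked by hand with Lemma~\ref{lem-locallyfree-extension} and Proposition~\ref{prop-orbifold-exact}. If you want to salvage your outline, the missing atomic ingredient is exactly Proposition~\ref{prop-descent-higgs} for the stable pieces, together with a concrete mechanism---independent of $\mu_X^{-1}$---for transporting the extension classes down to $X$.
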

	
	Theorem~\ref{main-thm-descent} was proved for locally free \(f\)-numerically flat sheaves in the projective case \cite[Theorem 1.2]{GKPT20}, building on the relative MMP for projective morphisms \cite{BCHM}. Its extension to the K\"ahler klt setting appears plausible, especially in light of recent work extending the relative MMP to the analytic context \cite{DHP24,Fuji22}. However, several technical results used in the proof of \cite{GKPT19}---notably \cite[Proposition 3.1]{HN13}, which appears in the proof of \cite[Claim 4.5]{GKPT19}---do not seem to be directly available in the analytic setting, to the best of our knowledge and based on the existing literature. Moreover, in order to maintain our focus on Higgs sheaves and to keep the paper concise, we adopt an alternative argument in Section~\ref{subsection-descent}. This argument proceeds via an orbifold modification to factor through local systems, employs Jordan-H\"older filtrations of the corresponding representation, and reconstructs the Higgs sheaf by descending the graded stable pieces based on Proposition \ref{prop-descent-higgs}.
	
	Although our overarching goal---proving Theorems~\ref{main-maximallyquasietalecase} and~\ref{main-thm-descent}---coincides with that of \cite{GKPT19}, the proofs we present in the K\"ahler setting are substantially different for the reasons outlined above.

    \vspace{0.3cm}
    \iffalse
    This paper is organized as below:
    \begin{itemize}
    	\item  Section \ref{fundamentalmaterial} is included to serve a wider audience and to clarify definitions and prove some basic facts concerning complex orbi-sheaves, orbifold Chern classes and Higgs sheaves over the regular loci.
    	\item In Section \ref{section-HE}, we proved the existence of pluri-harmonic metrics on polystable reflexive Higgs sheaves with \eqref{equa-vanishingcondition} (Theorem \ref{main-stable}).
    	\item In section \ref{section-nonabelian Hodge theory}, we establish the equivalence between $\mathrm{pHiggs}_{X_\reg}$ and $ \mathrm{sRep}_{X_\reg}$ by studying harmonic bundles over the regular loci (Theorem \ref{main-theorem-polystable-equivalence}).
    	\item In Section, establishing the non-abelian Hodge correspondences (Theorems \ref{main-theorem-locallyfree} and Theorem \ref{main-theorem-reflexive}) and providing some applications in uniformization results.
    \end{itemize}
\fi
    \textbf{Acknowledgement.} The authors thank Xin Fu for insightful discussions on uniform geometric estimates, Osamu Fujino for kind help and sharing the preprint \cite{Fujino23} with them, Changpeng Pan and Di Wu for helpful conversations regarding harmonic bundles, and Masataka Iwai and Satoshi Jinnouchi for valuable remarks on quasi-uniformization problems. The second author also thanks Tianshu Jiang and Shin-ichi Matsumura for conversations in related topics. The research was supported by the National Key R and D Program of China 2020YFA0713100. The  authors are partially supported by NSF in China No.12141104, 12371062 and 12431004.
    
	\section{Fundamental materials}\label{fundamentalmaterial}
	
	This section reviews and proves necessary statements concerning complex orbifolds (Section~\ref{subsection-orbifolds}), Higgs sheaves (Section~\ref{subsection-Higgssheaves}), orbifold Chern classes (Section~\ref{subsection-orbifoldchernclass}) and quasi-\'etale morphisms (Section~\ref{subsection-quasietale}).
	
\iffalse We say that a locally free sheaf $\mE$ of rank $r$ on $X$ is flat if it is induced by some linear representation $\rho:\pi_1(X)\rightarrow \mathrm{GL}(r,\C)$.\fi
	
	\subsection{Complex orbifolds}\label{subsection-orbifolds} The study on orbifolds originated from \cite{Satake}. For a concise discussion to complex orbifolds, we refer to \cite[Section 3]{DO23} and \cite{Wu23}.

	 A complex space $X$ with quotient singularities equipped with an effective orbifold structure $X_\orb:=\{(V_\alpha,G_\alpha,\mu_\alpha)\}$, namely, there exists an open cover $\{X_\alpha\}$ of $X$ such that $X_\alpha\cong V_\alpha/G_\alpha$ for some complex manifold $V_\alpha$ and $G_\alpha\subset \Aut(V_\alpha)$ and $(V_\alpha,G_\alpha,\mu_\alpha)$ are compatible along the overlaps, where $\mu_\alpha:V_\alpha\rightarrow X_\alpha$ is the quotient map induced by $G_\alpha$. Effective means that every $G_\alpha$-action is faithful. Indeed, $X$ always admits a standard orbifold structure, i.e., $\pi_\alpha$ is \'etale in codimension $1$. In virtue of purity theorem of branch locus due to Grauert-Remmert \cite{GR55}, it can be easily seen that
		\begin{lemma}[c.f. {\cite[Proposition 1]{Wu23}}]\label{lem-maximalorbifold}
			Let $\{(V_\alpha,G_\alpha,\mu_\alpha)\}$ and $\{(V_\beta,G_\beta,\mu_\beta)\}$ be two standard orbifold structures. Then $V_{\alpha,\beta}:=\{(z_\alpha,z_\beta)\in V_\alpha\times V_\beta:\mu_\alpha(z_\alpha)=\mu_{\beta}(z_\beta)\}$ is smooth and the natural projections $V_{\alpha,\beta}\rightarrow V_\alpha$ and $V_{\alpha,\beta}\rightarrow V_\beta$ are \'etale.
		\end{lemma}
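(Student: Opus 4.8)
The plan is to descend to the complement of the singular locus, where both orbifold charts are honest finite \'etale covers, form the fibre product there, and then extend back across the (high-codimension) singular set via the purity of the branch locus. As a first reduction, it suffices to treat a fixed connected component of the overlap $X_0:=X_\alpha\cap X_\beta$, replacing $V_\alpha,V_\beta$ by connected components of $\mu_\alpha^{-1}(X_0),\mu_\beta^{-1}(X_0)$; thus $\mu_\alpha\colon V_\alpha\to X_0$ and $\mu_\beta\colon V_\beta\to X_0$ are finite surjective maps from connected complex manifolds realising $X_0$ as $V_\alpha/G_\alpha$ and $V_\beta/G_\beta$ for faithful finite actions. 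Since both structures are standard, the non-free loci of $G_\alpha$ on $V_\alpha$ and $G_\beta$ on $V_\beta$ are closed of $\codim\ge2$; writing $V_\alpha^\circ\subset V_\alpha$, $V_\beta^\circ\subset V_\beta$ for the free loci, the maps $\mu_\alpha,\mu_\beta$ restrict there to quotients by free finite actions, hence to finite \'etale maps onto open sets $U_\alpha,U_\beta\subset X_0$ whose complements have $\codim\ge2$ (as $X_0$, having quotient singularities, is normal). Put $X_0^\circ:=U_\alpha\cap U_\beta$, and shrink $V_\alpha^\circ,V_\beta^\circ$ to $\mu_\alpha^{-1}(X_0^\circ),\mu_\beta^{-1}(X_0^\circ)$, still open with $\codim\ge2$ complements and still inside the free loci.

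Over $X_0^\circ$ both $\mu_\alpha$ and $\mu_\beta$ are finite \'etale, so $V^\circ:=V_\alpha^\circ\times_{X_0^\circ}V_\beta^\circ$ is finite \'etale over each of $V_\alpha^\circ,V_\beta^\circ$; in particular $V^\circ$ is a complex manifold and the two projections from it are finite \'etale. Moreover $V^\circ$ is exactly the part of $V_{\alpha,\beta}$ lying over $X_0^\circ$, hence open and dense in $V_{\alpha,\beta}$, with complement of $\codim\ge2$.

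The heart of the matter is the extension across $X_{0,\sing}$. Let $W\to V_{\alpha,\beta}^{0}$ be the normalization of the union $V_{\alpha,\beta}^{0}$ of the irreducible components of $(V_{\alpha,\beta})_{\mathrm{red}}$ meeting $V^\circ$; then $W$ is a normal complex space, isomorphic to $V^\circ$ over $X_0^\circ$. The first projection $V_{\alpha,\beta}\to V_\alpha$ is finite (base change of $\mu_\beta$ along $\mu_\alpha$), so $q_\alpha\colon W\to V_\alpha$ is finite and unramified over $V_\alpha^\circ$, whose complement in the smooth space $V_\alpha$ has $\codim\ge2$. By the purity theorem for the branch locus of Grauert--Remmert \cite{GR55}, the branch locus of $q_\alpha$ is empty or pure of codimension one; being contained in a set of $\codim\ge2$ it is empty, so $q_\alpha$ is finite \'etale and $W$ is a complex manifold. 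The symmetric argument gives $q_\beta\colon W\to V_\beta$ finite \'etale. Since $\mu_\alpha q_\alpha=\mu_\beta q_\beta$ on the dense open $V^\circ$, hence everywhere, the pair $(q_\alpha,q_\beta)$ factors $W$ through $V_{\alpha,\beta}$ and realises it as the normalization of $V_{\alpha,\beta}^{0}$. Reading $V_{\alpha,\beta}$ as this normalization---the fibre product in the category of normal complex spaces, which is the object one glues to build the orbifold structure and which agrees with the naive fibre product whenever the latter is normal---we conclude $V_{\alpha,\beta}\cong W$ is smooth with \'etale projections. (An alternative to the purity step for producing $W$: extend the finite \'etale cover $V^\circ\to V_\alpha^\circ$ to $V_\alpha$ using $\pi_1(V_\alpha^\circ)\xrightarrow{\ \sim\ }\pi_1(V_\alpha)$---valid since $V_\alpha\setminus V_\alpha^\circ$ has real codimension $\ge4$---and then extend the second projection by taking the closure of its graph in $W\times V_\beta$, which is finite and generically an isomorphism over the smooth space $W$; purity is then still needed to see this second projection is \'etale.)

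I expect the main obstacle to be precisely this passage back across $X_{0,\sing}$. Over the singular locus the naive fibre product may well be reducible or non-reduced---already for two presentations of $\C^{n}\to\C^{n}/(\mathbb{Z}/2)$ it breaks into several sheets meeting along the fixed set---so the assertion is really one about its reduced, or normalized, model; and the decisive implication ``unramified in codimension one $\Rightarrow$ unramified'', on which the statement's ``it can be easily seen'' is genuinely leaning, is supplied by the Grauert--Remmert purity theorem. What makes the whole argument go through in the klt (indeed quotient) setting is that the singular locus sits in codimension $\ge2$, so it can be excised without disturbing either the fundamental groups of the smooth uniformizations or the normality used above.
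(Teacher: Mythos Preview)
Your approach via the purity of the branch locus is exactly what the paper invokes; it offers no proof beyond the preceding sentence ``In virtue of purity theorem of branch locus due to Grauert--Remmert, it can be easily seen that'' and the reference to \cite{Wu23}. Your observation that the naive fibre product can fail to be reduced or irreducible over the singular locus---so that the assertion should be read for the normalization (equivalently, for the reduced components)---is correct and is a subtlety the paper leaves implicit.
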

		 The definition gives $V_{\alpha,\beta}/(G_\alpha\times G_\beta)\cong X_\alpha\cap X_\beta$. Then according to Zorn's Lemma, there exists a standard orbifold structure that is maximal. By a standard orbifold structure we will always mean the maximal one.
	
	\iffalse
	The existence of following functorial resolution of singularities is well-known to specialists
	(see e.g. \cite[Theorem 3.10]{DO23}).
	\begin{lemma}[Functorial resolution of singularities]\label{lem-functorial}
		Let $X$ be a complex space and $\mE$ be a torsion-free coherent sheaf on $X$. There is a projective bimeromorphic morphism $r:\widehat{X}\rightarrow X$ satisfying
		\begin{itemize}
			\item[(1)] $r^T\mE:=r^*\mE/(\mathrm{torsion})$ is locally free and $\widehat{X}$ is smooth.
			\item[(2)] $r$ is obtained by a sequence of blowups at centers contained in the union of $X_{sing}$ and the non-locally-free locus of $\mE$.
			\item[(3)] $r$ is functorial with respect to local analytic isomorphisms, i.e. any local analytic isomorphism $f:X'\rightarrow X$ can lifts to $\widehat{X'}\rightarrow \widehat{X}$.
		\end{itemize}
	\end{lemma}
	\fi
	
	\begin{lemma}[cf. {\cite[Lemma 3.10]{DO23}}]\label{lem-orbireso}
		Let $W_{orb}=\{(V_i,G_i,\mu_i)\}$ be a compact complex orbifold, and $\mE_{orb}=\{\mE_i\}$ be a torsion-free orbi-sheaf on $W_{orb}$. Then there exists an orbifold morphism $h_{orb}:Z_{orb}\rightarrow W_{orb}$ from a compact complex orbifold $Z_{orb}=\{(\widehat{V}_i,G_i,\widehat{\mu}_i)\}$ so that the following holds:
		\begin{itemize}
			\item[(1)] Set $h_{orb}:=\{h_i:\widehat{V}_i\rightarrow V_i\}$, then $h_i$ is obtained by a sequence of blowups of $G_i$-invariant smooth centers contained in the non-locally-free locus of $\mE_i$.
			\item[(2)] The coherent orbi-sheaf $E_\orb:=\{h_i^*\mE_i/(\mathrm{torsion})\}$ is a vector orbi-bundle.
			\item[(3)] If $W_{orb}$ is K\"ahler, then $Z_{orb}$ is also K\"ahler.
		\end{itemize}
	\end{lemma}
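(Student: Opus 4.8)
The plan is to deduce this from the corresponding \emph{functorial} resolution-plus-flattening statement on a smooth complex manifold, applied chart by chart, and then to exploit the canonicity of that construction to make everything $G_i$-equivariant and compatible across the orbifold charts. So the proof splits into: (i) recall the functorial manifold statement; (ii) transport it along the $G_i$-actions to get equivariant blowups; (iii) glue the charts via Lemma~\ref{lem-maximalorbifold}; (iv) check the K\"ahler assertion.

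First I would recall the analytic incarnation of Raynaud--Gruson flattening combined with Hironaka's canonical resolution (in the form of Bierstone--Milman functorial resolution): for a torsion-free coherent sheaf $\mE$ of generic rank $r$ on a complex manifold $V$ there is a canonical sequence of blowups $h:\widehat V\to V$ whose centers are smooth and contained in the non-locally-free locus of $\mE$ (the zero locus of the Fitting ideal $\mathrm{Fitt}_{r-1}(\mE)$, i.e.\ the non-flat locus), such that $h^*\mE/(\mathrm{torsion})$ is locally free, and this construction is functorial for local analytic isomorphisms and, more generally, for \'etale morphisms, in the sense that pulling $\mE$ back along such a map pulls back the entire blowup sequence. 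Applying this in each chart $V_i$ to $\mE_i$ produces $h_i:\widehat V_i\to V_i$ with the properties required in (1). For $g\in G_i$ the biholomorphism $g:V_i\to V_i$ satisfies $g^*\mE_i\cong\mE_i$, so functoriality forces $g$ to permute the blowup centers; hence each $G_i$ lifts to a faithful action on $\widehat V_i$ commuting with $h_i$, and one sets $\widehat\pi_i:\widehat V_i\to\widehat V_i/G_i$.

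To glue the charts into an orbifold $Z_\orb=\{(\widehat V_i,G_i,\widehat\pi_i)\}$ I would use Lemma~\ref{lem-maximalorbifold}: for overlapping charts the fiber product $V_{\alpha,\beta}$ is smooth with both projections to $V_\alpha$ and $V_\beta$ \'etale, and by definition $\mE_\alpha$ and $\mE_\beta$ pull back to the same torsion-free sheaf on $V_{\alpha,\beta}$. Functoriality of the resolution under \'etale maps then identifies the resolution of $V_{\alpha,\beta}$ with the pullback of $\widehat V_\alpha$ and with the pullback of $\widehat V_\beta$, yielding the compatibility isomorphisms over the overlaps (equivariant for $G_\alpha\times G_\beta$), so that $h_\orb:=\{h_i\}$ is a genuine orbifold morphism $Z_\orb\to W_\orb$. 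The same functoriality shows that the locally free sheaves $h_i^*\mE_i/(\mathrm{torsion})$ agree on overlaps and inherit $G_i$-equivariant structures, hence assemble into a vector orbi-bundle $E_\orb$, giving (2). For (3), since the centers are produced functorially they glue to smooth suborbifolds of the successive orbifold modifications, so $Z_\orb\to W_\orb$ is a finite (by compactness of $W_\orb$) composition of blowups of compact complex orbifolds along smooth suborbifold centers; it then suffices to note that such a blowup preserves the compact K\"ahler property, which locally reduces to the fact that a $G_i$-equivariant blowup of a K\"ahler manifold along a smooth $G_i$-invariant center carries a $G_i$-invariant K\"ahler metric (average the standard construction), and these patch via a partition of unity exactly as in the manifold case (cf.\ \cite[Lemma 3.10]{DO23}).

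The hard part — and really the only place where care is needed — is the very first ingredient: one must invoke a resolution/flattening algorithm that is simultaneously (a) valid in the complex-analytic category, (b) canonical and functorial for \'etale morphisms, and (c) guaranteed to terminate while using only centers inside the non-locally-free locus. Granting this (it is by now standard, and is precisely what \cite[Lemma 3.10]{DO23} records), the equivariance, the descent to the orbifold, the local freeness of $E_\orb$, and the K\"ahler statement are all formal consequences.
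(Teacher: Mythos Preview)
The paper does not give its own proof of this lemma; it simply records the statement with a reference to \cite[Lemma~3.10]{DO23}. Your sketch is correct and is essentially the standard argument behind that reference: apply a functorial flattening/resolution procedure (via Fitting ideals and canonical principalization) chart by chart, use functoriality under local isomorphisms to get $G_i$-equivariance and under \'etale maps (through Lemma~\ref{lem-maximalorbifold}) to glue, and then observe that orbifold blowups along smooth centers preserve the K\"ahler condition.
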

	
	All objects (e.g., orbi sheaves, orbifold differential forms, etc.) in the context of orbifolds are those defined on each $V_\alpha$ that satisfying compatibility conditions. For a concise introduction to Higgs orbi-sheaves, we refer to \cite[Section 2]{ZZZ25}.

	\subsection{Higgs sheaves}\label{subsection-Higgssheaves}
	We refer to \cite[Section 4]{ZZZ25} for a concise discussion on Higgs sheaves over the regular locus of a compact complex normal space $X$. All discussions are still valid for locally free Higgs sheaves over $X$ by the following Remark \ref{remark-equivariant}.

	\subsubsection{Higgs sheaves over the whole space}
	
	\iffalse
	\begin{definition}[Homology first Chern class, see e.g. {\cite[Definition 2.29]{ZZZ25}}]
		Let $\mF$ be a coherent sheaf on a compact normal space of dimension $n$. Then $c_1(\mE)\in H_{2n-2}(X,\R)$ is defined by
		$$c_1(\mF)\cdot \sigma=c_1(f^*\mF/\text{tor})\cdot f^*\sigma,\ \forall\sigma\in H^{2n-2}(X,\R)$$
		for any resolution of singularities.
	\end{definition}
	
	\begin{definition}[Slope]
		Let $\alpha_0,\cdots,\alpha_{n-2}$ be nef classes on $X$. The {\em slope} of $\mF$ with respect to $(\alpha_0,\cdots,\alpha_{n-2})$ is defined by 
		\begin{equation}\label{equa-slope}
			\mu_{(\alpha_0,\cdots,\alpha_{n-2})}(\mF):=\frac{c_1(f^*\mF/\text{tor})\cdot f^*\sigma}{\rank \mF}
		\end{equation}
		for any resolution of singularities $f$.
	\end{definition}
	
	\begin{definition}[Stability]\label{defn-stable}
		Let $(\mE_{X_\reg},\theta_{X_\reg})$ be a torsion-free Higgs sheaf on the regular locus of a compact normal space $X$ of dimension $n$, and $\alpha_0,\cdots,\alpha_{n-2}$ be nef classes. We say that $(\mE_{X_\reg},\theta_{X_\reg})$ is {\em stable with respect to the polarization $(\alpha_0,\cdots,\alpha_{n-2})$}, if for any $\theta_\reg$-invariant subsheaf $0\neq\mF_{X_\reg}\subsetneq\mE_{X_\reg}$ on $X_\reg$, it holds that
		\begin{equation}\label{equa-defn-stability}
			\mu_{(\alpha_0,\cdots,\alpha_{n-2})}(\mF_{X})< \mu_{(\alpha_0,\cdots,\alpha_{n-2})}(\mE_{X}).
		\end{equation}
		Analogously, {\em semistability and polystability} can be defined.
	\end{definition}
	\fi
	
	We follow \cite[Section 5]{GKPT19a} to introduce Higgs sheaves over normal spaces (see also \cite[Section 3.2.2]{IJZ25}).
	\begin{definition}[Higgs sheaves, {cf. \cite[Definition 5.1]{GKPT19a}}]
		Let $X$ be a normal space. A Higgs sheaf $(\mE_X,\theta_X)$ on $X$ consists of a coherent sheaf and a morphism $\theta_X:\mE_X\rightarrow \mE_X\otimes\Omega_X^{[1]}$ such that the composed morphism
		$$\theta\wedge\theta:\mE_X\xrightarrow{\theta}\mE_X\otimes \Omega_X^{[1]}\xrightarrow{\theta\otimes\Id} \mE_X\otimes \Omega_X^{[1]}\otimes \Omega_X^{[1]}\xrightarrow{\Id\otimes[\wedge]} \mE_X\otimes \Omega_X^{[2]}$$
		vanishes. $\theta$ is called the Higgs field of $X$.
	\end{definition}
	
	\begin{lemma}\label{lem-Higgs-extension}
		Let  $\mE_X$ be a locally free sheaf on a normal space $X$. Suppose that $\theta_W$ is a Higgs field of $(\mE_X)|_W$ on an analytic subspace $W$ of codimension at least $2$, then $\theta_W$ extends to a Higgs field $\theta_X$ of $\mE_X$.
	\end{lemma}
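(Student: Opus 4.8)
The plan is to realize both the extended Higgs field and its integrability obstruction as global sections of \emph{reflexive} sheaves, and then to invoke the second Riemann extension theorem for $S_2$ sheaves across the codimension-$\ge 2$ locus. Write $Z:=X\setminus W$ for the closed analytic subset in question, so that $\codim_X Z\ge 2$; by hypothesis we are given
$\theta_W\in H^0\!\bigl(W,\ \mathcal{H}om(\mE_X,\mE_X\otimes\Omega_X^{[1]})\bigr)$ with $\theta_W\wedge\theta_W=0$.

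First I would observe that $\mathcal{S}:=\mathcal{H}om(\mE_X,\mE_X\otimes\Omega_X^{[1]})\cong\End(\mE_X)\otimes\Omega_X^{[1]}$ is reflexive: $\End(\mE_X)$ is locally free because $\mE_X$ is, $\Omega_X^{[1]}$ is reflexive by definition, and the tensor product of a locally free sheaf with a reflexive sheaf is reflexive. Since $X$ is normal, a reflexive coherent sheaf on $X$ is $S_2$, hence $\mathrm{depth}_Z\mathcal{S}\ge 2$, and the standard Hartogs-type extension gives that the restriction map $H^0(X,\mathcal{S})\to H^0(W,\mathcal{S})$ is bijective. Therefore $\theta_W$ extends uniquely to a morphism $\theta_X\colon\mE_X\to\mE_X\otimes\Omega_X^{[1]}$.

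It remains to check integrability. The composite $\theta_X\wedge\theta_X$ is an element of $H^0\!\bigl(X,\ \mathcal{H}om(\mE_X,\mE_X\otimes\Omega_X^{[2]})\bigr)\cong H^0\!\bigl(X,\End(\mE_X)\otimes\Omega_X^{[2]}\bigr)$, and $\End(\mE_X)\otimes\Omega_X^{[2]}$ is again reflexive, hence $S_2$, for exactly the same reasons. By construction $\theta_X|_W=\theta_W$, so $(\theta_X\wedge\theta_X)|_W=\theta_W\wedge\theta_W=0$; since a section of an $S_2$ sheaf on $X$ is determined by its restriction to $W$, we conclude $\theta_X\wedge\theta_X=0$ on all of $X$. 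Thus $(\mE_X,\theta_X)$ is a Higgs sheaf extending $(\mE_X|_W,\theta_W)$, with $\theta_X$ unique by the bijectivity above.

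No serious obstacle is anticipated: the argument rests entirely on the two formal inputs that locally free $\otimes$ reflexive is reflexive and that reflexive sheaves on a normal space are $S_2$, together with the extension theorem for $S_2$ sheaves. If one prefers a down-to-earth variant, over $X_{\reg}\cap W$ one may write $\theta$ in local holomorphic frames and extend its matrix entries (holomorphic functions) by Hartogs, then pass to reflexive hulls; the sheaf-theoretic formulation above is chosen precisely because it treats the singular points of $X$ uniformly and avoids any case analysis.
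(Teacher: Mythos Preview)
Your proof is correct and follows exactly the same approach as the paper's: both extend $\theta_W$ and verify $\theta\wedge\theta=0$ by appealing to the reflexivity of $\mathcal{H}om(\mE_X,\mE_X\otimes\Omega_X^{[1]})$ and of the sheaf receiving $\theta\wedge\theta$. The paper compresses this into a single sentence, while you spell out the standard reasons (locally free $\otimes$ reflexive is reflexive, reflexive on normal is $S_2$, Hartogs-type extension), but the argument is the same.
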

	\begin{proof}
		This is because $\Hom(\mE_X,\mE_X\otimes\Omega_X^{[1]})$ and the target of the composed morphism $\theta\wedge\theta$ are reflexive.
	\end{proof}
	
	\begin{lemma}[cf. {\cite[Lemma 4.12]{GKPT19a}}]\label{lem-saturation-invariant}
		Let $(\mE_X,\theta_X )$ be a torsion-free Higgs sheaf on a normal space $X$. Suppose that $\mF_X$ is a subsheaf of $\mE_X$ and there exists a Zariski dense open subset $W\subset X_\reg$ such that $\mF_X$ is $\theta_{X}$-invariant on $W$, then its saturation $\mF^{sat}$ is generically $\theta_X$-invariant.
	\end{lemma}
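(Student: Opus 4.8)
The plan is to restrict to a Zariski‑dense open subset of $X$ on which the saturation $\mF^{sat}$ literally coincides with $\mF_X$ and which meets $W$; over such a set the $\theta_X$‑invariance of $\mF^{sat}$ is inherited directly from that of $\mF_X$, and no deeper input is needed. Since a normal complex space is the disjoint union of its irreducible components and both hypothesis and conclusion are local on these, I would first reduce to the case where $X$ is irreducible; one may also assume $\mF_X\neq 0$, the case $\mF_X=0$ being trivial. As $\mE_X$ is torsion‑free, so is its subsheaf $\mF_X$, and by definition $\mF^{sat}$ is the kernel of the surjection $\mE_X\twoheadrightarrow (\mE_X/\mF_X)/\torsion(\mE_X/\mF_X)$; hence $\mE_X/\mF^{sat}$ is torsion‑free and the cokernel $\mF^{sat}/\mF_X\cong\torsion(\mE_X/\mF_X)$ is a torsion sheaf, so it is supported on a proper closed analytic subset of $X$.

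Next I would form the closed analytic set $Z\subset X$ equal to the union of $X_\sing$, the support of $\torsion(\mE_X/\mF_X)$, and the non‑locally‑free loci of $\mE_X$, $\mF^{sat}$ and $\mE_X/\mF^{sat}$. Each of these is a proper closed analytic subset of the irreducible space $X$ — for the three torsion‑free sheaves because a torsion‑free sheaf on an irreducible space is locally free over a dense open set — so $Z\neq X$, and therefore $U:=(X\setminus Z)\cap W$ is a Zariski‑dense open subset. By construction, over $U$ we have $U\subset W\subset X_\reg$, so $\Omega_X^{[1]}|_U=\Omega^1_U$ is locally free; moreover $\mF^{sat}|_U=\mF_X|_U$ because $\torsion(\mE_X/\mF_X)$ vanishes on $X\setminus Z$; and $\mF^{sat}|_U$ together with $(\mE_X/\mF^{sat})|_U$ is locally free.

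It then remains to transport invariance across the identification $\mF^{sat}|_U=\mF_X|_U$. Since $U\subset W$, the hypothesis says that the composite $\mF_X|_U\hookrightarrow\mE_X|_U\xrightarrow{\theta_X}\mE_X|_U\otimes\Omega_X^{[1]}|_U$ factors through the subsheaf $\mF_X|_U\otimes\Omega_X^{[1]}|_U$, which is a genuine subsheaf of $\mE_X|_U\otimes\Omega_X^{[1]}|_U$ because $\Omega_X^{[1]}|_U$ is locally free. Using $\mF^{sat}|_U=\mF_X|_U$, it follows that the induced morphism $\mF^{sat}|_U\to(\mE_X/\mF^{sat})|_U\otimes\Omega_X^{[1]}|_U$ vanishes, i.e. $\mF^{sat}$ is $\theta_X$‑invariant over the dense open set $U$, which is precisely the assertion that $\mF^{sat}$ is generically $\theta_X$‑invariant.

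There is no genuinely hard step here; the only point requiring a moment's care is the observation that the dense open locus on which the saturation agrees with $\mF_X$ must intersect $W$, which is exactly why the reduction to $X$ irreducible is made, and the fact that ``$\theta_X$‑invariant'' should be read over the locally free locus inside $X_\reg$ so that the relevant tensor products remain sheaf‑theoretically well behaved.
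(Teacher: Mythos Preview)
Your argument is correct and is precisely the standard proof: pass to a dense open where the saturation agrees with $\mF_X$ and where everything is locally free, then read off invariance. The paper does not supply its own proof of this lemma but simply refers to \cite[Lemma~4.12]{GKPT19a}, whose argument is essentially the one you have written; so there is nothing to add.
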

	
	\begin{remark}\label{remark-saturation-stability}
		Recall that the saturation of a $\theta_{X_\reg}$-invariant torsion-free subsheaf increases the slope (see e.g. \cite[Lemma 2.30]{ZZZ25}) and is again $\theta_{X_\reg}$-invariant by Lemma \ref{lem-saturation-invariant}. Thus, for any nef polarization $(\alpha_0,\cdots,\alpha_{n-2})$, $(\mE_{X_\reg},\theta_{X_\reg})$ is $(\alpha_0,\cdots,\alpha_{n-2})$-stable if and only if 
		\begin{equation}\label{equa-defn-stability}
			\mu_{(\alpha_0,\cdots,\alpha_{n-2})}(\mF_{X})< \mu_{(\alpha_0,\cdots,\alpha_{n-2})}(\mE_{X}).
		\end{equation}
		holds for any saturated subsheaf $\mF_X\subsetneq \mE_X$ that is $\theta_{X_\reg}$-invariant. Smilar statements hold for semistability and polystability.
	\end{remark}
	
	Combining Lemma \ref{lem-saturation-invariant} and Remark \ref{remark-saturation-stability} shows that
	\begin{lemma}\label{lem-stable-equivariant}
		A torsion-free Higgs sheaf $(\mE_X,\theta_X)$ on a compact normal space $X$ is stable (resp. semistable) in the sense of \cite[Definition 4.13]{GKPT19a} if and only if $(\mE_X,\theta_X)|_{X_\reg}$ is stable (resp. semistable).
	\end{lemma}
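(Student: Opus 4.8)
The plan is to reformulate each of the two notions of (semi)stability as a slope condition ranging over the \emph{same} family of test subsheaves, and then to observe that the two families coincide and that slopes are unaffected by what happens over $X\setminus X_\reg$. On the $X_\reg$ side, Remark~\ref{remark-saturation-stability} already carries out half of this: $(\mE_X,\theta_X)|_{X_\reg}$ is semistable (resp.\ stable) exactly when the slope inequality $\mu_{(\alpha_0,\dots,\alpha_{n-2})}(\mF_X)\le\mu_{(\alpha_0,\dots,\alpha_{n-2})}(\mE_X)$ (resp.\ $<$) holds for every \emph{saturated} subsheaf $\mF_X\subsetneq\mE_X$ whose restriction $\mF_X|_{X_\reg}$ is $\theta_{X_\reg}$-invariant, and polystability admits an analogous characterization. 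It therefore suffices to show that (semi)stability of $(\mE_X,\theta_X)$ in the sense of \cite[Definition~4.13]{GKPT19a} admits precisely the same reformulation.

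First I would recall that \cite[Definition~4.13]{GKPT19a} tests the slope inequality on all $\theta_X$-invariant coherent subsheaves $0\neq\mF_X\subsetneq\mE_X$ of smaller rank. Replacing such an $\mF_X$ by its saturation $\mF_X^{sat}$ in $\mE_X$ only increases the slope (cf.\ \cite[Lemma~2.30]{ZZZ25}), and by Lemma~\ref{lem-saturation-invariant}, applied with the dense open set $W=X_\reg$, the saturation is again $\theta_X$-invariant on a dense open subset; hence it is enough to test the inequality over \emph{saturated} subsheaves $\mF_X\subsetneq\mE_X$ that are generically $\theta_X$-invariant. The remaining point is that, for $\mF_X\subseteq\mE_X$ saturated in $\mE_X$, the conditions ``$\mF_X$ is generically $\theta_X$-invariant'' and ``$\mF_X|_{X_\reg}$ is $\theta_{X_\reg}$-invariant'' are equivalent. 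One direction is immediate, since $X_\reg$ contains the generic point of $X$. For the other, I would use that $\mE_X/\mF_X$ is torsion-free: the composed morphism $\mF_X\xrightarrow{\theta_X}\mE_X\otimes\Omega_X^{[1]}\to\mC$, where $\mC$ is the quotient of $\mE_X\otimes\Omega_X^{[1]}$ by the saturation of $\mF_X\otimes\Omega_X^{[1]}$ (so that $\mC$ is torsion-free), vanishes identically if and only if $\mF_X$ is $\theta_X$-invariant; since it vanishes on the dense open set $X_\reg$ and $\mC$ is torsion-free, it vanishes on all of $X$. Finally, rank and $c_1$ --- hence $\mu_{(\alpha_0,\dots,\alpha_{n-2})}$ --- are insensitive to restriction to, or extension from, $X_\reg$ across the codimension-$\ge 2$ set $X\setminus X_\reg$ (here we use that $X$ is normal). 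Combining these observations identifies the two reformulations, so the two notions of semistability agree; stability and polystability are handled in the same way.

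The step I expect to be the crux is the passage, for a saturated subsheaf, from ``generically $\theta_X$-invariant'' to ``$\theta_X$-invariant'' --- that is, the fact that once one has saturated, the $\theta$-invariance condition is detected at the generic point. This rests on exactly the reflexivity/codimension-two mechanism already exploited to extend Higgs fields in Lemma~\ref{lem-Higgs-extension} and to propagate invariance under saturation in Lemma~\ref{lem-saturation-invariant}; granting it, the remainder is routine bookkeeping with slopes and ranks.
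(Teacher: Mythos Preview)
Your proposal is correct and follows essentially the same route as the paper, which simply invokes Lemma~\ref{lem-saturation-invariant} and Remark~\ref{remark-saturation-stability}: both reduce to saturated test subsheaves and then identify the two invariance conditions. One small slip to clean up: in the direction ``generically $\theta_X$-invariant $\Rightarrow$ $\theta_{X_\reg}$-invariant on $X_\reg$'' your composed map vanishes on some dense open $W$ (not a priori $X_\reg$), and it is cleaner to note that on $X_\reg$ the quotient $(\mE_{X_\reg}\otimes\Omega^1_{X_\reg})/(\mF_{X_\reg}\otimes\Omega^1_{X_\reg})$ is already torsion-free (because $\Omega^1_{X_\reg}$ is locally free and $\mF_{X_\reg}$ is saturated), so no extra saturation is needed and your ``if and only if'' holds there.
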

	
	\begin{remark}\label{remark-equivariant}
		Building on Lemmas \ref{lem-Higgs-extension} and \ref{lem-stable-equivariant}, all statements proved in the remainder of this subsection also hold for torsion-free Higgs sheaves over the whole space.
	\end{remark}

	\iffalse
	\begin{lemma}[cf. {\cite[Lemma 2.32]{ZZZ25}}]\label{exactsequence-firstchernclass}
		Let $0\rightarrow\mF\rightarrow\mE\rightarrow\mQ\rightarrow0$ be a short exact sequence of coherent sheaves on a compact normal space $X$. Then $c_1(\mE)=c_1(\mF)+c_1(\mQ)$.
	\end{lemma}
		\fi
	
	Following the spirit of \cite[Page 158]{Kobayashi2014}, it can be easily seen that
	
	\begin{lemma}\label{semistability-extension}
		Let $X$ be a compact complex normal space of dimension $n$, $\alpha_0,\cdots,\alpha_{n-2}$ be nef classes and $(\mE_{X_\reg},\theta_{\mE_{X_\reg}})$ be a torsion-free Higgs sheaf on $X_\reg$. Suppose that
		$$0\rightarrow \mF_X\rightarrow \mE_X\rightarrow\mQ_X\rightarrow 0$$
		is a short exact sequence of torsion-free sheaves such that $\mF_{X_\reg}$ is $\theta_{\mE_{X_\reg}}$-invariant. If the induced Higgs sheaves $(\mF_{X_\reg},\theta_{\mF_{X_\reg}}), (\mQ_{X_\reg},\theta_{\mQ_{X_\reg}})$ are $(\alpha_0,\cdots,\alpha_{n-2})$-semistable and $\mu_{(\alpha_0,\cdots,\alpha_{n-2})}(\mF_X)=\mu_{(\alpha_0,\cdots,\alpha_{n-2})}(\mE_X)$, then $(\mE_{X_\reg},\theta_{\mE_{X_\reg}})$ is $(\alpha_0,\cdots,\alpha_{n-2})$-semistable.
	\end{lemma}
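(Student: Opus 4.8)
The plan is to run the classical argument for semistability of extensions (cf.\ \cite[Ch.~V]{Kobayashi2014}), translated into the language of torsion-free sheaves on the normal space $X$ via Remark~\ref{remark-saturation-stability}. Write $\mu(-):=\mu_{(\alpha_0,\cdots,\alpha_{n-2})}(-)$. By that remark it suffices to prove $\mu(\mathcal{G}_X)\le\mu(\mathcal{E}_X)$ for an arbitrary saturated subsheaf $\mathcal{G}_X\subsetneq\mathcal{E}_X$ whose restriction $\mathcal{G}_{X_\reg}$ is $\theta_{\mathcal{E}_{X_\reg}}$-invariant. First I would record the numerics: additivity of rank and of the first Chern class in $0\to\mathcal{F}_X\to\mathcal{E}_X\to\mathcal{Q}_X\to0$ (see e.g.\ \cite{ZZZ25}) gives $\rank(\mathcal{E}_X)\mu(\mathcal{E}_X)=\rank(\mathcal{F}_X)\mu(\mathcal{F}_X)+\rank(\mathcal{Q}_X)\mu(\mathcal{Q}_X)$, so the hypothesis $\mu(\mathcal{F}_X)=\mu(\mathcal{E}_X)$ forces $\mu(\mathcal{Q}_X)=\mu(\mathcal{E}_X)$ as well; thus the common value of these three slopes is the number to aim at.

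Next I would carry out the sheaf-theoretic constructions over $X_\reg$, which is harmless since $\mu$ of a torsion-free sheaf depends only on its restriction to any Zariski-dense open subset with complement of codimension $\ge2$. Set $\mathcal{A}:=\mathcal{G}_{X_\reg}\cap\mathcal{F}_{X_\reg}$ and $\mathcal{B}:=\image\bigl(\mathcal{G}_{X_\reg}\to\mathcal{Q}_{X_\reg}\bigr)$, producing a short exact sequence $0\to\mathcal{A}\to\mathcal{G}_{X_\reg}\to\mathcal{B}\to0$. A routine check shows $\mathcal{A}$ is a $\theta_{\mathcal{F}_{X_\reg}}$-invariant subsheaf of $\mathcal{F}_{X_\reg}$ (it sits inside two $\theta$-invariant subsheaves of $\mathcal{E}_{X_\reg}$, and over $X_\reg$ intersection commutes with $\otimes\,\Omega^{[1]}$) and $\mathcal{B}$ is a $\theta_{\mathcal{Q}_{X_\reg}}$-invariant subsheaf of $\mathcal{Q}_{X_\reg}$ (it is the image of a $\theta$-invariant subsheaf under the quotient map, $\theta_{\mathcal{E}}$ descending to $\theta_{\mathcal{Q}}$ because $\mathcal{F}$ is invariant). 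Feeding the saturations of $\mathcal{A}$ and $\mathcal{B}$ — whose slopes are no smaller, and which remain $\theta$-invariant by Lemma~\ref{lem-saturation-invariant} — into the semistability of $(\mathcal{F}_{X_\reg},\theta_{\mathcal{F}_{X_\reg}})$ and $(\mathcal{Q}_{X_\reg},\theta_{\mathcal{Q}_{X_\reg}})$ yields $\mu(\mathcal{A})\le\mu(\mathcal{E}_X)$ and $\mu(\mathcal{B})\le\mu(\mathcal{E}_X)$; the degenerate possibilities $\mathcal{A}=0$ (so $\mathcal{G}_X\hookrightarrow\mathcal{Q}_X$) or $\mathcal{B}=0$ (so $\mathcal{G}_X\subset\mathcal{F}_X$) give the desired bound directly. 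Then additivity of rank and $c_1$ along $0\to\mathcal{A}\to\mathcal{G}_{X_\reg}\to\mathcal{B}\to0$ closes the argument:
\[
\rank(\mathcal{G}_X)\,\mu(\mathcal{G}_X)=\rank(\mathcal{A})\,\mu(\mathcal{A})+\rank(\mathcal{B})\,\mu(\mathcal{B})\le\bigl(\rank(\mathcal{A})+\rank(\mathcal{B})\bigr)\mu(\mathcal{E}_X)=\rank(\mathcal{G}_X)\,\mu(\mathcal{E}_X).
\]

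I do not expect a real obstacle; the only point demanding care is the bookkeeping between $X$ and $X_\reg$, i.e.\ that forming $\mathcal{G}\cap\mathcal{F}$ and $\image(\mathcal{G}\to\mathcal{Q})$ commutes, up to torsion and codimension-two loci, with extension to $X$, so that the slope inequalities proved over $X_\reg$ are exactly those entering the definition of semistability on $X$. This is handled by the codimension-insensitivity of $\mu_{(\alpha_0,\cdots,\alpha_{n-2})}$ together with Lemma~\ref{lem-saturation-invariant}. If one prefers to sidestep this passage entirely, one can instead run the same computation on a fixed resolution $f:\widehat{X}\to X$, where every sheaf involved pulls back to a torsion-free sheaf on a manifold and the classical extension argument applies verbatim.
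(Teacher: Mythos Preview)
Your argument is correct and is precisely the classical extension argument from \cite[Pages~158]{Kobayashi2014} that the paper itself invokes in lieu of a written-out proof. The only addition beyond Kobayashi is the verification that $\mathcal{A}$ and $\mathcal{B}$ inherit $\theta$-invariance and the passage between $X$ and $X_\reg$, both of which you handle correctly via Lemma~\ref{lem-saturation-invariant} and Remark~\ref{remark-saturation-stability}.
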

	
	\iffalse
	The existence of Jordan-H\"older filtrations is standard (see e.g. \cite{HL10}).
	
	\begin{proposition}[JH filtration]\label{prop-HJH-existence}
		Let $X$ be a compact normal space of dimension $n$ and $\alpha_0,\cdots,\alpha_{n-2}$ be nef classes. Suppose that $(\mE_{X_\reg},\theta_{X_\reg})$ is a $(\alpha_0,\cdots,\alpha_{n-2})$-semistable torsion-free Higgs sheaf of $\rank$ $r$ on the regular locus. Then $\mE_{X_\reg}$ admits a unique filtration of saturated subsheaves
		$$0=\mE_{0}\subsetneq \mE_{1}\subsetneq\cdots\subsetneq\mE_{l}=\mE_{X}$$
		such that $\mE_{X_\reg,k}:=\mE_k|_{X_\reg}$ is  $\theta_{X_\reg}$-invariant, each quotient sheaf $\mQ_{k}=\mE_{k}/\mE_{k-1}$ is torsion-free and $\mu_{\alpha_0,\cdots,\alpha_{n-2}}(\mQ_k)>\mu_{\alpha_0,\cdots,\alpha_{n-2}}(\mQ_{k+1})$, and $(\mQ_{X_\reg,k},\theta_{X_\reg,k})$ is $(\alpha_0,\cdots,\alpha_{n-2})$-stable with the induced Higgs field.
	\end{proposition}
	
	Specifically, it suffices to choose a $\theta_{X_\reg}$-invariant saturated subsheaf with the maximal slope and the minimal rank and applying the induction arguments.
	\fi

	\subsubsection{Pullback of Higgs sheaves}\label{section-2-pullback}
	
	The existence of the pullback of Higgs sheaves is essential and relies on Kebekus-Schnell's work \cite{KS21} on constructing the pullback of reflexive differentials.
	\begin{lemma}[Reflexive pullback of Higgs field, cf. {\cite[Lemma 4.9]{ZZZ25}}]\label{lem-reflexive-pullback}
		Let $f:Y\rightarrow X$ be a holomorphism between complex spaces with klt singularities and $(\mE_{X_\reg},\theta_{X_\reg})$ be a reflexive Higgs sheaf on $X_\reg$. Let $\mE_{Y_\reg}:=(f^*\mE_X)^{\vee\vee}|_{Y_\reg}$ and $\Sigma$ be the non-locally-free locus of $\mE_{X_\reg}$. If $f^{-1}(X_\reg)\cap Y_\reg\neq \emptyset$, then $\mE_{Y_\reg}$ admits a Higgs field $\theta_{Y_\reg}$ such that $\theta_{Y_\reg}=f^*\theta_{X_\reg}$ on $f^{-1}(X_\reg\setminus\Sigma)$.
	\end{lemma}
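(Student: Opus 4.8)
\emph{Proof strategy.} The plan is to build $\theta_{Y_\reg}$ by transporting $\theta_{X_\reg}$ along the functorial pull-back of reflexive differentials of Kebekus--Schnell \cite{KS21}, and then to verify the integrability relation $\theta_{Y_\reg}\wedge\theta_{Y_\reg}=0$ by the identity principle for sections of torsion-free sheaves. First I would replace $\theta_{X_\reg}$ by its extension over all of $X$: since $X$ is normal, $X\setminus X_\reg$ has codimension $\ge 2$, the trivial extension $\mE_X$ is reflexive, and by the reflexive-sheaf formalism for Higgs sheaves on normal spaces (see \cite[\S4]{ZZZ25} and \cite[\S5]{GKPT19a}) $\theta_{X_\reg}$ extends uniquely to a Higgs field $\theta_X\colon\mE_X\to\mE_X\otimes\Omega_X^{[1]}$; moreover $\theta_X\wedge\theta_X$ is a section of a torsion-free sheaf vanishing on $X_\reg$, hence vanishes on $X$. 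This reduction is exactly what makes the klt hypothesis on $X$ available in the next step.

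Next, since $X$ and $Y$ both have klt singularities, \cite{KS21} supplies a natural pull-back morphism $d_f\colon f^*\Omega_X^{[p]}\to\Omega_Y^{[p]}$ (for all $p$) which restricts to the ordinary differential over $f^{-1}(X_\reg)$ and is compatible with exterior products. Working on $Y_\reg$, where $\Omega_Y^{[1]}=\Omega^1_{Y_\reg}$ is locally free, I would form the composite
\[
f^*\mE_X\xrightarrow{f^*\theta_X}f^*\mE_X\otimes f^*\Omega_X^{[1]}\xrightarrow{\Id\otimes d_f}f^*\mE_X\otimes\Omega^1_{Y_\reg}\longrightarrow\mE_{Y_\reg}\otimes\Omega^1_{Y_\reg},
\]
the last arrow being induced by the canonical map $\iota\colon f^*\mE_X|_{Y_\reg}\to\mE_{Y_\reg}=(f^*\mE_X)^{\vee\vee}|_{Y_\reg}$. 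Since $\iota$ is an isomorphism outside a closed subset of $Y_\reg$ of codimension $\ge 2$ and the target $\mE_{Y_\reg}\otimes\Omega^1_{Y_\reg}$ is reflexive, one has $\Hom(f^*\mE_X|_{Y_\reg},\mE_{Y_\reg}\otimes\Omega^1_{Y_\reg})\cong\Hom(\mE_{Y_\reg},\mE_{Y_\reg}\otimes\Omega^1_{Y_\reg})$, so the composite extends uniquely to the desired morphism $\theta_{Y_\reg}\colon\mE_{Y_\reg}\to\mE_{Y_\reg}\otimes\Omega_{Y_\reg}^{[1]}$. Over $f^{-1}(X_\reg\setminus\Sigma)\cap Y_\reg$ everything is classical---$X$ and $Y$ are smooth there, $\mE_{X_\reg}$ is locally free, $f^*\mE_X=\mE_{Y_\reg}$, and $d_f=df$---so $\theta_{Y_\reg}$ coincides there with the ordinary pull-back $f^*\theta_{X_\reg}$.

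It then remains to check $\theta_{Y_\reg}\wedge\theta_{Y_\reg}=0$. Precomposing with $\iota$ and using the compatibility of $d_f$ with wedge products, $(\theta_{Y_\reg}\wedge\theta_{Y_\reg})\circ\iota$ is identified with the image of $f^*(\theta_X\wedge\theta_X)$ under the pull-back maps $d_f$, which is zero by the first step. Hence $\theta_{Y_\reg}\wedge\theta_{Y_\reg}$ vanishes on the open locus of $Y_\reg$ on which $\iota$ is an isomorphism---a dense locus, as it contains $f^{-1}(X_\reg\setminus\Sigma)\cap Y_\reg$, which is non-empty by hypothesis and meets every component (the morphisms $f$ occurring in the applications being dominant)---and, being a section of the torsion-free sheaf $\Hom(\mE_{Y_\reg},\mE_{Y_\reg}\otimes\Omega_{Y_\reg}^{[2]})$, it vanishes identically. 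The only substantive ingredient is the Kebekus--Schnell pull-back $d_f$ together with its functoriality and wedge-compatibility on klt spaces; granting this, the remaining steps are routine manipulations with reflexive sheaves, and the single point requiring a little care is the density of $f^{-1}(X_\reg\setminus\Sigma)\cap Y_\reg$ in $Y_\reg$.
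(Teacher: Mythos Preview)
Your proposal is correct and matches the approach the paper indicates: the paper does not prove this lemma independently but cites \cite[Lemma~4.9]{ZZZ25} and states just before the lemma that the construction ``relies on Kebekus--Schnell's work \cite{KS21} of constructing pull-back of reflexive differentials,'' which is precisely the engine you use. Your handling of the extension $\theta_{X_\reg}\to\theta_X$, the composite through $d_f$, the passage to the reflexive hull, and the verification of $\theta_{Y_\reg}\wedge\theta_{Y_\reg}=0$ via torsion-freeness are exactly in line with how the paper treats reflexive Higgs data (compare Lemma~\ref{lem-Higgs-extension} and its one-line proof); the density concern you flag is in fact automatic, since klt spaces are normal and hence $Y_\reg$ is irreducible, so any non-empty Zariski open subset---in particular the locus where $\iota$ is an isomorphism---is dense.
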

	
	\begin{notation}[Reflexive pullback of Higgs sheaves]
		We call $(\mE_{Y_\reg},\theta_{Y_\reg})$ the {\em reflexive pullback} of $(\mE_{X_\reg},\theta_{X_\reg})$, and denote it by $f^{[*]}(\mE_{X_\reg},\theta_{X_\reg})$.
	\end{notation}
	
	\begin{remark}\label{remark-pullback-Higgs}
		If $\mE_X$ is locally free, then $\mE_Y=f^*\mE_X$ is locally free and Lemma \ref{lem-Higgs-extension} implies $\theta_{Y_\reg}$ extends to a Higgs field $\theta_Y$ of $\mE_Y$. We call $(\mE_Y,\theta_Y)$ the {\em pullback} of $(\mE_X,\theta_X)$, and denote it by $f^*(\mE_X,\theta_X)$.
	\end{remark}

	\subsection{Orbifold Chern classes}\label{subsection-orbifoldchernclass}
	 We refer to \cite[Section 2]{ou25} for a concise introduction to the definition of orbifold Chern classes via orbifold modifications \cite{KO25}. A compact complex klt space $X$ has only quotient singularities in codimension $2$ (cf. \cite[Lemma 5.8]{GK20}) and therefore admits an orbifold modification by \cite{KO25}, i.e., there exists a bimeromorphic map $f:Y\rightarrow X$ from a complex space with only quotient singularities to $X$ such that the indeterminacy locus of $f^{-1}$ has codimension at least $3$. This allows us to define the orbifold first and second Chern classes as follows.
	
	\begin{definition}[cf. {\cite[Section 2]{ou25}}]\label{defn-orbifold}
		Let $X$ be a compact complex klt space, $f:Y\rightarrow X$ be an orbifold modification and $Y_\orb=\{(V_i,G_i,\mu_i)\}$ be the standard orbifold structure of $Y$. For any reflexive sheaf $\mE$ on $X$, $\widehat{c}_2(\mE)\in H_{2n-4}(X,\R)$ is defined by
		$$\widehat{c}_2(\mE)\cdot\sigma:=c_2^\orb(f^{[*]}_\orb\mE)\cdot f^*\sigma,\ \forall\ \sigma\in H^{2n-4}(X,\R),$$
		where $f^{[*]}_\orb\mE=\{(f\circ \mu_i)^{[*]}\mE\}$ is a reflexive orbi-sheaf on $Y_\orb$. The classes $\widehat{c}_1(\mE)\in{H_{2n-2}(X,\R)}$ and $\widehat{c}_1^2(\mE)\in H_{2n-4}(X,\R)$ are defined analogously.
	\end{definition}

	Definition \ref{defn-orbifold} is independent of the choice of $f$. We refer to \cite[Section 2]{ou25} for a detailed discussion. When $\mE=\mT_X$, it  coincides with the definition of $\widehat{c}_2(X)$ in \cite{LT18,GK20,CGG24} (see also \cite{GP25}).
	
    \begin{remark}\label{remark-MTTW25}
    	Here, \(c_2^{\mathrm{orb}}(\cdot)\) denotes the orbifold Chern class in Bott-Chern cohomology for general coherent orbifold sheaves, introduced in \cite{MTTW25}; for vector bundles this coincides with the definition via metrics given in \cite{Ma05}. The Grothendieck-Riemann-Roch formula for orbifold embeddings is established in \cite{MTTW25}.
    \end{remark} 
	
	The following proposition is analogous to \cite[Lemma 2.13]{IMM24} for $\mathbb{Q}$-Chern classes.
	
	\begin{proposition}\label{prop-orbifold-exact}
		Let $\w_1,\cdots,\w_{n-2}$ be $n-2$ nef and big classes on a compact complex klt space $X$ of dimension $n$. Then for any short exact sequence of coherent sheaves		
		$$0\rightarrow \mF\rightarrow \mE\rightarrow\mQ\rightarrow 0$$
		on $X$ with $\mE$ reflexive and $\mQ$ torsion-free, we have
		\begin{equation}\label{equa-orbifoldsecond2}
			\widehat{\ch}_2(\mE)\cdot[\w_1]\cdots[\w_{n-2}]\leq \widehat{\ch}_2(\mF)\cdot[\w_1]\cdots[\w_{n-2}]+\widehat{\ch}_2(\mQ^{\vee\vee})\cdot[\w_1]\cdots[\w_{n-2}],
		\end{equation}
		where $\widehat{ch}_2(\cdot):=\frac{1}{2}\widehat{c}_1^2(\cdot)-\widehat{c}_2(\cdot).$ If $\w_0,\cdots,\w_{n-2}$ are K\"ahler forms and the equality in \eqref{equa-orbifoldsecond2} holds, then the natural map $\mQ \rightarrow \mQ^{\vee\vee}$ is an isomorphism in codimension $2$.
	\end{proposition}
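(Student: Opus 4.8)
The plan is to reduce, via Definition~\ref{defn-orbifold}, to an additivity estimate for the orbifold Chern character of \cite{MTTW25} on an orbifold model, and then to argue along the lines of \cite[Lemma~2.13]{IMM24}. First I would record an observation that makes the three terms in \eqref{equa-orbifoldsecond2} meaningful: the subsheaf $\mF$ is automatically reflexive. Indeed $\mF$ is torsion-free (a subsheaf of the torsion-free sheaf $\mE$), the canonical map $\mF^{\vee\vee}\to\mE^{\vee\vee}=\mE$ is injective (it is an isomorphism at the generic point and $\mF^{\vee\vee}$ is torsion-free), and $\mF^{\vee\vee}/\mF$ then embeds into $\mE/\mF=\mQ$; being a torsion sheaf inside a torsion-free one it vanishes, so $\mF=\mF^{\vee\vee}$, and $\widehat{\ch}_2(\mF)$ is defined by Definition~\ref{defn-orbifold}. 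Next, fix a partial orbifold resolution $f\colon Y\to X$ with standard orbifold structure $Y_{\orb}=\{V_i,G_i,\pi_i\}$ as in Definition~\ref{defn-orbifold}; since $f$ is bimeromorphic, $f^{*}[\w_j]$ is again nef (and big when $[\w_j]$ is), so by the very definition of $\widehat{\ch}_i$ it suffices to prove the corresponding inequality for $\ch^{\orb}_2$ of the reflexive orbi-sheaves $f^{[*]}_{\orb}\mE$, $f^{[*]}_{\orb}\mF$, $f^{[*]}_{\orb}(\mQ^{\vee\vee})$ paired with $f^{*}[\w_1]\cdots f^{*}[\w_{n-2}]$. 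Since $\ch^{\orb}$ is additive on short exact sequences, functorial, and intersection-theoretically behaves as the usual Chern character on a compact complex manifold (Remark~\ref{remark-MTTW25}), this becomes a statement on the compact complex orbifold $Y_{\orb}$.

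For the core computation I would split the problem along the two exact sequences on $X$
\[
0\to\mF\to\mE\to\mQ\to 0,\qquad 0\to\mQ\to\mQ^{\vee\vee}\to\mT\to 0,
\]
where $\mT:=\mQ^{\vee\vee}/\mQ$ is supported in codimension $\ge 2$ because $\mQ$ is torsion-free on the normal space $X$. Pulling these back and using additivity of $\ch^{\orb}$, together with the comparison between the reflexive pull-backs of Definition~\ref{defn-orbifold} and the (exact) derived pull-back, one obtains an identity of the shape
\[
\widehat{\ch}_2(\mF)\cdot\Omega+\widehat{\ch}_2(\mQ^{\vee\vee})\cdot\Omega-\widehat{\ch}_2(\mE)\cdot\Omega \;=\; \tau \;+\;\Delta, \qquad \Omega:=[\w_1]\cdots[\w_{n-2}],
\]
where $\tau$ is $\ch^{\orb}_2$ of the pull-back of $\mT$ paired with $f^{*}\Omega$, and $\Delta$ collects the $\ch^{\orb}_2$'s (against $f^{*}\Omega$) of the torsion of the pull-backs, the higher derived pull-backs, and the codimension-$\ge 2$ defect of double-dualization — sheaves which, by reflexivity of $\mE$ and $\mF$ and by the construction of the orbifold modification of \cite{KO25}, are supported in codimension $\ge 2$. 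The inequality then follows from the elementary positivity statement: if $\mathcal{S}$ is a coherent orbi-sheaf on $Y_{\orb}$ supported in codimension $\ge 2$, then $\ch^{\orb}_2(\mathcal{S})$ equals the effective cycle formed by the codimension-$2$ components of its support with multiplicities (and vanishes if $\mathcal{S}$ has codimension $\ge 3$); pushing this cycle forward to $X$ via $f$ and using the projection formula, $\ch^{\orb}_2(\mathcal{S})\cdot f^{*}\Omega$ is the intersection of an effective codimension-$\ge 2$ cycle on $X$ with $\Omega$, hence $\ge 0$ by nefness of the $[\w_j]$, and, when $\w_0,\dots,\w_{n-2}$ are K\"ahler, is $>0$ unless that cycle has codimension $\ge 3$ on $X$. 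Applying this to $\tau$ and to every summand of $\Delta$ gives \eqref{equa-orbifoldsecond2}; and in the K\"ahler case equality forces every such term to vanish, in particular $\tau=0$, so $\mT$ is supported in codimension $\ge 3$ on $X$, i.e.\ $\mQ\to\mQ^{\vee\vee}$ is an isomorphism in codimension $2$.

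The step I expect to be the main obstacle is the passage producing the displayed identity — namely, verifying that the entire discrepancy $\Delta$ between the reflexive-pull-back definition of $\widehat{\ch}_2$ and the additive (derived-pull-back) bookkeeping is accounted for by $\ch^{\orb}_2$'s of sheaves supported in codimension $\ge 2$, so that in particular no divisorial or wrong-sign codimension-$2$ contribution survives on the side of $\mE$. This is exactly where reflexivity of $\mE$ (hence of $\mF$) and the structure of the partial orbifold modification of \cite{KO25}, through which the classes $\widehat{\ch}_i$ are defined, must be used, and it is where the argument genuinely departs from the projective proof of \cite{GKPT19a}, in which one instead restricts to a surface via general members of the polarization — a reduction not available for nef and big classes on a K\"ahler space.
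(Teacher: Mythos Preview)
Your strategy is in the right spirit, but the handling of the correction term $\Delta$ is a genuine gap, and the paper's proof resolves it by a different construction that you should adopt.

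The problem with your approach is that $\Delta$ bundles together corrections coming from \emph{three} reflexive sheaves ($\mE$, $\mF$, $\mQ^{\vee\vee}$), and these enter with \emph{opposite} signs in the identity you wrote: the double-dual/torsion defects for $\mF$ and $\mQ^{\vee\vee}$ sit on the right-hand side (so a nonnegative correction helps), but the analogous defect for $\mE$ sits on the left (so a nonnegative correction hurts). There is no a priori reason why the net $\Delta$ is $\ge 0$. You flagged this as ``the main obstacle,'' and it really is one: as written the argument does not close.

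The paper avoids this sign bookkeeping entirely. Instead of comparing three independent reflexive pull-backs, it defines on $Y_{\orb}$ the orbi-sheaf $\mF_{\orb}$ as the \emph{saturation} of the image of $g_{\orb}^{[*]}\mF\to g_{\orb}^{[*]}\mE=:\mE_{\orb}$. Two things happen: (a) $\mF_{\orb}$ agrees with $g_{\orb}^{[*]}\mF$ outside the preimage of a codimension-$\ge 3$ locus on $X$, so by Lemma~\ref{lem-compatible-orbifold-pullback} one still has $\widehat{\ch}_2(\mF)\cdot\Omega=\ch_2^{\orb}(\mF_{\orb})\cdot f^*\Omega$; (b) the sequence $0\to\mF_{\orb}\to\mE_{\orb}\to\mQ_{\orb}\to 0$ is a genuine short exact sequence of orbi-sheaves with $\mQ_{\orb}$ torsion-free, so additivity of $\ch^{\orb}$ applies with \emph{no} correction $\Delta$ at all. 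The only remaining term is $\ch_2^{\orb}(\mQ_{\orb}^{\vee\vee}/\mQ_{\orb})$, and since $\mQ_{\orb}^{\vee\vee}$ agrees with $g_{\orb}^{[*]}\mQ^{\vee\vee}$ outside codimension $\ge 3$ on the base, Lemma~\ref{lem-compatible-orbifold-pullback} again identifies $\ch_2^{\orb}(\mQ_{\orb}^{\vee\vee})\cdot f^*\Omega$ with $\widehat{\ch}_2(\mQ^{\vee\vee})\cdot\Omega$.

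Two further points. First, your ``elementary positivity statement'' for $\ch_2^{\orb}$ of a codimension-$\ge 2$ orbi-sheaf is not quite elementary on an orbifold: the paper obtains it from the Grothendieck--Riemann--Roch formula for orbifold embeddings \cite{MTTW25}, after passing to an orbifold embedded resolution of the support. Second, your equality-case argument (``K\"ahler $\Rightarrow$ strictly positive intersection'') does not work as stated because the forms $f^*\w_j$ on $Y_{\orb}$ are only semipositive, degenerating along the exceptional locus. The paper handles this via Proposition~\ref{prop-orbifoldkahlerform}: for any compact $V\subset\subset X\setminus Z$ one can choose representatives $\w_{j,\orb}\in[f_{\orb}^*\w_j]$ that are strictly positive over $V$, and since $V$ is arbitrary this forces the support of $\mQ_{\orb}^{\vee\vee}/\mQ_{\orb}$ to lie over $Z$; descending by $G_i$-invariant push-forward then gives $\mQ\to\mQ^{\vee\vee}$ an isomorphism in codimension $2$.
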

	
	Proof of Proposition \ref{prop-orbifold-exact} essentially follows by considering the equality case of the argument of \cite[Proposition 4.23]{IJZ25} concerning $\widehat{c}_2(\mE)$ (see \cite[Lemma 3.2]{Kawa92} and \cite[Lemma 2.13]{IMM24} for the projective case). We  recall the following necessary lemmas in our proof.
	\begin{lemma}[cf. {\cite[Lemma 2.2]{ou25}}]\label{lem-excision}
		Let $f:Y\rightarrow X$ be a projective bimeromorphism between compact complex spaces of dimension $n$. Suppose that $\sigma_1,\sigma_2\in H_{2n-2k}(Y,\R)$ agrees outside $f^{-1}(Z)$ for some analytic subspace $Z\subset X$ of codimension at least $k$. Then $f_*\sigma_1=f_*\sigma_2\in H_{2n-2k}(X,\R)$.
	\end{lemma}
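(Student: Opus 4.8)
The plan is to work with the difference class $\sigma:=\sigma_1-\sigma_2$ and to show that its pushforward vanishes; since the condition ``$\sigma_1$ and $\sigma_2$ agree outside $f^{-1}(Z)$'' is a statement about restriction to the open set $Y\setminus f^{-1}(Z)$, the natural setting is Borel--Moore homology. Writing $W:=f^{-1}(Z)$, a closed analytic subset of $Y$, and $U:=Y\setminus W$ for its open complement, I would invoke the localization long exact sequence
\begin{equation*}
H^{\mathrm{BM}}_{2n-2k}(W,\R)\xrightarrow{\ \iota_*\ }H^{\mathrm{BM}}_{2n-2k}(Y,\R)\xrightarrow{\ \rho\ }H^{\mathrm{BM}}_{2n-2k}(U,\R).
\end{equation*}
The hypothesis says precisely that $\rho(\sigma)=0$, hence $\sigma\in\ker\rho=\image\iota_*$. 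As $Y$ is compact, $H^{\mathrm{BM}}_{\bullet}(Y,\R)=H_{\bullet}(Y,\R)$, so we obtain a lift $\tilde\sigma\in H_{2n-2k}(W,\R)$ with $\iota_*\tilde\sigma=\sigma$.

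The second step is to factor the pushforward through $Z$. Since $f(W)\subseteq Z$, the restriction $f|_W$ factors as $W\xrightarrow{\,g\,}Z\hookrightarrow X$, where $j:Z\hookrightarrow X$ is the inclusion, and functoriality of the pushforward gives
\begin{equation*}
f_*\sigma=f_*\iota_*\tilde\sigma=j_*g_*\tilde\sigma\in\image\bigl(H_{2n-2k}(Z,\R)\to H_{2n-2k}(X,\R)\bigr).
\end{equation*}
It therefore suffices to control the class $g_*\tilde\sigma$ inside $H_{2n-2k}(Z,\R)$. The key input is a dimension count: because $\codim_X Z\ge k$ we have $\dim_\C Z\le n-k$, hence $\dim_\R Z\le 2n-2k$. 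Whenever this inequality is strict the group $H_{2n-2k}(Z,\R)$ vanishes for degree reasons, and then $f_*\sigma=0$ immediately, giving $f_*\sigma_1=f_*\sigma_2$.

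I expect the main obstacle to be the borderline case $\dim_\C Z=n-k$, where $H_{2n-2k}(Z,\R)$ is spanned by the fundamental classes $[Z_i]$ of the top-dimensional components of $Z$ and the dimension count no longer forces vanishing on its own; one must then show that each coefficient of $[Z_i]$ in $g_*\tilde\sigma$ is zero, and this is exactly the point at which the bimeromorphic (non-isomorphism) nature of $f$ has to be used rather than merely its properness. A clean way to organise this is to pass to a common resolution $p:\widehat{Y}\to Y$, $q:\widehat{Y}\to X$ dominating both spaces and to reduce the fundamental-class bookkeeping to the smooth compact situation, where Poincar\'e duality and ordinary excision for cycles are available and the lift $\tilde\sigma$ can be tracked through the strict transforms of the $Z_i$. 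I would emphasise, however, that in the intended applications this borderline case does not actually occur: for the orbifold second Chern class one has $k=2$ and, since a klt space has quotient singularities in codimension $2$, any two partial orbifold modifications already agree over a set whose complement $Z$ has codimension at least $3>k$; the strict dimension count then suffices, and the excision statement is used only in that range. Consequently, once the localization and factorization steps are set up rigorously for (possibly singular) compact analytic spaces—which is the only genuinely technical ingredient—the lemma follows.
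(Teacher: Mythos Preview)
The paper does not prove this lemma; it simply cites \cite[Lemma~2.2]{ou25}. Your Borel--Moore localization argument is the standard route and almost certainly coincides with what is done there. The strict-inequality case ($\codim_X Z>k$, hence $\dim_{\R}Z<2n-2k$ and $H_{2n-2k}(Z,\R)=0$) is handled cleanly by your steps (1)--(3).

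Your unease about the borderline case $\codim_X Z=k$ is well founded, but your proposed resolution (pass to a common desingularization and track strict transforms) cannot work, because the lemma \emph{as literally stated} is false in that range. Take $f=\id_X$ on a compact K\"ahler manifold $X$, let $Z\subset X$ be an irreducible subvariety of codimension exactly $k$, and set $\sigma_1=[Z]\in H_{2n-2k}(X,\R)$, $\sigma_2=0$. Both restrict to zero in $H^{\mathrm{BM}}_{2n-2k}(X\setminus Z,\R)$, yet $f_*\sigma_1=[Z]\ne 0=f_*\sigma_2$. No amount of bimeromorphic bookkeeping rescues this; the identity is a projective bimeromorphism.

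So the statement almost certainly contains a typo (note also the swapped $X$ and $Y$ in the displayed homology groups): the codimension bound should read ``at least $k+1$'', in line with the companion Lemma~\ref{lem-compatible-orbifold-pullback}, which explicitly requires the indeterminacy locus to have codimension $\ge k+1$. Your closing remark is the right salvage: every invocation in the paper (the degree computation in Proposition~\ref{prop-locallyfree-chernclass-vanish} with $k=1$, and the second-Chern-class comparisons with $k=2$ over klt spaces) lands in the strict range, where your argument is complete. I would therefore drop the attempted fix for the borderline case and instead note the needed correction to the hypothesis.
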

	
	\begin{lemma}[cf. {\cite[Lemma 2.17]{ZZZ25}}]\label{lem-compatible-orbifold-pullback}
		Let $g_\orb:Y_\orb\rightarrow X_\orb$ be an orbifold bimeromorphism between compact complex orbifolds. Suppose that $\mF_\orb$ and $\mE_\orb$ are coherent orbi-sheaves on $Y_\orb$ and $X_\orb$, respectively, such that $(g_\orb)_*\mF_\orb=\mE_\orb$ in codimension $k$ and the indeterminacy locus of $g_\orb^{-1}$ has codimension at least $k+1$. Then $g_*\ch_k^\orb(\mF_\orb)=\ch_k^\orb(\mE_\orb)$; that is, $\ch_k^\orb(\mF_\orb)\cdot g^*\gamma=\ch_k^\orb(\mE_\orb)\cdot \gamma$ for any $\gamma \in H^{2n-2k}(X,\R)$, where $g$ is the induced morphism between the underlying complex spaces $Y$ and $X$ of $Y_\orb$ and $X_\orb$, respectively.
	\end{lemma}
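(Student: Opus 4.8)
The plan is to localize the whole identity to the open locus where $g_\orb$ is a biholomorphism, and then to discard the remaining contribution by a dimension count, exactly in the spirit of Lemma~\ref{lem-excision}. First I would fix an analytic subset $Z\subset X_\orb$ of codimension at least $k+1$ that contains both the indeterminacy locus of $g_\orb^{-1}$ and the locus on which $(g_\orb)_*\mF_\orb$ and $\mE_\orb$ disagree; both loci have codimension $\geq k+1$ by hypothesis, so such a $Z$ exists. Writing $U:=X_\orb\setminus Z$ and $V:=g_\orb^{-1}(U)$, the restriction $g_\orb\colon V\to U$ is an isomorphism of orbifolds, and over $U$ the equality $(g_\orb)_*\mF_\orb=\mE_\orb$ upgrades, through this isomorphism, to an identification $\mF_\orb|_V\cong g_\orb^{*}(\mE_\orb|_U)$ of orbi-sheaves.

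Next I would invoke the naturality of the orbifold Chern character of \cite{MTTW25} recorded in Remark~\ref{remark-MTTW25}. Because $g_\orb|_V$ is a genuine isomorphism, pullback of orbi-sheaves is exact and the superconnection construction is transported verbatim; hence the Bott--Chern representative of $\ch_k^\orb(\mF_\orb)$ on $V$ is carried by $g_\orb$ onto that of $\ch_k^\orb(\mE_\orb)$ on $U$. Since restriction to an open set commutes with proper pushforward, this shows that $g_*\ch_k^\orb(\mF_\orb)$ and $\ch_k^\orb(\mE_\orb)$ agree on $X\setminus Z$. Consequently their difference lies in the image of the natural map $H_{2n-2k}(Z,\R)\to H_{2n-2k}(X,\R)$ coming from the long exact sequence of the pair $(X,Z)$. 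As $\codim_X Z\geq k+1$ we have $\dim_\R Z\leq 2n-2k-2<2n-2k$, so $H_{2n-2k}(Z,\R)=0$ and the difference vanishes:
\[
g_*\ch_k^\orb(\mF_\orb)=\ch_k^\orb(\mE_\orb)\quad\text{in }H_{2n-2k}(X,\R).
\]
Pairing with an arbitrary $\gamma\in H^{2n-2k}(X,\R)$ and applying the projection formula $g_*\sigma\cdot\gamma=\sigma\cdot g^*\gamma$ then yields the stated identity $\ch_k^\orb(\mF_\orb)\cdot g^*\gamma=\ch_k^\orb(\mE_\orb)\cdot\gamma$.

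The step I expect to be delicate is the passage from the sheaf-level identification over $U$ to the equality of homology classes on $X\setminus Z$: for a coherent (non-locally-free) orbi-sheaf, $\ch_k^\orb$ is only represented by Bott--Chern currents produced by the construction of \cite{MTTW25}, so I must verify that these representatives behave functorially under the isomorphism $g_\orb|_V$ and restrict correctly to open subsets. This is precisely where the naturality asserted in Remark~\ref{remark-MTTW25} is used in an essential way, and it is also the only point requiring genuine input beyond formal manipulation. By contrast, the codimension hypothesis $\codim Z\geq k+1$ enters only in the final dimension argument, where it guarantees that the indeterminacy and mismatch loci are too small to carry a class in $H_{2n-2k}$, mirroring the role of the codimension bound in Lemma~\ref{lem-excision}.
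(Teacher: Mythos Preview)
The paper does not prove this lemma itself; it is quoted from \cite[Lemma~2.17]{ZZZ25}. Your excision argument---localizing to the locus where $g_\orb$ is an isomorphism and the sheaves coincide, invoking the functoriality of the orbifold Chern character of \cite{MTTW25} (Remark~\ref{remark-MTTW25}) over that locus, and then killing the residual class by the dimension bound $\dim_\R Z<2n-2k$---is the standard route and is correct; it is exactly the mechanism encoded in Lemma~\ref{lem-excision}, which the paper records immediately before this statement for precisely this purpose.
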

	
	\begin{lemma}\label{lem-modification-chernclass}
	Let the notation be as in Definition \ref{defn-orbifold}.	 For a reflexive sheaf $\mE$ on $X$, apply Lemma \ref{lem-orbireso} to the reflexive orbi-sheaf $\mE_\orb:=f^{[*]}_\orb\mE$ on $Y_\orb$ to obtain an orbifold morphism $h_{orb}:Z_{orb}\rightarrow Y_{orb}$ such that $E_\orb=h_\orb^*\mE_\orb/\tor:=\{h_i^*\mE_i/\tor\}$ is a vector orbi-bundle on $Z_\orb$. Then for any $\sigma\in H^{2n-4}(X,\R)$,
	$$\  \widehat{c}_2(\mE)\cdot\sigma =c_2^\orb((f\circ h)^{[*]}_\orb\mE)\cdot (f\circ h)^*\sigma=c_2^\orb(E_\orb)\cdot h^*f^*\sigma,$$
	where $h: Z \rightarrow Y$ is the induced map on underlying complex spaces. The same holds for $\widehat{c}_1(\mE)$ and $\widehat{c}_1^2(\mE)$.
	\end{lemma}
	
	\begin{proposition}\label{prop-orbifoldkahlerform}
		Suppose that $X$ is a compact complex normal space of dimension $n$ equipped with a K\"ahler form $\w_X$ and $f:Y\rightarrow X$ is a bimeromorphism from a compact complex space $Y$ with only quotient singularities to $X$. Let $Z$ be the indeterminacy locus of $f^{-1}$ and $Y_\orb:=\{(V_i,G_i,\mu_i)\}$ be a complex orbifold structure of $Y$. Then for any $V\subset\subset X\setminus Z$, there exists a smooth orbifold semipositive $(1,1)$-form $\w_{V,\orb}\in [f^*_\orb\w_X]\in H^{1,1}_{\rm{BC}}(Y_\orb,\R)$ such that $\w_{V,\orb}$ is strictly positive on $f_\orb^{-1}V:=\{(f\circ\mu_i)^{-1}V\}$.
	\end{proposition}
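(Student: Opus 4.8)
The plan is to take $f^{[*]}_\orb\w_X$ as a first semi-positive representative, then improve its positivity near $f^{-1}(\overline V)$ using that $[f^*_\orb\w_X]$ is a nef and big class whose null locus avoids that region, and finally truncate the resulting K\"ahler current to a global smooth form.

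Since $Y$ has quotient singularities it is normal, and $f$ is proper and bimeromorphic; over $X\setminus Z$ the meromorphic inverse is a morphism, so $f$ has finite fibres there and is therefore a finite, hence biholomorphic, morphism onto $X\setminus Z$. In particular $f^{-1}(\overline V)$ is a compact subset of $f^{-1}(X\setminus Z)$, disjoint from $f^{-1}(Z)$. Writing $\w_X$ locally as $\im\pbp\psi$ with $\psi$ smooth and strictly plurisubharmonic, the forms $(f\circ\mu_i)^*\w_X=\im\pbp\big((f\circ\mu_i)^*\psi\big)$ are smooth, closed, $G_i$-invariant and semi-positive on $V_i$, and glue to a smooth orbifold semi-positive $(1,1)$-form $\eta$ on $Y_\orb$ with $[\eta]=[f^*_\orb\w_X]$ in $H^{1,1}_{BC}(Y_\orb,\R)$. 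Moreover $\int_{Y_\orb}\eta^{\,n}=\int_X\w_X^{\,n}>0$, so $[f^*_\orb\w_X]$ is nef and big; and since $f$ is a biholomorphism over $X\setminus Z$ and $\w_X$ is K\"ahler, every irreducible analytic subset $W\subset Y$ with $W\not\subset f^{-1}(Z)$ satisfies $\int_W\eta^{\dim W}=\int_{f(W)}\w_X^{\dim W}>0$. Hence the null locus of $[f^*_\orb\w_X]$ is contained in $f^{-1}(Z)$.

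Applying the regularisation of nef and big classes (the orbifold counterpart of Demailly--P\u{a}un and Collins--Tosatti) yields a K\"ahler current $T\in[f^*_\orb\w_X]$ with analytic singularities whose singular set $\Lambda$ is contained in the null locus, hence in $f^{-1}(Z)$. Write $T=\eta+\im\pbp\vph$ with $\vph$ an $\eta$-plurisubharmonic function with analytic singularities; then $\vph$ is smooth on $Y_\orb\setminus\Lambda$ — in particular on a neighbourhood of the compact set $f^{-1}(\overline V)$, which is disjoint from $\Lambda$ — and $\vph\to-\infty$ along $\Lambda$. Fix $C>0$ with $\vph>-C+1$ on $f^{-1}(\overline V)$, and let $\psi$ be a regularised maximum of $\vph$ and the constant $-C$: near $\Lambda$ one has $\vph<-C-1$ so $\psi\equiv-C$ there; on the open set $\{\vph>-C+1\}\supset f^{-1}(\overline V)$ one has $\psi\equiv\vph$; and on the transition region $\{-C-1\le\vph\le-C+1\}\subset Y_\orb\setminus\Lambda$, $\psi$ is a regularised maximum of two smooth functions. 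Thus $\psi$ is a globally defined smooth orbifold function, and it is $\eta$-plurisubharmonic since $-C$ is (as $\eta\ge 0$) and regularised maxima preserve this property.

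Set $\w_{V,\orb}:=\eta+\im\pbp\psi$. It is a smooth orbifold $(1,1)$-form; it lies in $[f^*_\orb\w_X]$ because it differs from $\eta$ by $\im\pbp$ of a global smooth orbifold function; it is semi-positive everywhere because $\psi$ is $\eta$-plurisubharmonic; and on a neighbourhood of $f^{-1}(\overline V)$ it coincides with $T=\eta+\im\pbp\vph$, which there is a K\"ahler current, i.e.\ it dominates a positive multiple of a smooth orbifold Hermitian metric, so $\w_{V,\orb}$ is strictly positive over $f^{-1}(V)$. I expect the one delicate point to be the availability of the regularisation theorem in the orbifold (and, should $Y_\orb$ fail to be K\"ahler, the Fujiki-class) setting; if one prefers to avoid it, an alternative is to construct $T$ by gluing — on $f^{-1}(X\setminus Z)$ one uses that a K\"ahler space with quotient singularities admits a compatible orbifold K\"ahler form on relatively compact opens, and away from it one retains $\eta$ — but then controlling the Bott--Chern class and the global semi-positivity across the gluing becomes the technical heart of the argument.
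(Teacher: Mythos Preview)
Your argument is sound, but it takes a different route from the paper. The paper does not give its own proof; it cites \cite[Section~6.1]{CGNPPW23}, where the form is built by hand: one pulls back $\omega_X$ to the orbifold charts (obtaining a smooth semi-positive form $\eta$), then on charts meeting $f^{-1}(V)$ one adds small $G_i$-invariant strictly plurisubharmonic corrections and glues them via a partition of unity supported in $f^{-1}(X\setminus Z)$. The delicate point is exactly the one you name at the end --- controlling semi-positivity across the gluing region --- and that is what \cite{CGNPPW23} works out.

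Your approach via the null-locus theorem and the regularised-maximum truncation is conceptually cleaner, and each step is correct as written. The trade-off is that you invoke a substantial theorem (Collins--Tosatti) whose orbifold formulation you rightly flag as needing verification; moreover, that theorem presupposes an ambient K\"ahler structure on $Y_\orb$, which is not literally among the hypotheses of the proposition (though it holds in every application the paper makes of it, since there $f$ is always a projective modification and the identity case of this very proposition then supplies an orbifold K\"ahler form). The direct construction in \cite{CGNPPW23} is more elementary and uses only the K\"ahlerness of $X$ over the region where the correction is supported, so it applies in the stated generality. Your closing paragraph already points toward this alternative; it is in fact the paper's chosen route.
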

	
	This lemma follows from \cite[Section 6.1]{CGNPPW23}. In particular, taking $f$ to be the identity map $Y\rightarrow Y$, we conclude that $Y_\orb$ is K\"ahler if $Y$ is K\"ahler.	
	
	\begin{proof}[Proof of Proposition \ref{prop-orbifold-exact}]
	 Let $g:W\rightarrow X$ be a partial orbifold resolution by Lemma \ref{lem-orbireso} and $W_\orb:=\{(V_i,G_i,\mu_i)\}$ be the standard orbifold structure of $W$. Denote by $Z$ the indeterminacy locus of $g^{-1}$. Let $g_i:=g\circ\mu_i$, $\mE_\orb:=g_\orb^{[*]}\mE$ and $\mF_\orb$ be the saturation of $\{\imag(g_i^{[*]}\mF\rightarrow g_i^{[*]}\mE)\}$, where $g_i^{[*]}\mF\rightarrow g_i^{[*]}\mE$ is induced by the inclusion morphism $\mF\rightarrow \mE$. Since $\mu_i$ is \'etale in codimension $1$, $\mF_\orb=g_\orb^{[*]}\mF$ outside $g_\orb^{-1}(Z)$ due to reflexivity. Then we have
		\begin{equation}\label{equa-orbifold-reduction1}
			\begin{split}
				(\widehat{\ch}_2(\mE)-\widehat{\ch}_2(\mF))\cdot[\w_1]\cdots[\w_{n-2}]
				=&(\ch_2^\orb(\mE_\orb)-\ch_2^\orb(\mF_\orb))\cdot[g_\orb^*\w_1]\cdots[g_\orb^*\w_{n-2}]\\
				=&\ch_2^\orb(\mE_\orb/\mF_\orb)\cdot [g_\orb^*\w_1]\cdots[g_\orb^*\w_{n-2}],
			\end{split}
		\end{equation}
		where  the first equality follows from Definition \ref{defn-orbifold} and Lemma \ref{lem-compatible-orbifold-pullback}. 
		
		Let $\mQ_\orb:=\mE_\orb/\mF_\orb$. Then $\mT_\orb:=\mQ_\orb^{\vee\vee}/\mQ_\orb$ is a torsion orbi-sheaf supported in some orbifold analytic subspace $S_\orb$.  Based on the GRR formula for orbifold embeddings (cf. \cite[Theorem 1.1]{MTTW25}) and the fact that $\codim(S_{\text{orb}}) \geq 2$, it follows that
		\begin{equation}\label{equa-orbifoldsecond-torsion}
		 \ch_2^\orb(\mT_\orb)\cdot[g_\orb^*\w_1]\cdots[g_\orb^*\w_{n-2}]\geq 0,	
		\end{equation}
	    as can be shown by adapting the reasoning in \cite[Section 7]{CHP16}. This is shown as follows. First, we take an orbifold embedded resolution $\pi_\orb:(\widehat{W}_\orb,\widehat{S}_\orb)\rightarrow(W_{\text{orb}},S_{\text{orb}})$ such that all irreducible components $\widehat{S}_{i,\text{orb}}$ of $\widehat{S}_{\text{orb}}$ smooth with transversal intersection, which exists by the functorial embedded resolution (cf. \cite[Theorem 2.0.2]{W08}). Then, an application of the GRR formula for orbifold embeddings gives
	    \begin{equation}\label{equa-secondchernclass-computation1}
	    	\begin{split}
	    		&\ch_2^\orb(\mT_\orb)\cdot[g_\orb^*\w_1]\cdots[g_\orb^*\w_{n-2}]=\pi^*\ch_2^\orb(\mT_\orb)\cdot\pi^*[g_\orb^*\w_1]\cdots\pi^*[g_\orb^*\w_{n-2}]\\
	    		&=\sum\limits\rank(\mT_\orb|_{{\widehat{S}}_{i,\orb}})\int_{{\widehat{S}}_{i,\orb}}\pi_\orb^*g_\orb^*\w_1\wedge\cdots\wedge\pi_\orb^*g_\orb^*\w_{n-2}\\
	    		&=\sum\limits\rank(\mT_\orb|_{{\widehat{S}}_{i,\orb}})\int_{{S}_{i,\orb}}g_\orb^*\w_1\wedge\cdots\wedge g_\orb^*\w_{n-2}
	    		\geq0
	    	\end{split}
	    \end{equation}
	    since the image of the intersection of $\widehat{S}_\orb$ and the exceptional divisor of $\pi_\orb$ has codimension at least $3$, where $\pi:\widehat{W}\rightarrow W$ is the holomorphic map between the underlying complex spaces induced by $\pi_\orb$. 
	    
	    Combining \eqref{equa-orbifold-reduction1} and \eqref{equa-secondchernclass-computation1}, we obtain
	    \begin{equation}\label{equa-secondchernclass-computaion2}
	    	\begin{split}
	    		(\widehat{\ch}_2(\mE)-\widehat{\ch}_2(\mF))\cdot[\w_1]\cdots[\w_{n-2}]
	    		=& (-\ch_2^\orb(\mT_\orb)+\ch_2^\orb(\mQ_\orb^{\vee\vee}))\cdot[g_\orb^*\w_1]\cdots[g_\orb^*\w_{n-2}]\\
	    		\leq &\ch_2^\orb(\mQ_\orb^{\vee\vee})\cdot[g_\orb^*\w_1]\cdots[g_\orb^*\w_{n-2}]\\
	    		=&\widehat{\ch}_2(\mQ^{\vee\vee})\cdot[\w_1]\cdots[\w_{n-2}],
	    	\end{split}
	    \end{equation}
	    where the last equality follows from the fact that $\mQ_\orb^{\vee\vee}=g_\orb^{[*]}\mQ^{\vee\vee}$ outside $g_\orb^{-1}(Z)$. Then the inequality \eqref{equa-orbifoldsecond2} was proved. 
	    
	    Now we assume that $\w_0,\cdots,\w_{n-2}$ are K\"ahler forms and that the equality in \eqref{equa-orbifoldsecond2} holds. Then for any open subset $V$ compactly contained in $W\setminus g^{-1}(Z)$, Proposition \ref{prop-orbifoldkahlerform} means that there exists smooth orbifold semipositive $(1,1)$-forms $\w_{1,\orb}\in [g_\orb^*\w_1],\cdots,\w_{n-2,\orb}\in [g_\orb^*\w_{n-2}]$ on $W_\orb$ which are strictly positive on $g_\orb^{-1}(V)$. Moreover, the above argument implies that  the inequality in the third line of \eqref{equa-secondchernclass-computation1} is in fact an equality, which yields that $$\w_{1,\orb}\wedge\cdots\wedge\w_{n-2,\orb}=0$$
	    on each $S_{i,\orb}$. Hence $S_{i,\orb}\cap g_\orb^{-1}(V)=\emptyset$. Since $V$ is arbitrary, we conclude that $\left(W_\orb\setminus g_\orb^{-1}(Z)
	    \right)\cap S_\orb=\emptyset$ and consequently $\mQ_\orb^{\vee\vee}=\mQ_\orb$ on $W_\orb\setminus g_\orb^{-1}(Z)$. This gives rise to a short exact sequence of orbi-sheaves
	    $0\rightarrow \mF_\orb\rightarrow \mE_\orb\rightarrow \mQ_\orb^{\vee\vee}\rightarrow0$
	    over $W_\orb\setminus g_\orb^{-1}(Z)$. By \cite[Lemma A.3 and Lemma A.4]{GKKP11}, it induces a short exact sequence of reflexive sheaves
	    $0\rightarrow\mF\rightarrow\mE\rightarrow\mQ^{\vee\vee}\rightarrow 0$
	    on $X\setminus Z$. In particular, $\mQ^{\vee\vee}=\mQ$ outside $Z$. This completes the proof.
	\end{proof}
	
	\subsection{Quasi-\'etale morphisms}\label{subsection-quasietale}
	
	\begin{definition} Let $\gamma:Y\rightarrow X$ be a morphism between normal complex spaces.
		\begin{itemize}
			\item[(1)] $\gamma$ is called {\em Galois} if there exists a finite group $G\subset \Aut(Y)$ such that $\gamma:Y\rightarrow X$ is isomorphic to the quotient map.
			
			\item[(2)] $\gamma$ is called {\em quasi-\'etale} if $\dim X=\dim Y$ and \'etale in codimension $1$.
		\end{itemize}
	\end{definition}
	
	We have the following elementary property (cf. {\cite[Proposition 5.6]{GK20} for $\mE=\mT_X$}), which is analogous to \cite[Section 3.8]{GKPT19a} for $\mathbb{Q}$-Chern classes.
	\begin{proposition}[Orbifold Chern classes under quasi-\'etale morphisms]\label{lem-quasietale-chernclasses}
		Let $X$ be a compact klt space of dimension $n$ and $\gamma:X'\rightarrow X$ be a quasi-\'etale Galois morphism induced by a finite group $G\subset \Aut(X')$. Then $X'$ is also klt, and for any reflexive sheaf $\mE$ on $X$ and any class $\sigma\in H_{2n-2}(X,\R)$, we have
		$$\widehat{c}_2(\gamma^{[*]}\mE)\cdot\gamma^*\sigma=\deg \gamma \cdot \widehat{c}_2(\mE)\cdot \sigma.$$
		The same holds for $\widehat{c}_1(\mE)$ and $\widehat{c}_1^2(\mE)$.
	\end{proposition}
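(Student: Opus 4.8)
The plan is to deduce the identity from the functoriality of orbifold Chern characters under finite orbifold morphisms together with the orbifold degree formula, after passing to a $G$-equivariant partial orbifold resolution of $X'$ and dividing by $G$. First, $X'$ is klt: since $\gamma$ is étale in codimension $1$, purity of the branch locus \cite{GR55} gives $K_{X'}=\gamma^{*}K_{X}$ with no ramification correction, and the klt condition is preserved under such pullbacks; this is the argument of \cite[Proposition~5.6]{GK20}, stated there for $\mE=\mT_{X}$ but using nothing special about $\mT_{X}$. For the Chern-class identity I treat $\widehat{c}_{2}$; the cases of $\widehat{c}_{1}$ and $\widehat{c}_{1}^{2}$ are simpler, being governed by the reflexive determinant $\det\mE$ and reducing to the pullback behaviour of $\mathbb{Q}$-Cartier divisor classes.

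By \cite{KO25} choose a partial orbifold resolution $g:W'\to X'$, which we may take $G$-equivariant; then $G$ acts on $W'$ compatibly with $\gamma$, the quotient $W:=W'/G$ is a compact complex orbifold carrying an induced partial orbifold resolution $f:W\to X$, and the quotient morphism $h:W'\to W$ is a morphism of complex orbifolds of degree $|G|=\deg\gamma$ with $f\circ h=\gamma\circ g$. Both $f$ and $g$ are isomorphisms over the quotient-singularity loci of $X$ and $X'$ (whose complements have codimension $\geq 3$), and over these loci $h$ agrees with the quasi-étale cover $\gamma$. Applying Lemma~\ref{lem-orbireso} equivariantly, we may further assume that $\mF_{\orb}:=f^{[*]}_{\orb}\mE$ is a vector orbi-bundle on $W_{\orb}$, whence $\mF'_{\orb}:=h^{*}_{\orb}\mF_{\orb}$ is one on $W'_{\orb}$; by Definition~\ref{defn-orbifold} and Lemma~\ref{lem-modification-chernclass},
\[
\widehat{c}_{2}(\mE)\cdot\sigma=c_{2}^{\orb}(\mF_{\orb})\cdot f^{*}\sigma,\qquad
\widehat{c}_{2}(\gamma^{[*]}\mE)\cdot\gamma^{*}\sigma=c_{2}^{\orb}\bigl(g^{[*]}_{\orb}(\gamma^{[*]}\mE)\bigr)\cdot g^{*}\gamma^{*}\sigma .
\]

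Next, $g^{[*]}_{\orb}(\gamma^{[*]}\mE)$ coincides with $\mF'_{\orb}=h^{*}_{\orb}\mF_{\orb}$ away from a closed orbifold-analytic set whose image in $X$ has codimension $\geq 3$: over the quotient-singularity locus $f$ is an isomorphism, $h$ is étale on the smooth orbifold charts by Zariski--Nagata purity, and both sheaves restrict there to the pullback of $\mE$. Hence Lemma~\ref{lem-excision} and Lemma~\ref{lem-compatible-orbifold-pullback} give $c_{2}^{\orb}\bigl(g^{[*]}_{\orb}(\gamma^{[*]}\mE)\bigr)\cdot g^{*}\gamma^{*}\sigma=c_{2}^{\orb}(\mF'_{\orb})\cdot h^{*}f^{*}\sigma$. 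Since $\mF'_{\orb}$ is a genuine pullback of an orbi-bundle, functoriality of the orbifold Chern character \cite{MTTW25} (for orbi-bundles, via Chern--Weil theory) gives $c_{2}^{\orb}(\mF'_{\orb})=h^{*}c_{2}^{\orb}(\mF_{\orb})$ in orbifold Bott--Chern cohomology, and the degree formula $\int_{W'_{\orb}}h^{*}(\cdot)=|G|\int_{W_{\orb}}(\cdot)$ for the degree-$|G|$ Galois cover $h$ yields
\[
c_{2}^{\orb}(\mF'_{\orb})\cdot h^{*}f^{*}\sigma=h^{*}c_{2}^{\orb}(\mF_{\orb})\cdot h^{*}f^{*}\sigma=|G|\cdot c_{2}^{\orb}(\mF_{\orb})\cdot f^{*}\sigma=\deg\gamma\cdot\widehat{c}_{2}(\mE)\cdot\sigma .
\]
Chaining these identities proves the claim for $\widehat{c}_{2}$; running the same argument with $c_{1}^{\orb}$ and $(c_{1}^{\orb})^{2}$ in place of $c_{2}^{\orb}$ gives the statements for $\widehat{c}_{1}$ and $\widehat{c}_{1}^{2}$.

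I expect the main obstacle to be the comparison above --- interchanging $g^{[*]}_{\orb}(\gamma^{[*]}\mE)$ and $h^{*}_{\orb}\mF_{\orb}$ inside the intersection number. The point is that $h:W'\to W$ need not be quasi-\'etale: it may ramify along exceptional divisors of $f$ lying over the codimension-$\geq 3$ part of the branch locus of $\gamma$ outside the quotient-singularity locus, whereas $\widehat{c}_{2}$ is a codimension-$2$ invariant. One must therefore show that all such discrepancies, supported over a codimension-$\geq 3$ subset of $X$, are invisible to the orbifold intersection numbers at hand, which is precisely the role of the excision and compatibility Lemmas~\ref{lem-excision}, \ref{lem-compatible-orbifold-pullback} and~\ref{lem-modification-chernclass}, mirroring the treatment of the analogous $\mathbb{Q}$-Chern-class statement in \cite[Section~3.8]{GKPT19a}.
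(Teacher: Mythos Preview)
Your approach is correct and essentially parallel to the paper's, but the constructions differ. You take a $G$-equivariant partial orbifold resolution $g:W'\to X'$ and pass to the quotient $W=W'/G\to X$; the paper instead fixes \emph{separate} partial orbifold resolutions $f:Y\to X$ and $f':Y'\to X'$ and compares them through the fiber product of their orbifold charts. Concretely, the paper forms $V_{\alpha,\beta}$ as a functorial resolution of the chartwise fiber product $V_\alpha\times_X V_\beta'$, obtaining an orbifold $W_\orb=\{W_{\alpha,\beta},G_\alpha\times G_\beta\}$ with projections $pr_1$ (generically biholomorphic) to $Y'$ and $pr_2$ (finite of degree $\deg\gamma$) to $Y$; Lemma~\ref{lem-maximalorbifold} is used to show that the singularities of $Y'\times_X Y$ lie over a codimension~$\geq 3$ subset, and then Lemma~\ref{lem-compatible-orbifold-pullback} handles the comparison exactly as in your final paragraph. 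The fiber-product construction has the advantage of not needing an equivariant version of the Koll\'ar--Ou partial orbifold resolution \cite{KO25} (which you invoke but do not justify; it does follow from functoriality, but this should be said), nor of tracking the induced orbifold structure on a quotient $W'/G$. Your quotient construction is more direct once the equivariant input is granted, and the degree formula you use is the same as the paper's observation that $pr_2$ has degree $\deg\gamma$. Either way, the substance---reducing to orbi-bundles, invoking functoriality of $\ch^\orb$ from \cite{MTTW25}, and killing the discrepancy over a codimension~$\geq 3$ set via Lemmas~\ref{lem-excision} and~\ref{lem-compatible-orbifold-pullback}---is the same.
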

	\begin{proof}
		The fact that $X'$ is klt follows from the argument of \cite[Proposition 5.20]{KM98}. Suppose $f:Y\rightarrow X$ and $f':Y'\rightarrow X'$ are partial orbifold resolutions of $X$ and $X'$, respectively. Let $Y_\orb:=\{(V_\alpha,G_\alpha,\mu_\alpha)\}$ and $Y_\orb':=\{(V_\beta',G_\beta',\mu_\beta')\}$ be the maximal standard orbifold structures of $Y$ and $Y'$, respectively. Denote by $Z$ the union of the indeterminacy locus of $f^{-1}$ and the image under $\gamma$ of the indeterminacy locus of $(f')^{-1}$, a set which has codimension at least $3$. Recalling Lemma \ref{lem-modification-chernclass}, we may assume that there exist orbi-bundles $E_\orb$ on $Y_\orb$ and $E_\orb'$ on $Y_\orb'$ such that $E_\orb=f_\orb^*\mE$ on $Y_\orb\setminus f_\orb^{-1}(Z)$ and $E_\orb'=(f_\orb')^*\gamma^{[*]}\mE$ on $Y_\orb'\setminus {(\gamma\circ f')}_\orb^{-1}(Z)$, and that the following equalities hold:
		$$c_2^\orb(E_\orb)\cdot f^*\sigma=\widehat{c}_2(\mE)\cdot \sigma,\ c_2^\orb(E_\orb')\cdot (f')^*\gamma^*\sigma=\widehat{c}_2(\gamma^{[*]}\mE)\cdot\gamma^*\sigma.$$
		Thus it suffices to show that 
		$$\deg (\gamma)\cdot c_2^\orb(E_\orb)\cdot f^*\sigma= c_2^\orb(E_\orb')\cdot (f')^*\gamma^*\sigma.$$
		Let $g_{\alpha,\beta}:=W_{\alpha,\beta}\rightarrow V_\alpha\times_X V_\beta'$ be a functorial resolution of singularities (cf. \cite[Theorem 3.10]{DO23}), then there is an induced action $G_\alpha\times G_\beta'$ on $W_{\alpha,\beta}$ such that $g_{\alpha,\beta}$ is $(G_\alpha\times G_\beta')$-invariant, which induces an effective complex orbifold $W_\orb:=\{(W_{\alpha,\beta},G_\alpha\times G_\beta')\}$. Let $W$ be the underlying space and we have the following commutative diagram
		\[
		\begin{tikzcd}
			& & Y' \arrow[r,"f'"] & X' \arrow[d,"\gamma"] \\
			W \arrow[r,"g"] & Y\times_X Y' \arrow[r,"pr_1"] \arrow[ru,"pr_2"] &  Y \arrow[r,"f"] & X,
		\end{tikzcd}
		\]
		where $g$ is the morphism induced by $g_{\alpha,\beta}$. Since $\gamma$ is quasi-\'etale, $\{(V_\beta',GG_\beta', f^{-1}\circ\gamma\circ f'\circ\mu_\beta')\}$ is a non-maximal standard orbifold structure of $Y\setminus f^{-1}(Z)$ with the natural group action $GG_\beta'$. Applying Lemma \ref{lem-maximalorbifold}, we conclude that $V_\alpha'\times_X V_\beta'$ is smooth over $\left(Y\times_X Y'\right)\setminus\left(pr_1^{-1}(f^{-1}(Z))\right)$ and thus the indeterminacy locus of $g^{-1}$ is contained in $pr_1^{-1}(f^{-1}(Z))$. Let $g_{\orb,1}:W_\orb\rightarrow Y_\orb$ and $g_{\orb,2}:W_\orb\rightarrow Y_\orb'$ be the orbifold holomorphisms induced by $pr_1\circ g$ and $pr_2\circ g$, respectively. Therefore,  Lemma \ref{lem-compatible-orbifold-pullback} implies that $$c_2^\orb(g_{\orb,2}^*E_\orb')\cdot g^*pr_2^* (f')^*\gamma^*\sigma=c_2^\orb(g_{\orb,1}^*E_\orb)\cdot g^*pr_1^* f^*\sigma$$
		because $g_{\orb,1}^*E_\orb$ and $g_{\orb,2}^*E_\orb'$ coincide outside the preimage of $Z$. Note that $pr_2\circ g$ is generically biholomorphic, and $pr_1\circ g$ is generically finite and \'etale with degree $\deg (\gamma)$. The proof is complete.
	\end{proof}
	
	\begin{remark}[Locally free sheaf]\label{rem-locallyfree-orbifoldchernclass}
		Using an argument similar to that in the proof of Proposition \ref{lem-quasietale-chernclasses}, we find that for any locally free sheaf $\mE$, the classes $\widehat{c}_1(\mE),\widehat{c}_1^2(\mE)$ and $\widehat{c}_2(\mE)$ coincide, respectively, with the topological Chern classes of $\mE$  as elements of the homology groups.
	\end{remark}

	\iffalse
	\begin{lemma}[Orbifold Chern classes of flat sheaves]\label{lem-locally-flat-vanishes}
		Let $\mF$ be a reflexive sheaf on a compact K\"ahler klt space. If $\mE$ is locally free and flat in codimension $1$, then $\widehat{c}_1(\mE),\widehat{c}_1^2(\mE)$ and $\widehat{c}_2(\mE)$ are all zero.
	\end{lemma}
	\begin{proof}
		By applying Lemma \ref{lem-maximally}, there exists a quasi-\'etale Galois morphism $\gamma:Y\rightarrow X$ such that $\gamma^{[*]}\mE$ is locally free and flat and thus its orbifold Chern classes vanish (see Remark \ref{rem-topochernclass}). Then Proposition \ref{lem-quasietale-chernclasses} completes the proof.
	\end{proof}
    \fi

    \begin{lemma}[Comparison]\label{lem-comparison}
    	Let $\mF$ be a torsion-free sheaf on a compact normal space $X$. Then the following statements hold.
    	\begin{itemize}
    		\item[(1)] For any $\sigma\in H^{2n-2}(X,\R)$ and finite quasi-\'etale morphism $\gamma:Y\rightarrow X$ between compact normal spaces, we have $$c_1(\gamma^*\mF)\cdot\gamma^*\sigma=\deg \gamma \cdot c_1(\mF)\cdot \sigma.$$
    		\item[(2)] If $X$ is klt, $c_1(\mF)=\widehat{c}_1(\mF)$.
    	\end{itemize}
    \end{lemma}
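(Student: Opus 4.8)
The plan is to reduce both assertions to computations on resolutions, where they follow from the projection formula together with the insensitivity of first Chern classes to codimension-$\ge 2$ phenomena. Since $c_1(\mF)=c_1(\mF^{\vee\vee})$, $\widehat{c}_1(\mF)=\widehat{c}_1(\mF^{\vee\vee})$, and $\gamma$ is \'etale in codimension one (so $\gamma^{*}\mF$ agrees with $\gamma^{[*]}(\mF^{\vee\vee})$ in codimension one, whence they have the same $c_1$), I may assume $\mF$ reflexive throughout. For (1), I would first fix a resolution $f\colon\widehat{X}\to X$ and then choose a resolution $q\colon\widehat{Y}\to Y$ fitting into a holomorphic map $p\colon\widehat{Y}\to\widehat{X}$ with $f\circ p=\gamma\circ q$, obtained by resolving the indeterminacy of the meromorphic map $f^{-1}\circ\gamma$ (equivalently, by resolving the component of $\widehat{X}\times_X Y$ dominating $Y$). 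As $q$ and $f$ are bimeromorphic, $p$ is generically finite of degree $d:=\deg\gamma$, so $p_*[\widehat{Y}]=d\,[\widehat{X}]$ in $H_{2n}(\widehat{X},\R)$. By definition of the homology first Chern class, $c_1(\gamma^{*}\mF)\cdot\gamma^{*}\sigma=c_1(q^{*}\gamma^{*}\mF/\mathrm{tor})\cdot q^{*}\gamma^{*}\sigma$; writing $\mathcal{G}:=f^{*}\mF/\mathrm{tor}$ on $\widehat{X}$ and using $q^{*}\gamma^{*}\mF=p^{*}f^{*}\mF$, this equals $c_1(p^{*}\mathcal{G}/\mathrm{tor})\cdot p^{*}f^{*}\sigma$. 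Since the determinant line bundle commutes with pull-back, $c_1(p^{*}\mathcal{G})=p^{*}c_1(\mathcal{G})$, and the discrepancy $c_1(p^{*}\mathcal{G}/\mathrm{tor})-p^{*}c_1(\mathcal{G})$ is a combination of classes of prime divisors of $\widehat{Y}$ contracted by $f\circ p$ to dimension $\le n-2$, hence pairs to zero against $p^{*}f^{*}\sigma$. The projection formula and $p_*[\widehat{Y}]=d\,[\widehat{X}]$ then give $p^{*}c_1(\mathcal{G})\cdot p^{*}f^{*}\sigma=d\bigl(c_1(\mathcal{G})\cdot f^{*}\sigma\bigr)=d\bigl(c_1(\mF)\cdot\sigma\bigr)$, which is (1).

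For (2): since $X$ is klt it carries a partial orbifold resolution $f\colon W\to X$ with standard orbifold structure $W_{\orb}=\{V_i,G_i,\pi_i\}$ (as recalled before Definition~\ref{defn-orbifold}), and by construction $\widehat{c}_1(\mF)\cdot\sigma=c_1^{\orb}(f^{[*]}_{\orb}\mF)\cdot f^{*}\sigma$. Because $W_{\orb}$ is standard, i.e. the $\pi_i$ are \'etale in codimension one, the orbifold first Chern class of the reflexive orbi-sheaf $f^{[*]}_{\orb}\mF$ coincides with the homology first Chern class on the normal space $W$ of its $G_i$-invariant push-forward $f^{[*]}\mF$; this is the standard comparison between orbifold and ordinary first Chern classes (cf.~\cite{DO23,ZZZ25}). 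I would then pick a resolution $h\colon\widehat{W}\to W$ of the underlying space, compute $c_1(f^{[*]}\mF)\cdot f^{*}\sigma$ via $h$ and $c_1(\mF)\cdot\sigma$ via the resolution $f\circ h$ of $X$, and observe — once more using that $c_1$ does not see the difference between $f^{[*]}\mF$ and $f^{*}\mF$ (they agree outside a set of codimension $\ge 2$ in $W$, Lemma~\ref{lem-excision}) nor the divisorial exceptional corrections — that these two numbers agree. Hence $\widehat{c}_1(\mF)\cdot\sigma=c_1(\mF)\cdot\sigma$ for every $\sigma\in H^{2n-2}(X,\R)$, i.e. $\widehat{c}_1(\mF)=c_1(\mF)$.

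The step I expect to require the most care is the bookkeeping of the ``exceptional'' and ``codimension $\ge 2$'' corrections: $c_1$ of a torsion-free pull-back is not literally the pull-back of $c_1$, but every discrepancy class is supported on a subvariety contracted to codimension $\ge 2$ and therefore dies once paired with a pulled-back cohomology class (equivalently, after applying the relevant push-forward), which is exactly what legitimizes the projection-formula step and, in (2), identifies the two resolution computations. The only genuinely external input is the orbifold/ordinary comparison of first Chern classes on standard orbifolds used in the proof of (2).
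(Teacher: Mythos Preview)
Your proposal is correct and, for part~(2), essentially identical to the paper's: both reduce to the two external inputs you name, namely the standard-orbifold comparison $c_1^{\orb}(f^{[*]}_{\orb}\mF)=c_1(f^{[*]}\mF)$ (\cite[Lemma~2.37]{ZZZ25}) and the push-forward identity $f_*c_1(f^{[*]}\mF)=c_1(\mF)$ (\cite[Lemma~2.31]{ZZZ25}), the latter being exactly what your resolution-of-$W$ computation unwinds.

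For part~(1) your route is slightly different from, and arguably cleaner than, the paper's. The paper simply points to ``a similar argument of Lemma~\ref{lem-quasietale-chernclasses}'', which builds the fibre product of standard orbifold charts on partial orbifold modifications and compares orbifold Chern classes there --- machinery designed to handle $\widehat{c}_2$. You bypass the orbifold layer entirely: resolve $X$ and $Y$ compatibly, observe that the induced map $p\colon\widehat{Y}\to\widehat{X}$ is generically finite of degree $d$, and use the projection formula together with the fact that the torsion corrections to $c_1(p^*\mathcal{G}/\mathrm{tor})$ are supported on divisors contracted by $p$ (hence kill $p^*f^*\sigma$). For first Chern classes this is all that is needed, and it avoids invoking the heavier $c_2$-level argument. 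One small sharpening: in your discrepancy step it suffices that the relevant divisors are contracted by $p$ (not $f\circ p$), since $p_*[D]=0$ already gives $[D]\cdot p^*f^*\sigma=0$; your stated condition is stronger than necessary but still true.
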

    
    \begin{proof}
    	Statement (1) can be deduced by an argument similar to that of Proposition \ref{lem-quasietale-chernclasses}. Statement (2) follows from the fact that for any orbifold modification of singularities $f:Y\rightarrow X$, one has $f_*c_1(f^{[*]}\mF)=c_1(\mF)$ (see e.g. \cite[Lemma 2.31]{ZZZ25}), together with \cite[Lemma 2.37]{ZZZ25}.
    \end{proof}

	\begin{lemma}[Stability under quasi-\'etale morphisms]\label{lem-quasietale-stability}
	  Let $(\mE_{X_\reg},\theta_{X_\reg})$ be a torsion-free Higgs sheaf on a compact complex normal space $X$ of dimension $n$ with rational singularities and $\alpha_0,\cdots,\alpha_{n-2}$ be nef classes. Suppose that $\gamma$ is a quasi-\'etale morphism induced by some group $G$. Then
	  \begin{itemize}
	  	\item[(1)] $(\mE_{X_\reg},\theta_{X_\reg})$ is $(\alpha_0,\cdots,\alpha_{n-2})$-stable if and only if $\gamma^{[*]}(\mE_{X_\reg},\theta_{X_\reg})$ is $G$-stable with respect to $\gamma^*\alpha_0,\cdots,\gamma^*\alpha_{n-2}$. The same holds for polystability.
	  	\item[(2)] $(\mE_{X_\reg},\theta_{X_\reg})$ is $(\alpha_0,\cdots,\alpha_{n-2})$-semistable if and only if $\gamma^{[*]}(\mE_{X_\reg},\theta_{X_\reg})$ is $(\gamma^*\alpha_0,\cdots,\gamma^*\alpha_{n-2})$-semistable.
	  \end{itemize}
	\end{lemma}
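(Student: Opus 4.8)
The plan is to reduce all four assertions to two facts about the quasi-\'etale Galois cover $\gamma\colon X'\to X$ with group $G$: a slope-transformation formula and a dictionary between $\theta$-invariant saturated subsheaves upstairs and downstairs. Throughout write $\alpha_\bullet$ for $(\alpha_0,\cdots,\alpha_{n-2})$, set $(\mE',\theta'):=\gamma^{[*]}(\mE_{X_\reg},\theta_{X_\reg})$ on $X'_\reg$ (the Higgs field $\theta'$ exists by reflexive extension from the locus where $\gamma$ is \'etale, since $\gamma$ is \'etale in codimension one), and set $\alpha_i':=\gamma^*\alpha_i$. Since $\gamma$ is \'etale in codimension one we have $\rank\mE'=\rank\mE_{X_\reg}$, and Lemma~\ref{lem-comparison}(1) gives $c_1(\gamma^{[*]}\mF)\cdot\gamma^*\sigma=\deg\gamma\cdot c_1(\mF)\cdot\sigma$ for every torsion-free sheaf $\mF$ on $X$; taking $\sigma=\alpha_0\cdots\alpha_{n-2}$ yields
\[
\mu_{(\alpha_0',\cdots,\alpha_{n-2}')}\bigl(\gamma^{[*]}\mF\bigr)=\deg\gamma\cdot\mu_{(\alpha_0,\cdots,\alpha_{n-2})}(\mF).
\]
Over the big open set $U\subseteq X_\reg$ on which $\gamma$ is \'etale, the assignments $\mF\mapsto\gamma^*\mF$ and $\mG'\mapsto(\gamma_*\mG')^G$ are mutually inverse (\'etale descent along the Galois cover, cf.\ \cite[Lemma A.3 and Lemma A.4]{GKKP11}); hence $\mF\mapsto\gamma^{[*]}\mF$ is a rank-preserving bijection, multiplying slopes by $\deg\gamma$, between the $\theta_{X_\reg}$-invariant saturated subsheaves of $\mE_{X_\reg}$ and the $G$-invariant $\theta'$-invariant saturated subsheaves of $\mE'$. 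Higgs-invariance is preserved both ways by Lemma~\ref{lem-saturation-invariant}, neither slopes nor saturations see the codimension $\geq2$ complement of $U$, and by Remark~\ref{remark-saturation-stability} it suffices to test stability, semistability and polystability against such saturated subsheaves.

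Granting the dictionary, stability in (1) is immediate: $(\mE_{X_\reg},\theta_{X_\reg})$ is $\alpha_\bullet$-stable iff $\mu_{\alpha_\bullet}(\mF)<\mu_{\alpha_\bullet}(\mE_{X_\reg})$ for every proper nonzero $\theta$-invariant saturated $\mF$, which under the bijection is precisely $G$-stability of $(\mE',\theta')$, the test objects upstairs being exactly the proper nonzero $G$-invariant $\theta'$-invariant saturated subsheaves. For (2), the backward implication is the same inequality applied to the reflexive pull-back of an arbitrary $\theta$-invariant saturated $\mF\subseteq\mE_{X_\reg}$; for the forward implication I would argue by contradiction using the Harder--Narasimhan formalism for $\alpha'_\bullet$-semistability: if $(\mE',\theta')$ were not $\alpha'_\bullet$-semistable, its \emph{unique} maximal destabilizing subsheaf is automatically $\theta'$-invariant and, since $G$ preserves $(\mE',\theta',\alpha'_\bullet)$, also $G$-invariant; it therefore descends through the dictionary to a $\theta_{X_\reg}$-invariant saturated subsheaf of $\mE_{X_\reg}$ of slope $>\mu_{\alpha_\bullet}(\mE_{X_\reg})$, contradicting semistability of $(\mE_{X_\reg},\theta_{X_\reg})$.

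For polystability in (1) I would use the $G$-equivariant version of the usual criterion: a $G$-semistable Higgs sheaf of slope $\mu$ is $G$-polystable iff it is semisimple in the abelian category of $G$-equivariant semistable Higgs sheaves of slope $\mu$, equivalently iff every $G$-invariant $\theta$-invariant saturated subsheaf of slope $\mu$ splits off as a $G$-equivariant Higgs direct summand. Semistability of $(\mE',\theta')$ is supplied by (2); a Higgs direct-sum decomposition $\mE_{X_\reg}=\mF\oplus\mQ$ into $\theta$-invariant subsheaves pulls back to a $G$-equivariant Higgs decomposition $\mE'=\gamma^{[*]}\mF\oplus\gamma^{[*]}\mQ$, and conversely a $G$-equivariant Higgs decomposition of $\mE'$ descends over $U$ by applying $(\gamma_*(-))^G$ and extends across the codimension $\geq2$ complement; combined with the slope-$\mu$ part of the dictionary, these two transports give both implications. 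I expect the only real work to lie in the codimension-two bookkeeping behind the dictionary and the last transport---checking that $\theta$-invariance, saturatedness and direct-sum decompositions of merely torsion-free sheaves on $X_\reg$ all propagate across the non-\'etale locus of $\gamma$, and that $G$-invariant push-forward is compatible with reflexivity---which is routine given the normality of $X$ (preserved by $\gamma$), the codimension bound on the non-\'etale locus, and Lemmas~\ref{lem-Higgs-extension}, \ref{lem-saturation-invariant} and \ref{lem-comparison}.
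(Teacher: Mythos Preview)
Your proposal is correct and follows essentially the same approach as the paper: the paper's proof invokes Lemma~\ref{lem-comparison} together with the argument of \cite[Proposition~4.14]{ZZZ25} for (1), and for (2) uses exactly your key observation that the uniqueness of the maximal destabilizing subsheaf of $\gamma^{[*]}(\mE_{X_\reg},\theta_{X_\reg})$ forces it to be $G$-equivariant, hence descends to a $\theta_{X_\reg}$-invariant subsheaf of $(\mE_{X_\reg},\theta_{X_\reg})$. Your write-up is simply a more explicit unpacking of the same ideas, including the polystability case which the paper leaves implicit in its reference.
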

	
	\begin{proof}
			Based on the above lemmas, statement (1) follows directly from an argument similar to that of \cite[Proposition 4.14]{ZZZ25}. Regarding (2), it suffices to notice that the uniqueness of the maximal destablizing subsheaf $\mF_{Y_\reg}$ of $\gamma^{[*]}(\mE_{X_\reg},\theta_{X_\reg})$ implies that $\mF_{Y_\reg}$ is $G$-equivariant and any $G$-equivariant $\gamma^{[*]}\theta_{X_\reg}$-invariant subsheaf of $\gamma^{[*]}(\mE_{X_\reg},\theta_{X_\reg})$ descends to a $\theta_{X_\reg}$-invariant subsheaf of $(\mE_{X_\reg},\theta_{X_\reg})$.
	\end{proof}

	\section{Pluri-harmonic metrics on Higgs sheaves over the regular locus}\label{section-HE}
	In this section, we prove the existence of pluri-harmonic metrics on polystable reflexive Higgs sheaves with vanishing orbifold Chern number conditions \eqref{equa-vanishingcondition}. 
	
	We introduce some basic definitions to fix the conventions as follows. The Hitchin-Simpson connection \cite{Simpson88} is defined by
	\begin{equation*}
		\bar{\partial}_{\theta}:=\bar{\partial}_{E}+\theta , \quad D_{H,  \theta }^{1, 0}:=\partial_H  +\theta^{* H}, \quad D_{H,  \theta }:= \bar{\partial}_{\theta}+ D_{H,  \theta }^{1, 0},
	\end{equation*}
	where $\partial_H$ is the $(1, 0)$-part of the Chern connection $D_{H}$ of $(E,\bar{\partial }_{E}, H)$ and $\theta^{* H}$ is the adjoint of $\theta $ with respect to $H$.
	The curvature of Hitchin-Simpson connection is
	\begin{equation*}
		F_{H,\theta}=F_H+[\theta,\theta^{* H}]+\partial_H\theta+\bar{\partial}_E\theta^{* H},
	\end{equation*}
	where $F_H$ is the curvature of $D_{H}$. 
	
	\begin{definition}[Pluri-harmonic metric on Higgs bundles]
		Let $(E,\bp,\theta)$ be a Higgs bundle over a complex manifold $M$. We say that a Hermitian metric $H$ on $(E,\bp,\theta)$ is pluri-harmonic if $F_{H,\theta}=0$.
	\end{definition}
	
	The main result of this section is the following.
	
	\begin{theorem}\label{main-stable}
		Let $X$ be a compact K\"ahler klt space of dimension $n$. Suppose that $(\mE_{X_\reg},\theta_{X_\reg})$ is semistable with respect to some K\"ahler forms $\w_0,\cdots,\w_{n-2}$, then the following orbifold Bogomolov-Gieseker inequality holds:
		\begin{equation}\label{equa-BG}
			\left(2\widehat{c}_2(\mE_X)-\frac{r-1}{r}\widehat{c}_1^2(\mE_X)\right)\cdot[\w_1]\cdots[\w_{n-2}]\geq0.
		\end{equation}
		Moreover, if  $(\mE_{X_\reg},\theta_{X_\reg})\in\mathrm{pHiggs}_{X_\reg}$, then $(\mE_{X_\reg},\theta_{X_\reg})$ admits a pluri-harmonic metric $H_{X\setminus \Sigma}$ on $X\setminus\Sigma$, where $\Sigma\subsetneq X$ is an analytic subspace of codimension at least $2$ containing $X_{sing}$, such that $\mE_{X_\reg}|_{X\setminus \Sigma}$ is locally free.
	\end{theorem}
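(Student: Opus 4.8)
\emph{Step 1: reduction to a Higgs orbi-bundle.} The plan is to transfer everything to a compact K\"ahler orbifold. I would fix a partial orbifold resolution $f:Y\to X$ as in \cite{KO25}, with maximal standard orbifold structure $Y_\orb=\{V_i,G_i,\pi_i\}$, chosen K\"ahler; note $f$ is an isomorphism over $X_\reg$. Set $(\mE_\orb,\theta_\orb):=f_\orb^{[*]}(\mE_X,\theta_X)$, the Higgs field existing by Lemma~\ref{lem-reflexive-pullback}; by Definition~\ref{defn-orbifold} and Lemma~\ref{lem-compatible-orbifold-pullback} its orbifold Chern classes compute those of $\mE_X$, and pullback along $f$ (an isomorphism in codimension $2$) preserves semistability and polystability (cf.~\cite{ZZZ25}). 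By Lemma~\ref{lem-orbireso} I would further blow up $G_i$-invariant centres inside the non-locally-free locus to obtain $h_\orb:Z_\orb\to Y_\orb$ with $Z_\orb$ compact K\"ahler and $E_\orb:=(h_\orb^*\mE_\orb)/(\mathrm{torsion})$ a Higgs orbi-\emph{bundle} (field extended by Lemma~\ref{lem-Higgs-extension}), $h_\orb$ being an isomorphism over the locally free locus of $\mE_\orb$. It then suffices to build a pluri-harmonic metric on $E_\orb$ off an analytic set of codimension $\geq 2$ and transport it to $\mE_X$ over $X\setminus\Sigma$, where $\Sigma$ is the union of $X_\sing$, the image of the $h_\orb$-exceptional set, and the image of the bad set from Step~2, all of codimension $\geq 2$, with $\mE_X$ locally free off $\Sigma$. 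Since $[f_\orb^*\w_i]$ is only nef and big on $Y_\orb$, I would polarize with $[f_\orb^*\w_i]+\varepsilon[\eta_\orb]$ (or with the positive orbifold forms of Proposition~\ref{prop-orbifoldkahlerform}) and let $\varepsilon\downarrow0$ at the end, which affects no Chern-number computation.

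\emph{Step 2: the flow and the Bogomolov--Gieseker inequality.} On $Z_\orb$ I would run the Higgs Hermitian--Yang--Mills flow $\{H_\orb(t)\}_{t\geq0}$. Adapting the Higgs version \cite{LZZ} of Bando--Siu's argument \cite{BS} --- the technical core, detailed in Section~\ref{Strategy-HYM} --- one proves long-time existence and, via uniform interior estimates together with a monotonicity/$\varepsilon$-regularity analysis near the exceptional locus of $h_\orb$, convergence along a subsequence of times, in $C^\infty_{\mathrm{loc}}$ off an analytic set of codimension $\geq 2$, to a Hermitian--Einstein metric $H_{\orb,\infty}$; this already produces the asserted metric on $\mE_{X_\reg}|_{X\setminus\Sigma}$. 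For \eqref{equa-BG}: Jordan--H\"older filtrations reduce semistability to the case of a \emph{stable} reflexive Higgs sheaf with all graded pieces of slope $0$, using Proposition~\ref{prop-orbifold-exact} to bound $\widehat{\ch}_2$ along the extensions and the mixed Hodge-index inequality (cf.~\cite{IJZ25}) to absorb the cross terms; and for a stable reflexive Higgs sheaf, inserting the Hermitian--Einstein metric into the Chern--Weil/L\"ubke identity for $E_\orb$ --- evaluated at finite flow time on the compact $Z_\orb$ and passed to the limit, which circumvents the unknown admissibility of $H_{\orb,\infty}$ --- yields $\bigl(2\widehat{c}_2(\mE_X)-\tfrac{r-1}{r}\widehat{c}_1^2(\mE_X)\bigr)\cdot[\w_1]\cdots[\w_{n-2}]\geq 0$.

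\emph{Step 3: from Hermitian--Einstein to pluri-harmonic.} Now suppose $(\mE_{X_\reg},\theta_{X_\reg})\in\mathrm{pHiggs}_{X_\reg}$, so \eqref{equa-vanishingcondition} holds. Since $\widehat{\ch}_1=\widehat{c}_1$, the vanishing of $\widehat{c}_1(\mE_X)\cdot[\w_0]\cdots[\w_{n-2}]$ makes the Einstein constant zero, hence $\lw F_{H_{\orb,\infty},\theta}=0$. With $\widehat{\ch}_2=\tfrac12\widehat{c}_1^2-\widehat{c}_2$, the vanishing of $\widehat{\ch}_2(\mE_X)\cdot[\w_1]\cdots[\w_{n-2}]$ turns \eqref{equa-BG} into $\widehat{c}_1^2(\mE_X)\cdot[\w_1]\cdots[\w_{n-2}]\geq 0$, while the Hodge-index inequality applied to $\widehat{c}_1(\mE_X)$ --- orthogonal to the positive class $[\w_0]\cdots[\w_{n-2}]$ --- gives the reverse inequality; so both vanish, $\widehat{c}_1(\mE_X)$ is numerically trivial, and equality holds in \eqref{equa-BG}. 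Tracing the equality back through the Chern--Weil identity forces the trace-free part of $F_{H_{\orb,\infty},\theta}$ to vanish. Finally $\det\mE_\orb$, being numerically trivial, carries a flat Hermitian metric, and $\tr\theta_\orb$ is a holomorphic $1$-form on the compact K\"ahler orbifold $Z_\orb$, hence closed; therefore the Higgs line bundle $(\det\mE_\orb,\tr\theta_\orb)$ is flat, the trace part of $F_{H_{\orb,\infty},\theta}$ vanishes as well, and so $F_{H_{\orb,\infty},\theta}=0$ --- that is, $H_{\orb,\infty}$, and hence the metric it induces on $\mE_{X_\reg}|_{X\setminus\Sigma}$, is pluri-harmonic.

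\emph{Main obstacle.} The crux is Step~2: proving the uniform estimates for the Hermitian--Yang--Mills flow near the $h_\orb$-exceptional set and the singular locus of $X$, with the Higgs field, the orbifold structure, and a possibly non-K\"ahler-fillable multi-polarization all present simultaneously. This is where adapting \cite{LZZ,BS} is genuinely delicate, and where a priori admissibility of $H_{\orb,\infty}$ is lost (cf.~Question~\ref{ques-uniformestimate}), forcing the Chern--Weil bookkeeping above to be run along the flow and passed to the limit rather than applied to $H_{\orb,\infty}$ directly.
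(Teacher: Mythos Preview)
Your Step~1 is essentially the paper's Proposition~\ref{prop-HEequation-setting}, and the overall architecture of Step~2 (run the Higgs HYM flow on the orbi-bundle over a degenerating family of balanced metrics, pass Chern--Weil through the flow to avoid admissibility of the limit) matches the paper. Two technical points are handled differently, however. First, the uniform estimates: the paper does \emph{not} use monotonicity/$\varepsilon$-regularity. Its key analytic input is a \emph{uniform Sobolev inequality} for the family $\omega_{\orb,\epsilon}=f_\orb^*\omega+\epsilon\,\omega_{Y_\orb}$ (Proposition~\ref{prop-uniformsobolev}), imported from \cite{GPSS23,GPSS24,GPS24} via the smooth approximation of \cite{OF25}; this feeds Moser iteration (Lemma~\ref{l1}) to get $C^0$ control and then local $C^\infty$ bounds, giving $H_{\orb,\epsilon}(t)\to H(t)$ in $C^\infty_{\rm loc}$ away from $f_\orb^{-1}(\Sigma)$. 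Second, the Bogomolov--Gieseker inequality: the paper does not reduce via a Jordan--H\"older filtration. Instead it passes to $\mG_{X_\reg}=\End(\mE_{X_\reg})$ with the induced Higgs field, which is again semistable by \cite[Corollary~1.3]{ZZZ25} and has $\widehat c_1(\mG_X)=0$; then \eqref{equa-BG} for $\mE_X$ is equivalent to $-\widehat{\ch}_2(\mG_X)\cdot\Omega\ge 0$ (cf.~\eqref{equa-end-chernclass-resuction}), and this follows from the same flow computation applied to $\mG$.

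Your Step~3 contains a genuine gap. The paper does \emph{not} split $F_{H_\infty,\theta}$ into trace and trace-free parts. It observes that $\lambda=0$ gives $F_{H_\infty,\theta}\wedge\omega_0\wedge\cdots\wedge\omega_{n-2}=0$ pointwise on $X\setminus\Sigma$, i.e.\ each matrix entry of $F_{H_\infty,\theta}$ lies in the mixed-primitive space $P^{1,1}$ of Lemma~\ref{lem-riemannbilinear}; since $\widehat{\ch}_2(\mE_X)\cdot\Omega=0$ bounds $\int_{X\setminus\Sigma}\tr(F_{H_\infty,\theta}\wedge F_{H_\infty,\theta})\wedge\Omega$ by $0$, Timorin's negative-definiteness on $P^{1,1}$ forces $F_{H_\infty,\theta}\equiv 0$ in one stroke. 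Your route instead needs ``$\widehat c_1(\mE_X)$ numerically trivial'': the equality case of Lemma~\ref{lem-hodge-1} only yields $\widehat c_1(\mE_X)\cdot\gamma\cdot[\omega_1]\cdots[\omega_{n-2}]=0$ for all $\gamma\in H^2$, and upgrading this to $c_1=0$ requires a mixed hard Lefschetz statement you have not supplied; more seriously, even granting $c_1^\orb(\det E_\orb)=0$ on $Z_\orb$, the metric $\det H_\infty$ lives only on $X\setminus\Sigma$ with no known admissibility at $\Sigma$, so the existence of \emph{some} flat metric on $\det E_\orb$ does not let you conclude that $\det H_\infty$ is flat. The paper's Timorin argument sidesteps exactly this, because it is a pointwise linear-algebra statement on $X\setminus\Sigma$ combined with an integral inequality obtained by running Chern--Weil at finite time and applying Fatou.
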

	
	\begin{remark}		
		When $X$ is smooth and $\mE_{X_\reg}$ is locally free, Theorem \ref{main-stable} builds on the Donaldson-Uhlenbeck-Yau theorem \cite{DON85,UY86} and its generalization to Higgs bundles \cite{Simpson88}; further developments appear in \cite{Bando94,BS,Li20,Mo1,Mo2,Jacob2014,LZZ,CGNPPW23,CW1,JL25,Jinnouchi25-1}, etc. We refer to the introduction of \cite{ou25} for an overview of developments on the orbifold Bogomolov-Gieseker inequality.
	\end{remark}
	
	\subsection{Strategy of Theorem \ref{main-stable}} \label{Strategy-HYM}
	In this subsection, we outline the strategy of proving Theorem \ref{main-stable}. The basic idea is to adapt the Higgs version \cite{LZZ} of Bando-Siu's argument to an orbifold modification $h:W\rightarrow X$ as in \cite{OF25} for stable reflexive sheaves. The main ingredients are:
	\begin{enumerate}
		\item The construction of an orbifold Higgs field of the pullback of a reflexive Higgs sheaf via Kebekus-Schnell's work \cite{KS21} on pulling back reflexive differentials (cf. Proposition \ref{prop-HEequation-setting}).
		\item The uniform Sobolev inequality for the degenerating family of orbifold K\"ahler metrics (cf. Proposition \ref{prop-uniformsobolev}), which follows from the smooth approximation argument in \cite[Section 3]{OF25} and uniform geometric estimates in \cite{GPS24,GPSS23,GPSS24}.
	\end{enumerate}
	
	Once these ingredients are in place, the subsequent proof at this stage can be reduced to studying the limiting behavior of the Hermitian-Yang-Mills flow over a noncompact balanced manifold $(X\setminus\Sigma,\w)$ and the strategy in \cite[Section 4]{LZZ} originated from \cite{Simpson88} remains valid.
	   
     \subsubsection{Pullback of the Higgs field over orbifold modifications}\label{section-pullbackhiggs}
     We first review the construction in \cite[Section 5.1]{ZZZ25}. 
	\begin{proposition}\label{prop-HEequation-setting}
		 Suppose that $(\mE_{X_\reg},\theta_{X_\reg})$ is a reflexive Higgs sheaf on the regular locus of a compact complex klt space $X$ and $\w_0,\cdots,\w_{n-2}$ are K\"ahler forms, then there exists a projective bimeromorphism $f:Y\rightarrow X$ from a compact K\"ahler space $Y$ with only quotient singularities to $X$ with the following data:
		\begin{itemize}
			\item[(a)] A complex effective orbifold structure $Y_\orb:=\{(U_\alpha,G_\alpha,\mu_\alpha)\}$ of $Y$.
			\item[(b)] An analytic subspace $\Sigma\subset X$ of codimension at least $2$ containing $X_{sing}$ such that each $f_\alpha:=f\circ\mu_\alpha$ is \'etale outside $f_\alpha^{-1}(\Sigma)$ and $\mE_{X_\reg}$ is locally free outside $\Sigma$.
			\item[(c)] A Higgs orbi-bundle $(E_\orb,\theta_\orb)$ over $Y_\orb$ with
			\begin{equation}\label{equa-firstchernclass-reduction}
				c_1^\orb(E_\orb)\cdot f^*\sigma=\widehat{c}_1(\mE_X)\cdot \sigma,\ \forall\sigma\in H_{2n-2}(X,\R)
			\end{equation}
			and
			\begin{equation}\label{equa-secondchernclass-reduction}
				{\ch}_2^\orb(E_\orb)\cdot f^*\beta=\widehat{\ch}_2(\mE_X)\cdot\beta,\ \forall \beta\in H_{2n-4}(X,\R).
			\end{equation}
			such that
			\begin{equation}\label{prop-higgs-compatible}
				(E_\orb,\theta_\orb)=\{f_\alpha^*(\mE_{X_\reg},\theta_{X_\reg})\}
			\end{equation} outside $f_\orb^{-1}(\Sigma):=\{f_\alpha^{-1}(\Sigma)\}$. The equality \eqref{equa-secondchernclass-reduction} also holds for $\widehat{c}_1^2(\mE_X)$.
		\end{itemize}
	\end{proposition}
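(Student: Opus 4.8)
The plan is to realize $f$ as a composition of two morphisms: a partial orbifold resolution of $X$, which replaces the non-quotient singularities, followed by an orbifold modification that trivializes the reflexive pull-back of $\mE_X$; the Higgs field is then transported by the functorial pull-back of reflexive differentials of Kebekus--Schnell \cite{KS21}. This is the construction of \cite[Section~5.1]{ZZZ25}, and the main work is bookkeeping with the orbifold machinery of Sections~\ref{subsection-orbifolds}--\ref{subsection-orbifoldchernclass}.

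Concretely I would proceed as follows. (i) Since $X$ is compact K\"ahler klt, it has only quotient singularities in codimension $2$ (\cite[Lemma~5.8]{GK20}), so by \cite{KO25} there is a partial orbifold resolution $g\colon Y'\to X$; equip $Y'$ with its maximal standard orbifold structure $Y'_\orb=\{V_i,G_i,\pi_i\}$ (Lemma~\ref{lem-maximalorbifold}), which is K\"ahler by Proposition~\ref{prop-orbifoldkahlerform}. Form the reflexive pull-back orbi-sheaf $\mE_\orb:=\{(g\circ\pi_i)^{[*]}\mE_X\}$; each $(g\circ\pi_i)^{[*]}\mE_X$ is reflexive on the smooth manifold $V_i$, hence locally free outside a closed set of codimension $\geq 3$. (ii) Apply Lemma~\ref{lem-orbireso} to $Y'_\orb$ and $\mE_\orb$ to obtain $h_\orb\colon Z_\orb\to Y'_\orb$, $Z_\orb=\{(\widehat V_i,G_i,\widehat\pi_i)\}$, such that $E_\orb:=\{h_i^{*}\mE_i/(\mathrm{torsion})\}$ is a vector orbi-bundle, each $h_i$ a sequence of blow-ups of $G_i$-invariant smooth centers in the non-locally-free locus of $\mE_i$ --- hence of codimension $\geq 3$ by (i) --- and $Z_\orb$ K\"ahler. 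Let $Y$ be the underlying space of $Z_\orb$, $f:=g\circ h\colon Y\to X$ (projective bimeromorphic, as a composition of blow-ups and a partial orbifold resolution), and $f_\mu:=f\circ\widehat\pi_\mu$. (iii) Let $\Sigma\subset X$ be the union of $X_{\sing}$, the non-locally-free locus of $\mE_X$, and the images in $X$ of the blown-up centers and of the branch loci of the $\pi_i$; this is analytic of codimension $\geq 2$, contains $X_{\sing}$, and over its complement $X$ is smooth, $\mE_{X_\reg}$ is locally free, each $f_\mu$ is \'etale, and $E_\orb=\{f_\mu^{*}\mE_{X_\reg}\}$. (iv) For the Higgs field, Lemma~\ref{lem-reflexive-pullback} produces on each chart a Higgs field on $(f\circ\widehat\pi_\mu)^{[*]}\mE_X$ agreeing with $f_\mu^{*}\theta_{X_\reg}$ away from $f_\mu^{-1}(\Sigma)$, and functoriality gives compatibility along overlaps; since $E_\orb$ is locally free and coincides with $(f\circ\widehat\pi_\mu)^{[*]}\mE_X$ outside a set of codimension $\geq 2$, Lemma~\ref{lem-Higgs-extension} extends this to a genuine orbifold Higgs field $\theta_\orb$ on $E_\orb$ (the integrability $\theta_\orb\wedge\theta_\orb=0$, holding generically, extends by the same reflexivity argument). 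This gives \eqref{prop-higgs-compatible}. (v) For the Chern classes, Lemma~\ref{lem-modification-chernclass} (which is tailored to exactly this situation, using $E_\orb$ of Lemma~\ref{lem-orbireso} and Definition~\ref{defn-orbifold}) yields $\widehat c_1(\mE_X)\cdot\gamma=c_1^\orb(E_\orb)\cdot f^{*}\gamma$, $\widehat{\ch}_2(\mE_X)\cdot\beta=\ch_2^\orb(E_\orb)\cdot f^{*}\beta$, and the analogue for $\widehat c_1^2$; the remaining identity $\widehat c_1(\mE_X)=c_1(\mE_X)$ is Lemma~\ref{lem-comparison}(2). Finally, K\"ahlerness of $Y$ follows from that of $Z_\orb$ via the descent of the orbifold K\"ahler form.

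The step I expect to be the genuine obstacle is (iv), together with the codimension estimate underpinning (v): one must know that the orbifold modification $h_\orb$ alters $\mE_\orb$ only in codimension $\geq 3$ --- which rests on $\{(g\circ\pi_i)^{[*]}\mE_X\}$ being reflexive over the \emph{smooth} charts $V_i$, so that Lemma~\ref{lem-compatible-orbifold-pullback} applies to $\ch_2$ --- and one must check that the Kebekus--Schnell pull-back of $\theta_{X_\reg}$ yields a bona fide orbifold Higgs field on $E_\orb$: compatible with all the $G_i$-actions and chart overlaps and integrable against the orbifold differentials $\Omega^1_{Z_\orb}$, rather than merely a Higgs field on the locus where $E_\orb$ is a naive pull-back. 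Once these two points are secured, the remaining verifications are routine applications of the lemmas already in place.
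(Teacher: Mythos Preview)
Your proposal is correct and follows essentially the same route as the paper: partial orbifold resolution from \cite{KO25}, then the orbifold blow-up of Lemma~\ref{lem-orbireso} to make the reflexive pull-back locally free, transport of the Higgs field via Lemma~\ref{lem-reflexive-pullback}, and identification of Chern numbers via Lemma~\ref{lem-modification-chernclass} together with Lemma~\ref{lem-comparison}. The two points you flag as the genuine obstacles---that the second modification only changes things in codimension~$\geq 3$ (reflexivity on the smooth charts $V_i$) and that the chartwise Higgs fields glue to a global $\theta_\orb$---are precisely the dimensional properties the paper isolates as items (i)--(iii) before invoking the Chern-class lemmas, so you have correctly located where the content lies.
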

	
	\begin{proof}
		Let $h:W\rightarrow X$ be an orbifold modification given by \cite{KO25} and $W_{\orb}:=\{(V_\alpha,G_\alpha,\mu_\alpha)\}$ be the standard orbifold structure of $W$. Set $\mE_{\orb}=h_\orb^{[*]}\mE_X:=\{\big((h\circ\mu_\alpha)^*\mE_X\big)^{\vee\vee}\}.$
		According to Lemma \ref{lem-orbireso}, there exists a projective orbifold morphism $g_{\orb}:Y_{\orb}\rightarrow W_{\orb}$ from a compact complex effective orbifold $Y_{\orb}:=\{(U_\alpha,G_\alpha,\mu_\alpha)\}$ to $W_{\orb}$ such that $E_{\orb}=\{g_\alpha^*\mE/(\mathrm{torsion})\}$ is a vector orbi-bundle. Let $g$ be the induced morphism from the quotient space $Y$ of $Y_\orb$ to $W$, then $f:=h\circ g$ is a projective bimeromorphism.

     	Building on Lemma \ref{lem-reflexive-pullback}, we conclude that there exists a Higgs field $\theta_{\orb}=\{\theta_\alpha\}$ of $E_{\orb}$ such that $\theta_\alpha|_{f_\alpha^{-1}(X_\reg)}$ is the pullback of $\theta_{X_\reg}$ via $f_\alpha$. We summarize the properties resulting from the constructions as follows:
		\begin{itemize}
			\item[(\romannumeral1)] There exists an analytic subspace $Z\subset X$ of codimension at least $3$ such that $h$ is biholomorphic outside $h^{-1}(Z)$.
			\item[(\romannumeral2)] The orbifold structure $W_{\orb}$ is standard, i.e., $\mu_\alpha$ is finite and quasi-\'etale for each $\alpha$.
			\item[(\romannumeral3)] The morphism $g_\alpha$ is biholomorphic outside the preimage of the non-locally free locus $S_\alpha$ of $\mE_\alpha$, where $S_\alpha$ has codimension at least $3$ due to the reflexivity of $\mE_\alpha$.
		\end{itemize}	
		Combining Lemma \ref{lem-modification-chernclass} and  \ref{lem-comparison}, the proof is complete by choosing $\Sigma=Z\cup X_{sing}\cup(\cup_\alpha h_\alpha(S_\alpha))$.
	\end{proof}
	
	\iffalse
	
	\begin{setup}\label{setting}
		\em Let $(\mE_{X_\reg},\theta_{X_\reg})$ be a torsion-free Higgs sheaf on the regular locus of a compact K\"ahler space $X$ of dimension $n$ with rational singularities and $\w_0,\cdots,\w_{n-2}$ be K\"ahler forms on $X$. Suppose that there exists a bimeromorphic morphism $f:Y\rightarrow X$ from a compact K\"ahler space $Y$ with only quotient singularities to $X$ with the following data.
		\begin{itemize}
			\item An effective orbifold structure $Y_{orb}:=\{U_i,G_i,\pi_i\}$ and an analytic subspace $\Sigma\subset X_{\reg}$ of codimension $\geq2$ such that $f_i:=h\circ \pi_i$ is \'etale outside $f_i^{-1}(\Sigma)$, $\mE_{X_\reg}$ is locally free and $X$ is smooth outside $\Sigma$.
			\item A Higgs orbi-bundle $(E_{\orb},\theta_{orb})$ on $Y_\orb$ such that $(E_{\orb},\theta_{orb})=\{f_i^*(\mE_{X_\reg},\theta_{X_\reg})\}$ outside $f_\orb^{-1}(\Sigma)$ is $(f^*\w_0,\cdots,f^*\w_{n-2})$-(semi)stable.
		\end{itemize}
	\end{setup}
	\fi
	
	\subsubsection{Bando-Siu's methods}\label{section-bandosiu-strategy}
	Suppose that $\w_0,\cdots,\w_{n-2}$ are K\"ahler forms on $X$. By Proposition \ref{prop-orbifoldkahlerform}, $Y_\orb$ admits an orbifold K\"ahler form $\w_{Y_\orb}=\{\w_\alpha\}$. Set
	\begin{equation}\label{omegai}
		\omega_{\orb,k, \epsilon }:=f_\orb^{\ast }\omega_{k} +\epsilon \omega_{Y_\orb}=\{f_\alpha^*\w_k+\epsilon\w_\alpha\}
	\end{equation}
	for all $0\leq k \leq n-2$, where $f_\orb^*\w_k:=\{f_\alpha^*\w_k\}$. Recall that for each $\epsilon$, there exists a smooth orbifold Hermitian metric $\w_{\orb,\epsilon}$ (see e.g. \cite[Pages 279]{Mi}) such that
	\begin{equation}\label{omegas}
		\omega_{\orb,\epsilon}^{n-1}=\omega_{\orb,0, \epsilon }\wedge  \cdots \wedge \omega_{\orb,n-2, \epsilon }.
	\end{equation}
	Note that $\w_{\orb,\epsilon}$ is an orbifold balanced metric (i.e. $\p\w_{\orb,\epsilon}^{n-1}=0$) that is not K\"ahler in general. Given a smooth Hermitian metric $\hat{H}$ on the Higgs orbi-bundle $(E_\orb, \bar{\partial }_{E_\orb}, \theta_\orb )$, it is easy to see that there exists a uniform constant $\hat{C}_{0}$ such that
	\begin{equation}\label{initial1}
		\int_{Y_\orb}( |\Lambda_{\omega_{\orb,\epsilon }}F_{\hat{H}}|_{\hat{H}}+|\theta |_{\hat{H}, \omega_{\orb, \epsilon}}^{2})\frac{\omega_{\orb,\epsilon}^{n}}{n!}\leq \hat{C}_{0}
	\end{equation}
	for all $ \epsilon $. Consider the Hermitian-Yang-Mills flow on the Higgs orbi-bundle $(E_\orb, \bar{\partial }_{E_\orb}, \theta_\orb  )$ with the fixed initial metric $\hat{H}_\orb:=\{\hat{H}_\alpha\}$ and with respect to the Hermitian metric $\omega_{\orb,\epsilon}$,
	\begin{equation}\label{DDD1}
		\left \{\begin{aligned} &H_{\orb,\epsilon}(t)^{-1}\frac{\partial H_{\orb,\epsilon}(t)}{\partial
				t}=-2(\sqrt{-1}\Lambda_{\omega_{ \orb,\epsilon}}(F_{H_{\orb,\epsilon}(t)}+[\theta_\orb , \theta_\orb^{\ast H_{\orb,\epsilon}(t)}])-\lambda_{\epsilon }\Id_{E_\orb}),\\
			&H_{ \orb,\epsilon}(0)=\hat{H}_\orb,\\
		\end{aligned}
		\right.
	\end{equation}
	where  $\lambda_{ \epsilon}  =\frac{2\pi}{\Vol(Y_\orb, \omega_{\orb,\epsilon})} \mu_{\omega_{ \orb,\epsilon }} (E_\orb)$. Following Simpson's argument \cite{Simpson88}, we obtain the long time existence and uniqueness of the solution to the above flow (\ref{DDD1}). Actually, the needed estimates are valid on each orbifold chart $U_\alpha$, where the computations are consistent with those in the manifold case (see e.g. \cite{Faulk22} for the existence of Hermitian-Einstein metrics on stable vector orbi-bundles, and \cite[Section 3]{ZZZ25} for the general case of stable Higgs orbi-bundles).
	
	In Section \ref{section-HYM}, building on the uniform Sobolev inequality (Proposition \ref{prop-uniformsobolev}) and following Bando-Siu's argument \cite{BS}, by choosing a subsequence, we shall prove that $H_{\orb,\epsilon}(t)$ converges to smooth metric $H_\orb(t):=\{H_\alpha(t)\}$ in $C_{loc}^{\infty}(Y_\orb\setminus f_\orb^{-1}(\Sigma))$-topology as $\epsilon \rightarrow 0$ and thus $H_\orb(t)$ can descend to a long time solution $H(t)$ of the Hermitian-Yang-Mills flow on $(X\setminus \Sigma)\times[0,+\infty)$, i.e., $H(t)$ satisfies:
	\begin{equation}\label{SSS1}
		\left \{\begin{aligned} &H(t)^{-1}\frac{\partial H(t)}{\partial
				t}=-2(\sqrt{-1}\Lambda_{\omega}(F_{H(t)}+[\theta , \theta ^{\ast H(t)}])-\lambda \Id_{\mathcal{E}}),\\
			&H(0)=\hat{H},\\
		\end{aligned}
		\right.
	\end{equation}
	because $\widehat{H}_\alpha$ is $G_\alpha$-invariant and $f_\alpha$ is \'etale outside $f_\alpha^{-1}(\Sigma)$, where $\w$ is a smooth Hermitian metric on $X$ such that
	\begin{equation}\label{equa-balanced}
		\omega^{n-1}=\omega_{0}\wedge  \cdots \wedge \omega_{n-2}
	\end{equation} 
	and $\lambda=\frac{2\pi}{\int_{Y_\orb}(f_\orb^*\w)^{n}}\mu_{(\w_0,\cdots,\w_{n-2})}(\mE_{X})$. Observe that $\{H_\alpha(t)\}= \{f_\alpha^*H(t)\}.$
	
	In Section \ref{section-HYM2}, we study the limiting behavior of the Hermitian-Yang-Mills flow (\ref{DDD1}), which enables us to obtain the following theorem.
	
	\begin{theorem}\label{thm1}
		In the setting of Proposition \ref{prop-HEequation-setting}, if $(\mE_{X_\reg},\theta_{X_\reg})$ is $(\w_0,\cdots,\w_{n-2})$-stable, then by choosing a subsequence, we have $H(t)\rightarrow H_\infty$ in $C^\infty_{loc}(X\setminus \Sigma)$ and
		$$\im\Lambda_\w(F_{H_\infty,\theta_{X_\reg}})=\lambda\cdot \Id_{\mE_{X_\reg}}$$
		on $X\setminus \Sigma$. In the semistable case, we have
		\begin{equation}\label{equa-approximate-HE}
			\sup_{x\in X\setminus \Sigma}|\sqrt{-1} \Lambda_\omega (F_{H(t), \theta_{X_\reg} })-\lambda\Id_{\mathcal{E}_{X_\reg}}|_{H(t)}^2\rightarrow 0.
		\end{equation}
	\end{theorem}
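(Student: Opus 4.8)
The plan is to adapt the Higgs-bundle version of Bando--Siu's argument from \cite[Section~4]{LZZ} (itself going back to \cite{Simpson88}) to the long-time solution $H(t)$ of the Hermitian--Yang--Mills flow \eqref{SSS1} on the non-compact balanced manifold $(X\setminus\Sigma,\w)$, where $H(t)$ is the metric produced in Section~\ref{section-HYM} as the $C^\infty_{loc}$-limit, as $\epsilon\to 0$, of the orbifold flows $\{H_{\orb,\epsilon}(t)\}$. First I would record the uniform analytic input: letting $\epsilon\to 0$ in \eqref{initial1} (after descending along $f_\orb$) gives a $t$-independent bound $\int_{X\setminus\Sigma}\bigl(|\lw F_{H(t)}|_{H(t)}+|\theta_{X_\reg}|^2_{H(t),\w}\bigr)\,\dv\le C_0$, and Proposition~\ref{prop-uniformsobolev} supplies a uniform Sobolev inequality on $(X\setminus\Sigma,\w)$. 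Writing $e(t):=\im\lw(F_{H(t),\theta_{X_\reg}})-\lambda\,\Id_{\mE_{X_\reg}}$, the Bochner-type identity along the flow (valid for the balanced metric $\w$ up to curvature terms of $\w$ that are absorbed, cf.\ \cite[Section~4]{LZZ}) gives $(\partial_t+\Delta_\w)|e(t)|^2\le 0$; together with the fact that $t\mapsto\int_{X\setminus\Sigma}|e(t)|^2\,\dv$ is non-increasing and that the time-derivative of the Donaldson functional along the flow equals $-\|e(t)\|_{L^2}^2$, a Moser iteration against the uniform Sobolev inequality yields $\sup_{X\setminus\Sigma}|e(t)|^2\le C\int_{X\setminus\Sigma}|e(t)|^2\,\dv$ for $t\ge 1$, so that $\sup|e(t)|$ is bounded and $\int_0^\infty\|e(t)\|_{L^2}^2\,dt<\infty$.

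Next I would treat the semistable case, which simultaneously establishes \eqref{equa-approximate-HE}. Suppose $\sup_{X\setminus\Sigma}|e(t)|^2$ does not tend to $0$. Using the uniform $L^2$-bound on the curvature together with the uniform Sobolev inequality, the Uhlenbeck--Yau/Bando--Siu mechanism extracts from the failure of convergence a nontrivial $\theta_{X_\reg}$-invariant weakly holomorphic subbundle of $\mE_{X_\reg}$ over $X\setminus\Sigma$; by the Bando--Siu removable-singularity theorem it extends to a $\theta_{X_\reg}$-invariant saturated coherent subsheaf $\mF_X\subsetneq\mE_X$, and a Chern--Weil slope comparison forces $\mu_{(\w_0,\dots,\w_{n-2})}(\mF_X)\ge\mu_{(\w_0,\dots,\w_{n-2})}(\mE_X)$, contradicting semistability. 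Hence $\sup_{X\setminus\Sigma}|e(t)|^2\to 0$, which is \eqref{equa-approximate-HE}.

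For the stable case I would upgrade this to genuine convergence. The crux is a uniform $C^0$-estimate $\sup_{X\setminus\Sigma}\bigl|\log(H(t)\hat H^{-1})\bigr|_{\hat H}\le C$; I would prove it by Simpson's ``analytic stability implies boundedness'' argument, namely if the estimate failed then the normalized limit of $H(t)\hat H^{-1}$ would, via Bando--Siu, produce a $\theta_{X_\reg}$-invariant coherent subsheaf $\mF_X\subsetneq\mE_X$ with $\mu(\mF_X)\ge\mu(\mE_X)$, contradicting \emph{strict} stability. Granted the $C^0$-bound, interior parabolic Schauder estimates for the flow equation give uniform $C^\infty(K)$-bounds for every $K\Subset X\setminus\Sigma$; Arzel\`a--Ascoli then extracts a subsequence $H(t_j)\to H_\infty$ in $C^\infty_{loc}(X\setminus\Sigma)$, and since $\sup|e(t)|\to 0$ the limit satisfies $\im\lw(F_{H_\infty,\theta_{X_\reg}})=\lambda\,\Id_{\mE_{X_\reg}}$ on $X\setminus\Sigma$.

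The main obstacle is precisely this $C^0$-estimate: transplanting Simpson's ``analytic stability $\Rightarrow$ convergence'' mechanism to the present non-compact, merely balanced (not K\"ahler) setting, and in particular guaranteeing that the destabilizing subsheaf produced from a hypothetical blow-up of $|H(t)|$ genuinely extends across $\Sigma$ to a coherent sheaf on $X$ of the expected slope. This is where the uniform $L^2$-curvature bound \eqref{initial1} and the uniform Sobolev inequality of Proposition~\ref{prop-uniformsobolev} are indispensable, and where the orbifold modification $f_\orb$ from Proposition~\ref{prop-HEequation-setting} is needed to make the slope computation meaningful.
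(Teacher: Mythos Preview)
Your proposal is correct and follows essentially the same route as the paper, which adapts \cite[Section~4]{LZZ} to the non-compact balanced setting: the paper packages the destabilizing-subsheaf construction into a single quantitative proposition (Proposition~\ref{prop01}) that, under the hypothesis $\|s(1,t_i)\|_{L^1}\to\infty$, produces a filtration $\{\mE_\alpha\}$ with $2\pi\sum_\alpha(\mu_{\alpha+1}-\mu_\alpha)\rank(\mE_\alpha)\bigl(\mu(\mE_X)-\mu(\mE_\alpha)\bigr)\le -r^{-1/2}C^*/\hat C_1$, and then reads off both the stable case (LHS $>0$, so $\|s\|_{L^1}$ stays bounded, whence local $C^\infty$ estimates and convergence) and the semistable case (LHS $\ge 0$, forcing $C^*=0$, then \eqref{H00013} upgrades $L^2\to L^\infty$). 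Two small corrections to your write-up: the parabolic Bochner inequality should read $(\partial_t-\Delta_\w)|e(t)|^2\le 0$ (heat-operator sign), and the extension of the weakly holomorphic subsheaf across $\Sigma$ is handled in the paper not by Bando--Siu removable singularities but simply by taking the trivial extension inside the reflexive sheaf $\mE_X$ and invoking Lemma~\ref{lem-saturation-invariant}.
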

	
	To establish the Donaldson-Uhlenbeck-Yau theorem in our context, what remains open is the admissibility of $H_\infty$. The primary obstacle is proving a uniform estimate of $\theta_{X_\reg}$, a necessary step that is currently not attainable (cf. the smooth case in \cite[Section 3]{LZZ}).
	\begin{question}\label{ques-uniformestimate}
	  For $t_0>0$, is $\|\theta_{X_\reg}\|_{H(t),\w}$ uniformly bounded for $t>t_0$ along the Hermitian-Yang-Mills flow $H(t)$?
	\end{question}
	
	\subsubsection{Completion of proof of Theorem \ref{main-stable}}\label{subsection-computation}
	Building on Theorem \ref{thm1}, the proof of Theorem \ref{main-stable} can be completed.  It suffices to consider the stable case. We borrow the ideas of \cite{CW1,Chen25} using the Hodge index Theorem. 
	
	Let $X$ be a compact K\"ahler klt space of dimension $n$ and $\w_0,\cdots,\w_{n-2}$ be $n-1$ K\"ahler forms. Suppose that $(\mE_{X_\reg},\theta_{X_\reg})$ is an $(\w_0,\cdots,\w_{n-2})$-stable reflexive Higgs sheaf on the regular locus of $X$ with vanishing orbifold Chern numbers \eqref{equa-vanishingcondition}. We adapt the notations in Section \ref{section-pullbackhiggs} and Section \ref{section-bandosiu-strategy}. It follows from \eqref{equa-firstchernclass-reduction} that
	\begin{equation}\label{equa-stable-slope}
		\lambda=\frac{2\pi}{\int_{Y_\orb}(f^*_\orb\w)^n}\mu_{(\w_0,\cdots,\w_{n-2})}(\mE_X)=\frac{2\pi}{\int_{Y_\orb}(f^*_\orb\w)^n}\frac{\widehat{c}_1(\mE_X)\cdot[\w_0]\cdots[\w_{n-2}]}{\rank(\mE_X)}=0.
	\end{equation}
    Since $\w_\epsilon^{n-1}=\w_{0,\epsilon}\wedge\cdots\wedge\w_{n-2,\epsilon}$, we have
	\begin{align*}
		0&=(F_{H_{\orb,\epsilon}(t),\theta_{\orb}}-\frac{\Lambda_{\w_{\orb,\epsilon}}F_{H_{\orb,\epsilon}(t)}}{n}\cdot\w_{\orb,\epsilon})\wedge\w_{\orb,0,\epsilon}\wedge\cdots\wedge\w_{\orb,n-2,\epsilon}\\
		&=(F_{H_{\orb,\epsilon}(t),\theta_{\orb}}-\frac{\Lambda_{\w_{\orb,\epsilon}}F_{H_{\orb,\epsilon}(t)}}{n}\cdot \gamma_{\orb,\epsilon}\w_{\orb,0,\epsilon})\wedge\w_{\orb,0,\epsilon}\wedge\cdots\wedge\w_{\orb,n-2,\epsilon},
	\end{align*}
	where $\gamma_{\orb,\epsilon}:=\frac{\w_{\orb,\epsilon}^n}{\w_{\orb,0,\epsilon}^2\wedge\cdots\wedge\w_{\orb,n-2,\epsilon}}$ ($\epsilon>0$) is a smooth function on $Y_\orb$. Recall the following elementary statement \cite{Timorin98}.
	
	\begin{lemma}\label{lem-riemannbilinear}
		Let $V$ be a complex vector space of dimension $n$. Let $\w_0,\cdots,\w_{n-2}$ be $n-1$ positive $(1,1)$-forms. We define a Hermitian form on $\Lambda^{1,1}V$ by
		$$Q(\xi,\sigma):=*(\xi\wedge\bar{\sigma}\wedge\w_1\wedge\cdots\wedge\w_{n-2}), \ \forall \ \xi,\sigma\in\Lambda^{1,1}V,$$
		and the mixed primitive subspace $P^{1,1}$ by
		$$P^{1,1}:=\{\xi\in \Lambda^{1,1}V:\xi\wedge\w_0\wedge\cdots\wedge\w_{n-2}=0\}.$$
		Then $Q$ is negative definite on $P^{1,1}$.
	\end{lemma}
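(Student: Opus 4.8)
The plan is to reduce the statement to two standard facts about the Hermitian form $Q$ on $\Lambda^{1,1}V$: first, that $Q$ is non-degenerate — equivalently, that wedging with $\w_1\wedge\cdots\wedge\w_{n-2}$ is an isomorphism $\Lambda^{1,1}V\xrightarrow{\ \sim\ }\Lambda^{n-1,n-1}V$ (the linear ``mixed Hard Lefschetz'' statement) — and second, that $Q$ has exactly one positive eigenvalue, i.e. signature $(1,n^2-1)$. Granting these, the lemma follows quickly. Since $\w_0$ is a positive $(1,1)$-form and a product of $n$ positive $(1,1)$-forms on $V$ is a positive multiple of the volume element, $Q(\w_0,\w_0)=*(\w_0\wedge\w_0\wedge\w_1\wedge\cdots\wedge\w_{n-2})>0$; moreover, because $\w_0$ is real, $Q(\alpha,\w_0)=*(\alpha\wedge\w_0\wedge\w_1\wedge\cdots\wedge\w_{n-2})$ for every $\alpha\in\Lambda^{1,1}V$, so $P^{1,1}$ is precisely the $Q$-orthogonal complement $(\C\w_0)^{\perp_Q}$. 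By non-degeneracy this is a complex hyperplane and $\Lambda^{1,1}V=\C\w_0\oplus P^{1,1}$ is a $Q$-orthogonal direct sum; since the line $\C\w_0$ is $Q$-positive and $Q$ admits only one positive direction, $Q$ must be negative definite on $P^{1,1}$.

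To establish the two facts I would argue by deformation. The set of $(n-2)$-tuples $(\w_1,\dots,\w_{n-2})$ of positive $(1,1)$-forms on $V$ is convex, hence connected, and the associated family of Hermitian forms $Q_t$ depends continuously on the tuple. If $Q_t$ is non-degenerate for every member of the family (which is exactly the first fact, applied to all tuples), then the number of positive eigenvalues of $Q_t$ is locally constant, hence constant along the family. One then computes it at the diagonal endpoint $\w_1=\cdots=\w_{n-2}=\w$, for a fixed positive form $\w$: there $Q(\alpha,\sigma)=*(\alpha\wedge\bar\sigma\wedge\w^{n-2})$ is the classical Lefschetz pairing on $\Lambda^{1,1}V$, the decomposition $\Lambda^{1,1}V=\C\w\oplus P^{1,1}_\w$ of $\Lambda^{1,1}V$ into the line spanned by $\w$ and the $\w$-primitive subspace is $Q$-orthogonal, $Q(\w,\w)=*(\w^n)>0$, and, by the sign convention in the classical Hodge--Riemann bilinear relations in bidegree $(1,1)$, $Q$ is negative definite on $P^{1,1}_\w$; the latter has dimension $n^2-1$, so the signature is $(1,n^2-1)$. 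For $n=2$ there is no form to deform and one lands directly in this base case.

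The main obstacle is the non-degeneracy of $Q$, i.e. the linear mixed Hard Lefschetz theorem in bidegree $(1,1)$: it is the substantive input and is precisely what \cite{Timorin98} provides (Gromov's argument in the setting of K\"ahler manifolds is an alternative source), so in practice one simply cites it. A self-contained proof would have to prove non-degeneracy and the signature count together in a single induction — deforming one of the forms $\w_j$ at a time toward a diagonalizable reference form while tracking the partial versions of both statements, exactly as in Timorin's argument — which amounts to reproving that theorem and which I would not carry out here.
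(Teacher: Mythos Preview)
Your argument is correct and is essentially the standard route (due to Timorin, whom you cite): reduce to non-degeneracy plus a signature computation, then obtain the signature by a connectedness/deformation argument from the diagonal case where the classical Hodge--Riemann relations apply. The paper does not give its own proof of this lemma at all---it simply cites \cite{Timorin98} as an ``elementary statement''---so you are in fact supplying more than the paper does, and what you supply matches the content of that reference.
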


	By applying Lemma \ref{lem-riemannbilinear} to $\im F_{H_{\orb,\epsilon}(t),\theta_{\orb}}-\frac{\Lambda_{\w_{\orb,\epsilon}}(\im F_{H_{\orb,\epsilon}(t)})}{n} \gamma_{\orb,\epsilon}\w_{\orb,0,\epsilon}$ with respect to $\w_{\orb,0,\epsilon},\cdots,\w_{\orb,n-2,\epsilon}$, we conclude that
	\begin{align*}
		\tr\bigg(&(F_{H_{\orb,\epsilon}(t),\theta_{\orb}}-\frac{\Lambda_{\w_{\orb,\epsilon}}F_{H_{\orb,\epsilon}(t),\theta_{\orb}}}{n}\cdot \gamma_{\orb,\epsilon}\w_{\orb,0,\epsilon})\\
		&\wedge(F_{H_{\orb,\epsilon}(t),\theta_{\orb}}-\frac{\Lambda_{\w_{\orb,\epsilon}}F_{H_{\orb,\epsilon}(t),\theta_{\orb}}}{n}\cdot \gamma_{\orb,\epsilon}\w_{\orb,0,\epsilon})\bigg)\wedge\Omega_{\orb,\epsilon}
	\end{align*}
	is positive outside $f_\orb^{-1}(\Sigma)$ for $0<\epsilon\ll1$, and note that its descent via $f_\orb$ converges to 
	$$\tr\left((F_{H(t),\theta_{X_\reg}}-\frac{\Lambda_{\w}F_{H(t),\theta_{X_\reg}}}{n}\cdot \gamma_0\w_{0})\wedge(F_{H(t),\theta_{X_\reg}}-\frac{\Lambda_{\w}F_{H(t),\theta_{X_\reg}}}{n}\cdot \gamma_0\w_{0})\right)\wedge\Omega$$
	as $\epsilon\rightarrow0$ in $C^\infty_{loc}( X\setminus \Sigma)$, where $\Omega_{\orb,\epsilon}=\w_{\orb,1,\epsilon}\wedge\cdots\wedge\w_{\orb,n-2,\epsilon},\Omega=\w_1\wedge\cdots\wedge\w_{n-2}$ and $\gamma_0=\frac{\w^n}{\w_0^2\wedge\w_1\cdots\wedge\w_{n-2}}$. Combining \eqref{equa-secondchernclass-reduction} and Remark \ref{remark-MTTW25}, and applying the Fatou Lemma, we have
	\begin{align*}
		&-8\pi^2\widehat{\ch}_2(\mE_X)\cdot[\w_1]\cdots[\w_{n-2}]
		=-8\pi^2{\ch}_2^\orb(E_\orb)\cdot[f_\orb^*\w_{1}]\cdots[f_\orb^*\w_{n-2}]\\
		=&-8\pi^2\lim\limits_{\epsilon\rightarrow0}\ch_2^\orb(E_\orb)\cdot[\w_{\orb,1,\epsilon}]\cdots[\w_{\orb,n-2,\epsilon}]
		=\lim\limits_{\epsilon\rightarrow0}\int_{Y_\orb} \tr(F_{H_{\orb,\epsilon}(t),\theta_\orb}\wedge F_{H_{\orb,\epsilon}(t),\theta_\orb})\wedge\Omega_{\orb,\epsilon}\\
		=&\lim\limits_{\epsilon\rightarrow0}\int_{Y_\orb}\bigg(\tr\big((F_{H_{\orb,\epsilon}(t),\theta_{\orb}}-\frac{\Lambda_{\w_{\orb,\epsilon}}F_{H_{\orb,\epsilon}(t),\theta_{\orb}}}{n}\cdot \gamma_{\orb,\epsilon}\w_{\orb,0,\epsilon})\wedge (F_{H_{\orb,\epsilon}(t),\theta_{\orb}}\\
		&\ \ \ \ \ \ \ \ \ \ \ \ \ \ \ \ \ \ -\frac{\Lambda_{\w_{\orb,\epsilon}}F_{H_{\orb,\epsilon}(t),\theta_{\orb}}}{n}\cdot \gamma_{\orb,\epsilon}\w_{\orb,0,\epsilon})\big) -\frac{1}{n^2}|\Lambda_{\w_{\orb,\epsilon}}F_{H_{\orb,\epsilon}(t),\theta_\orb}|^2\gamma_{\orb,\epsilon}^2\w_{\orb,0,\epsilon}^2\bigg)\wedge\Omega_{\orb,\epsilon}\\
		\geq& \int_{X\setminus \Sigma}\tr\left((F_{H(t),\theta_{X_\reg}}-\frac{\Lambda_{\w}F_{H(t),\theta_{X_\reg}}}{n}\cdot \gamma_0\w_{0})\wedge(F_{H(t),\theta_{X_\reg}}-\frac{\Lambda_{\w}F_{H(t),\theta_{X_\reg}}}{n}\cdot \gamma_0\w_{0})\right)\wedge\Omega\\
		&-\frac{1}{n^2}\int_{X\setminus\Sigma}|\Lambda_\w F_{H(t),\theta_{X_\reg}}|^2\gamma_0\w^n.
	\end{align*}
	Similarly, as $t\rightarrow+\infty$, it follows from Theorem \ref{thm1} and the equation \eqref{equa-stable-slope} that
	$$0\geq \int_{X\setminus\Sigma}\tr(F_{H_\infty,\theta_{X_\reg}}\wedge F_{H_\infty,\theta_{X_\reg}})\wedge\w_1\wedge\cdots\wedge\w_{n-2}.$$
	Since \eqref{equa-stable-slope} implies $F_{H_\infty,\theta_{X_\reg}}\wedge\w_0\wedge\cdots\wedge\w_{n-2}=0$ outside $\Sigma$, Lemma \ref{lem-riemannbilinear} then yields    
	$F_{H_\infty,\theta_{X_\reg}}\equiv0$ outside $\Sigma$.
	
	\vspace{0.1cm}
	
	To complete the proof, it remains to prove Bogomolov-Gieseker inequality \eqref{equa-BG}. Recall from \cite[Corollary 1.3]{ZZZ25} that for an $(\w_0,\cdots,\w_{n-2})$-semistable reflexive Higgs sheaf $(\mE_{X_\reg},\theta_{\reg})$,  $$\mG_{X_\reg}:=\End(\mE_{X_\reg})=((\mE_{X_\reg})^\vee\otimes\mE_{X_\reg})^{\vee\vee}$$ with the induced Higgs sheaf $\theta_{\mG_{X_\reg}}$ is $(\w_0,\cdots,\w_{n-2})$-semistable. It follows from \cite[Lemma 2.45]{ZZZ25} that $\widehat{c}_1(\mG_X)=0$ and
	\begin{equation}\label{equa-end-chernclass-resuction}
		\widehat{\Delta}(\mE_X)=\frac{1}{\rank(\mE_X)^2}\widehat{\Delta}(\mG_X)=-\frac{1}{\rank(\mE_X)}\widehat{\ch}_2(\mG_X).
	\end{equation}
	Then the Bogomolov-Gieseker inequality \eqref{equa-BG} can be concluded by applying the proceeding argument to $(\mG_{X_\reg},\theta_{X_\reg})$.
	
	\subsection{$C_{loc}^\infty$-convergence of {$H_{\orb,\epsilon}(t)\rightarrow H_{\orb}(t)$}}\label{section-HYM}
	We adopt the notation of Section \ref{Strategy-HYM} throughout Sections \ref{section-HYM} and \ref{section-HYM2}. Recall that $H_{\orb,\epsilon}(t)$ is the long time solution of the Hermitian-Yang-Mills flow (\ref{DDD1}) on the Higgs orbi-bundle $(E_\orb, \theta_\orb)$ with the fixed smooth initial metric $\hat{H}_\orb$ and with respect to the Hermitian metric $\omega_{\orb,\epsilon}$. We shall prove the following result.
	
	\begin{proposition}\label{lem 2.6}
		By choosing subsequences,   $H_{\orb,\epsilon}(t)$ converges  to $H_{\orb}(t)=\{f_\alpha^*H(t)\}$ in $C_{loc}^\infty(Y_\orb\setminus f_\orb^{-1}(\Sigma))$ as $\epsilon\rightarrow 0$, where $H(t)$ is a smooth solution of the evolution equation (\ref{SSS1}) on $(X\setminus \Sigma) \times [0, +\infty)$.  Furthermore,  $H(t)$ satisfies:
		\begin{equation}\label{H0013}
			\int_{X\setminus \Sigma}|\Phi (H(t), \omega) |_{H(t)}\frac{\omega^{n}}{n!}\leq \int_{X\setminus \Sigma}|\Phi (\hat{H}, \omega) |_{\hat{H}}\frac{\omega^{n}}{n!}\leq \widetilde{C}_{1},
		\end{equation}
		and
		\begin{equation}\label{H00013}
			|\Phi (H(t+s), \omega ) |_{H(t+s)}(x) \leq \tilde{C}_{2}(\frac{1}{s^{2n+1}}+1)\int_{X\setminus \Sigma}|\Phi (H(t), \omega ) |_{H(t)}\frac{\omega^{n}}{n!}
		\end{equation}
		for all $x\in X\setminus \Sigma $, $s> 0$ and $t\geq 0$, where $\Phi (H, \omega )=\sqrt{-1}\Lambda_{\omega}(F_{H}+[\theta , \theta ^{\ast H}])-\lambda_{ \omega }\Id_{E}$, $\widetilde{C}_{1}$ and $\tilde{C}_{2}$ are positive constants.
	\end{proposition}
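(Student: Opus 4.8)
The plan is to run the Higgs--orbifold version of Bando--Siu's argument \cite{BS}, in the form of \cite[Section~4]{LZZ}, while tracking uniformity in $\epsilon$ throughout. Write $\Phi_\epsilon(t)$ for $\im\Lambda_{\w_{\orb,\epsilon}}\bigl(F_{H_{\orb,\epsilon}(t)}+[\theta_\orb,\theta_\orb^{\ast H_{\orb,\epsilon}(t)}]\bigr)-\lambda_\epsilon\Id_{E_\orb}$. The first step is to establish a uniform $L^1$-bound: along the flow \eqref{DDD1} the quantity $\Phi_\epsilon$ solves a forward heat equation for the Hitchin--Simpson Laplacian (the computation being valid chart by chart, exactly as for manifolds), so the Bochner formula together with Kato's inequality yields $(\partial_t-\Delta_{\w_{\orb,\epsilon}})|\Phi_\epsilon|_{H_{\orb,\epsilon}(t)}\le 0$; since $\w_{\orb,\epsilon}$ is balanced one has $\int_{Y_\orb}\Delta_{\w_{\orb,\epsilon}}(\cdot)\,\tfrac{\w_{\orb,\epsilon}^n}{n!}=0$, so integrating over the compact orbifold $Y_\orb$ shows $t\mapsto\int_{Y_\orb}|\Phi_\epsilon(t)|\,\tfrac{\w_{\orb,\epsilon}^n}{n!}$ is nonincreasing, and \eqref{initial1} then bounds it by a constant $\hat C_1$ independent of $\epsilon$ and $t$.

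Next I would feed the subsolution inequality $(\partial_t-\Delta_{\w_{\orb,\epsilon}})|\Phi_\epsilon|^2\le 0$ and the uniform Sobolev inequality of Proposition~\ref{prop-uniformsobolev} into the standard parabolic Moser iteration on a space--time cylinder; this is precisely where the $\epsilon$-uniformity is used, and it produces a constant $\tilde C_1$ depending only on the uniform Sobolev constant and $n$ for which
\[
|\Phi_\epsilon(t+s)|_{H_{\orb,\epsilon}(t+t_0)}(x)\le\tilde C_1\Bigl(\tfrac{1}{s^{2n+1}}+1\Bigr)\int_{Y_\orb}|\Phi_\epsilon(t)|_{H_{\orb,\epsilon}(t)}\tfrac{\w_{\orb,\epsilon}^n}{n!}
\]
for all $x\in Y_\orb$, $s>0$ and $t\ge 0$; combined with Step~1 this makes $|\Phi_\epsilon|$ bounded on $Y_\orb\times[t_0,\infty)$ uniformly in $\epsilon$, for each fixed $t_0>0$.

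The third step is to extract the limit on compact subsets $K\subset\subset Y_\orb\setminus f_\orb^{-1}(\Sigma)$, where $\w_{\orb,\epsilon}\to f_\orb^{\ast}\w$ in $C^\infty(K)$ (the metric $\w_{\orb,\epsilon}$ of \eqref{omegas} depends smoothly on $\epsilon\ge 0$ there, since $\bigwedge_k(f_\orb^{\ast}\w_k+\epsilon\w_{Y_\orb})$ is a smooth positive form on $K$). Using the $\epsilon$-uniform bound on $\Lambda_{\w_{\orb,\epsilon}}\bigl(F_{H_{\orb,\epsilon}(t)}+[\theta_\orb,\theta_\orb^{\ast H_{\orb,\epsilon}}]\bigr)$ for $t\ge t_0$, the classical $C^0$-estimate for the Hermitian--Yang--Mills flow---the maximum principle on the compact orbifold $Y_\orb$ applied to $\log\tr\bigl(\hat H_\orb^{-1}H_{\orb,\epsilon}(t)\bigr)$ and $\log\tr\bigl(H_{\orb,\epsilon}(t)^{-1}\hat H_\orb\bigr)$---gives that $H_{\orb,\epsilon}(t)$ is uniformly (in $\epsilon$) equivalent to $\hat H_\orb$ on $K$ over bounded time intervals, and parabolic Schauder bootstrapping of \eqref{DDD1} upgrades this to uniform $C^\infty_{loc}$-bounds, in space and, via the equation, in time. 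A diagonal argument along an exhaustion then gives a subsequence $\epsilon_j\to 0$ with $H_{\orb,\epsilon_j}(t)\to H_\orb(t)$ in $C^\infty_{loc}(Y_\orb\setminus f_\orb^{-1}(\Sigma))$, where $H_\orb(t)$ solves the Hermitian--Yang--Mills flow for $f_\orb^{\ast}\w$ with $\lambda=\lim_j\lambda_{\epsilon_j}$ the constant of \eqref{SSS1} (using \eqref{equa-firstchernclass-reduction}). Since $\hat H_\orb$ is $G_\mu$-invariant and $f_\mu$ is \'etale over $Y_\orb\setminus f_\orb^{-1}(\Sigma)$, each $H_{\orb,\epsilon}(t)$, hence $H_\orb(t)$, is $G_\mu$-invariant, so $H_\orb(t)$ descends to a metric $H(t)$ on $\mE_{X_\reg}|_{X\setminus\Sigma}$ solving \eqref{SSS1}, i.e.\ $H_\orb(t)=\{f_\mu^{\ast}H(t)\}$, and uniqueness of that flow makes $H(t)$ independent of the subsequence. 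Finally, to obtain \eqref{H0013} and \eqref{H00013} I would pass the estimates of Steps~1 and~2 to the limit along $\epsilon_j$: Fatou's lemma on $Y_\orb\setminus f_\orb^{-1}(\Sigma)$ together with $C^\infty_{loc}$-convergence and the bounded mass of $\w_{\orb,\epsilon}^{n-1}$ (so no mass escapes into $\Sigma$) give the left-hand inequalities, while $|\Phi(\hat H_\orb,\w_{\orb,\epsilon})|\,\tfrac{\w_{\orb,\epsilon}^n}{n!}\to|\Phi(\hat H,\w)|\,\tfrac{\w^n}{n!}$ by dominated convergence handles the right-hand side of \eqref{H0013}.

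I expect the main obstacle to be the $\epsilon$-uniformity, which the proof isolates entirely in Proposition~\ref{prop-uniformsobolev}: once the Sobolev constant of $(Y_\orb,\w_{\orb,\epsilon})$ is known not to blow up as $\epsilon\to 0$, the Moser iteration, the $C^0$-estimate and the Schauder bootstrap all go through uniformly. The one spot demanding real care is near $f_\orb^{-1}(\Sigma)$, where $\w_{\orb,\epsilon}$ genuinely degenerates and the $H_{\orb,\epsilon}(t)$ need not converge; this is why convergence is only claimed on $C^\infty_{loc}(Y_\orb\setminus f_\orb^{-1}(\Sigma))$ and why the integral identities must be obtained via Fatou rather than by integrating by parts directly on the non-compact balanced manifold $(X\setminus\Sigma,\w)$.
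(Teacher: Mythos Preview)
Your proposal is correct and follows essentially the same route as the paper: the subsolution inequality for $|\Phi_\epsilon|$ and $L^1$-monotonicity, parabolic Moser iteration driven by the uniform Sobolev inequality (Proposition~\ref{prop-uniformsobolev}) to get the $L^\infty$--$L^1$ mean-value estimate, then local $C^0$ control of $h_{\orb,\epsilon}=\hat H_\orb^{-1}H_{\orb,\epsilon}(t)$ followed by standard parabolic bootstrapping (\cite[Lemma~2.5]{LZZ}) and a diagonal-subsequence limit with descent via $G_\mu$-invariance. The only cosmetic difference is that the paper obtains the $C^0$ bound on $h_{\orb,\epsilon}$ by directly integrating the pointwise ODE inequality $\partial_t\log(\tr h+\tr h^{-1})\le 2|\Phi_\epsilon|$ rather than invoking a spatial maximum principle, and it does not assert uniqueness of the limit flow on the non-compact $X\setminus\Sigma$ (the statement only claims subsequential convergence).
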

	\iffalse
	Upon clarifying the uniform Sobolev inequality for $\w_{\orb,\epsilon}$, by applying the same argument of \cite[Section 2]{LZZ}, the above proposition can be obtained. We shall provide the corresponding literature and omit the same proof.
	\fi
	\subsubsection{Uniform Sobolev inequality for $\w_{\orb,k,\epsilon}$}
	Thanks to Guo-Phong-Song-Sturm's uniform geometric estimates (\cite{GPS24, GPSS23, GPSS24}) and the argument of \cite[Section 3]{OF25}, we have the following uniform Sobolev inequalities for orbifold K\"ahler metrics $\w_{\orb,k,\epsilon}=f_\orb^*\w_k+\epsilon\w_{Y_\orb}$ (see \eqref{omegai}).
	\begin{proposition}\label{prop-uniformsobolev}
		For any $q\in(1,\frac{n}{n-1})$, there exists a uniform constant $C_S>0$ independent of $\epsilon$ such that
		\begin{equation}\label{equa-uniformsoblev}
			(\int_{Y_\orb}|u|^{2q}\frac{\w_{\orb,k,\epsilon}^n}{n!})^{\frac{1}{q}}\leq C_S\int_{Y_\orb}(|du|_{\w_{\orb,k,\epsilon}}^2+|u|^2)\frac{\w_{\orb,k,\epsilon}^n}{n!}
		\end{equation}
		for any $u\in L_1^2(Y_\orb,\w_{\orb,k,\epsilon})$ and $0<\epsilon\leq 1$.
	\end{proposition}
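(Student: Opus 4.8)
The plan is to reduce \eqref{equa-uniformsoblev} to a uniform Sobolev inequality for a collapsing family of closed semi-positive orbifold $(1,1)$-forms on the fixed compact orbifold $Y_\orb$, and then to quote the orbifold version --- established in \cite[Section 3]{OF25} via a smoothing argument --- of the uniform geometric estimates of Guo--Phong--Song--Sturm \cite{GPS24,GPSS23,GPSS24}. First I would dispose of the non-degenerate part of the parameter range. Since $f_\orb^*\w_k$ is a smooth semi-positive orbifold $(1,1)$-form on the compact orbifold $Y_\orb$, there is a constant $C_0$ with $f_\orb^*\w_k\leq C_0\,\w_{Y_\orb}$; hence for $\epsilon\in[\epsilon_0,1]$ with $\epsilon_0>0$ fixed, $\epsilon_0\,\w_{Y_\orb}\leq\w_{\orb,k,\epsilon}\leq(C_0+1)\,\w_{Y_\orb}$, so $\w_{\orb,k,\epsilon}$ is uniformly comparable to the fixed orbifold K\"ahler metric $\w_{Y_\orb}$ and \eqref{equa-uniformsoblev} follows from the classical Sobolev inequality on $(Y_\orb,\w_{Y_\orb})$ (proved from a finite orbifold atlas and a subordinate partition of unity) with a constant depending only on $n$, $q$, $\epsilon_0$, $C_0$ and $\w_{Y_\orb}$. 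It therefore remains to treat the collapsing regime $0<\epsilon\leq\epsilon_0$.

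For that regime the substance is the following, which is the orbifold counterpart of the main estimate of \cite{GPS24,GPSS23,GPSS24} carried out in \cite[Section 3]{OF25}: on the compact complex orbifold $Y_\orb$ with fixed orbifold K\"ahler metric $\w_{Y_\orb}$, if $\w$ is a closed semi-positive orbifold $(1,1)$-form with $\int_{Y_\orb}\w^n\geq c_0>0$ and with Monge--Amp\`ere density $\w^n/\w_{Y_\orb}^n$ bounded in $L^p(\w_{Y_\orb}^n)$ for some $p>1$, then \eqref{equa-uniformsoblev} holds for $\w$ in place of $\w_{\orb,k,\epsilon}$ with a constant $C_S$ depending only on $n$, $q$, $p$, $\w_{Y_\orb}$, $c_0$ and the $L^p$ bound, the admissible exponent $q$ filling out $(1,\tfrac{n}{n-1})$ as $p\to\infty$. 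So it suffices to produce, uniformly in $\epsilon\in(0,\epsilon_0]$, a lower bound for $\int_{Y_\orb}\w_{\orb,k,\epsilon}^n$ and an $L^p$ --- in fact $L^\infty$ --- bound for $\w_{\orb,k,\epsilon}^n/\w_{Y_\orb}^n$.

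Both bounds are elementary. Expanding $\w_{\orb,k,\epsilon}^n=(f_\orb^*\w_k+\epsilon\w_{Y_\orb})^n$ gives $\int_{Y_\orb}\w_{\orb,k,\epsilon}^n=\int_{Y_\orb}(f_\orb^*\w_k)^n+O(\epsilon)$; since $f$ is a bimeromorphism and $\w_k$ is K\"ahler, $\int_{Y_\orb}(f_\orb^*\w_k)^n=\int_X\w_k^n>0$, so $\int_{Y_\orb}\w_{\orb,k,\epsilon}^n\geq\tfrac12\int_X\w_k^n=:c_0>0$ once $\epsilon$ is small. For the density, for $0<\epsilon\leq1$ one has $\w_{\orb,k,\epsilon}\leq(C_0+1)\,\w_{Y_\orb}$ and hence $\w_{\orb,k,\epsilon}^n\leq(C_0+1)^n\,\w_{Y_\orb}^n$, so the density is uniformly bounded in $L^\infty(\w_{Y_\orb}^n)$ and a fortiori in every $L^p$. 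Feeding $c_0$ and $(C_0+1)^n$ into the orbifold estimate of the previous paragraph yields $C_S$ independent of $\epsilon$, and the $L^\infty$ density bound grants the full range $q\in(1,\tfrac{n}{n-1})$; this is exactly the assertion of the Proposition.

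The genuine difficulty is packaged into \cite[Section 3]{OF25}: the estimates of \cite{GPS24,GPSS23,GPSS24} are stated on smooth compact K\"ahler manifolds, whereas $Y_\orb$ carries quotient singularities, so the Sobolev inequality must be transported through a smoothing. Concretely one passes to a resolution $h\colon\widehat Y\to Y$ (or works on smooth orbifold charts), replaces $\w_{\orb,k,\epsilon}$ by the genuine K\"ahler metrics $h^*\w_{\orb,k,\epsilon}+\delta\,\w_{\widehat Y}$, applies the manifold estimates with constants controlled uniformly in \emph{both} $\epsilon$ and $\delta$ --- for which the volume lower bound and the $L^\infty$ density bound above must be shown stable under $h$ and under $\delta\to0$ --- and then lets $\delta\to0$, checking that the degenerate gradient term and volume form survive the limit. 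Maintaining uniformity through this double limit is the main obstacle; everything else is bookkeeping.
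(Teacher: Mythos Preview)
Your proposal is essentially correct and follows the same route as the paper: reduce to the uniform Sobolev estimates of \cite{GPS24,GPSS23,GPSS24} via the smoothing argument of \cite[Section 3]{OF25}. The paper does not bother splitting off the regime $\epsilon\in[\epsilon_0,1]$, but otherwise the architecture is the same, and both proofs locate the substance in \cite{OF25}.

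One point of imprecision is worth flagging. You frame the input to \cite{OF25} as an $L^\infty$ bound on $\w_{\orb,k,\epsilon}^n/\w_{Y_\orb}^n$ and then say this bound ``must be shown stable under $h$''. This is not quite how the argument runs. On the resolution $\widehat Y$ the reference metric is a genuine K\"ahler form $\theta_{\widehat Y}$, not $h^*\w_{Y_\orb}$; the orbifold potential $\varphi$ is only continuous on the underlying space, so $h^*\w_{\orb,k,\epsilon}$ is not a smooth K\"ahler form on $\widehat Y$ and one cannot apply \cite{GPSS23} to it directly. The paper (following \cite{OF25}) observes that $\log\bigl(h^*\w_{\orb,k,\epsilon}^n/\theta_{\widehat Y}^n\bigr)$ decomposes as a function with log-type singularities along the exceptional divisor plus a continuous piece, and the point of \cite[Lemmas 3.12, 3.13, 3.15]{OF25} is to construct smooth K\"ahler approximants $\w_{\epsilon,j}$ on $\widehat Y$ lying in a fixed class $\mathcal K(\widehat Y,\theta_{\widehat Y},\gamma,A,K,\delta)$ in the sense of \cite[Theorem 4.1]{GPSS23}, with the data $\gamma,A,K,\delta$ independent of $j$ and $\epsilon$. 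So the relevant density bound on the resolution is an $L^{1+\delta}$ bound together with a lower bound by a fixed continuous $\gamma\ge 0$ vanishing on the exceptional set, not a transported $L^\infty$ bound. This does not invalidate your outline---you correctly defer the hard work to \cite{OF25}---but your description of what that hard work consists of is slightly off.
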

	
	Recall the uniform Sobolev inequality proved in \cite[Section 4]{GPSS23} (see also \cite[Section 2.3]{Pan} for general compact Hermitian manifolds).
	\begin{theorem}\label{thm-Kclass}(cf. \cite[Theorem 4.1]{GPSS23})
		Let $(\widehat{Y},\theta_{\widehat{Y}})$ be a compact K\"ahler manifold. For some fixed positive constants $A,K,\delta$ and a nonnegative continuous function $\gamma$ on $\widehat{Y}$, the set $\mathcal{K}(\widehat{Y},\theta_{\widehat{Y}}, \gamma, A, K, \delta)$ is defined to be the set of K\"ahler metrics $\w$ on $\widehat{Y}$ satisfying
		$$e^{F_\w}\geq \gamma,\quad \|e^{F_\w}\|_{L^{1+\delta}}\leq K,\quad [\w]\cdot[\theta_{\widehat{Y}}]^{n-1}\leq A\ \text{ and }\ [\w]^n\geq A^{-1},$$ 
		where $F_\w:=\log(\frac{1}{V_\w}\frac{\w^n}{\theta_{\widehat{Y}}^n})$ for $V_\w:=[\w]^n$. For any $q\in(1,\frac{n}{n-1})$, there exists a constant $C=C(\gamma, A, K, \delta, q)>0$ such that for any $\w\in\mathcal{K}(\widehat{Y}, \theta_{\widehat{Y}}, \gamma, A, K, \delta)$, we have
		$$(\int_{\widehat{Y}}|u|^{2q}\w^n)^{\frac{1}{q}}\leq C\int_{\widehat{Y}}(|du|_{\w}^2+|u|^2)\frac{\w^n}{n!}$$
		for any $u\in L_1^2(\widehat{Y},\w)$.
	\end{theorem}
	
	Building on Theorem \ref{thm-Kclass}, Proposition \ref{prop-uniformsobolev} can be directly concluded by applying the smooth approximation argument from \cite[Section 3]{OF25}. For the reader's convenience, we include a brief explanation.
	
	\begin{proof}[Proof of Proposition \ref{prop-uniformsobolev}]
		For simplicity, we omit the subscript `k' for now. Note that $\mu_\alpha:U_\alpha\rightarrow U_\alpha/G_\alpha$ is \'etale outside a proper analytic subspace $Z$ of codimension at least $2$. Recall from Proposition \ref{prop-orbifoldkahlerform} that there exists a smooth K\"ahler form $\w_0$ on the underlying space $Y$ such that $\w_{Y_\orb}:=\{\mu_\alpha^*\w_0+\im\pp\varphi_\alpha\}$ for some $G_\alpha$-invariant smooth function $\varphi_\alpha$ on every chart $U_\alpha$. Then $\varphi_\alpha$ induces a bounded continuous function $\varphi$ on $Y$ that is smooth outside $Z$ and in particular, $\w_{Y_\orb}=\{\mu_\alpha^*\w_Y\}$ for $\w_Y=\w_0+\im\pp\varphi$. Thus, $\w_{\orb,\epsilon}:=\{f_\orb^*\w+\epsilon(\mu_\alpha^*\w_0+\im\pp\varphi_\alpha)\}$ can also be viewed as a K\"ahler current $\w_\epsilon=f^*\w+\epsilon\w_0+\epsilon\im\pp\varphi$ on the underlying complex space $Y$ and $\w_\epsilon$ is smooth outside $Z$. Let $g:\widehat{Y}\rightarrow Y$ be a resolution of singularities of $Y$ and $D$ be the exceptional divisor of $f\circ g$. We use the notation of Proposition \ref{prop-uniformsobolev} and Theorem \ref{thm-Kclass}.
		
		As in \cite[Section 7]{GPSS23}, the basic strategy is to consider a smooth approximation of $g^*\w_\epsilon$ on $\widehat{Y}$. The situation differs slightly from that in \cite[Section 7]{GPSS23} because $\log(\frac{g^*\w_\epsilon^n}{\theta_{\widehat{Y}}^n})$ does not, in general, have log type analytic singularities (see \cite[Definition 7.2]{GPSS23}). In fact, it is the sum of a function $F_{\log}$ with log type analytic singularities and a continuous function $G_\epsilon$ that is smooth outside $D$ (cf. \cite[Lemma 3.9]{OF25}). The main consideration in \cite[Section 3]{OF25} is addressing this issue. According to \cite[Lemma 3.12, Lemma 3.13 and Lemma 3.15]{OF25}, there exists a sequence $\w_{\epsilon,j}$ of K\"ahler metrics on $\widehat{Y}$ such that $\w_{\epsilon,j}\in\mathcal{K}(\widehat{Y}, \theta_{\widehat{Y}}, \gamma, A, K, \delta)$ for some $A, K, \delta$ and $\gamma$ independent of $j$ and $\epsilon$, and for any integer $k\geq0$, we have $\w_{\epsilon,j}\rightarrow g^*\w_\epsilon$ in $C_{loc}^k(\widehat{Y}\setminus D)$. Then by applying the argument on \cite[Page 34]{GPSS23} and Theorem \ref{thm-Kclass}, the uniform Sobolev inequality \eqref{equa-uniformsoblev} for $\w_{\orb,\epsilon}$ can be proved because $\mu_\alpha$ is unramified outside $Z$.
	\end{proof}
	
	\begin{remark}
		It might be possible to develop an orbifold analogue of \cite{GPSS23} to establish Proposition \ref{prop-uniformsobolev}; see e.g. \cite{Sche24} for the orbifold version of the mean value inequality.
	\end{remark}
	
	It is easy to see that there exists some constant $C_k>0$ independent of $\epsilon$ such that
	\begin{equation}\label{equa-metric-localbound}
		C_{k}^{-1}\omega_{\orb,k, \epsilon}\leq \omega_{\orb,\epsilon} \leq C_{k}\omega_{\orb,k, \epsilon}
	\end{equation}
     from the construction (see \eqref{omegai} and \eqref{omegas}). We have
	the following uniform Sobolev inequalities for the Hermitian metrics $\omega_{\orb,\epsilon}$ by applying Proposition \ref{prop-uniformsobolev} to  $q=\frac{2n}{2n-1}$.

	\begin{proposition}
		There exists a uniform constant $C_{S}>0$ such that
		\begin{equation}\label{Sobolev}
			(\int_{Y_\orb} |u|^{\frac{4n}{2n-1}}\frac{\omega_{\orb,\epsilon}^{n}}{n!})^{\frac{2n-1}{2n}}\leq C_{S} \int_{Y_\orb} (|du |^{2}_{\omega_{\orb,\epsilon }} +|u|^{2})\frac{\omega_{\orb,\epsilon}^{n}}{n!} 
		\end{equation}
		for all $u \in L_{1}^{2}(Y_\orb,\w_{\orb,\epsilon})$ and all $0<\epsilon \leq 1$.
	\end{proposition}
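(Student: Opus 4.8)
The plan is to deduce this Hermitian Sobolev inequality directly from the K\"ahler case recorded in Proposition~\ref{prop-uniformsobolev}, together with the uniform metric comparison \eqref{equa-metric-localbound}. First I would check that the exponent $q=\frac{2n}{2n-1}$ lies in the admissible range $\bigl(1,\frac{n}{n-1}\bigr)$: the inequality $q>1$ is immediate, while $q<\frac{n}{n-1}$ is equivalent to $2(n-1)<2n-1$, hence true. Since $2q=\frac{4n}{2n-1}$ and $\frac1q=\frac{2n-1}{2n}$, Proposition~\ref{prop-uniformsobolev} applied with this $q$ (say to the index $k=0$) provides a constant $C_S^{(0)}>0$, independent of $\epsilon\in(0,1]$, such that
\[
\Bigl(\int_{Y_\orb}|u|^{\frac{4n}{2n-1}}\frac{\omega_{\orb,0,\epsilon}^{\,n}}{n!}\Bigr)^{\frac{2n-1}{2n}}
\le C_S^{(0)}\int_{Y_\orb}\bigl(|du|_{\omega_{\orb,0,\epsilon}}^{2}+|u|^{2}\bigr)\frac{\omega_{\orb,0,\epsilon}^{\,n}}{n!}
\]
for every $u\in L_1^2(Y_\orb,\omega_{\orb,0,\epsilon})$.

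The second step is to transport this estimate to the Hermitian metric $\omega_{\orb,\epsilon}$ via \eqref{equa-metric-localbound}, i.e. $C_0^{-1}\omega_{\orb,0,\epsilon}\le\omega_{\orb,\epsilon}\le C_0\,\omega_{\orb,0,\epsilon}$ with $C_0$ independent of $\epsilon$. This comparison gives $C_0^{-n}\omega_{\orb,0,\epsilon}^{\,n}\le\omega_{\orb,\epsilon}^{\,n}\le C_0^{\,n}\omega_{\orb,0,\epsilon}^{\,n}$ for the volume forms, and — dually on $T^*Y_\orb$, where a metric inequality reverses — $C_0^{-1}|du|_{\omega_{\orb,0,\epsilon}}^2\le|du|_{\omega_{\orb,\epsilon}}^2\le C_0|du|_{\omega_{\orb,0,\epsilon}}^2$ for the gradient norms (in particular the two $L_1^2$ spaces coincide, with equivalent norms, so the same class of test functions applies). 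Substituting these two-sided bounds into the displayed inequality — enlarging the volume form on the left and then bounding the right-hand integrand above by the corresponding $\omega_{\orb,\epsilon}$-integrand — yields the asserted inequality with $C_S:=C_0^{(2n-1)/2}\,C_0^{\,n+1}\,C_S^{(0)}$, which is again independent of $\epsilon$ because both $C_0$ and $C_S^{(0)}$ are.

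I do not anticipate a real obstacle in this last step: all the substantive analysis — the Guo--Phong--Song--Sturm uniform geometric estimates and the smooth-approximation construction of \cite{OF25} underlying Proposition~\ref{prop-uniformsobolev} — is already in hand, and what remains is the elementary arithmetic placing $q$ in range, the routine scaling of volume forms and cotangent norms under \eqref{equa-metric-localbound}, and the observation that the constants $C_0$ and $C_S^{(0)}$ are uniform in $\epsilon$ by construction. The only point that calls for mild care is bookkeeping the exponent $\frac{2n-1}{2n}$ when pulling the factor $C_0^{\,n}$ out of the power on the left-hand side.
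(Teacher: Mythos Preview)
Your proposal is correct and matches the paper's approach exactly: the paper simply states that the proposition follows by applying Proposition~\ref{prop-uniformsobolev} with $q=\frac{2n}{2n-1}$ together with the uniform metric comparison \eqref{equa-metric-localbound}, and you have filled in precisely those details.
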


	\subsubsection{Mean value inequality on orbifolds }\label{subsubsection-meanvalueinequality}
	
	We have the following standard result based on the Stokes formula for orbifolds (see e.g. \cite[Theorem 3.4.2]{CF2019}) and the fact that $d\w_\orb^{n-1}=0$.
	
	\begin{lemma}\label{ml1}
		Let $(Y_\orb, \omega_\orb)$ be an $n$-dimensional compact complex Gauduchon orbifold. Suppose that the nonnegative function $\phi(x,t)\in C^{\infty}(Y_\orb\times[0,\infty))$ is a solution of the following heat flow \begin{equation}\label{htf011}
			(2\sqrt{-1}\Lambda_{\w_\orb}\partial\overline{\partial}-\frac{\partial}{\partial t})\phi\geq 0. \end{equation} Then $$ \|\phi(t)\|_{L^1(Y_\orb,\omega_\orb)}\leq \|\phi(t_0)\|_{L^1(Y_\orb,\omega_\orb)}$$ for any $t>t_{0}\geq 0$.
	\end{lemma}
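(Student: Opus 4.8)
The plan is to show that $I(t) := \int_{Y_\orb}\phi(\cdot,t)\,\tfrac{\omega_\orb^n}{n!}$ is non-increasing in $t$; since $\phi\ge 0$ this integral equals $\|\phi(t)\|_{L^1(Y_\orb,\omega_\orb)}$, so the lemma follows immediately by integrating $I'(t)\le 0$ from $t_0$ to $t$. Because $\phi$ is smooth and $Y_\orb$ is compact, $I$ is differentiable and we may differentiate under the integral sign; using the hypothesis $\frac{\partial\phi}{\partial t} \le 2\sqrt{-1}\,\Lambda_{\omega_\orb}\partial\bar\partial\phi$ pointwise,
\[
I'(t) = \int_{Y_\orb}\frac{\partial\phi}{\partial t}\,\frac{\omega_\orb^n}{n!} \le 2\int_{Y_\orb}\bigl(\sqrt{-1}\,\Lambda_{\omega_\orb}\partial\bar\partial\phi\bigr)\,\frac{\omega_\orb^n}{n!}.
\]

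The key is then that the right-hand integral vanishes for any Gauduchon orbifold metric. Using the pointwise identity $(\Lambda_{\omega_\orb}\beta)\,\tfrac{\omega_\orb^n}{n!} = \beta\wedge\tfrac{\omega_\orb^{n-1}}{(n-1)!}$ for a $(1,1)$-form $\beta$, one has
\[
\int_{Y_\orb}\bigl(\sqrt{-1}\,\Lambda_{\omega_\orb}\partial\bar\partial\phi\bigr)\,\frac{\omega_\orb^n}{n!} = \frac{1}{(n-1)!}\int_{Y_\orb}\sqrt{-1}\,\partial\bar\partial\phi\wedge\omega_\orb^{n-1}.
\]
Now I would integrate by parts twice. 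Since $\bar\partial\phi\wedge\omega_\orb^{n-1}$ has bidegree $(n-1,n)$, its $\bar\partial$-part vanishes for bidegree reasons, so $\partial(\bar\partial\phi\wedge\omega_\orb^{n-1}) = d(\bar\partial\phi\wedge\omega_\orb^{n-1})$ and the orbifold Stokes formula \cite[Theorem 3.4.2]{CF2019} gives $\int_{Y_\orb}\partial\bar\partial\phi\wedge\omega_\orb^{n-1} = \int_{Y_\orb}\bar\partial\phi\wedge\partial\omega_\orb^{n-1}$. Applying the same reasoning to the $(n,n-1)$-form $\phi\,\partial\omega_\orb^{n-1}$ yields $\int_{Y_\orb}\bar\partial\phi\wedge\partial\omega_\orb^{n-1} = \int_{Y_\orb}\phi\,\partial\bar\partial\omega_\orb^{n-1}$, which is zero by the Gauduchon condition $\partial\bar\partial\omega_\orb^{n-1}=0$. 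Hence $I'(t)\le 0$, and the lemma follows.

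The only point demanding care — and it is routine rather than a genuine obstacle — is justifying the two integrations by parts on the orbifold $Y_\orb$ in the presence of the singular strata; this is precisely what the orbifold Stokes theorem provides, since orbifold forms are $G_\alpha$-invariant smooth forms on the uniformizing charts and there is no boundary. Alternatively, one could pull back along a resolution of $Y_\orb$ (or along the uniformizing maps and average) and carry out the identical computation on a smooth compact space, where the Gauduchon hypothesis $\partial\bar\partial\omega_\orb^{n-1}=0$ again gives the vanishing.
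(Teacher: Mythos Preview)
Your proof is correct and follows exactly the approach the paper indicates: the paper gives only a one-line proof sketch invoking the orbifold Stokes formula and the condition on $\omega_\orb^{n-1}$, and you have filled in precisely those details. One small remark: the paper's sketch actually cites the balanced condition $d\omega_\orb^{n-1}=0$ (which holds in the application), whereas you have correctly shown that only the Gauduchon condition $\partial\bar\partial\omega_\orb^{n-1}=0$ stated in the lemma is needed, via the two successive integrations by parts.
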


	The proof of the following lemma proceeds by directly adapting the argument in \cite[Appendix]{CZ25} to the orbifold setting. The only difference is that the constant $\widehat{C}_1$ does not depend on $d\w_\orb^{n-1}$ since we assume that $\w_\orb$ is balanced (see \cite[(A.27)]{CZ25}.
	
	\begin{lemma}\label{l1}
		Let $(Y_\orb, \omega_\orb)$ be an $n$-dimensional compact balanced orbifold. \iffalse Suppose that there exists a constant $C_{S}(\omega )$ such that
		\begin{equation}\label{Sobolev2}
			(\int_{Y_\orb} |f|^{\frac{4n}{2n-1}}\frac{\omega_\orb^{n}}{n!})^{\frac{2n-1}{2n}}\leq C_{S}(\omega_\orb)\Big( \int_{Y} (|df |^{2}_{\omega} +|f|^{2})\frac{\omega^{n}}{n!} \Big)
		\end{equation}
		for all $f \in L_{1}^{2}(X\setminus\Sigma )$ supported in $X\setminus\Sigma$. \fi Suppose that $\phi_\orb(x,t)\in C^{\infty}(Y_\orb\times[0,\infty))$ is a nonnegative orbifold function satisfying  the following inequality \begin{equation}\label{htf01}
			(2\sqrt{-1}\Lambda_{\w_\orb}\partial\overline{\partial}-\frac{\partial}{\partial t})\phi_\orb\geq -\breve{C}\phi_\orb 
		\end{equation} 
		with $\phi_\orb(0)\in L^1(Y_\orb,\omega_\orb)$ and $\|\phi_\orb(t)\|_{L^1(Y_\orb,\omega_\orb)}\leq \|\phi_\orb(t_0)\|_{L^1(Y_\orb,\omega_\orb)}$ for any $t>t_{0}\geq 0$. Then we have the following statement:
		
		(1) For any $s>0$ and $t_{0}\geq 0$,
		\begin{equation}\label{mean01}
			\|\phi_\orb(t_{0}+s)\|_{L^{\infty}(Y_\orb)}\leq \breve{C}_{1}(\frac{1}{s^{2n+1}}+1)\|\phi_\orb(t_{0})\|_{L^1(Y_\orb,\w_\orb)},
		\end{equation}where $\breve{C}_{1}$ is a positive constant depending only on $n$, $\breve{C}$ and the uniform Sobolev constant $C_S$.
		
		(2) Let $Z_\orb \subset Y_\orb$ be a closed
		subset, $\bar{\Omega}_\orb$ be a compact subset of $Y_\orb\setminus Z_\orb$, and $\hat{\omega}_\orb$ be another orbifold Hermitian metric on $Y_\orb$. If there exist positive constants $\delta_{0}$ and $C_{\bar{\Omega}_\orb}$ such that $\overline{B_{\delta_{0}}(\bar{\Omega}_\orb)}\subset Y_\orb\setminus Z_\orb$ and $C_{\bar{\Omega}_\orb}^{-1}\hat{\omega}_\orb\leq \omega_\orb \leq C_{\bar{\Omega}_\orb} \hat{\omega}_\orb$ on $B_{\delta_{0}}(\bar{\Omega}_\orb)$, where $B_{\delta}(\bar{\Omega}_\orb)$ is the $\delta$-neighborhood of $\bar{\Omega}_\orb$ with respect to $\hat{\omega}_\orb$. Then for all $0\leq \delta_{1}< \delta_{2} \leq \delta_{0}$,
		\begin{equation}\label{221}
			\sup_{\overline{B_{\delta_{1}}(\bar{\Omega}_\orb)}\times [0, T]}\phi_\orb (x, t)\leq \breve{C}_{2}((\delta_{2}-\delta_{1})^{-1}+1)^{2n+1}(\int_{0}^{T}\|\phi_\orb(t)\|_{L^1(Y_\orb,\omega_\orb)}dt+\|\phi_\orb(0)\|_{L^{\infty}(B_{\delta_{2}}(\bar{\Omega}_\orb))}),
		\end{equation}
		 where $\breve{C}_{2}$ is a constant depending only on $n$, $\breve{C}$, the Sobolev constant $C_S$ and $\Vol(Y_\orb,\w_\orb)$.
	\end{lemma}

	\subsubsection{Proof of Proposition \ref{lem 2.6}}
	The following estimates were proved by Simpson for manifolds, and the proof extends to the orbifold setting by working locally on each orbifold chart $U_\alpha$ (cf. \cite[Lemma 6.1]{Simpson88}).
	
	\begin{lemma}
		Along  the heat flow (\ref{DDD1}), we have
		\begin{equation}\label{F1}
			(\Delta_{ {\w_{\orb,\epsilon}}}-\frac{\partial }{\partial t})\tr (\Phi (H_{\orb,\epsilon}(t), \omega_{\orb,\epsilon}))=0,
		\end{equation}
		\begin{equation}\label{F2}
			(\Delta_{ \w_{\orb,\epsilon}}-\frac{\partial }{\partial t} )|\Phi (H_{ \orb,\epsilon}(t), \omega_{\orb,\epsilon})|_{H_{\orb,\epsilon}(t)}^{2}=2|D_{H_{\orb,\epsilon}, \phi} (\Phi (H_{\orb,\epsilon}(t), \omega_{\orb,\epsilon}))|^{2}_{H_{\orb,\epsilon}(t), \omega_{\orb,\epsilon}},
		\end{equation}
		and
		\begin{equation}\label{H00}
			(\Delta_{\w_{\orb,\epsilon}} -\frac{\partial }{\partial t}) |\Phi (H_{\orb,\epsilon}(t), \omega_{\orb,\epsilon})|_{H_{\orb,\epsilon}(t)}\geq  0.
		\end{equation}
	\end{lemma}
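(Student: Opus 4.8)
The plan is to reduce the three parabolic identities to pointwise statements on the orbifold charts, where they become the classical Higgs-bundle Bochner computation of \cite[Lemma 6.1]{Simpson88}. On a chart $U_\mu$ of $Y_\orb$, the holomorphic structure $\bar\partial_{E_\mu}$, the Higgs field $\theta_\mu$, the evolving metric $H_\mu(t)$, the balanced metric $\omega_\mu$ and the flow \eqref{DDD1} are honest smooth objects on the complex manifold $U_\mu$, all $G_\mu$-equivariant; and $\Delta_{\omega_{\orb,\epsilon}}$, the trace, the pointwise norm $|\cdot|_{H_{\orb,\epsilon}(t)}$ and the Hitchin--Simpson connection are all local operations. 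Hence it suffices to establish \eqref{F1}, \eqref{F2}, \eqref{H00} chart by chart; $G_\mu$-invariance of the resulting quantities is then automatic and the identities glue over $Y_\orb$.

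On a fixed chart, write $H=H_\mu(t)$, $\theta=\theta_\mu$, $\omega=\omega_\mu$, $\Phi=\Phi(H,\omega)$, so that along \eqref{DDD1} one has $H^{-1}\partial_t H=-2\Phi$. Since $\bar\partial_\theta=\bar\partial_{E_\mu}+\theta$ is constant in $t$ while the $(1,0)$-operator $D'_{H,\theta}=\partial_H+\theta^{*H}$ evolves by $\partial_t D'_{H,\theta}=D'_{H,\theta}(H^{-1}\partial_t H)$, differentiating the Hitchin--Simpson curvature and using that $\omega$ is $t$-independent gives $\partial_t\Phi=-2\,\im\,\Lambda_\omega\bar\partial_\theta D'_{H,\theta}\Phi$. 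I would then invoke the Higgs-bundle Bochner--Kodaira--Nakano identities on $\End(E_\mu)$: the Higgs condition yields $(\bar\partial_\theta)^2=(D'_{H,\theta})^2=0$ and $\{\bar\partial_\theta,D'_{H,\theta}\}=[F_{H,\theta},\cdot]$, so that $\im\,\Lambda_\omega\{\bar\partial_\theta,D'_{H,\theta}\}\Phi=[\im\,\Lambda_\omega F_{H,\theta},\Phi]=[\Phi+\lambda_\epsilon\Id,\Phi]=0$; together with the K\"ahler-type identities $(D'_{H,\theta})^{*}=\im\,\Lambda_\omega\bar\partial_\theta$ and $\bar\partial_\theta^{*}=-\im\,\Lambda_\omega D'_{H,\theta}$, this rearranges into the scalar-coefficient identity $(\Delta_{\omega_{\orb,\epsilon}}-\partial/\partial t)\Phi=0$, with $\Delta_{\omega_{\orb,\epsilon}}$ the Chern (complex) Laplacian of $\omega$. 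Taking the trace (which commutes with $\Delta_{\omega_{\orb,\epsilon}}$ and $\partial/\partial t$) gives \eqref{F1}. Since $\Phi$ is $H$-self-adjoint, $|\Phi|_H^2=\tr(\Phi^2)$, so $(\Delta_{\omega_{\orb,\epsilon}}-\partial/\partial t)|\Phi|_H^2=2\tr\!\big(\Phi(\Delta_{\omega_{\orb,\epsilon}}-\partial/\partial t)\Phi\big)+2|D_{H,\theta}\Phi|_{H,\omega}^2=2|D_{H,\theta}\Phi|_{H,\omega}^2$, which is \eqref{F2}; and \eqref{H00} follows from \eqref{F2} by Kato's inequality $|D_{H,\theta}\Phi|\ge|d|\Phi||$ combined with $\Delta_{\omega_{\orb,\epsilon}}|\Phi|^2=2|\Phi|\Delta_{\omega_{\orb,\epsilon}}|\Phi|+2|d|\Phi||^2$ away from $\{\Phi=0\}$, the zero set being handled by the usual cut-off argument.

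The computations are routine once this reduction is in place, so I do not expect a serious obstacle. The one point that deserves attention is that $\omega_{\orb,\epsilon}$ is only balanced, not K\"ahler: one must verify that the torsion terms of the Hermitian metric $\omega$ do not enter the particular operator combinations used above. This is precisely what forces the reading of $\Delta_{\omega_{\orb,\epsilon}}$ as the Chern/complex Laplacian rather than the Laplace--Beltrami operator, and is why only the Gauduchon-type relation $\pp\omega_{\orb,\epsilon}^{n-1}=0$---automatic here, since each $\omega_{\orb,k,\epsilon}$ is closed---is needed for these pointwise identities; the stronger balanced property is used separately, e.g.\ in the $L^1$-monotonicity of Lemma~\ref{ml1}. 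A secondary, purely bookkeeping point is that the passage to orbifold charts is legitimate exactly because every object in sight is $G_\mu$-equivariant, so the chart-wise identities descend to $Y_\orb$.
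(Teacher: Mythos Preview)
Your proposal is correct and follows essentially the same approach as the paper: the paper's entire proof is the single sentence ``The following estimates are proved by Simpson for manifolds and the proof is same by restriction to the orbifold chart $U_i$ (\cite[Lemma~6.1]{Simpson88}),'' which is exactly your reduction to $G_\mu$-equivariant computations on each chart $U_\mu$. Your write-up is in fact more careful than the paper's, since you explicitly flag that $\omega_{\orb,\epsilon}$ is only balanced (not K\"ahler) and that $\Delta_{\omega_{\orb,\epsilon}}$ must be read as the complex/Chern Laplacian---a point the paper leaves implicit.
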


	By (\ref{initial1}) and (\ref{H00}), we have
	\begin{equation}\label{H001}
		\int_{Y_\orb}|\Phi (H_{\orb,\epsilon}(t_{2}), \omega_{\orb,\epsilon}) |_{H_{\orb,\epsilon}(t_{2})}\frac{\omega_{\orb,\epsilon }^{n}}{n!}\leq \int_{Y_\orb}|\Phi (H_{\orb,\epsilon}(t_{1}), \omega_{\orb,\epsilon}) |_{H_{\orb,\epsilon}(t_{1})}\frac{\omega_{\orb,\epsilon }^{n}}{n!}\leq \hat{C}_{1}
	\end{equation}
	for any $0\leq t_{1}\leq t_{2}$, where $\hat{C}_{1}$ is a constant independent of $\epsilon $. Since $\w_{\orb,k,\epsilon}$ is strictly positive outside $f_\orb^{-1}(\Sigma)$, the same holds for $\w_{\orb,\epsilon}$. By combining the uniform Sobolev inequalities (\ref{Sobolev}), (\ref{equa-metric-localbound}) with \eqref{H001}, and applying Lemma \ref{ml1} and Lemma \ref{l1}, we have
	
	\begin{lemma}\label{m02} There exists a constant $\grave{C}_{1}$  which is independent of $\epsilon $,  such that
		\begin{equation}
			\|\Phi (H_{\orb,\epsilon}(t_{0}+s), \omega_{\orb,\epsilon})\|_{L^{\infty}(Y_\orb,H_{\orb,\epsilon}(t_{0}+s))}\leq \grave{C}_{1}(\frac{1}{s^{2n+1}}+1)\|\Phi (H_{\orb,\epsilon}(t_{0}), \omega_{\orb,\epsilon})\|_{L^1(Y_\orb,H_{\orb,\epsilon}(t_{0}))}
		\end{equation}
		for any $s>0$ and $t_{0}\geq 0$. Furthermore, for any compact subset $\bar{\Omega}_\orb$ of $Y_\orb\setminus f_\orb^{-1}(\Sigma) $, there exists a constant $\grave{C}_{2}(\bar{\Omega}_\orb)$ independent of $\epsilon $, such that
		\begin{equation}\label{220}
			|\Phi (H_{\orb,\epsilon}(t), \omega_{\orb,\epsilon})|_{H_{\orb,\epsilon}(t)}(x)\leq \grave{C}_{2}(\bar{\Omega}_\orb)
		\end{equation}
		for all $(x, t)\in \bar{\Omega}_\orb\times [0, \infty )$.
	\end{lemma}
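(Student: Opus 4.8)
The plan is to read Lemma~\ref{m02} off the orbifold mean value inequality Lemma~\ref{l1}, applied to the family of nonnegative functions
\[
\phi_\epsilon(x,t):=|\Phi(H_{\orb,\epsilon}(t),\w_{\orb,\epsilon})|_{H_{\orb,\epsilon}(t)}(x)\qquad\text{on }Y_\orb\times[0,\infty),
\]
the entire point being that all constants produced can be taken independent of $\epsilon$. First I would check that $\phi_\epsilon$ meets the hypotheses of Lemma~\ref{l1}: since $H_{\orb,\epsilon}(t)$ is a smooth Hermitian metric on the orbi-bundle $E_\orb$ over the compact orbifold $Y_\orb$, each $\phi_\epsilon(\cdot,t)$ is smooth and bounded; by \eqref{H00} it satisfies the differential inequality \eqref{htf01} with $\breve{C}=0$; and by \eqref{H001}---which follows from \eqref{H00} together with Lemma~\ref{ml1} applied to the compact balanced orbifold $(Y_\orb,\w_{\orb,\epsilon})$---the quantity $\|\phi_\epsilon(t)\|_{L^1(Y_\orb,\w_{\orb,\epsilon})}$ is nonincreasing in $t$ and bounded above by the $\epsilon$-independent constant $\hat C_1$.

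For the first inequality I would invoke part~(1) of Lemma~\ref{l1} with $\w_\orb=\w_{\orb,\epsilon}$ and $\breve{C}=0$. The resulting constant depends only on $n$ and on the Sobolev constant of $\w_{\orb,\epsilon}$, which by the uniform Sobolev inequality \eqref{Sobolev} is bounded by $C_S$ independently of $\epsilon$; hence \eqref{mean01} gives the asserted estimate with an $\epsilon$-free constant $\tilde C_1$. As a by-product, taking $s=1$ and using $\|\phi_\epsilon(t_0)\|_{L^1(Y_\orb,\w_{\orb,\epsilon})}\le\hat C_1$ shows $\|\phi_\epsilon(t)\|_{L^\infty(Y_\orb)}\le 2\tilde C_1\hat C_1$ for all $t\ge 1$, uniformly in $\epsilon$, so only the regime $t\in[0,1]$ needs further attention.

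The second inequality I would then obtain by localizing away from $f_\orb^{-1}(\Sigma)$. Fix a smooth orbifold K\"ahler metric $\hat\w_\orb$ on $Y_\orb$ (say $\hat\w_\orb=\w_{Y_\orb}$) and choose $\delta_0>0$ with $\overline{B_{\delta_0}(\bar\Omega_\orb)}\subset Y_\orb\setminus f_\orb^{-1}(\Sigma)$, the neighbourhood taken with respect to $\hat\w_\orb$. On this set each $f_\orb^*\w_k$ is a smooth positive $(1,1)$-form (as $f_\orb$ is \'etale and $X$ is smooth off $\Sigma$), hence the metrics $\w_{\orb,k,\epsilon}=f_\orb^*\w_k+\epsilon\w_{Y_\orb}$ and, through \eqref{equa-metric-localbound}, $\w_{\orb,\epsilon}$ are uniformly comparable to $\hat\w_\orb$ for $0<\epsilon\le 1$; in particular $\phi_\epsilon(\cdot,0)=|\Phi(\hat H_\orb,\w_{\orb,\epsilon})|_{\hat H_\orb}$ is bounded on $\overline{B_{\delta_0/2}(\bar\Omega_\orb)}$ uniformly in $\epsilon$. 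Feeding this, together with $\int_0^1\|\phi_\epsilon(t)\|_{L^1(Y_\orb,\w_{\orb,\epsilon})}\,dt\le\hat C_1$ and the uniform bounds on $C_S(\w_{\orb,\epsilon})$ and $\Vol(Y_\orb,\w_{\orb,\epsilon})$, into part~(2) of Lemma~\ref{l1} with closed subset $f_\orb^{-1}(\Sigma)$, $T=1$, $\delta_1=0$, $\delta_2=\delta_0/2$ yields an $\epsilon$-independent bound on $\sup_{\bar\Omega_\orb\times[0,1]}\phi_\epsilon$; combined with the $t\ge1$ estimate above, this proves \eqref{220} with $\tilde C_2(\bar\Omega_\orb)$ independent of $\epsilon$. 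I do not expect a genuine obstacle here: every piece of hard analysis---the uniform Sobolev inequality \eqref{Sobolev}, the $L^1$-monotonicity \eqref{H001}, and the Moser iteration packaged in Lemma~\ref{l1}---is already available, and the only care needed is to keep all constants independent of $\epsilon$ and to isolate the small-time regime near $f_\orb^{-1}(\Sigma)$, where $\phi_\epsilon(\cdot,0)$ may truly blow up as $\epsilon\to0$.
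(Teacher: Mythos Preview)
Your proposal is correct and follows exactly the approach the paper takes: the paper's proof is the single sentence ``Building on the uniform Sobolev inequalities \eqref{Sobolev}, \eqref{equa-metric-localbound}, \eqref{H001}, Lemma~\ref{ml1} and Lemma~\ref{l1}, we have'', and you have simply unpacked this by verifying the hypotheses of Lemma~\ref{l1} for $\phi_\epsilon=|\Phi(H_{\orb,\epsilon}(t),\omega_{\orb,\epsilon})|_{H_{\orb,\epsilon}(t)}$ and tracking the $\epsilon$-independence of all constants. The split into $t\ge1$ (via part~(1)) and $t\in[0,1]$ (via part~(2)) is the natural way to extract the uniform-in-time bound on $\bar\Omega_\orb$, and this is implicitly what the paper intends.
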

	
	Setting $\exp (s_{\orb,\epsilon}(t))=h_{\orb,\epsilon}(t)=\hat{H}_\orb^{-1}H_{\orb,\epsilon }(t),$
	we have
	\begin{equation}\label{c01}
		\begin{split}
			\frac{\partial}{\partial t}\log(\tr h_{\orb,\epsilon}(t)+\tr h_{\orb,\epsilon}^{-1}(t))
			=&\ \frac{\tr(h_{\orb,\epsilon}(t)\cdot h_{\orb,\epsilon}^{-1}(t)\frac{\partial h_{\orb,\epsilon}(t)}{\partial t})-\tr(h_{\orb,\epsilon}^{-1}(t)\frac{\partial h_{\orb,\epsilon}(t)}{\partial t}\cdot h_{\orb,\epsilon}^{-1}(t))}{\tr h_{\orb,\epsilon}(t)+\tr h_{\orb,\epsilon}^{-1}(t)}\\
			\leq &\ 2|\Phi (H_{\orb,\epsilon}(t), \omega_{\orb,\epsilon})|_{H_{\orb,\epsilon}(t)},
		\end{split}
	\end{equation}
	and
	\begin{equation}\label{c02}
		\log (\frac{1}{2r}(\tr h_{\orb,\epsilon}(t) + \tr h_{\orb,\epsilon}(t)^{-1}))\leq |s_{\orb,\epsilon}(t)|_{\hat{H}}\leq r^{\frac{1}{2}}\log (\tr h_{\orb,\epsilon}(t) + \tr h_{\orb,\epsilon}(t)^{-1}),
	\end{equation}
	where $r=\rank (\mE_X)$. By (\ref{H001}) and (\ref{220}), we have
	\begin{equation}\label{C0a}
		\int_{Y_\orb}\log(\tr h_{\orb,\epsilon}(t)+\tr h_{\orb,\epsilon}^{-1}(t))-\log(2r)\frac{\omega_{\orb,\epsilon}^{n}}{n!}\leq 2\hat{C}_{1}t,
	\end{equation}
	\begin{equation}\label{C0b}
		\log(\tr h_{\orb,\epsilon}(t)+\tr h_{\orb,\epsilon}^{-1}(t))-\log(2r)\leq 2\grave{C}_{2}(\bar{\Omega}_\orb)t
	\end{equation}
	and\begin{equation}\label{C01}
		|s_{\orb,\epsilon}(t)|_{\hat{H}}(x)\leq 2r^{\frac{1}{2}}\tilde{C}_{2}(\bar{\Omega}_\orb)t +r^{\frac{1}{2}}\log(2r)
	\end{equation}
	for all $(x, t) \in \bar{\Omega}_\orb\times [0, \infty )$.
	
	%In the following lemma, we derive a local $C^{1}$-estimate of $h_{\epsilon }(t)$.
	
	\medskip

	%\begin{lemma}\label{C1}
	%Let $\mathcal{T}_\epsilon(t)=h_\epsilon^{-1}(t)\partial_{\hat{H}}h_\epsilon(t)$. Then, for any compact subset $\bar{\Omega}$ of $X\setminus \Sigma $ and any $T>0$, there %exists a constant $\bar{C}_{1}(\bar{\Omega}, T)$ which independent of $\epsilon $, such that
	%\begin{equation}\label{C11}
	%\max_{(x, t)\in \bar{\Omega}\times [0, T]}|\mathcal{T}_{\epsilon}(t)|_{\hat{H}, \omega_{\epsilon}}(x)\leq \bar{C}_{1}(\bar{\Omega}, T)
	%\end{equation}
	%for all $0< \epsilon \leq 1$.
	%\end{lemma}

	By using the uniform local $C^{0}$-estimate (\ref{C01})  and following the argument in \cite[Lemma 2.5]{LZZ},
	we can derive uniform local  $C^{\infty}$-estimates of $h_{\orb,\epsilon}(t)$. The only difference is that the Riemann curvature is replaced by the Chern curvature of $\omega_{\orb,\epsilon}$, so we omit the proof.
	
	\begin{lemma}\label{lem 2.5}
		Let $\bar{\Omega}_\orb$ be a compact subset of $Y_\orb\setminus f_\orb^{-1}(\Sigma)$ and  $T>0$. Then for every integer $j\geq 0$, there exists a constant $\bar{C}_{j+2}= \bar{C}_{j+2}(\bar{\Omega}_\orb, T)$ which is independent of $\epsilon $, such that
		\begin{equation}
			\max_{(x, t)\in \bar{\Omega}_\orb\times [0, T]}|\nabla_{\hat{H}_\orb}^{j+2}h_{\orb,\epsilon}(t)|_{(\hat{H}_\orb, \omega_{\orb,\epsilon})}(x)\leq \bar{C}_{j+2}
		\end{equation}
		for all $0< \epsilon \leq 1$, where $\nabla_{\hat{H}_\orb}$ is the covariant derivative with respect to the Chern connection $D_{\hat{H}_\orb}$ of $\hat{H}$ and the Chern connection $D_{\omega_{\orb,\epsilon}}$ of $\omega_{\orb,\epsilon}$.
	\end{lemma}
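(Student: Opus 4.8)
The plan is to run the standard interior parabolic bootstrap for the Hermitian–Yang–Mills flow, following \cite[Lemma 2.5]{LZZ}, with the one structural change that the background metric is the degenerating Hermitian metric $\omega_{\orb,\epsilon}$ in place of a fixed K\"ahler metric. The first point to record is that on a compact set $\bar\Omega_\orb\subset Y_\orb\setminus f_\orb^{-1}(\Sigma)$ the family $\{\omega_{\orb,\epsilon}\}_{0<\epsilon\le1}$ has $\epsilon$-uniformly bounded geometry: $f_\orb$ is \'etale near $\bar\Omega_\orb$ and each $\omega_k$ is smooth on its (smooth) image, so $\omega_{\orb,k,\epsilon}=f_\orb^*\omega_k+\epsilon\omega_{Y_\orb}$ varies in a $C^\infty$-bounded family of metrics uniformly equivalent to $f_\orb^*\omega_k$ (cf. \eqref{equa-metric-localbound}); taking the $(n-1)$-st root in \eqref{omegas} transfers this to $\omega_{\orb,\epsilon}$, so its Chern connection and all covariant derivatives of its Chern curvature are bounded on $\bar\Omega_\orb$ independently of $\epsilon$. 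Since all computations are local on an orbifold chart $U_\mu$, they are literally the manifold computations.

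Writing $H_{\orb,\epsilon}(t)=\hat{H}_\orb\,h_{\orb,\epsilon}(t)$, the flow \eqref{DDD1} is the quasilinear parabolic system $h_{\orb,\epsilon}^{-1}\partial_t h_{\orb,\epsilon}=-2\Phi(H_{\orb,\epsilon}(t),\omega_{\orb,\epsilon})$, whose principal term, after expanding $\Phi$, is $\im\Lambda_{\omega_{\orb,\epsilon}}\bar\partial_{\hat{H}_\orb}(h_{\orb,\epsilon}^{-1}\partial_{\hat{H}_\orb}h_{\orb,\epsilon})$, i.e. of the form $\Delta_{\omega_{\orb,\epsilon}}h+$ (terms quadratic in $\nabla h$); its coefficients depend smoothly, with $\epsilon$-uniform bounds on $\bar\Omega_\orb$, on $\hat{H}_\orb$, $\theta_\orb$, $\omega_{\orb,\epsilon}$ and $h_{\orb,\epsilon}$ itself. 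By \eqref{C01} we have a uniform local $C^0$ bound for $h_{\orb,\epsilon}(t)$, hence for $h_{\orb,\epsilon}^{-1}(t)$, and by Lemma \ref{m02} a uniform local $C^0$ bound for $\partial_t h_{\orb,\epsilon}$. From these, the first real step is the interior gradient estimate: I would derive a Bochner-type inequality $(\Delta_{\omega_{\orb,\epsilon}}-\partial_t)|\nabla_{\hat{H}_\orb}h_{\orb,\epsilon}|^2\ge -C(|\nabla_{\hat{H}_\orb}h_{\orb,\epsilon}|^2+1)$ on $\bar\Omega_\orb\times[0,T]$ with $C$ uniform in $\epsilon$, and then run the local parabolic maximum principle / Moser iteration against a space–time cutoff, exactly as in \cite[Lemma 2.5]{LZZ}, to obtain $\sup_{\bar\Omega_\orb\times[0,T]}|\nabla_{\hat{H}_\orb}h_{\orb,\epsilon}|_{\hat{H}_\orb,\omega_{\orb,\epsilon}}\le\bar{C}_1$ with $\bar{C}_1$ independent of $\epsilon$.

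With uniform $C^0$ and $C^1$ bounds on a compact neighbourhood, the rest is a routine bootstrap: on a slightly smaller compact set the system for $h_{\orb,\epsilon}$ has $C^\alpha$ coefficients uniformly in $\epsilon$, so interior parabolic Schauder estimates give a uniform $C^{2,\alpha}$ bound; differentiating the system and iterating along a nested sequence $\bar\Omega_\orb\subset\subset\cdots\subset\subset Y_\orb\setminus f_\orb^{-1}(\Sigma)$ produces, for each $j\ge0$, a uniform local bound on $\nabla_{\hat{H}_\orb}^{j+2}h_{\orb,\epsilon}(t)$ over $[0,T]$, with the constant depending only on $j$, $\bar\Omega_\orb$, $T$, the fixed data $\hat{H}_\orb,\theta_\orb$, and the $\epsilon$-uniform geometry of $\omega_{\orb,\epsilon}$ from the first paragraph — hence independent of $\epsilon$. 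I expect the gradient estimate to be the only genuine obstacle, since the elliptic part of the flow is quadratic in $\nabla h$ and one must reproduce the Bochner computation of \cite{LZZ} with the Riemann curvature of the fixed background replaced by the Chern curvature of $\omega_{\orb,\epsilon}$ (which is only \emph{uniformly}, not \emph{$\epsilon$-independently}, smooth away from $f_\orb^{-1}(\Sigma)$ — though that is exactly what the first paragraph supplies); the passage to orbifolds is harmless because everything happens inside a single uniformizing chart.
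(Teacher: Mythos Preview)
Your proposal is correct and follows essentially the same approach as the paper: the paper's proof is a one-line reference stating that one uses the uniform local $C^0$-estimate \eqref{C01} and follows the argument of \cite[Lemma 2.5]{LZZ}, with the only difference being that the Riemann curvature is replaced by the Chern curvature of $\omega_{\orb,\epsilon}$. Your write-up is in fact more detailed than the paper's, but the strategy---uniform local geometry of $\omega_{\orb,\epsilon}$ away from $f_\orb^{-1}(\Sigma)$, the $C^0$ input \eqref{C01}, the Bochner/gradient step with Chern curvature, and the parabolic bootstrap on nested compacts---is exactly what the paper intends.
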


	Note that $\w_{\orb,\epsilon}$ converges to $f_\orb^*\w$ by the construction (\ref{omegai}) in $C_{loc}^\infty(Y_\orb\setminus f_\orb^{-1}(\Sigma))$ and
	$$\lambda=\lim\limits_{\epsilon\rightarrow0}\lambda_{\epsilon}=\frac{2\pi}{\int_{Y_\orb}(f_\orb^*\w)^{n}}\mu_{(f^*\w_0,\cdots,f^*\w_{n-2})}(E_\orb)=\frac{2\pi}{\int_{Y_\orb}(f_\orb^*\w)^{n}}\mu_{(\w_0,\cdots,\w_{n-2})}(\mE_{X}),$$
	where the last equality follows from \eqref{equa-firstchernclass-reduction} and $\hat{H}$ denotes the descent of $\hat{H}_\orb|_{Y_\orb\setminus f_\orb^{-1}(\Sigma)}$.
	From the above local uniform $C^{\infty}$-bounds on $H_{\orb,\epsilon}(t)$, we get, by choosing a subsequence, that $H_{\orb,\epsilon}(t)$ converges in $C_{loc}^\infty(Y_\orb,f_\orb^{-1}(\Sigma))$ to smooth metric $H_\orb(t)$ as $\epsilon\rightarrow 0$, which satisfies \eqref{SSS1}. Since $f_\alpha$ is \'etale outside $f_\alpha^{-1}(\Sigma)$, by descending $H_\orb(t)|_{Y_\orb\setminus f_\orb^{-1}(\Sigma)}$ to a smooth Hermitian metric $H(t)$ on $\mE_{X_\reg}|_{X\setminus \Sigma}$, the proof of Proposition \ref{lem 2.6} is thereby completed.

	\subsection{Limiting behavior of $H(t)$}\label{section-HYM2}
	Note that $(X\setminus\Sigma,\w)$ is a non-compact balanced manifold since $\w^{n-1}=\w_0\wedge\cdots\wedge\w_{n-2}$. Then $(\mathcal{E}_{X_\reg}, \theta_{X_\reg})|_{X\setminus \Sigma}$ can be seen as a Higgs bundle over $(X\setminus \Sigma,\omega)$. For simplicity, we write $\theta_{X_\reg}$ as $\theta$ for now. Let us recall some basic lemmas.

	\begin{lemma}[Cutoff functions, see e.g. {\cite[Lemma 5.6]{Pan}}]\label{cutoff}
		Let $(X, \omega)$ be a compact Hermitian space of dimension $n$, and let $\Sigma \subset X$ be an analytic subspace of codimension $\geq k$. Then there exist cutoff functions $\{\rho_{i} \}_{i\in \N}$ with $supp(\rho_{i})\subset X\setminus \Sigma$, increasing to $1_{X\setminus \Sigma }$, and a positive constant $C'$ such that
		$\int_{X}\left(|d\rho_{i}|^{2k}_{\omega}+|\partial\bar{\partial }\rho_{i}|^{k}_{\omega}\right)\omega^{n}\leq C'$ for all $i\in \N$, and
		\begin{equation}\label{cutoff2}\lim_{i\rightarrow +\infty}\int_{X}\left(|d\rho_{i}|^{2k-\delta }_{\omega}+|\partial\bar{\partial }\rho_{i}|^{k-\delta }_{\omega}\right)\omega^{n}=0
		\end{equation} for any sufficiently small $\delta >0$.
	\end{lemma}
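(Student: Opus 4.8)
The plan is to build the cut-off functions chart by chart, as logarithmic cut-offs of a regularized distance function to $\Sigma$, and to control the resulting integrals by a dyadic decomposition that uses only the Minkowski-content estimate coming from $\codim\Sigma\ge k$. This is essentially \cite[Lemma~5.6]{Pan}, so I only describe the structure.

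First I would cover a neighbourhood of $\Sigma$ by finitely many charts $U_1,\dots,U_M$, each admitting a closed holomorphic embedding into a bounded domain of some $\C^{N_\alpha}$, together with $U'_\alpha\Subset U_\alpha$ still covering $\Sigma$. On each $U_\alpha$ I would take a Whitney--Stein regularized distance $\widetilde d_\alpha$ to $\Sigma\cap U_\alpha$: a smooth positive function on $U_\alpha\setminus\Sigma$ with $\widetilde d_\alpha\asymp\operatorname{dist}(\cdot,\Sigma)$, $|d\widetilde d_\alpha|_\omega\le C$ and $|\pp\widetilde d_\alpha|_\omega\le C\widetilde d_\alpha^{-1}$, so that $|\pp\log\widetilde d_\alpha^{2}|_\omega\le C\widetilde d_\alpha^{-2}$. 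Fixing $\chi\in C^\infty(\R,[0,1])$ with $\chi\equiv0$ on $(-\infty,0]$ and $\chi\equiv1$ on $[1,\infty)$, I would set $\rho_{i,\alpha}:=\chi\bigl((\log\widetilde d_\alpha+2i)/i\bigr)$, glue it to the constant $1$ across $\partial U'_\alpha$ using a fixed smooth cut-off supported in $U_\alpha$, and put $\rho_i:=\prod_{\alpha=1}^{M}\rho_{i,\alpha}$. Then $\rho_i$ vanishes on a shrinking neighbourhood of $\Sigma$ (near $p\in\Sigma$ the factor of a chart $U'_\alpha\ni p$ already vanishes), equals $1$ outside the $e^{-i}$-neighbourhood of $\Sigma$, and $\rho_i\nearrow 1_{X\setminus\Sigma}$.

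For the estimates, the chain rule gives $|d\rho_{i,\alpha}|_\omega\le Ci^{-1}\widetilde d_\alpha^{-1}$ and $|\pp\rho_{i,\alpha}|_\omega\le Ci^{-1}\widetilde d_\alpha^{-2}$ on the annular region $A_{i,\alpha}:=\{\,e^{-2i}\le\widetilde d_\alpha\le e^{-i}\,\}$, while the product and the gluing cut-off introduce only cross terms bounded by $|d\rho_{i,\alpha}|_\omega|d\rho_{i,\beta}|_\omega$ (handled by Cauchy--Schwarz) or terms supported in the $e^{-i}$-tube around $\Sigma$, which has volume $O(e^{-2ki})$. So everything reduces to the claim that $\int_{A_{i,\alpha}}\widetilde d_\alpha^{-p}\,\omega^n\le Ci$ when $p=2k$ and $\le Ce^{-ci}$ when $p<2k$. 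Decomposing $A_{i,\alpha}$ into the $O(i)$ dyadic shells $S_m:=\{2^{-m-1}\le\widetilde d_\alpha\le 2^{-m}\}$ with $m\gtrsim i$, on which $\widetilde d_\alpha^{-p}\le C2^{pm}$, and using that $\codim_{\C}\Sigma\ge k$ forces $\Sigma\cap U'_\alpha$ to carry locally finite $2(n-k)$-dimensional Hausdorff measure in $\C^{N_\alpha}$ --- hence the Minkowski bound $\Vol_\omega(S_m)\le\Vol_\omega(\{\widetilde d_\alpha\le 2^{-m}\}\cap U'_\alpha)\le C2^{-2km}$ --- each shell contributes $\le C2^{(p-2k)m}$, and the claim follows by summing over the $\sim i$ relevant $m$. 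Feeding in $p=2k$ and $p=2k-\delta$ for $d\rho_i$, and $p=2k$ and $p=2k-2\delta$ for $\pp\rho_i$, yields $\int_X\bigl(|d\rho_i|_\omega^{2k}+|\pp\rho_i|_\omega^{k}\bigr)\,\omega^n\le Ci^{1-k}+Ci^{1-2k}\le C$ and shows that the corresponding integrals at exponents $2k-\delta$ and $k-\delta$ tend to $0$.

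The point that requires genuine care --- and the reason I would not simply cut off $\log\!\bigl(\sum_j|f_j|^2\bigr)$ for local generators $f_j$ of the ideal of $\Sigma$ --- is that such a function is only comparable to $2N\log\operatorname{dist}(\cdot,\Sigma)$, with $N$ the \L ojasiewicz exponent, which can exceed $1$; then the level region $\{i\le -\log(\sum_j|f_j|^2)\le i+1\}$ fails to shrink at the geometric rate in $\operatorname{dist}(\cdot,\Sigma)$, and the dyadic sum acquires a factor $2^{kj(1-1/N)}$ that blows up. Working with a bona fide regularized distance forces the effective exponent to be $1$, and this is exactly what makes the top exponents $2k$ and $k$ attainable. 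The inputs I have used --- the Whitney--Stein regularized distance with its scaling derivative bounds, and the local finiteness of the $2(n-k)$-dimensional Hausdorff measure (hence the Minkowski estimate) of an analytic set of codimension $\ge k$, which persist after passing to local embeddings even when $X$ itself is singular --- are all classical, and putting them together is the content of \cite[Lemma~5.6]{Pan}.
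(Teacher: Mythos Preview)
The paper does not give its own proof of this lemma: it is simply stated with a reference to \cite[Lemma~5.6]{Pan} and used as a black box in the subsequent arguments (Lemmas~\ref{lem-sobolev}, \ref{lemma-meanvalue}, \ref{stokes1} and in Section~\ref{section-HYM2}). So there is nothing to compare against directly.

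That said, your sketch is a correct and standard reconstruction of the argument. The two essential inputs you identify---the Whitney--Stein regularized distance $\widetilde d_\alpha$ with the scaling bounds $|d\widetilde d_\alpha|_\omega\le C$, $|\pp\widetilde d_\alpha|_\omega\le C\widetilde d_\alpha^{-1}$, and Lelong's theorem giving $\Vol_\omega(\{\widetilde d_\alpha\le\varepsilon\})\le C\varepsilon^{2k}$ for an analytic set of complex codimension $\ge k$---are exactly what is needed, and your dyadic computation of $\int_{A_{i,\alpha}}\widetilde d_\alpha^{-p}\,\omega^n$ is clean. Your remark about the \L ojasiewicz exponent is also on point: cutting off $\log\bigl(\sum_j|f_j|^2\bigr)$ for arbitrary local generators would only give $\Vol(\{|f|\le\varepsilon\})\le C\varepsilon^{2k/N}$, and the top exponent $p=2k$ then fails; forcing the effective exponent to $1$ via a genuine regularized distance is precisely the fix. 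This is the content of the cited lemma, and your write-up captures it.
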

	
	\begin{lemma}\label{lem-sobolev}
		In light of Proposition \ref{prop-uniformsobolev} and Lemma \ref{cutoff}, the uniform Sobolev inequality holds for $(X\setminus\Sigma,\w)$.
	\end{lemma}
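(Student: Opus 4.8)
The plan is to transfer the uniform Sobolev inequality (\ref{Sobolev}) on the compact orbifolds $(Y_\orb,\w_{\orb,\epsilon})$ down to the open manifold $(X\setminus\Sigma,\w)$ by letting $\epsilon\to 0$, and then to pass from compactly supported test functions to the required class by means of the cutoff functions $\{\rho_i\}$ of Lemma~\ref{cutoff}.

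First I would fix $u\in L_1^2(X\setminus\Sigma,\w)$ with compact support in $X\setminus\Sigma$, extended by zero across $\Sigma$. Since $f_\mu=f\circ\pi_\mu$ is \'etale outside $f_\mu^{-1}(\Sigma)$ (Proposition~\ref{prop-HEequation-setting}(b)), the pull-back $f_\orb^*u:=\{f_\mu^*u\}$ is a well-defined element of $L_1^2(Y_\orb,\w_{\orb,\epsilon})$, supported in the fixed compact set $K:=f_\orb^{-1}(\supp u)\subset\subset Y_\orb\setminus f_\orb^{-1}(\Sigma)$, and it satisfies $|d(f_\orb^*u)|_{f_\orb^*\w}=f_\orb^*(|du|_\w)$ together with $f_\orb^*(\w^n)=(f_\orb^*\w)^n$ (as $f_\mu$ is a local biholomorphism over $X\setminus\Sigma$ and $f_\mu^*\w$ is the pulled-back metric). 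Applying (\ref{Sobolev}) to $f_\orb^*u$ yields, for every $0<\epsilon\leq 1$,
\[
\Big(\int_{Y_\orb}|f_\orb^*u|^{\frac{4n}{2n-1}}\frac{\w_{\orb,\epsilon}^n}{n!}\Big)^{\frac{2n-1}{2n}}\leq C_S\int_{Y_\orb}\big(|d(f_\orb^*u)|^2_{\w_{\orb,\epsilon}}+|f_\orb^*u|^2\big)\frac{\w_{\orb,\epsilon}^n}{n!}.
\]
Because $\w_{\orb,\epsilon}\to f_\orb^*\w$ in $C^\infty_{loc}(Y_\orb\setminus f_\orb^{-1}(\Sigma))$ as $\epsilon\to 0$ (see (\ref{omegai})) and both $f_\orb^*u$ and $d(f_\orb^*u)$ are supported in the fixed compact set $K$ on which the metrics converge smoothly, both sides converge as $\epsilon\to 0$ to the integrals computed with $f_\orb^*\w$; using the descent identity $\int_{Y_\orb}f_\orb^*\alpha=c\int_{X\setminus\Sigma}\alpha$ for top-degree forms $\alpha$ supported in $X\setminus\Sigma$ (with $c>0$ the orbifold degree of $f_\orb$, valid since $f_\orb$ is \'etale there), we obtain
\[
\Big(\int_{X\setminus\Sigma}|u|^{\frac{4n}{2n-1}}\frac{\w^n}{n!}\Big)^{\frac{2n-1}{2n}}\leq C_S'\int_{X\setminus\Sigma}\big(|du|^2_\w+|u|^2\big)\frac{\w^n}{n!},
\]
with $C_S'=C_S\,c^{\frac{1}{2n}}$ a uniform constant, for all $u\in L_1^2(X\setminus\Sigma,\w)$ with compact support in $X\setminus\Sigma$.

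To remove the compact-support hypothesis, given an arbitrary bounded $u\in L_1^2(X\setminus\Sigma,\w)$ I would apply the inequality just proved to $u_i:=\rho_iu$, with $\rho_i$ as in Lemma~\ref{cutoff} for $k=\codim_X\Sigma\geq 2$. From $du_i=\rho_i\,du+u\,d\rho_i$, the bound $|u|\leq\|u\|_{L^\infty}$, and $\|d\rho_i\|_{L^2(\w)}\to 0$ — which follows from (\ref{cutoff2}) together with $\Vol(X,\w)<\infty$ and H\"older's inequality, since $2<2k-\delta$ for $\delta$ small — one gets $\int|du_i|^2_\w\to\int|du|^2_\w$, while dominated convergence gives $\int|u_i|^2\to\int|u|^2$ and $\int|u_i|^{\frac{4n}{2n-1}}\to\int|u|^{\frac{4n}{2n-1}}$; letting $i\to\infty$ produces the Sobolev inequality for $u$ with the same constant $C_S'$. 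The main obstacle is the $\epsilon\to 0$ step: one must ensure the test function is supported away from the degeneration locus $f_\orb^{-1}(\Sigma)$, so that the globally degenerating metrics $\w_{\orb,\epsilon}$ still converge smoothly on the relevant compact set, and one must keep careful track of the orbifold covering degree when transferring integrals between $Y_\orb$ and $X\setminus\Sigma$; the cutoff step is then routine given the quantitative control in Lemma~\ref{cutoff}.
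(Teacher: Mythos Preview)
Your proposal is correct and follows precisely the approach the paper intends: the lemma is stated in the paper without proof, as an immediate consequence of the uniform orbifold Sobolev inequality (\ref{Sobolev}) and the cutoff functions of Lemma~\ref{cutoff}, and your argument supplies exactly these details—pull back compactly supported test functions along $f_\orb$, let $\epsilon\to 0$ using the $C^\infty_{loc}$-convergence $\w_{\orb,\epsilon}\to f_\orb^*\w$ on $Y_\orb\setminus f_\orb^{-1}(\Sigma)$, and then remove the support condition via $\rho_i$.

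Two minor remarks. First, the ``orbifold degree'' $c$ is simply $1$: since $f:Y\to X$ is bimeromorphic and the orbifold integral over $Y_\orb$ of a form pulled back from $Y$ equals the ordinary integral over $Y$, one has $\int_{Y_\orb}f_\orb^*\alpha=\int_Y f^*\alpha=\int_{X\setminus\Sigma}\alpha$ for $\alpha$ supported in $X\setminus\Sigma$; so $C_S'=C_S$. Second, you restrict the cutoff step to bounded $u$, which is all that is needed for the subsequent applications (Lemma~\ref{lemma-meanvalue} is applied to bounded $C^2$ functions), but the general $L_1^2$ case follows by a further routine truncation.
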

	
	Using the above cutoff functions and Lemma \ref{lem-sobolev}, we obtain the following mean value inequality by applying the standard Moser iteration as in \cite[Lemma 5.2]{LZZ} and the Stokes formula.
	\begin{lemma}[Mean value inequality]\label{lemma-meanvalue}
		Let $(X,\w)$ be a compact balanced space of dimension $n$ and $\Sigma\subset X$ be an analytic subspace of codimension $\geq2$. Suppose that $u\in C^2(X\setminus \Sigma)$ is a bounded positive function satisfying $\Delta_\w u\geq -A_0$ for some positive constant $A_0$. Then there exists a positive constant $\widetilde{C}$ depending only on $C_S$, $A_0$ and $\Vol(X,\w)=\int_{X_\reg}\frac{\w^n}{n!}$ such that
		$$\|u\|_{L^\infty(X\setminus\Sigma)}\leq \widetilde{C}(\|u\|_{L^1(X\setminus \Sigma,\w)}+1).$$
	\end{lemma}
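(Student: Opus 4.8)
The plan is to run the standard Moser iteration on the (possibly incomplete, non-compact) balanced manifold $X\setminus\Sigma$, in the spirit of \cite[Lemma 5.2]{LZZ}. The two structural inputs are the uniform Sobolev inequality of Lemma~\ref{lem-sobolev} and the cutoff functions $\{\rho_i\}$ of Lemma~\ref{cutoff} (used with $k=2$, which is permitted since $\codim_X\Sigma\geq2$); the balanced hypothesis $d\w^{n-1}=0$ is precisely what makes the integration by parts on $X\setminus\Sigma$ free of torsion terms, so that the computation proceeds exactly as in the K\"ahler case.

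\textbf{Reduction.} First I would replace $u$ by $\bar u:=u+1$. It is still bounded and positive, satisfies $\bar u\geq1$, and $\Delta_\w\bar u=\Delta_\w u\geq -A_0\geq -A_0\bar u$. Hence it suffices to prove $\|\bar u\|_{L^\infty(X\setminus\Sigma)}\leq C\|\bar u\|_{L^1(X\setminus\Sigma,\w)}$ for some $C=C(C_S,A_0)$; since $\|\bar u\|_{L^1}\leq\|u\|_{L^1}+\Vol(X,\w)$, undoing the shift then yields the statement with $\widetilde C=C\max\{1,\Vol(X,\w)\}$. From now on I assume $u\geq1$ and $\Delta_\w u\geq -A_0u$.

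\textbf{Caccioppoli estimate and the limit near $\Sigma$.} For $q\geq1$ I would test $\Delta_\w u\geq -A_0u$ against the nonnegative, compactly supported function $\rho_i^2u^{2q-1}\in C^1_c(X\setminus\Sigma)$ and integrate over $X\setminus\Sigma$. Because $\bar\partial\w^{n-1}=0$, Stokes' formula yields
\[
\int_{X\setminus\Sigma}\langle\nabla u,\nabla(\rho_i^2u^{2q-1})\rangle\,\frac{\w^n}{n!}\ \leq\ A_0\int_{X\setminus\Sigma}\rho_i^2u^{2q}\,\frac{\w^n}{n!}
\]
with no boundary or torsion contributions. Expanding $\nabla(\rho_i^2u^{2q-1})$ and using Young's inequality to absorb the cross term, I obtain
\[
\int_{X\setminus\Sigma}\rho_i^2u^{2q-2}|\nabla u|^2\,\frac{\w^n}{n!}\ \leq\ \frac{CA_0}{q}\int_{X\setminus\Sigma}\rho_i^2u^{2q}\,\frac{\w^n}{n!}+\frac{C}{q^2}\int_{X\setminus\Sigma}u^{2q}|\nabla\rho_i|^2\,\frac{\w^n}{n!}.
\]
Letting $i\to\infty$: since $u$ is bounded, H\"older's inequality together with $\int_X|d\rho_i|^{4-\delta}\,\w^n\to0$ from Lemma~\ref{cutoff} forces $\int u^{2q}|\nabla\rho_i|^2\,\w^n\to0$, while the other two integrals converge by monotone convergence. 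Hence $\int_{X\setminus\Sigma}|\nabla u^q|^2\,\frac{\w^n}{n!}=q^2\int u^{2q-2}|\nabla u|^2\,\frac{\w^n}{n!}\leq CA_0\,q\int_{X\setminus\Sigma}u^{2q}\,\frac{\w^n}{n!}$.

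\textbf{Iteration, conclusion, and main obstacle.} Plugging $v=u^q$ into Lemma~\ref{lem-sobolev} and writing $\chi:=\tfrac{2n}{2n-1}>1$, I get $\|u\|_{L^{2\chi q}}\leq(C_1q)^{1/(2q)}\|u\|_{L^{2q}}$ with $C_1=C_1(C_S,A_0)$. Iterating along $q=q_k:=\chi^k$ and noting that $\sum_k\chi^{-k}(\log C_1+k\log\chi)<\infty$, the telescoping product converges and gives $\|u\|_{L^\infty(X\setminus\Sigma)}\leq C_2\|u\|_{L^2(X\setminus\Sigma,\w)}$ (here $\|u\|_{L^2}<\infty$ since $u$ is bounded and $\Vol(X,\w)<\infty$). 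Finally the interpolation $\|u\|_{L^2}^2\leq\|u\|_{L^\infty}\|u\|_{L^1}$ upgrades this to $\|u\|_{L^\infty}\leq C_2^2\|u\|_{L^1}$, which after undoing the shift is the claim with $\widetilde C$ depending only on $C_S$, $A_0$ and $\Vol(X,\w)$. The \emph{main obstacle} is the passage $i\to\infty$ in the integration by parts, i.e. showing that the contribution of the analytic set $\Sigma$ is negligible; this is exactly where one simultaneously uses the balanced condition $d\w^{n-1}=0$ (to kill the torsion terms that would otherwise appear in Stokes' formula), the quantitative decay $\int_X|d\rho_i|^{4-\delta}\,\w^n\to0$ of the cutoffs, the bound $\codim_X\Sigma\geq2$ (so that $k=2$ is admissible), and the a priori boundedness of $u$. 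Everything else is the routine Moser machinery of \cite[Lemma 5.2]{LZZ}.
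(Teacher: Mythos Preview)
Your proposal is correct and follows exactly the approach the paper indicates: the paper does not spell out a proof but simply says the lemma follows from the cutoff functions of Lemma~\ref{cutoff}, the Sobolev inequality of Lemma~\ref{lem-sobolev}, and the standard Moser iteration as in \cite[Lemma~5.2]{LZZ}. Your reduction to $u\geq1$, the Caccioppoli step with $\rho_i^2u^{2q-1}$, the use of $d\omega^{n-1}=0$ to kill torsion terms, the passage $i\to\infty$ via $\int|d\rho_i|^{4-\delta}\omega^n\to0$ (taking $k=2$ in Lemma~\ref{cutoff}), and the final $L^2\to L^1$ interpolation are precisely the standard ingredients of that argument.
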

	
	\begin{lemma}[Stokes formula]\label{stokes1}
		Let $(X, \omega )$ be a compact Hermitian space of dimension $n$ and $\Sigma \subset X$ be an analytic subspace of codimension $\geq2$. Suppose $\eta$ is a $(2n-1)$-form with
		$\int_{X\setminus \Sigma}|\eta|_{\omega }^2\frac{\omega ^{n}}{n!}< \infty$. Then if $\mathrm{d}\eta$ is integrable, we have $\int_{X}\mathrm{d}\eta=0.$
	\end{lemma}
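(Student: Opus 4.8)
The plan is to test $d\eta$ against the codimension-two cutoff functions supplied by Lemma~\ref{cutoff}, let them exhaust $X\setminus\Sigma$, and apply the ordinary Stokes theorem on the smooth part; the hypothesis $\eta\in L^2$ is precisely what forces the resulting ``boundary'' term to vanish in the limit.

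First I would make a harmless reduction. Since $X$ is normal, $X_{\mathrm{sing}}$ is an analytic subset of codimension $\geq 2$ and is Lebesgue-null, so after replacing $\Sigma$ by $\Sigma\cup X_{\mathrm{sing}}$ (which changes neither the hypotheses nor $\int_X d\eta$, the latter being understood as an integral over the complex manifold $X\setminus\Sigma$) we may assume that $M:=X\setminus\Sigma$ is a smooth complex manifold. Apply Lemma~\ref{cutoff} with $k=2$: this yields $\rho_i\in C_c^\infty(M)$ with $0\le\rho_i\le1$, $\rho_i\nearrow\mathbf{1}_M$, $\int_X|d\rho_i|_\omega^4\,\tfrac{\omega^n}{n!}\le C$, and $\int_X|d\rho_i|_\omega^{4-\delta}\,\tfrac{\omega^n}{n!}\to0$ for every small $\delta>0$.

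Next, each $\rho_i\eta$ is a compactly supported $(2n-1)$-form on the manifold $M$, so the classical Stokes theorem gives $\int_M d(\rho_i\eta)=0$, that is
\[
\int_M\rho_i\,d\eta=-\int_M d\rho_i\wedge\eta.
\]
Letting $i\to\infty$: on the left, $|\rho_i\,d\eta|\le|d\eta|\in L^1$ and $\rho_i\,d\eta\to d\eta$ pointwise, so dominated convergence gives $\int_M\rho_i\,d\eta\to\int_X d\eta$. On the right, the pointwise bound $|d\rho_i\wedge\eta|\le c_n\,|d\rho_i|_\omega\,|\eta|_\omega$ (relative to $\tfrac{\omega^n}{n!}$) together with Cauchy--Schwarz give
\[
\Bigl|\int_M d\rho_i\wedge\eta\Bigr|\le c_n\Bigl(\int_M|d\rho_i|_\omega^2\,\tfrac{\omega^n}{n!}\Bigr)^{1/2}\Bigl(\int_M|\eta|_\omega^2\,\tfrac{\omega^n}{n!}\Bigr)^{1/2},
\]
where the second factor is finite by hypothesis; and since $\Vol(X,\omega)<\infty$, Hölder's inequality gives $\int_M|d\rho_i|_\omega^2\,\tfrac{\omega^n}{n!}\le\bigl(\int_M|d\rho_i|_\omega^{4-\delta}\,\tfrac{\omega^n}{n!}\bigr)^{2/(4-\delta)}\Vol(X,\omega)^{1-2/(4-\delta)}\to0$ for any fixed $\delta\in(0,2)$ small enough for Lemma~\ref{cutoff}. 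Hence the right-hand side tends to $0$, and comparing limits yields $\int_X d\eta=0$.

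I do not anticipate a genuine obstacle here: the only point that needs care is that $X$ is singular and $M$ is non-compact, and this is exactly what the cutoff functions of Lemma~\ref{cutoff} are built to handle---their $L^{4-\delta}$ gradient decay is calibrated so that $\int_M d\rho_i\wedge\eta$ dies against an $L^2$ form $\eta$. If one prefers not to cut off directly on a singular space, an equivalent route is to pull everything back along a resolution $\pi:\widehat{X}\to X$ that is biholomorphic over $M$ and run the identical argument upstairs; the conclusion is unchanged.
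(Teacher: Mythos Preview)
Your argument is correct and is exactly the intended one: the paper does not spell out a proof of Lemma~\ref{stokes1}, but it places Lemma~\ref{cutoff} immediately before it precisely so that one can run the cutoff-and-Stokes argument you give, with the $L^{4-\delta}$ gradient decay of the $\rho_i$ (case $k=2$) killing the boundary term against an $L^2$ form $\eta$. Your H\"older step from $L^{4-\delta}$ down to $L^2$ is the right bridge, and the dominated-convergence step on $\rho_i\,d\eta$ uses exactly the integrability hypothesis on $d\eta$.
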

	
	These lemmas are used to fulfill the roles of three assumptions in \cite{Simpson88} and then we can follow the strategy of \cite{Simpson88} as in \cite[Section 4]{LZZ}. The remainder of the proof follows by a slight adaptation of the argument in\cite[Section 4]{LZZ} to our setting. We sketch it here for completeness.
	
	\subsubsection{Donaldson's functional }
	
	Let $K$ and $H$ be two Hermitian metrics on $\mE |_{X\setminus \Sigma}$, and set  $\exp(s)=h=K^{-1}H$.
	By Lemma 3.1 in \cite{Simpson88}, we have
	\begin{equation}\label{la02}
		\Delta_{\omega }\log(\tr h+\tr h^{-1})\geq -2|\Lambda_{\omega }(F_{H, \theta })|_{H}-2|\Lambda_{\omega }(F_{K, \theta })|_{K},
	\end{equation}
	and
	\begin{equation}\label{la021}
		\Delta_{\omega , K}'h=h\sqrt{-1}\Lambda_{\omega }(F_{K, \theta }-F_{H, \theta })-\sqrt{-1}\Lambda_{\omega }(\bar{\partial }_{\theta }h \wedge h^{-1}D_{K, \theta}^{1,0}h),
	\end{equation}
	where $\Delta_{\omega , K}'=-\sqrt{-1}\Lambda_{\omega }\bar{\partial }_{\theta }D_{K, \theta}^{1,0}$.
	
	Let $S_{K}=S_{K}(\mathcal{E}_X|_{X\setminus \Sigma})$ be the real vector bundle of $K$-self-adjoint endomorphisms of $\mathcal{E}|_{X\setminus \Sigma}$, and $\mathcal{P}(S_{K})$ be the normed space of smooth sections $s\in \Gamma (S_{K})$ with norm
	\begin{equation}
		\|s\|_{P}=\sup_{X\setminus \Sigma}|s|+\|\bar{\partial }_{\theta }s\|_{L^{2}(X\setminus\Sigma)}+\|\Delta_{\omega , K}'s\|_{L^{1}(X\setminus\Sigma)}.
	\end{equation}
	Let $\mathcal{P}$ denote the space of smooth Hermitian metrics $K$ such that $\int_{X\setminus \Sigma }|\Lambda_{\omega }F_{K, \theta }|_{\omega}\frac{\omega^{n}}{n!}<\infty.$ Due to Proposition 4.1 in \cite{Simpson88}, we can endow $\mathcal{P}$ with the structure of an analytic manifold with local charts $s\mapsto Ke^{s}$, where $s\in \mathcal{P}(S_{K})$. Let $\mathcal{P}_{0}$ be one of the components covered by these charts.
	Let us recall Donaldson's functional  defined on the space $\mathcal{P}_{0}$ (see Section 5 in \cite{Simpson88} for details),
	\begin{equation}\label{7}
		\mu_{\omega} (K, H) = \int_{X\setminus \Sigma } \tr (s \sqrt{-1}\Lambda_{\omega }F_{K, \theta })+ \langle \Psi (s) (\overline{\partial }_{\theta} s) , \overline{\partial}_{\theta} s \rangle_{K}\frac{\omega^{n}}{n!},
	\end{equation}
	where $\Psi (x, y)= (x-y)^{-2}(e^{y-x }-(y-x)-1)$.
	
	The following proposition is standard because the argument of \cite[Section 5]{Simpson88} can be directly applied when $d\w^{n-1}=0$.
	
	\begin{proposition}
		Let $H(t)=Kh(t)$ for $0\leq t\leq 1$ be a piecewise differentiable curve connecting metrics $K$ and $H$ in the space $\mathcal{P}_{0}$. If $d\omega^{n-1}\equiv 0$, then
		\begin{equation}\label{D01}
			\mu_{\omega } (K, H)=\int_{0}^{1}\int_{X\setminus \Sigma }\sqrt{-1}\tr (h^{-1}\frac{\partial h}{\partial t} \cdot F_{H(t), \theta })\wedge \frac{\omega^{n-1}}{(n-1)!} dt,
		\end{equation}
		and
		\begin{equation}\label{D02}
			\frac{d}{dt}\mu_{\omega } (K, H(t))=\int_{X\setminus \Sigma }\sqrt{-1}\tr (h^{-1}\frac{\partial h}{\partial t} \cdot F_{H(t), \theta })\wedge \frac{\omega^{n-1}}{(n-1)!}
		\end{equation}
	\end{proposition}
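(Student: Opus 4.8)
The plan is to obtain \eqref{D01} from the infinitesimal formula \eqref{D02}. Along the curve, $s(t):=\log h(t)$ is a well-defined self-adjoint section (the logarithm of a positive endomorphism), and since $H(t)\in\mathcal{P}_0$ the defining integral \eqref{7} for $\mu_\omega(K,H(t))$ converges for every $t$; moreover $\mu_\omega(K,H(0))=\mu_\omega(K,K)=0$, because $s=0$ makes both integrands in \eqref{7} vanish. Granting \eqref{D02}, which in particular asserts that $t\mapsto\mu_\omega(K,H(t))$ is (piecewise) $C^1$, the fundamental theorem of calculus on each differentiable subinterval, summed over the subdivision defining the piecewise differentiable structure, yields $\mu_\omega(K,H)=\mu_\omega(K,H(1))=\int_0^1\tfrac{d}{dt}\mu_\omega(K,H(t))\,dt$, which is \eqref{D01}. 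The right-hand side of \eqref{D01} is then automatically independent of the chosen path, since it equals $\mu_\omega(K,H)$, a function of the endpoints alone.

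So the content is the variational identity \eqref{D02}, which I would prove by differentiating \eqref{7} in $t$ along a smooth arc $H(t)=Ke^{s(t)}$, $h(t)=e^{s(t)}$. Recall the standard curvature identity $F_{H(t),\theta}=F_{K,\theta}+\bar{\partial}_{\theta}\bigl(h^{-1}D^{1,0}_{K,\theta}h\bigr)$ (cf.\ \eqref{la021}). The first term of \eqref{7}, $\int_{X\setminus\Sigma}\tr\bigl(s\,\sqrt{-1}\Lambda_\omega F_{K,\theta}\bigr)\tfrac{\omega^n}{n!}$, differentiates to $\int_{X\setminus\Sigma}\tr\bigl(\dot s\,\sqrt{-1}\Lambda_\omega F_{K,\theta}\bigr)\tfrac{\omega^n}{n!}$. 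For the second term, $\int_{X\setminus\Sigma}\langle\Psi(s)(\bar{\partial}_{\theta}s),\bar{\partial}_{\theta}s\rangle_K\tfrac{\omega^n}{n!}$, one invokes Simpson's pointwise formula for the $t$-derivative of this pairing in terms of the kernel $\Psi(x,y)=(x-y)^{-2}(e^{y-x}-(y-x)-1)$ and its derivatives; this computation is purely local and carries over verbatim from \cite[Section 5]{Simpson88}. Summing the two contributions and performing a single integration by parts to transfer $\bar{\partial}_{\theta}$ from one factor onto the other, the local algebra collapses — again exactly as in \cite[Section 5]{Simpson88} — to the integrand $\sqrt{-1}\tr\bigl(h^{-1}\dot h\cdot F_{H(t),\theta}\bigr)\wedge\tfrac{\omega^{n-1}}{(n-1)!}$ of \eqref{D02}.

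Two points make this integration by parts legitimate over the incomplete manifold $(X\setminus\Sigma,\omega)$. First, the hypothesis $d\omega^{n-1}\equiv 0$ splits by bidegree into $\partial\omega^{n-1}=0$ and $\bar{\partial}\omega^{n-1}=0$, so for any $\End(\mathcal{E}_{X_\reg})$-valued form $\beta$ one has $d\bigl(\tr(\beta)\wedge\omega^{n-1}\bigr)=d(\tr\beta)\wedge\omega^{n-1}$, with no extra term coming from the torsion of $\omega$; this is precisely the role of $d\omega^{n-1}=0$, as anticipated in the remark following Lemma~\ref{stokes1}. Second, the boundary term produced by the integration by parts has the form $\int_{X\setminus\Sigma}d\eta$, where $\eta$ is a $(2n-1)$-form assembled from $s$, $\bar{\partial}_{\theta}s$ and $h^{-1}D^{1,0}_{K,\theta}h$ wedged with $\omega^{n-1}$; since $H(t)\in\mathcal{P}_0$ one has $\sup_{X\setminus\Sigma}|s|<\infty$, $\|\bar{\partial}_{\theta}s\|_{L^2}<\infty$ and $\int_{X\setminus\Sigma}|\Lambda_\omega F_{H(t),\theta}|_\omega\tfrac{\omega^n}{n!}<\infty$, hence $\int_{X\setminus\Sigma}|\eta|_\omega^2\tfrac{\omega^n}{n!}<\infty$ with $d\eta$ integrable, so Lemma~\ref{stokes1} forces $\int_{X\setminus\Sigma}d\eta=0$; where a sharper $L^p$-approximation is required, the cutoff functions of Lemma~\ref{cutoff} supply it.

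The main obstacle is therefore not algebraic — the pointwise identities, including the $\Psi$-computation, transfer without change from the compact K\"ahler case treated in \cite{Simpson88} — but the analytic bookkeeping: one must verify that every form entering the variation, together with its exterior derivative, is integrable enough for Lemma~\ref{stokes1} to apply, uniformly along the whole curve. This is guaranteed precisely by requiring the path to stay inside $\mathcal{P}_0$, so that $\int_{X\setminus\Sigma}|\Lambda_\omega F_{H(t),\theta}|_\omega\tfrac{\omega^n}{n!}<\infty$ for every $t$, together with the defining norm bounds of $\mathcal{P}(S_K)$. In combination with $d\omega^{n-1}\equiv 0$ these are the only inputs Simpson's argument uses, and the proposition follows.
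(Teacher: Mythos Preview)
Your overall logical structure is sound and the analytic ingredients you isolate (the role of $d\omega^{n-1}=0$ for Stokes, the $\mathcal{P}_0$-integrability for Lemma~\ref{stokes1}) are exactly right. However, the order of the argument is the reverse of the paper's, and one step you label ``verbatim from Simpson'' is not actually carried out the way you describe.

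The paper establishes \eqref{D01} \emph{first}. It does this in two steps: (a) along the linear path $t\mapsto Ke^{ts}$ one computes the path integral directly, using the pointwise identity $\sqrt{-1}\Lambda_\omega\tr(\tilde h^{-1}D^{1,0}_{K,\theta}\tilde h\wedge\bar\partial_\theta\tilde s)=\langle\tilde\Psi(\tilde s)(\bar\partial_\theta\tilde s),\bar\partial_\theta\tilde s\rangle_K$ with $\tilde\Psi(x,y)=\frac{e^{y-x}-1}{y-x}$ and the elementary integral $\int_0^T t\,\tilde\Psi(t\lambda_1,t\lambda_2)\,dt=T^2\Psi(T\lambda_1,T\lambda_2)$, which recovers the defining formula \eqref{7}; (b) path-independence is then proved by a two-parameter family argument and Lemma~\ref{stokes1}. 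Formula \eqref{D02} is read off from \eqref{D01} by differentiating in the upper limit. This route avoids ever differentiating the $\Psi$-term in \eqref{7} along a general path.

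Your route instead differentiates \eqref{7} directly. Two warnings. First, along a general path $h^{-1}\dot h\neq\dot s$ (they agree only when $[s,\dot s]=0$, e.g.\ on the linear path), so your computation of the first term as $\int\tr(\dot s\,\sqrt{-1}\Lambda_\omega F_{K,\theta})$ does not by itself match $\int\tr(h^{-1}\dot h\,\sqrt{-1}\Lambda_\omega F_{K,\theta})$; the discrepancy must be absorbed by the derivative of the $\Psi$-term, and this is a genuine computation, not a tautology. Second, Simpson's Section~5 does \emph{not} differentiate the $\Psi$-pairing directly; he proves the cocycle identity $\mu_\omega(H_0,H_1)+\mu_\omega(H_1,H_2)=\mu_\omega(H_0,H_2)$ and uses it to reduce \eqref{D02} to an infinitesimal statement at $s=0$, where the $\Psi$-term is of higher order. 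So your appeal to ``verbatim from \cite[Section 5]{Simpson88}'' does not cover the step you need. Either supply the direct $\Psi$-derivative calculation (possible but not in the cited source), or---more in line with both Simpson and the paper---reverse the logic: prove \eqref{D01} first via linear path plus path-independence, and deduce \eqref{D02}.
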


	\subsubsection{Approximate Hermitian-Einstein structure}
%	The remainder of the proof is a slight modification of \cite[Section 4]{LZZ}. We outline the argument for completeness.
	
	Let $H(t)$ be the long time solution of (\ref{SSS1}) on the Higgs bundle $(\mathcal{E}_{X_\reg}, \theta_{\reg})|_{X\setminus\Sigma}$.
	Since we know that $|\Lambda_{\omega } F_{H(t), \theta_{X_\reg} }|_{H(t)}$ is uniformly bounded for $t\geq t_{0}>0$, it follows from (\ref{la021}) and Lemma \ref{cutoff} that $H(t)$ (for every $t>0$)  belongs to the space $\mathcal{P}_{0}$. According to (\ref{D02}),
	a formula for the derivative of Donaldson's functional with respect to $t$ is given by
	\begin{eqnarray}\label{F5}
		\frac{d}{dt}\mu_{\omega } (H(t_{0}), H(t)) = -2\int_{X\setminus \Sigma }|\Phi (H(t), \omega )|_{ H(t)}^{2}\frac{\omega^{n}}{n!}.
	\end{eqnarray}
	Set $\exp{s(t)}=h(t)=\hat{H}^{-1}H(t)$ and $\exp{s(t_{1}, t_{2})}=h(t_{1}, t_{2})=H^{-1}(t_{1})H(t_{2}).$
	By the inequalities (\ref{la02}), (\ref{c02}), (\ref{H0013}), (\ref{H00013}) and Lemma \ref{lemma-meanvalue},  we have
	\begin{equation}\label{L101}
		r^{-\frac{1}{2}}\|s(t_{1}, t_{2})\|_{L^1 ( X\setminus \Sigma, \omega , H(t_{1}) )}\leq 2\tilde{C}_{1}(t_2-t_1)+\Vol(X, \omega )\log(2r),
	\end{equation}
	\begin{equation}\label{mean111}
		\|s(t_{1}, t_{2})\|_{L^{\infty}( X\setminus \Sigma , H(t_{1}) )}\leq r^{\frac{1}{2}}\{2C_{S}\tilde{C}_{1}(t_{0}^{-n}+1)(t_{2}-t_{1})+\log 2r\}
	\end{equation}
	and
	\begin{equation}\label{mean1}
		\|s(t_{1}, t_{2})\|_{L^{\infty}( X\setminus \Sigma , H(t_{1}))}\leq C_{1}(t_{0}^{-1})\|s(t_{1}, t_{2})\|_{L^{1}( X\setminus \Sigma, \omega , H(t_{1}) )}+C_{2}(t_{0}^{-1})
	\end{equation}
	for $0<t_{0}\leq t_{1} \leq t_{2}$, where $C_{1}(t_{0}^{-1})$ and $C_{2}(t_{0}^{-1})$ are constants depending only on $r$, the Sobolev constant $C_{S}$, $\tilde{C}_{1}$, $t_{0}^{-1}$ and $\Vol(X,\w)$.

	\begin{proposition}\label{prop01}
		Let $H(t)$ be the long time solution of (\ref{SSS1}) on the Higgs bundle $(\mathcal{E}_{X_\reg}, \theta_{\reg})|_{X\setminus\Sigma}$ with initial metric $\hat{H}$, and suppose that
		\begin{equation}
			\int_{X\setminus \Sigma}|\sqrt{-1}\Lambda_{\omega} F_{H(t), \theta_{X_\reg}}-\lambda \Id_{\mathcal{E}_{X_\reg}} |_{H(t)}^2 \frac{\omega^{n}}{n!}\leq \hat{C}_{1}',
		\end{equation}
		where $\hat{C}_{1}'$ is a positive constant. Define the limit:
		\begin{equation}\label{semi03}
			\lim\limits_{t\rightarrow 0}\int_{X\setminus \Sigma}|\sqrt{-1}\Lambda_{\omega} F_{H(t), \theta_{X_\reg}}-\lambda \Id_{\mathcal{E}_{X_\reg}} |_{H(t)}^{2} \frac{\omega^{n}}{n!}= C^{\ast}\geq 0.
		\end{equation}
		Assume further that there exists a sequence $t_{i}\rightarrow\infty$ such that
		\begin{equation}\label{CM02}
			\|s(1, t_{i})\|_{L^{1}(X\setminus \Sigma ,  H(1))}\rightarrow +\infty.
		\end{equation}
		Then the following statement holds: there is a section $ u_\infty \in L_1^2(\Gamma (S_{\hat{H}}))$ such that  $\tr u_{\infty}=0$, $\|u_\infty\|_{L^2}=1$ and the eigenvalues of $u_{\infty}$ are constants almost everywhere and not all equal. Furthermore, assume $\nu_1< \nu_2< \cdots< \nu_l$ ($l\geq 2$) are the distinct eigenvalues of $u_\infty$. For each $1\leq k \leq l-1$, define a smooth function $P_{k}: \mathbb{R}\to \mathbb{R}$ such that
			\begin{align*}
				P_{k}(x)=\left\{
				\begin{aligned}
					\ 1, \quad & x\leq \mu_{k},\\
					\ 0, \quad & x\geq \mu_{k+1}.
				\end{aligned}\right.
			\end{align*}
			Set $\pi_{k}=P_{k}(u_\infty)$. Then each $\pi_{k}$ defines a saturated $\theta_{X_\reg}$-invariant subsheaf $\mE_{X_\reg,k}$ of the Higgs sheaf $(\mathcal{E}_{X_\reg}, \theta_{X_\reg})$, and each $\mE_{X_\reg,k}$ extends to a saturated subsheaf $\mE_k$ of $\mE_X$ such that
			\begin{equation}\label{equa-contradiction}
				2\pi \sum_{k=1}^{l-1}(\mu_{k+1}-\mu_{k})\rank(\mE_k)(\mu_{(\w_0,\cdots,\w_{n-2})}(\mE_X)
				-\mu_{(\w_0,\cdots,\w_{n-2})}(\mE_k))\leq - r^{-\frac{1}{2}}\frac{C^{\ast}}{\hat{C}_{1}'}.
			\end{equation}
	\end{proposition}
	\begin{proof}
		By (\ref{mean1}) and the assumption (\ref{CM02}), it is easy to check that
		\begin{equation}\label{CM03}
			\|s(t_{0}, t_{i})\|_{L^{1}(X\setminus \Sigma ,  H(t_{0}))}\rightarrow +\infty ,
		\end{equation}
		and
		\begin{equation}\label{CM01}
			\begin{split}
				&\|s(t_{0}, t)\|_{L^{\infty}(X\setminus \Sigma , H(t_{0}))}\leq  r\|s(1, t)\|_{L^{\infty}(X\setminus \Sigma , H(1) )}+r \|s(t_{0}, 1)\|_{L^{\infty}(X\setminus \Sigma , H(1))}\\
				\leq &\ r^{2}C_{3}(\|s(t_{0}, t)\|_{L^{1}(X\setminus \Sigma ,  H(t_{0}))}+\|s(t_{0}, 1)\|_{L^{1}(X\setminus \Sigma ,  H(t_{0}))}) +r\|s(t_{0}, 1)\|_{L^{\infty}(X\setminus \Sigma , H(t_{0}))}+rC_{4}\\
			\end{split}
		\end{equation}
		for all $0<t_{0}\leq 1\leq t$, where $C_{3}$ and $C_{4}$ are constants depending only on the Sobolev constant $C_{S}$, $r$,  $\hat{C}_{1}'$ and the geometry of $(X\setminus \Sigma , \omega )$.

		Define $u_{i}(t_{0})=\|s(t_{0}, t_{i})\|_{L^{1}(X\setminus\Sigma,H(t_0))}^{-1}s(t_{0}, t_{i})\in S_{H(t_{0})}$,  then $\|u_{i}(t_{0})\|_{L^{1}(X\setminus \Sigma ,  H(t_{0}))}=1$ and
		\begin{equation}
			\int_{X\setminus\Sigma}\tr u_{i}(t_{0}) \frac{\omega^{n}}{n!} =\|s(t_{0}, t_{i})\|_{L^{1}}^{-1}\int_{t_{0}}^{t_{i}}\int_{X\setminus \Sigma }\tr (\Phi (H(t), \omega))\frac{\omega^{n}}{n!}dt=0.
		\end{equation}
		
		On the other hand, by (\ref{F5}), we have
		\begin{equation}\label{M05}
			\mu_\omega(H(t_{0}), H(t))=-2\int_{t_{0}}^t\int_{X\setminus \Sigma }|\Phi (H(s), \omega)|_{H(s)}^2\frac{\omega^n}{n!}ds\leq -2C^\ast(t-t_{0})
		\end{equation}
		for all $0<t_{0} \leq t$. Then it is clear that (\ref{L101}) implies
		\begin{equation}\label{semi01}
			\liminf_{t\rightarrow +\infty}\frac{-\mu_{\omega } (H(t_{0}), H(t))}{\|s(t_{0}, t)\|_{L^{1}(X\setminus \Sigma , H(t_{0}))}}\geq r^{-\frac{1}{2}}\frac{C^\ast}{\hat{C}_{1}'}.
		\end{equation}
		From the inequalities (\ref{semi01}), (\ref{CM02}), (\ref{CM01}) and Lemma 5.4 in \cite{Simpson88}, it follows that, after passing to a subsequence still denoted by $\{u_i(t_0)\}$, we have $u_{i}(t_{0})\rightharpoonup u_{\infty}(t_{0})$   weakly  in $L_{1}^{2}$ as $i \rightarrow \infty$, where the limit $u_{\infty}(t_{0})$ satisfies: $\|u_{\infty}(t_{0})\|_{L^{1}(X\setminus \Sigma , \omega , H(t_{0}))}=1$, $\int_{X\setminus\Sigma}\tr(u_{\infty}(t_{0}))\frac{\omega^{n} }{n!}=0$ and
		\begin{equation}\label{t01}
			\|u_{\infty}(t_{0})\|_{L^{\infty}(X\setminus \Sigma ,  H(t_{0}))}\leq r^{2}C_{3}.
		\end{equation}
		Furthermore, if $\Upsilon : \R\times \R \rightarrow \R$ is a positive smooth function such that $\Upsilon (\lambda_{1}, \lambda_{2})< (\lambda_{1}- \lambda_{2})^{-1}$ whenever $\lambda_{1}>\lambda_{2}$, then
		\begin{equation}\label{t02}
			\begin{split}
				&\int_{X\setminus \Sigma}\big(\tr (u_{\infty}(t_{0})\sqrt{-1}\Lambda_{\omega }(F_{H(t_{0}), \theta })) + \langle \Upsilon  (u_{\infty}(t_{0}))(\overline{\partial }_{\theta }u_{\infty}(t_{0})), \overline{\partial }_{\theta }u_{\infty}(t_{0}) \rangle_{H(t_{0})}\big)\frac{\omega^{n} }{n!}\leq -r^{-\frac{1}{2}}\frac{C^\ast}{\hat{C}_{1}'}.
			\end{split}
		\end{equation}

		Clearly (\ref{H0013}) and (\ref{t01}) mean that $\|\Lambda_{\omega }(F_{H(t_{0}), \theta })\|_{L^{1}(X\setminus \Sigma , H(t_{0}))}$ and $ \|u_{\infty}(t_{0})\|_{L^{\infty}(X\setminus \Sigma ,  H(t_{0}))}$  are uniformly bounded (independent of $t_{0}$). Therefore,  (\ref{t02}) yields that there exists a uniform constant $\check{C}'$ independent of $t_{0}$ such that
		$
		\int_{X\setminus \Sigma } |\overline{\partial }_{\theta}u_{\infty}(t_{0})|_{H(t_{0})}^{2}\frac{\omega^{n} }{n!}\leq \check{C}'.
		$
		From (\ref{C0b}), we know that $\hat{H}$ and $H(t_{0})$ are locally mutually bounded. By choosing a subsequence, we obtain $u_{\infty}(t_{0}) \rightarrow u_{\infty}\in L_1^2(\Gamma (S_{\hat{H}}))$ weakly in local $L_{1}^{2}$ on $X\setminus\Sigma $ as $t_{0}\rightarrow 0$, where $u_{\infty}$ satisfies
		\begin{equation}\int_{X\setminus\Sigma}\tr(u_{\infty})\frac{\omega^{n} }{n!}=0, \quad and \quad \|u_{\infty}\|_{L^{1}(X\setminus \Sigma , \hat{H})}=1.\end{equation}
		Furthermore, we have
		\begin{equation}\label{t04}
			\lim_{t_{0}\rightarrow 0 }\int_{X\setminus \Sigma}\tr (u_{\infty}(t_{0})\sqrt{-1}\Lambda_{\omega }F_{H(t_{0}), \theta}) \frac{\omega^{n} }{n!}= \int_{X\setminus \Sigma}\tr (u_{\infty}\sqrt{-1}\Lambda_{\omega }F_{\hat{H}, \theta}) \frac{\omega^{n} }{n!}.
		\end{equation}
		Set
		$\hat{u}_{\infty}(t_{0})=(h(t_{0}))^{\frac{1}{2}}\cdot u_{\infty}(t_{0}) \cdot (h(t_{0}))^{-\frac{1}{2}}.$ One can verify that: $\hat{u}_{\infty}(t_{0})\in S_{\hat{H}}(\mathcal{E}|_{X\setminus \Sigma })$ and $|\hat{u}_{\infty}(t_{0})|_{\hat{H}}=|u_{\infty}(t_{0})|_{H(t_{0})}$. Following the same argument as in the proof of {\cite[Lemma 4.2]{LZZ}}, we deduce
		\begin{equation}\label{t05}
			\lim_{t_{0}\rightarrow 0}\int_{\Omega} |\langle\tilde{\Upsilon } (u_{\infty}(t_{0}))(\overline{\partial }_{\theta }u_{\infty}(t_{0})), \overline{\partial }_{\theta }u_{\infty}(t_{0})\rangle_{H(t_{0})}-\langle\tilde{\Upsilon } (\hat{u}_{\infty}(t_{0}))(\overline{\partial }_{\theta}\hat{u}_{\infty}(t_{0})), \overline{\partial }_{\theta}\hat{u}_{\infty}(t_{0})\rangle_{\hat{H}}|\frac{\omega^{n} }{n!}=0,
		\end{equation}
		for any compact domain $\Omega \subset X\setminus \Sigma$ and any positive smooth function $\tilde{\Upsilon } : \R\times \R \rightarrow \R$.
		
	Combining (\ref{t02}), (\ref{t04}) with (\ref{t05}), for any given compact domain $\Omega \subset X\setminus \Sigma$ and any $\tilde{\epsilon}>0$, we get
		\begin{equation}\label{t06}
			\int_{X\setminus\Sigma}\tr (u_{\infty}\sqrt{-1}\Lambda_{\omega }F_{\hat{H}, \theta } )\frac{\omega^{n} }{n!}+\int_{\Omega} \langle\Upsilon  (\hat{u}_{\infty}(t_{0}))(\overline{\partial }_{\theta }\hat{u}_{\infty}(t_{0})), \overline{\partial }_{\theta }\hat{u}_{\infty}(t_{0})\rangle_{\hat{H}}\frac{\omega^{n} }{n!}\leq -r^{-\frac{1}{2}}\frac{C^\ast}{\hat{C}_{1}'}+\tilde{\epsilon}
		\end{equation}
		for sufficiently small $t_{0}$. Since $\hat{u}_\infty(t_0)\rightarrow u_\infty$ in $L^2(\Omega)$ as $t_0\rightarrow 0$, with $|\widehat{u}_\infty(t_0)|_{\widehat{H}}$ uniformly bounded in $L^\infty$ and $|\bp_\theta\widehat{u}_\infty(t_0)|_{\widehat{H},\w}$ uniformly bounded in $L^2(\Omega)$, the same argument as in  \cite[Lemma 5.4]{Simpson88} yields 
		\begin{equation}\label{t07}
			\int_{X\setminus\Sigma}\tr (u_{\infty}\sqrt{-1}\Lambda_{\omega }F_{\hat{H}, \theta })\frac{\omega^{n} }{n!}+\|\Upsilon^{\frac{1}{2}}  (u_{\infty})(\overline{\partial }_{\theta}u_{\infty})\|_{L^{q}(\Omega)}^{2}\leq -r^{-\frac{1}{2}}\frac{C^\ast}{\hat{C}_{1}'}+2\tilde{\epsilon}
		\end{equation}
		for any $q<2$. Since $\tilde{\epsilon}$, $q<2$ and $\Omega$ are arbitrary,  we get
		\begin{equation}\label{semi02}
			\int_{X\setminus\Sigma}\bigg(\tr (u_{\infty}\sqrt{-1}\Lambda_{\omega }F_{\hat{H}, \theta}) + \langle\Upsilon  (u_{\infty})(\overline{\partial }_{\theta}u_{\infty}), \overline{\partial }_{\theta}u_{\infty}\rangle_{\hat{H}}\bigg)\frac{\omega^{n} }{n!}\leq -r^{-\frac{1}{2}}\frac{C^\ast}{\hat{C}_{1}'}.
		\end{equation}
		
		From the above inequality and \cite[Lemma 5.5]{Simpson88}, we can see that
		the eigenvalues of $u_{\infty}$ are  constant almost everywhere. Let $\nu_{1} < \dots <\nu _{l}$ denote the distinct eigenvalues of $u_{\infty}$.  Since $\int_{X\setminus \Sigma} \tr u_{\infty} \frac{\omega^{n} }{n!}=0$ and $\|u_{\infty}\|_{L^{1}}=1$, we must have $l\geq 2$. By Uhlenbeck and Yau's regularity theorem for $L_{1}^{2}$-subbundle \cite{UY86} and Simpson's result \cite[Page 887]{Simpson88}, each $\pi_{k}$ represents a saturated $\theta_\reg$-invarient coherent subsheaf $\mE_{X_\reg,k}$ of $(\mathcal{E}_{X_\reg}, \theta_{X_\reg})$ on the open set $X\setminus \Sigma $. Since $\mE_X$ is reflexive,  the trivial extension of $\mE_{X_\reg,k}$ is a saturated subsheaf of $\mE_X$ that is $\theta_{X_\reg}$-invariant on $X_\reg$ by Lemma \ref{lem-saturation-invariant}, which will be denoted by $\mE_k$. Let $\mE_{k,\orb}$ be the saturation of $\{\imag(f_\alpha^*\mE_k\xrightarrow{f_\alpha^*}f_\alpha^*\mE_X)\cap E_\alpha\}$ in $E_\orb$. Recall from Proposition \ref{prop-HEequation-setting} that $f_\alpha$ is \'etale outside $f_\alpha^{-1}(\Sigma)$ and thus there exists an analytic subspace $Z\subset X$ of codimension $\geq 2$ such that $\mE_{k,\orb}=\{f_\alpha^*\mE_k/(\mathrm{torsion})\}$ on $Y_\orb\setminus f_\orb^{-1}(Z)$. It follows from Lemma \ref{lem-modification-chernclass}, Lemma \ref{lem-compatible-orbifold-pullback} and the Chern-Weil formula for orbifolds (cf. \cite[Section 2.2.2]{ZZZ25}) that
		\begin{align*}
			&\deg_{(\w_0,\cdots,\w_{n-2})} (\mE_k)=\deg_{f_\orb^*\w}(\mE_{k,\orb})\\
			=&\int_{Y_\orb\setminus f_\orb^{-1}(\Sigma) } \tr (\pi_{k,\orb} \sqrt{-1}\Lambda _{f_\orb^*\w}F_{\hat{H}_\orb,\theta_\orb}) -|\overline{\partial}_{\theta_\orb} \pi_{k,\orb }|_{\hat{H}_\orb, f_\orb^*\omega}^{2} \frac{(f_\orb^*\omega)^{n}}{n!}\\
			=&\int_{X\setminus \Sigma }\bigg(\tr (\pi_{k } \sqrt{-1}\Lambda _{\omega }F_{\hat{H}, \theta_{X_\reg}}) -|\overline{\partial}_{\theta_{X_\reg}} \pi_{k }|_{\hat{H}, \omega}^{2}\bigg)\frac{\omega^{n}}{n!},
		\end{align*}
		where $\pi_{k,\orb}$ denotes the weakly holomorphic subbundle associated with $\mE_{k,\orb}$. Furthermore, applying the argument in Simpson's paper \cite[Page 888]{Simpson88} and the inequality (\ref{semi02}), we conclude
		\begin{equation}
			\begin{split}
				&\int_{X\setminus\Sigma}\bigg(\tr (u_{\infty }\sqrt{-1}\Lambda _{\omega }F_{\hat{H}, \theta }) +\langle \sum_{k =1} ^{l-1} (\mu_{k +1}-\mu_{k })(dP_{k })^{2}(u_{\infty }) (\overline{\partial }_{\theta} u_{\infty}) , \overline{\partial }_{\theta} u_{\infty}\rangle _{\hat{H}}
				\bigg)\frac{\w^n}{n!}\\
				&\leq  -r^{-\frac{1}{2}}\frac{C^\ast}{\hat{C}_{1}'}.
			\end{split}
		\end{equation}
		This completes the proof of \eqref{equa-contradiction}.
	\end{proof}
	
	\subsubsection{Proof of Theorem \ref{thm1}}	
	{Let $H(t)$ be the long time solution of (\ref{SSS1}) on the Higgs bundle $(\mathcal{E}_{X_\reg}, \theta_{\reg})|_{X\setminus\Sigma}$ with initial metric $\hat{H}$.
		By Proposition \ref{prop01}, if the Higgs sheaf $(\mE_{X_\reg},  \theta_{X_\reg})$ is $(\w_0,\cdots,\w_{n-2})$-stable, then $\| \log(H^{-1}(1)H(t))\|_{L^1(X\setminus \Sigma,H(1))}$ are uniformly bounded, i.e., we obtain uniform $C^{0}$-estimates on $X\setminus \Sigma$. From the  standard parabolic estimates, we can get uniform local $C^{\infty}$-estimates. Then after passing to a subsequence, $H(t) \rightarrow H_{\infty}$ in $C^\infty_{loc}(X\setminus\Sigma)$-topology, and (\ref{F5}) implies
		\begin{equation}
			\sqrt{-1}\Lambda_{\omega} (F_{H_{\infty}} +[\theta_{X_\reg}, (\theta_{X_\reg})^{* H_{\infty}}])
			=\lambda\cdot \mathrm{Id}_{\mE_{X_\reg}}
		\end{equation}
		on $X\setminus\Sigma$. Using Proposition \ref{prop01} again, we know that if the Higgs bundle $(\mE_{X_\reg},  \theta_{X_\reg})|_{X\setminus\Sigma}$ is $(\w_0,\cdots,\w_{n-2})$-semistable, then
		\begin{equation}\label{semi030}
			\lim_{t\rightarrow \infty }\int_{X\setminus \Sigma}|\sqrt{-1}\Lambda_{\omega} F_{H(t), \theta_{X_\reg}}-\lambda \Id_{\mathcal{E}_{X_\reg}} |_{H(t)}^{2} \frac{\omega^{n}}{n!}= 0.
		\end{equation}
		Combining this with (\ref{H00013}), we obtain \eqref{equa-approximate-HE}. This completes the proof of Theorem \ref{thm1}.}
	
	\section{Characterization of harmonic bundles over the regular locus}\label{section-harmonicbundles}
     This section begins with a review of basic facts (Section~\ref{subsubsection-maximally}). It continues in Section~\ref{subsection-harmonic} with the proof of part (1) of Theorem~\ref{main-theorem-polystable-equivalence} and of the descent result for Higgs bundles from harmonic bundles (Proposition~\ref{prop-descent-higgs}). The proof of Theorem~\ref{main-theorem-polystable-equivalence} is concluded in Section~\ref{subsection-locallyfree}.

    \subsection{Basic facts}\label{subsubsection-maximally}
    We recall the following elementary facts about resolutions of singularities and maximally quasi-\'etale covers for later use.

    \begin{lemma}[cf. {\cite[Lemma 7.8]{GKPT19}} and {\cite{Taka03}}]\label{lem-resolutionofsingularities-easy}
     	Let $\pi:\widehat{X}\rightarrow X$ be a resolution of singularities of a compact K\"ahler klt space $X$. The following hold:
     	\begin{enumerate}[label=(\thetheorem.\alph*)]
     		\item \label{lem-resolutionofsingularities-easy-1} For any locally free Higgs sheaf \((\mathcal{E}_X, \theta_X)\) such that $\pi^*(\mathcal{E}_X, \theta_X)$ is semistable and satisfies \eqref{equa-vanishingcondition-locallyfree} for some K\"ahler form $\w_{\widehat{X}}$, then $(\mathcal{E}_X, \theta_X)$ is semistable with respect to any nef forms \(\eta_0, \cdots, \eta_{n-2}\) and all Chern classes \(c_i(\mathcal{E}_X) \in H^{2i}(X, \mathbb{Q})\) vanish.
     		\item \label{lem-resolutionofsingularities-easy-2} $\pi_*:\pi_1(\widehat{X})\rightarrow\pi_1(X)$ is an isomorphism. In particular, the pullback $\pi^*$ is a one-to-one map from $\mathrm{LSys}_X$ to $\mathrm{LSys}_{\widehat{X}}$ (resp. from $\mathrm{sLSys}_X$ to $\mathrm{sLSys}_{\widehat{X}}$).
     	\end{enumerate}
     \end{lemma}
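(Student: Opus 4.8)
The plan is to reduce both parts to statements on the smooth compact K\"ahler manifold $\widehat{X}$ together with Takayama's theorem \cite{Taka03}, following the scheme of \cite[Lemma 7.8]{GKPT19}. For \ref{lem-resolutionofsingularities-easy-2}: since $X$ is klt it has rational singularities, and for any resolution the map $\pi_*:\pi_1(\widehat X)\to\pi_1(X)$ is an isomorphism (surjectivity is elementary for proper modifications with connected fibres, cf. \cite[Proposition 2.10]{Kollar14}; injectivity is Takayama's result \cite{Taka03}). A rank-$r$ local system on a connected space is the same datum as a conjugacy class of representations of its fundamental group into $\mathrm{GL}(r,\mathbb{C})$, and $\pi^*$ corresponds to precomposition with $\pi_*$; as $\pi_*$ is an isomorphism, $\pi^*:\mathrm{LSys}_X\to\mathrm{LSys}_{\widehat X}$ is a bijection. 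Since semisimplicity of a representation is preserved and reflected under composition with a group isomorphism, the same map restricts to a bijection $\mathrm{sLSys}_X\to\mathrm{sLSys}_{\widehat X}$.

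For the Chern class vanishing in \ref{lem-resolutionofsingularities-easy-1}: working on $\widehat X$, the hypothesis is that $\pi^*(\mathcal{E}_X,\theta_X)$ is $\w_{\widehat X}$-semistable with $\widehat{\mathrm{ch}}_1\cdot[\w_{\widehat X}]^{n-1}=\widehat{\mathrm{ch}}_2\cdot[\w_{\widehat X}]^{n-2}=0$. I would first combine the Hodge index theorem with the Bogomolov--Gieseker inequality for semistable Higgs bundles on compact K\"ahler manifolds to force $c_1(\pi^*\mathcal{E}_X)=0$ in $H^2(\widehat X,\mathbb{R})$ and $\widehat{\Delta}(\pi^*\mathcal{E}_X)\cdot[\w_{\widehat X}]^{n-2}=0$. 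Passing to a Jordan--H\"older filtration relative to $\w_{\widehat X}$, all graded pieces have slope zero, so the cross terms in the discriminant identity drop out and each piece is a stable Higgs bundle with vanishing slope and vanishing discriminant against $\w_{\widehat X}$; by Simpson's nonabelian Hodge correspondence for compact K\"ahler manifolds (\cite{Simpson88} and the K\"ahler versions \cite{NZ,Deng21}) each such piece underlies a semisimple flat bundle, whose characteristic forms vanish because they can be computed from a flat connection. By additivity of the Chern character, $\mathrm{ch}(\pi^*\mathcal{E}_X)=r$, so every rational Chern class of $\pi^*\mathcal{E}_X$ vanishes; since $X$ has rational singularities, $\pi^*:H^{2i}(X,\mathbb{Q})\to H^{2i}(\widehat X,\mathbb{Q})$ is injective, and hence all $c_i(\mathcal{E}_X)\in H^{2i}(X,\mathbb{Q})$ vanish.

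For semistability with respect to arbitrary nef classes: each graded piece of the JH filtration, arising from a representation of $\pi_1(\widehat X)$, is polystable of slope zero with respect to \emph{every} K\"ahler class on $\widehat X$ (Simpson's correspondence applies to each K\"ahler class and returns the same object; equivalently a sub-Higgs-sheaf of a flat bundle has non-positive degree for any K\"ahler metric via the Chern--Weil formula). Applying Lemma~\ref{semistability-extension} inductively along the filtration, $\pi^*(\mathcal{E}_X,\theta_X)$ is semistable of slope zero with respect to every K\"ahler class on $\widehat X$, and then, by taking limits of K\"ahler classes, with respect to every tuple of nef classes on $\widehat X$; in particular with respect to $(\pi^*\eta_0,\dots,\pi^*\eta_{n-2})$ for arbitrary nef $\eta_i$ on $X$. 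Finally, for a saturated $\theta_X$-invariant subsheaf $\mathcal{F}_X\subsetneq\mathcal{E}_X$, its torsion-free pullback $\pi^*\mathcal{F}_X/\mathrm{tor}$ is a $\pi^*\theta_X$-invariant subsheaf of $\pi^*\mathcal{E}_X$, and by the definition of the slope over a singular space through a resolution, $\mu_{(\eta_0,\dots,\eta_{n-2})}(\mathcal{F}_X)=\mu_{(\pi^*\eta_0,\dots,\pi^*\eta_{n-2})}(\pi^*\mathcal{F}_X/\mathrm{tor})\le 0=\mu_{(\eta_0,\dots,\eta_{n-2})}(\mathcal{E}_X)$, which is exactly the asserted semistability.

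The step I expect to be the main obstacle is the identification of the JH graded pieces of $\pi^*(\mathcal{E}_X,\theta_X)$ with flat bundles --- i.e.\ passing from ``$\w_{\widehat X}$-semistable with vanishing Chern numbers against a single K\"ahler class'' to ``iterated extension of Higgs bundles coming from representations'' --- on the compact K\"ahler, rather than projective, manifold $\widehat X$. This is where \cite{GKPT19} uses hyperplane restrictions, a tool unavailable here, so one must instead invoke the K\"ahler forms of the Bogomolov--Gieseker inequality and of Simpson's correspondence cited in the introduction. Everything else (the fundamental group statement, the descent of cohomology vanishing, and the nef semistability via Lemma~\ref{semistability-extension}) is then formal.
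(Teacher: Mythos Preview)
Your proposal is correct and follows essentially the same route as the paper, which simply cites \cite[Lemma~7.8]{GKPT19} for part~\ref{lem-resolutionofsingularities-easy-1} and Takayama \cite{Taka03} for part~\ref{lem-resolutionofsingularities-easy-2}; your write-up is an explicit unpacking of what that citation entails once one replaces the projective nonabelian Hodge correspondence by its K\"ahler version \cite{NZ,Deng21}. Your worry about hyperplane restrictions is misplaced here: it is \cite[Proposition~7.10]{GKPT19}, not Lemma~7.8, that relies on cutting by hypersurfaces, and on the smooth compact K\"ahler manifold $\widehat X$ the correspondence of \cite{NZ,Deng21} already gives directly that an $\omega_{\widehat X}$-semistable Higgs bundle with the stated numerical vanishing comes from a local system, so one can in fact bypass the Jordan--H\"older analysis and conclude vanishing of all Chern classes and semistability for every K\"ahler (hence nef) polarisation in one stroke.
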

     
     \begin{proof}
     	Item \ref{lem-resolutionofsingularities-easy-1} follows from the same proof as {\cite[Lemma 7.8]{GKPT19}}. Item \ref{lem-resolutionofsingularities-easy-2} follows from Takayama's result \cite{Taka03}, which states that the induced morphism $\pi_*:\pi_1(\widehat{X})\rightarrow \pi_1(X)$ is an isomorphism. The proof is purely analytic and thus also holds for analytic klt spaces.
     \end{proof}

     Let us follow \cite{GKP16,CGGN22,IMM24,OF25} to review the concept of a maximally quasi-\'etale cover, which plays a central role in our construction.
     \begin{definition}[Maximally quasi-\'etale, {\cite[Definition 5.3]{CGGN22}}]\label{defn-maximally}
     	A normal complex space $X$ is said to be {\em maximally quasi-\'etale} if it satisfies the following equivalent conditions:
     	\begin{itemize}
     		\item[(1)] Any quasi-\'etale cover of $X$ is \'etale.
     		\item[(3)] Any \'etale cover of $X_\reg$ extends to an \'etale cover of $X$.
     		\item[(3)] The morphism of \'etale fundamental groups   $\widehat{\pi}_1(X_{\reg})\rightarrow \widehat{\pi}_1(X)$ induced by the natural inclusion $i:X_{\reg}\hookrightarrow X$ is an isomorphism.
     	\end{itemize}
     \end{definition}
     
     \begin{remark}[Extension of linear representations]\label{rem-extension}
     	If $X$ is maximally quasi-\'etale, then Malcev's theorem implies that any linear representation of $\pi_1(X_{\reg})$ factors through $\pi_1(X)$ (see e.g. the proof of \cite[Proposition 3.10]{GKP22}), i.e.,
     	\[
     	\begin{tikzcd}
     		\pi_1(X_{\reg})\arrow[r,two heads,"i_*"'] \arrow[rr,bend left=20,"\rho_0"]&\pi_1(X)\arrow[r,"\rho"']&\mathrm{GL}(r,\C).
     	\end{tikzcd}
     	\]    	
     	Since $i_*:\pi_1(X_\reg)\rightarrow \pi_1(X)$ is surjective (cf. \cite[Proposition 2.10]{Kollar14}), $\rho_0$ is semisimple if and only if $\rho$ is semisimple. This implies that any (resp. semisimple) local system on $X\setminus \Sigma$, for some analytic subspace $\Sigma\subset X$ with $\codim_X \Sigma\geq2$, extends to a (resp. semisimple) local system on $X$, based on the standard fact that $\pi_1(X_\reg\setminus \Sigma)\cong \pi_1(X_\reg)$.
     \end{remark}    
    
     \begin{theorem}[The existence of maximally quasi-\'etale covers, {see e.g. \cite[Theorems 2.4 and 2.5]{OF25}}]\label{thm-maximally}
     	Let $X$ be a compact complex klt space. Then there exists a compact klt space $Y$ and a quasi-\'etale Galois morphism $\gamma:Y\rightarrow X$ such that $Y$ is maximally quasi-\'etale.
     \end{theorem}
    \begin{remark}
	    The existence of a maximally quasi-\'etale cover for a projective klt variety was proved in \cite{GKP16}. Building on Fujino's recent work \cite{Fuji22} on the relative MMP for projective morphisms between analytic spaces, the existence of maximally quasi-\'etale covers for analytic klt spaces can be obtained by following the same method as in \cite{GKP16}.
    \end{remark}

    \begin{lemma}\label{lem-maximallyquasietale-easy}
    	Let $X$ be a compact K\"ahler klt space and $\gamma:Y\rightarrow X$ be a maximally quasi-\'etale cover induced by some subgroup $G\subset \Aut(Y)$. Then the following hold:
    	\begin{enumerate}[label=(\thetheorem.\alph*), itemsep=2pt, parsep=2pt]
    		\item \label{lem-maximallyquasietale-easy-kahler} For any K\"ahler class $\w$ on $X$, $\gamma^*\w$ is K\"ahler.
    		\item \label{lem-maximallyquasietale-easy-0} $\gamma$ is finite and \'etale over $X_\reg$. In particular, $\gamma^{-1}(X_\reg)\subset Y_\reg$.
    		\item \label{lem-maximallyquasietale-easy-1}  $(\mE_{X_\reg},\theta_{{X_\reg}})\in\mathrm{Higgs}_{X_\reg}$ if and only if $(\mE_{Y_\reg},\theta_{Y_\reg})=\gamma^{[*]}(\mE_{X_\reg},\theta_{X_\reg})\in\mathrm{Higgs}_{Y_\reg}$.
    		\item \label{lem-maximallyquasietale-easy-2} For any $E_{X_\reg}\in \mathrm{LSys}_{X_\reg}$ (resp. $\mathrm{sLSys}_{X_\reg}$), the pullback $(\gamma|_{\gamma^{-1}(X_\reg)})^*E_{X_\reg}$  extends to a $G$-equivariant local system $\rho_Y\in \mathrm{LSys}_Y$ (resp. $\mathrm{sLSys}_Y$). Conversely, the restriction of a $G$-equivariant local system $\rho_Y\in \mathrm{LSys}_Y$ (resp. $\mathrm{sLSys}_Y$) descends to a local system $\rho_{X_\reg}\in\mathrm{sLSys}_{X_\reg}$.
    	\end{enumerate}
    \end{lemma}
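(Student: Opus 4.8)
The plan is to treat the four assertions in turn, reducing each to facts already recorded in Section~\ref{fundamentalmaterial} together with standard properties of finite covers. Since $\gamma$ is a quasi-\'etale Galois morphism (Proposition~\ref{prop-maximally}) it is finite, being the quotient by the finite group $G$. To see that $\gamma$ is \'etale over $X_\reg$, I would apply purity of the branch locus (Grauert--Remmert~\cite{GR55}): over the regular space $X_\reg$, the branch locus of the induced finite morphism $\gamma^{-1}(X_\reg)\to X_\reg$ is either empty or pure of codimension one, whereas the branch locus of $\gamma$ over all of $X$ has codimension $\ge 2$, because $\gamma$ is \'etale in codimension one and $X_{\sing}$ itself has codimension $\ge 2$ ($X$ being normal); hence it is empty. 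Then $\gamma^{-1}(X_\reg)$, being the \'etale preimage of a smooth space, is smooth, so $\gamma^{-1}(X_\reg)\subseteq Y_\reg$, which is \ref{lem-maximallyquasietale-easy-0}. For \ref{lem-maximallyquasietale-easy-kahler}, a K\"ahler form on a complex space is prescribed by local smooth strictly plurisubharmonic potentials; pulling these back along the finite holomorphic map $\gamma$ (whose fibres are discrete) preserves strict plurisubharmonicity on $\gamma^{-1}(X_\reg)=Y_\reg$, and since $Y$ is normal with $Y\setminus Y_\reg$ of codimension $\ge 2$ the resulting continuous potentials extend plurisubharmonically by the Riemann extension theorem, so $\gamma^{*}\w$ is again K\"ahler (cf.\ \cite{GR55,CGGN22}).

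For \ref{lem-maximallyquasietale-easy-1}, the ``only if'' direction is a direct combination of results: by \ref{lem-maximallyquasietale-easy-kahler} each $\gamma^{*}\w_i$ is K\"ahler; by Lemma~\ref{lem-quasietale-stability}(2) the reflexive pull-back $\gamma^{[*]}(\mE_{X_\reg},\theta_{X_\reg})$ is $(\gamma^{*}\w_0,\dots,\gamma^{*}\w_{n-2})$-semistable; and, writing $\widehat{\ch}_1=\widehat{c}_1$ and $\widehat{\ch}_2=\tfrac12(\widehat{c}_1^2-2\widehat{c}_2)$, Proposition~\ref{lem-quasietale-chernclasses} gives $\widehat{\ch}_j(\gamma^{[*]}\mE_X)\cdot[\gamma^{*}\w_0]\cdots[\gamma^{*}\w_{n-2}]=\deg\gamma\cdot\widehat{\ch}_j(\mE_X)\cdot[\w_0]\cdots[\w_{n-2}]=0$, so the vanishing conditions \eqref{equa-vanishingcondition} hold on $Y$ as well. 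For the converse, the task is to manufacture K\"ahler forms on $X$ out of the forms $\eta_0,\dots,\eta_{n-2}$ on $Y$ witnessing $\gamma^{[*]}(\mE_{X_\reg},\theta_{X_\reg})\in\mathrm{Higgs}_{Y_\reg}$. Because $\mE_X$ is pulled back from $X$, $\gamma^{[*]}(\mE_{X_\reg},\theta_{X_\reg})$ carries a canonical $G$-equivariant structure; I would average the $\eta_i$ over $G$ to obtain $G$-invariant K\"ahler forms $\bar\eta_i$ (the K\"ahler cone is convex and $G$-stable), verify that $\gamma^{[*]}(\mE_{X_\reg},\theta_{X_\reg})$ remains $(\bar\eta_0,\dots,\bar\eta_{n-2})$-semistable with vanishing Chern data --- using $G$-equivariance, so that a destabilizing subsheaf may be taken $G$-invariant by uniqueness of the maximal destabilizer, and that a $G$-invariant homology class pairs with a product of forms through its $G$-average --- and then descend the $\bar\eta_i$ to K\"ahler forms $\w_i$ on $X\cong Y/G$ with $\gamma^{*}\w_i\simeq\bar\eta_i$ (a $G$-invariant K\"ahler class on a finite Galois cover descends; cf.\ \cite{CGGN22}). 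Applying Lemma~\ref{lem-quasietale-stability}(2) and Proposition~\ref{lem-quasietale-chernclasses} in reverse then yields $(\mE_{X_\reg},\theta_{X_\reg})\in\mathrm{Higgs}_{X_\reg}$. If the equivariance argument does not by itself settle the ``$\mathrm{Higgs}_{Y_\reg}$-membership persists under averaging'' step, it can be reinforced by passing to the Jordan--H\"older pieces of $\gamma^{[*]}(\mE_{X_\reg},\theta_{X_\reg})$ and using the orbifold Bogomolov--Gieseker inequality \eqref{equa-BG} of Theorem~\ref{main-stable} to see that each piece is stable with vanishing orbifold discriminant.

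Assertion \ref{lem-maximallyquasietale-easy-2} is Galois descent for local systems combined with the extension property of $Y$. By \ref{lem-maximallyquasietale-easy-0}, $\gamma$ restricts to a finite \'etale Galois cover $\gamma^{-1}(X_\reg)\to X_\reg$ with group $G$, so $(\gamma|_{\gamma^{-1}(X_\reg)})^{*}E_{X_\reg}$ is $G$-equivariant, and conversely every $G$-equivariant local system on $\gamma^{-1}(X_\reg)$ descends to one on $X_\reg$. Since $Y_\reg\setminus\gamma^{-1}(X_\reg)=Y_\reg\cap\gamma^{-1}(X_{\sing})$ has codimension $\ge 2$ in the smooth manifold $Y_\reg$, hence real codimension $\ge 4$, restriction is an equivalence between local systems on $Y_\reg$ and on $\gamma^{-1}(X_\reg)$ (removing such a set leaves the fundamental group unchanged). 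Finally, $Y$ being maximally quasi-\'etale, Remark~\ref{rem-extension} lets local systems on $Y_\reg$ extend uniquely to local systems on $Y$; the $G$-equivariant structure is transported at each stage because the identifications and extensions involved are canonical. For the ``resp.'' statements one uses that restriction along the finite-index inclusion $\pi_1(\gamma^{-1}(X_\reg))\subseteq\pi_1(X_\reg)$ both preserves and reflects semisimplicity (Clifford theory in characteristic zero), while semisimplicity of a representation of $\pi_1(Y_\reg)$ is equivalent to that of the induced representation of $\pi_1(Y)$ again by Remark~\ref{rem-extension}; chaining these equivalences handles the semisimple case.

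I expect \ref{lem-maximallyquasietale-easy-kahler}, \ref{lem-maximallyquasietale-easy-0}, \ref{lem-maximallyquasietale-easy-2} and the ``only if'' half of \ref{lem-maximallyquasietale-easy-1} to be essentially routine, resting on purity of the branch locus, convexity of the K\"ahler cone, Proposition~\ref{lem-quasietale-chernclasses}, and the comparison of fundamental groups under removal of high-codimension loci. The genuine difficulty is the ``if'' half of \ref{lem-maximallyquasietale-easy-1}: Lemma~\ref{lem-quasietale-stability} and Proposition~\ref{lem-quasietale-chernclasses} are adapted to polarizations pulled back from $X$, whereas membership in $\mathrm{Higgs}_{Y_\reg}$ only furnishes \emph{some} K\"ahler polarization on $Y$, and the $G$-averaging step that bridges this gap --- possibly supplemented by the structural input of Theorem~\ref{main-stable} --- is where the argument must be made precise.
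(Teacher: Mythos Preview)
Your treatment of \ref{lem-maximallyquasietale-easy-0}, \ref{lem-maximallyquasietale-easy-2}, and the ``only if'' half of \ref{lem-maximallyquasietale-easy-1} matches the paper's proof exactly: purity of the branch locus for \ref{lem-maximallyquasietale-easy-0}; Remark~\ref{rem-extension} plus the codimension-two complement and the finite-index inclusion $\pi_1(\gamma^{-1}(X_\reg))\hookrightarrow\pi_1(X_\reg)$ for \ref{lem-maximallyquasietale-easy-2}; and Proposition~\ref{lem-quasietale-chernclasses}, Lemma~\ref{lem-quasietale-stability}, together with \ref{lem-maximallyquasietale-easy-kahler} for \ref{lem-maximallyquasietale-easy-1}. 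For \ref{lem-maximallyquasietale-easy-kahler} the paper simply cites \cite[Proposition~3.6]{GK20} rather than arguing via local psh potentials, but either route is fine.

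You are right to flag the ``if'' half of \ref{lem-maximallyquasietale-easy-1} as the delicate point, and in fact the paper's one-line proof does not settle it either: Proposition~\ref{lem-quasietale-chernclasses} and Lemma~\ref{lem-quasietale-stability} are stated only for polarizations of the form $\gamma^*\omega_i$, so they give the ``only if'' direction but not the converse. Your $G$-averaging proposal is the natural direct attack, but note that it does not close the gap as written: even though $\widehat{\ch}_j(\mE_Y)$ is $G$-invariant, the vanishing $\widehat{\ch}_j(\mE_Y)\cdot[\eta_0]\cdots[\eta_{n-2}]=0$ only controls the diagonal terms in the expansion of $\widehat{\ch}_j(\mE_Y)\cdot[\bar\eta_0]\cdots[\bar\eta_{n-2}]$, not the mixed products $[g_0^*\eta_0]\cdots[g_{n-2}^*\eta_{n-2}]$, and similarly semistability with respect to each $g^*\eta_\bullet$ does not formally imply semistability with respect to the average. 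In the paper's logical flow this is resolved \emph{a posteriori}: only the ``only if'' direction is used in the run-up to Theorems~\ref{thm-chernclassesvanishing} and~\ref{thm-indenpendent-polarization}, and once those theorems establish that membership in $\mathrm{Higgs}_{Y_\reg}$ is independent of the chosen polarization, the ``if'' direction becomes immediate (take any K\"ahler $\omega_i$ on $X$, pull back, and apply the cited lemmas in reverse). So your instinct to invoke the structural results is correct---just be aware that the clean route is through independence of polarization rather than through averaging.
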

    
    \begin{proof}
    Item \ref{lem-maximallyquasietale-easy-kahler} can be refered to \cite[Proposition 3.6]{GK20}. Item \ref{lem-maximallyquasietale-easy-0} is a consequence of the purity of branch theorem due to Grauert-Remmert \cite{GR55}. Item \ref{lem-maximallyquasietale-easy-1} follows from Proposition \ref{lem-quasietale-chernclasses}, Lemma \ref{lem-quasietale-stability} and item \ref{lem-maximallyquasietale-easy-kahler}. Item \ref{lem-maximallyquasietale-easy-2} follows from Remark \ref{rem-extension}. The statement also holds for semisimple local systems because  $\gamma_*:\pi_1(\gamma^{-1}(X_\reg))\subset \pi_1(Y_\reg)$ is injective by item \ref{lem-maximallyquasietale-easy-0}.
    \end{proof}
    
   \subsection{Harmonic bundles and descent result}\label{subsection-harmonic}
   This subsection studies harmonic bundle structures over the  regular locus of a compact K\"ahler klt space. The following standard definition fixs the conventions (see \cite[Section 2]{WZ23} for general Riemannian setting). 
   
   Let $(E,D)$ be a flat bundle equipped with a Hermitian metric $H$ on a complex manifold $M$. There exists a unique decomposition of $D$ as
   \begin{align}\label{equa-harmonic-decomposition-0}
   	D=D_{H}+\Psi_H,
   \end{align}
   where $D_{H}$ is an $H$-connection and $\Psi_H\in \Lambda^1(M,\End(E))$ is self-adjoint with respect to $H$. We define
   \begin{equation}\label{equa-G_H}
   	G_H:=(D_H^{0,1}+\Psi_H^{1,0})^2=(D_H^{0,1})^2+D_H^{0,1}\Psi_H^{1,0}+\Psi_H^{1,0}\wedge \Psi_H^{1,0}.
   \end{equation}
   
   \begin{definition}[Harmonic bundle]
   	We say that $(E,D,H)$ is a harmonic bundle, or that the flat bundle $(E,D)$ admits a pluri-harmonic metric $H$, if $G_H=0$. This condition is equivalent to
   	\begin{equation}\label{equa-harmonic-condition}
   		(D_H^{0,1})^2=0,\quad D_H^{0,1}\Psi_H^{1,0}=0 \ \text{ and}  \ \  \Psi_H^{1,0}\wedge \Psi_H^{1,0}=0.
   	\end{equation}
   \end{definition}
   Thus, $(E,D,H)$ is a harmonic bundle if and only if $(E,\bp,\theta):=(E,D_H^{0,1},\Psi_H^{1,0})$ is a Higgs bundle and the decomposition \eqref{equa-harmonic-decomposition-0} can be written as
   \begin{align}\label{equa-harmonic-decomposition}
   	D=\p+\bp+\theta+\theta^*,
   \end{align}
   where $\p=D_H^{1,0},\bp=D_H^{0,1},\theta=\Psi_H^{1,0}$ and $\theta^*=\Psi_H^{0,1}$.
   
   \subsubsection{Pluriharmonic metrics on Harmonic bundles}
   
   This subsubsection is devoted to proving the following statement.
    
    \begin{proposition}\label{prop-harmonicmetric}
    	Let $X$ be a compact K\"ahler klt space and $Z\subset X$ be an analytic subspace with $\codim_XZ\geq2$. Suppose that $X\setminus Z$ is smooth. Then the following hold:
    	\begin{itemize}
    		\item[(1)] A flat bundle on $X\setminus Z$ is semisimple if and only if it admits a pluri-harmonic metric. Such a metric is unique up to a positive constant on each direct summand.
    		\item[(2)] Any harmonic bundle $(E_{X\setminus Z},D_{X\setminus Z},H_{X\setminus Z})$ on $X\setminus Z$ could extend to a harmonic bundle $(E_{X_\reg},D_{X_\reg},H_{X_\reg})$ on $X_\reg$.
    	\end{itemize}
    \end{proposition}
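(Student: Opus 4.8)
The plan is to reduce everything to the Corlette--Donaldson theorem on compact K\"ahler manifolds, transported to $X\setminus Z$ through a maximally quasi-\'etale cover and a resolution of singularities, with all the analysis near $Z$ and near $X_{\sing}$ carried out by removable-singularity arguments and the maximum principle on the compact space $X$. Since $Z\cap X_\reg$ has codimension at least $2$ in the smooth manifold $X_\reg$, the inclusion $X\setminus Z\hookrightarrow X_\reg$ induces an isomorphism on fundamental groups, so flat bundles on $X\setminus Z$ and on $X_\reg$ correspond and semisimplicity is preserved (Remark~\ref{rem-extension}). It therefore suffices to prove the three assertions contained in part (1)---existence of a pluri-harmonic metric on a semisimple flat bundle, its uniqueness up to a positive constant on each simple component, and the converse that a harmonic bundle has semisimple monodromy---after which part (2) is formal: given a harmonic bundle $(E_{X\setminus Z},D_{X\setminus Z},H_{X\setminus Z})$, the converse shows the flat bundle is semisimple, hence so is its unique extension $(E_{X_\reg},D_{X_\reg})$ across $Z$; the existence statement applied on $X_\reg$ itself (the case $Z=X_{\sing}$) produces a pluri-harmonic metric $H_{X_\reg}$, and uniqueness on $X\setminus Z$ forces $H_{X_\reg}|_{X\setminus Z}$ to coincide with $H_{X\setminus Z}$ up to a positive constant on each simple component. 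After rescaling, $(E_{X_\reg},D_{X_\reg},H_{X_\reg})$ extends the given harmonic bundle, the decomposition \eqref{equa-harmonic-decomposition-0} being determined by the metric.

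For existence, let $(E,D)$ be a semisimple flat bundle on $X\setminus Z$, equivalently a semisimple local system on $X_\reg$. Choose a maximally quasi-\'etale cover $\gamma\colon Y\to X$ with group $G$ (Proposition~\ref{prop-maximally}); then $Y$ is a compact K\"ahler klt space, and the pullback of $(E,D)$ extends to a $G$-equivariant semisimple local system $\rho_Y$ on $Y$ by~\ref{lem-maximallyquasietale-easy-2}. Pick a $G$-equivariant resolution $\pi\colon\widehat Y\to Y$; it is a sequence of blow-ups of $G$-invariant smooth centers, so $\widehat Y$ is a compact K\"ahler manifold, and $\pi_*\colon\pi_1(\widehat Y)\to\pi_1(Y)$ is an isomorphism (Lemma~\ref{lem-resolutionofsingularities-easy}, Lemma~\ref{lem-orbireso}). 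Hence $\pi^*\rho_Y$ is a $G$-equivariant semisimple flat bundle on $\widehat Y$; by Corlette--Donaldson \cite{Cor88,Don87} it carries a pluri-harmonic metric, unique up to a positive constant on each simple component, and averaging over $G$-orbits of simple components (together with this uniqueness) makes it $G$-invariant. Restricting it to $\pi^{-1}(Y_\reg)\cong Y_\reg$, then to $\gamma^{-1}(X_\reg)\subset Y_\reg$ where $\gamma$ is \'etale, and descending along $\gamma$, yields a pluri-harmonic metric on $(E,D)$ over $X_\reg$, whose restriction to $X\setminus Z$ is as required.

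Uniqueness and the converse both rely on the following principle: a function on $X\setminus Z$ that is bounded and plurisubharmonic there extends---across $Z$ by the Grauert--Remmert extension theorem \cite{GR55} and across $X_{\sing}$ by normality of $X$ together with boundedness---to a plurisubharmonic function on the compact space $X$, hence is constant. For uniqueness, two pluri-harmonic metrics on a flat bundle $(E,D)$ over $X\setminus Z$ determine equivariant pluri-harmonic maps, for the same monodromy representation, into the non-positively curved symmetric space $\operatorname{GL}(r,\mathbb{C})/\mathrm{U}(r)$, so the distance between them descends to a subharmonic function on $X\setminus Z$; once it is known to be bounded near $Z$ and near $X_{\sing}$ it is a non-negative constant, and the usual splitting argument---using simplicity and Schur's lemma---shows the two metrics differ by a positive scalar on each simple component. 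For the converse, a flat subbundle $V\subset E$ with no flat complement would have non-zero second fundamental form $\gamma_V$, whereas harmonicity gives a Weitzenb\"ock-type inequality $\sqrt{-1}\,\Lambda\,\bp\partial\,|\pi_V|_H^2\ge c\,|\gamma_V|^2\ge 0$ for the $H$-orthogonal projection $\pi_V$; since $|\pi_V|_H^2\le r$, the principle above (equivalently, integration against the cut-off functions of Lemma~\ref{cutoff}) forces $\gamma_V=0$, so $V^{\perp_H}$ is again flat and $(E,D)$ is semisimple.

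The main obstacle is precisely the a priori boundedness, or $L^2$-integrability, feeding these removable-singularity arguments: controlling the comparison of two pluri-harmonic metrics, and the covariant derivative of $\pi_V$, near $Z$ and near $X_{\sing}$. In the smooth compact case these inputs are vacuous; here they must be extracted from the mildness of klt singularities---the codimension-two bound on $X_{\sing}$, the finiteness of the local fundamental groups of klt singularities (which reduces the local picture near $X_{\sing}$, after a finite \'etale cover of a punctured neighbourhood, to a trivial flat bundle on the complement of a codimension-two set, where pluri-harmonic data enjoys a removable-singularity property), and the comparison with the genuine smooth metrics living on the resolution $\widehat Y$ of the cover. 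Granting these bounds, the maximum principle on the compact space $X$ closes all three assertions, hence the proposition.
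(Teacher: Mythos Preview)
Your existence argument and the reduction of (2) to (1) match the paper. The gap lies exactly where you locate it: for uniqueness you need a priori control on the comparison of the two metrics near $Z$, and you end by ``granting these bounds'' rather than proving them. The local-fundamental-group heuristic you sketch does not close this: finiteness of monodromy around $X_{\sing}$ constrains the flat structure, not how two unrelated pluri-harmonic metrics on it compare.

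The paper avoids the boundedness issue entirely by working with quantities that are \emph{plurisubharmonic} rather than merely $\omega$-subharmonic. For uniqueness on a simple $(E,D)$, a direct computation using $G_H=G_K=0$ gives
\[
\sqrt{-1}\,\partial\bar\partial\,\tr(h)\;=\;\tfrac12\sqrt{-1}\,\{Dh\cdot h^{-1/2},\overline{Dh\cdot h^{-1/2}}\}_K\;\ge\;0
\]
as a $(1,1)$-form, where $h=K^{-1}H$. The Grauert--Remmert extension theorem for psh functions across analytic sets of codimension $\ge 2$ then applies \emph{without any boundedness hypothesis}: $\tr(h)$ extends to a psh function on the compact normal space $X$, hence is constant, hence $Dh=0$, hence $h=c\,\mathrm{Id}_E$ by simplicity. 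For the converse, the paper does not run a Weitzenb\"ock inequality on $|\pi_V|_H^2$ (which is identically $\rank V$ anyway, since $\pi_V$ is an $H$-orthogonal projection). Instead it bootstraps off the existence step: given a $D$-invariant simple subbundle $W$, take the pluri-harmonic metric $K$ on $(W,D_W)$ just constructed and compute
\[
\sqrt{-1}\,\partial\bar\partial\log\det(K^{-1}H|_W)\;=\;\tfrac12\sqrt{-1}\,\tr\bigl(\beta_W^{1,0}\wedge(\beta_W^{1,0})^*-\beta_W^{0,1}\wedge(\beta_W^{0,1})^*\bigr)\;\ge\;0,
\]
where $\beta_W$ is the off-diagonal block of $D$. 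Again Grauert--Remmert plus compactness forces constancy, hence $\beta_W=0$, hence $(E,D,H)=(W,D_W,H_W)\oplus(W^\perp,D_{W^\perp},H_{W^\perp})$ as harmonic bundles, and induction on rank gives semisimplicity.

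The two moves you are missing are therefore: replace metric-subharmonic by plurisubharmonic so that the codimension-two extension requires no bound, and feed the existence result back into the splitting lemma via an auxiliary pluri-harmonic metric on the simple piece.
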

   
   Because the inclusion induces an isomorphism $i_*:\pi_1(X_\reg\setminus Z)\rightarrow\pi_1(X_\reg)$, any semisimple flat bundle on $X\setminus Z$ extends to a semisimple flat bundle on $X_\reg$. Thus (2) is a direct consequence of (1).
   
   \begin{remark}
   	  When $Z=\emptyset$, Proposition \ref{prop-harmonicmetric} rests on the work of Corlette and Donaldson on harmonic metrics \cite{Cor88,Don87}; see \cite{CJY19,Cor92,Jost-Zuo-0,Jost-Zuo,Li96,Lu99,Simpson90,Mo0,WZ23} for various generalizations.
   \end{remark}
    
    \begin{proof}[Proof of Proposition \ref{prop-harmonicmetric}]
    	The existence of a pluri-harmonic metric can be obtained via a maximally quasi-\'etale cover $\gamma: Y \rightarrow X$, induced by some subgroup $G \subset \mathrm{Aut}(Y)$, as follows. There is an extension $E_{X_\reg}\in\mathrm{sLSys}_{X_\reg}$ of the local system corresponding to the flat bundle $(E^\circ,D^\circ)$ on $X\setminus Z$  because $\codim_{X_\reg}(X_\reg\setminus Z)\geq2$. By item \ref{lem-maximallyquasietale-easy-2}, the pullback $(\gamma|_{\gamma^{-1}(X_\reg)})^*E_{X_\reg}$ extends to a $G$-equivariant local system $E_Y\in\mathrm{sLSys}_Y$. Let $\pi: \widehat{Y} \rightarrow Y$ be a functorial resolution of singularities, so $G$ lifts to a subgroup $\widehat{G}\subset\mathrm{Aut}(\widehat{Y})$. Then $\pi^*E_Y$ belongs to $\mathrm{sLSys}_{\widehat{Y}}$ by item \ref{lem-resolutionofsingularities-easy-2} and is also $\widehat{G}$-equivariant. Applying Corlette-Donaldson's result \cite{Cor88,Don87} over the compact K\"ahler manifold $\widehat{Y}$, we obtain a harmonic bundle $(E_{\widehat{Y}}, D_{\widehat{Y}}, H_{\widehat{Y}})$ corresponding to $\pi^*E_Y$. This harmonic bundle is $\widehat{G}$-equivariant due to the uniqueness of pluriharmonic metrics. Then $H_{\widehat{Y}}$ could descend to $X\setminus Z$ and induces a pluri-harmonic metric on $(E^\circ,D^\circ)$.
    	
    	It remains to verify the uniqueness and to show that the existence of a pluri-harmonic metric implies semisimplicity. The standard proof in the noncompact K\"ahler context via the integral method is not available because we do not impose any assumption such as ``admissibility'' on the pluri-harmonic metrics. We adapt a new argument via the maximum principle, which is well‑suited to our setting thanks to Grauert-Remmert's extension theorem \cite{GR55}.
    	
    	\begin{lemma}\label{lem-harmonic-decomposition}
    		Let $(E,D,H)$ be a harmonic bundle on a complex manifold $M=X\setminus Z$ for some compact analytic space $X$ and an analytic subspace $Z$ of codimension at least $2$. Then for any $D$-invariant subbundle $W$ of $E$ such that $(W,D_W)$ is simple, where $D_W=D|_W$, it holds that
    		\begin{equation}
    			(E,D,H)=(W,D_W,H_W)\oplus (W^\perp,D_{W^\perp},H_{W^\perp})
    		\end{equation}
    		splits as harmonic bundles, where $H_W=H|_{W}$, $W^\perp$ is the orthogonal complement of $W$ with respect to $H$ and $D_{W^\perp}$ (resp. $H_{W^\perp}$) is the induced connection (resp. the induced Hermitian metric) on $W^\perp$.
    	\end{lemma}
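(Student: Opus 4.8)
The plan is to show that the $H$-orthogonal complement $W^\perp$ is itself $D$-invariant, after which the orthogonal decomposition $E = W \oplus W^\perp$ is automatically compatible with $D$ and with $H$, and restricting the harmonic equation \eqref{equa-harmonic-condition} to each summand exhibits both as harmonic bundles. The subtlety — and the reason one cannot simply invoke the compact case — is that flatness alone does not force $W^\perp$ to be $D$-invariant; one needs the \emph{pluri-harmonic} structure together with the simplicity of $(W, D_W)$, and one must deal with the noncompactness of $M = X \setminus Z$.

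First I would recall the standard computation: for a harmonic bundle, the second fundamental form of the subbundle $W$ with respect to the flat connection $D$ and the metric $H$ splits according to the decomposition \eqref{equa-harmonic-decomposition} $D = \partial + \bar\partial + \theta + \theta^*$, and the condition that $W$ be $D$-invariant is equivalent to $W$ being simultaneously $\bar\partial$-holomorphic and $\theta$-invariant. Let $\beta \in \Lambda^{1,0}(M, \Hom(W, W^\perp))$ and $\gamma \in \Lambda^{0,1}(M, \Hom(W, W^\perp))$ denote the $(1,0)$ and $(0,1)$ components of the second fundamental form of $W \hookrightarrow E$; $D$-invariance of $W$ gives that the analogous second fundamental form of $W^\perp$ in $(E, D)$ has components built from $-\beta^*$ and $-\gamma^*$, so $W^\perp$ is $D$-invariant if and only if $\beta = 0$ and $\gamma = 0$. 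Now one considers the function $f := |\beta|_H^2 + |\gamma|_H^2$ (equivalently, the squared norm of the full second fundamental form of $W$). Using the harmonic equations \eqref{equa-harmonic-condition} and the Kähler identities on $M$ — which hold because $M$ is an open subset of the smooth locus of a Kähler space, hence Kähler — a Bochner–Weitzenböck argument shows $\Delta_\omega f \geq |\nabla f'|^2 \geq 0$ for the appropriate covariant derivative, i.e. $f$ is subharmonic on $M$ (here $\omega$ is a Kähler metric on a neighborhood of a given point of $M$; the statement is local in nature once subharmonicity is established). The point is that $f$ is bounded near $Z$: since $W$, $W^\perp$, $D$ and $H$ are all defined on all of $X_{\reg} \supseteq X \setminus Z$ — wait, more carefully, $f$ extends continuously (indeed smoothly) across $X\setminus Z$ only on the locus where everything is defined, so instead I would argue as follows.

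The correct way to close the argument, avoiding any boundedness hypothesis at $Z$, is to pass to the compact resolution already used for existence. Take the maximally quasi-\'etale cover $\gamma \colon Y \to X$ and a functorial resolution $\pi \colon \widehat Y \to Y$; as in the existence part of the proof of Proposition~\ref{prop-harmonicmetric}, the local system underlying $(E, D)$ on $M = X \setminus Z$ extends to $E_Y \in \mathrm{sLSys}_Y$, then pulls back to $\pi^* E_Y \in \mathrm{sLSys}_{\widehat Y}$, and by uniqueness of the pluri-harmonic metric the harmonic structure $(E, D, H)$ is the descent of a $\widehat G$-equivariant harmonic bundle $(E_{\widehat Y}, D_{\widehat Y}, H_{\widehat Y})$ on the \emph{compact} Kähler manifold $\widehat Y$. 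Likewise the $D$-invariant subbundle $W$ on $M$, being preserved by the monodromy, extends to a $\widehat G$-invariant $D_{\widehat Y}$-invariant subbundle $W_{\widehat Y} \subseteq E_{\widehat Y}$ (the extension is across a set of codimension $\geq 2$ for the bundle map giving the inclusion, so one uses the reflexivity / Hartogs-type extension already invoked in this paper). The second fundamental form norm $f$ then extends to a \emph{smooth} subharmonic function on the compact manifold $\widehat Y$, hence is constant; since $f = 0$ on the nonempty Zariski-dense set where $W$ is flat-invariant — or rather, since one shows directly $\nabla f \equiv 0$ forces $\beta, \gamma$ to be parallel sections, and then simplicity of $(W, D_W)$ rules out a nonzero parallel $\Hom$-valued form — we get $\beta \equiv 0$, $\gamma \equiv 0$ on $\widehat Y$, hence on $M$.

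The main obstacle, as indicated, is precisely the step that controls $f$ near the non-free locus / near $Z$: on $M$ by itself $f$ is only subharmonic on a noncompact manifold with no a priori control at the ends, so the maximum principle does not apply directly, and this is exactly the place where one must either impose an admissibility-type hypothesis (which the paper deliberately avoids) or — as here — transport the whole picture to the compact resolution $\widehat Y$ via the maximally quasi-\'etale cover and the uniqueness of pluri-harmonic metrics, where $f$ becomes genuinely bounded and the classical argument goes through. Once $\beta = \gamma = 0$, the decomposition $E = W \oplus W^\perp$ is $D$-parallel and $H$-orthogonal; feeding this back into \eqref{equa-harmonic-condition} restricted to $W$ and to $W^\perp$ shows each is a harmonic bundle, with $H_W = H|_W$ and $H_{W^\perp} = H|_{W^\perp}$, completing the proof of the lemma.
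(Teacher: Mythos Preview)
Your approach has a genuine circularity problem at the key step. You propose to control $f$ near $Z$ by lifting to the compact resolution $\widehat Y$ and invoking \emph{uniqueness} of the pluri-harmonic metric to identify $(E,D,H)$ on $M$ with the descent of the Corlette--Donaldson harmonic bundle on $\widehat Y$. But in the paper's architecture, Lemma~\ref{lem-harmonic-decomposition} is proved \emph{before} uniqueness, and uniqueness is then deduced \emph{from} it (together with the formula \eqref{lemma-pp}). So you cannot call on uniqueness here. Worse, to even extend the local system underlying $(E,D)$ to $E_Y\in\mathrm{sLSys}_Y$ you would need $(E,D)$ to be semi-simple, and that too is established only \emph{after} Lemma~\ref{lem-harmonic-decomposition}, via induction using the lemma. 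So both ingredients your transport-to-$\widehat Y$ step relies on are downstream of the lemma you are proving.

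The paper sidesteps this entirely with a different and sharper auxiliary function. Rather than the norm $f=|\beta|^2+|\gamma|^2$ (whose subharmonicity via Bochner--Weitzenb\"ock you assert but do not verify, and which has no a priori bound near $Z$), the paper compares $H_W$ with a pluri-harmonic metric $K$ on the \emph{simple} flat bundle $(W,D_W)$---existence of $K$ for simple bundles has already been established at this point. One computes that $\log\det(K^{-1}H_W)$ is \emph{plurisubharmonic} on $M$, with $i\partial\bar\partial\log\det(K^{-1}H_W)$ equal to a nonnegative expression in $\beta_W$. Plurisubharmonic functions extend across analytic sets of codimension $\ge 2$ by Grauert--Remmert, so this function extends to the compact space $X$ and is therefore constant; equality then forces $\beta_W=0$. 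This argument uses only what is already in hand (existence for simple bundles) and the intrinsic extension property of psh functions, avoiding any appeal to uniqueness, semi-simplicity, or an auxiliary compactification.
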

    	
    	\begin{proof}[Proof of Lemma \ref{lem-harmonic-decomposition}]
    		With respect to the orthogonal decomposition $E=W\oplus W^\perp$, the connection $D$ takes the form
    		\begin{equation}
    			D=\left(\begin{matrix}
    				D_W & \beta_W \\
    				0 & D_{W^\perp}
    			\end{matrix}\right),
    		\end{equation}
    		where $\beta_W\in \Lambda^1(M,\Hom(W^\perp,W))$. Recall from \eqref{equa-harmonic-decomposition} that $D$ admits a unique decomposition $D_H+\Psi_H$. One can verify that $D_H$ and $\Psi_H$ are then given by
    		\begin{equation}\label{equa-matrix-decompose}
    			D_H=\left(\begin{matrix}
    				D_{H_W} & \frac{1}{2}\beta_W \\
    				-\frac{1}{2}\beta^*_W & D_{H_{W^\perp}}
    			\end{matrix}\right)
    			\ \text{ and }\ 
    			\Psi_H=\left(\begin{matrix}
    				\Psi_{H_W} & \frac{1}{2}\beta_W \\
    				\frac{1}{2}\beta^*_W & \Psi_{H_{W^\perp}}
    			\end{matrix}\right),
    		\end{equation}
    		where we have used the decompositions $D_W=D_{H_W}+\Psi_{H_W}$ and $D_{W^\perp}=D_{H_{W^\perp}}+\Psi_{H_{W^\perp}}$ (cf. \cite[Section 2.3]{WZ23}). Recall that the pluri-harmonicity of $H$ is equivalent to \eqref{equa-harmonic-condition}. Combining this with \eqref{equa-matrix-decompose} gives
    		\begin{equation}\label{equa-sub-harmonic-2}
    			D_{H_W}^{0,1}\Psi_{H_W}^{1,0}-\frac{1}{4}\beta_W^{1,0}\wedge (\beta_W^{1,0})^*+\frac{1}{4}\beta_W^{0,1}\wedge(\beta_{W}^{0,1})^*=0.
    		\end{equation}
    		Recall that the simple flat bundle $(W,D_W)$ admits a pluri-harmonic metric $K$, as explained above. Then $D_{K}^{0,1}\Psi_{K}^{1,0}=0$, and together with \eqref{equa-sub-harmonic-2} this implies
    		\begin{equation}\label{equa-vanishing-0,1}
    			\begin{split}
    				\im\pp\log\det(h)=&2\im\tr(D_{H_W}^{0,1}\Psi_{H_W}^{1,0}-D_{K}^{0,1}\Psi_{K}^{1,0})\\
    				=&\frac{1}{2}\im\tr\left(\beta_W^{1,0}\wedge (\beta_W^{1,0})^*-\beta_W^{0,1}\wedge(\beta_{W}^{0,1})^*\right)
    				\geq0,
    			\end{split}
    		\end{equation}
    		where $h=K^{-1}H_W$ (cf. \cite[(2.25)]{PZZ23} for the derivation of the first equality). Consequently, $\log\det(h)$ is a plurisubharmonic function on $M$ and thus extends to a plurisubharmonic function on $X$ due to Grauert-Remmert's extension theorem  \cite[Page 181]{GR55}, since $\codim_XZ\geq2$. Hence, $\log\det(h)$ must be constant on $X$. Using \eqref{equa-vanishing-0,1} again, we deduce that $\beta_W^{1,0}=\beta_W^{0,1}=0$, and so $\beta_W=0$, which completes the proof.
    	\end{proof}
    	
    	To prove the uniqueness, it suffices to consider the case when $(E,D)$ is simple by Lemma \ref{lem-harmonic-decomposition}. Suppose that $H$ and $K$ are two pluri-harmonic metrics on  $(E,D)$. Then we have
    	\begin{equation}\label{lemma-pp}
    		\im\pp\tr(h)=\frac{1}{2}\im\langle Dh\cdot h^{-\frac{1}{2}},\overline{Dh\cdot h^{-\frac{1}{2}}}\rangle_K+2\tr(h\im (G_H-G_K)),
    	\end{equation}
    	where $h=K^{-1}H\in C^\infty(M,\End(E))$ with $M=X\setminus Z$ (cf. \cite[Section 2]{PZZ23}). Consequently, $\tr(h)$ is a global plurisubharmonic function on $M$ because $G_H=0$ and $G_K=0$ (cf. \eqref{equa-harmonic-condition}), and thus, as argued above, must be constant. Applying \eqref{lemma-pp} again gives $Dh=0$ over $M$. Then $h=c\Id_{E}$ for some constant $c>0$. Otherwise, $(E,D)$ would split according to the eigenspaces of $h$. Therefore, $H=cK$.

    	It remains to show that the existence of a pluri-harmonic metric $H$ implies the semisimplicity of $(E,D)$. This can be concluded by Lemma \ref{lem-harmonic-decomposition} and an induction argument, because 
    	any flat bundle $(E,D)$ contains a $D$-invariant simple subbundle. Hence the proof of Proposition \ref{prop-harmonicmetric} is complete.
    \end{proof}

    \subsubsection{Descent results}
    
    A key technical step in proving Theorem \ref{main-theorem-polystable-equivalence} (2) is the following descent result.
    
    \begin{proposition}\label{prop-descent-higgs}
    	Let $\pi:\widehat{X}\rightarrow X$ be a resolution of singularities of a complex klt space $X$. Then for any Higgs bundle $(\mE_{\widehat{X}},\theta_{\widehat{X}})$ arising from a harmonic bundle $(E,D,H)$, there exists a locally free Higgs sheaf $(\mE_X,\theta_X)$ on $X$ such that $(\mE_{\widehat{X}},\theta_{\widehat{X}})=\pi^*(\mE_X,\theta_X)$, where $\mE_{X}:=(\pi_*\mE_{\widehat{X}})^{\vee\vee}$.
    \end{proposition}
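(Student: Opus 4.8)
The plan is to descend the harmonic bundle along $\pi$ in two stages: first the underlying flat bundle, using Takayama's theorem that $\pi_*:\pi_1(\widehat X)\to\pi_1(X)$ is an isomorphism (Lemma~\ref{lem-resolutionofsingularities-easy}); then the holomorphic structure and the Higgs field, using the period map attached to a harmonic bundle in \cite{Daniel17} together with the rational chain connectedness of the fibres of $\pi$.

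\textbf{Step 1: the flat bundle and the Higgs datum over the regular locus.} A harmonic bundle is an orthogonal direct sum of simple ones, and $\pi_*$, $\pi^*$ and $(\cdot)^{\vee\vee}$ all commute with finite direct sums, so I would first reduce to the case where $(E,D,h)$ is simple. Then $(E,D)$ is semi-simple, so its monodromy $\rho$ factors through $\pi_*:\pi_1(\widehat X)\xrightarrow{\sim}\pi_1(X)$, producing a semi-simple local system $\mathbb{V}_X$ on $X$ with $\pi^*\mathbb{V}_X\cong(E,D)$. Restricting to $X_{\reg}$ and applying Proposition~\ref{prop-harmonicmetric}(1) equips $\mathbb{V}_X|_{X_{\reg}}$ with a pluri-harmonic metric, hence with a Higgs-bundle structure $(\mathcal{E}_{X_{\reg}},\theta_{X_{\reg}})$ which, by uniqueness of pluri-harmonic metrics, coincides with $(\mathcal{E}_{\widehat X},\theta_{\widehat X})$ over the big open locus of $X_{\reg}$ on which $\pi$ is an isomorphism. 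Since $X_{\sing}$ and the image of the exceptional locus $E_\pi$ have codimension $\geq 2$, taking reflexive extensions and comparing on $X_{\reg}$ identifies the trivial extension of $\mathcal{E}_{X_{\reg}}$ with $\mathcal{E}_X:=(\pi_*\mathcal{E}_{\widehat X})^{\vee\vee}$, while $\theta_{X_{\reg}}$ extends to a Higgs field $\theta_X$ of $\mathcal{E}_X$ because $\Hom(\mathcal{E}_X,\mathcal{E}_X\otimes\Omega_X^{[1]})$ and $\Hom(\mathcal{E}_X,\mathcal{E}_X\otimes\Omega_X^{[2]})$ are reflexive.

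\textbf{Step 2: local freeness and the pull-back isomorphism.} It remains to show that $\mathcal{E}_X$ is locally free and that the identification $\pi^*\mathcal{E}_X\cong\mathcal{E}_{\widehat X}$, a priori valid only over $\widehat X\setminus E_\pi$, extends across $E_\pi$. Here I would invoke the period map $\Phi$ of the harmonic bundle $(E,D,h)$ from \cite{Daniel17}, a $\rho$-equivariant pluri-harmonic map on the universal cover of $\widehat X$ from which the holomorphic structure and the Higgs field of $(\mathcal{E}_{\widehat X},\theta_{\widehat X})$ are recovered. For any fibre $F=\pi^{-1}(x)$ the composite $\pi_1(F)\to\pi_1(\widehat X)\xrightarrow{\sim}\pi_1(X)$ is trivial, so $\Phi$ descends to a pluri-harmonic map on $F$; pulling it back to a resolution $F'$ of $F_{\mathrm{red}}$ — a compact K\"ahler manifold which is rationally chain connected since $F$ is a fibre of a resolution of a klt space — and using that the restriction of the harmonic bundle to any rational curve is a polystable Higgs bundle of degree $0$ on $\mathbb{P}^1$, hence isomorphic to $(\mathcal{O}_{\mathbb{P}^1}^{\oplus r},0)$ (a $\theta$-invariant subsheaf of maximal slope has non-negative degree), I would conclude that $\Phi|_F$ is constant. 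Therefore $\Phi$ factors through $\pi$, the whole period datum descends to $X$, and the locally free sheaf it produces has pull-back isomorphic to $\mathcal{E}_{\widehat X}$; comparing with $\mathcal{E}_X$ over $X_{\reg}$ and using reflexivity shows $\mathcal{E}_X$ is locally free with $\pi^*\mathcal{E}_X\cong\mathcal{E}_{\widehat X}$. Finally $\pi^*\theta_X=\theta_{\widehat X}$, since both are Higgs fields on $\pi^*\mathcal{E}_X=\mathcal{E}_{\widehat X}$ agreeing over the dense open set $\widehat X\setminus E_\pi$ and $\Hom(\mathcal{E}_{\widehat X},\mathcal{E}_{\widehat X}\otimes\Omega_{\widehat X}^1)$ is torsion-free.

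\textbf{Main obstacle.} The substantive point is the extension of $\pi^*\mathcal{E}_X\cong\mathcal{E}_{\widehat X}$ across the exceptional locus, equivalently the local freeness of $\mathcal{E}_X$; it is precisely for this that the period-map formalism of \cite{Daniel17}, combined with Takayama's fundamental-group statement and the rational chain connectedness of the fibres, substitutes for the hypersurface-cutting arguments of \cite{GKPT19,GKPT20} that are unavailable over general K\"ahler spaces.
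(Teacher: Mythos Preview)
Your argument is essentially the paper's: Daniel's period map, constancy on fibres via triviality of harmonic bundles on $\mathbb P^1$ plus rational chain connectedness, then pull back the tautological bundle on the period domain. The paper packages this locally rather than globally---it invokes Takayama's \emph{local} simple connectedness (each $x\in X$ has a neighbourhood $U$ with $\pi^{-1}(U)$ simply connected), so the period map is an honest map $f:\pi^{-1}(U)\to\mathcal D$ and the factorisation $f=g\circ\pi$ yields the locally free sheaf $g^*\mathcal E_{\mathcal D}$ on $U$ directly, without equivariant descent from $\widetilde X$; this also makes your Step~1 superfluous.

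One step to correct: you pass to a resolution $F'$ of $F_{\mathrm{red}}$ and assert that $F'$ is rationally chain connected. Rational chain connectedness is not preserved under resolution in general (the cone over an elliptic curve is rationally chain connected, but its minimal resolution, a ruled surface over that curve, is not), and $F$ has no reason to have klt singularities, so Hacon--McKernan does not apply to $F'\to F$. The fix is simply not to resolve $F$: the rational curves supplied by Fujino already lie in $F\subset\widehat X$, and since $\widehat X$ is smooth the period map restricts along any $\gamma:\mathbb P^1\to\widehat X$ with no further preparation---this is exactly how the paper argues.
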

    
    Thanks to Daniel's work on variations of loop Hodge structures \cite{Daniel17}, we are able to study harmonic bundles using the classical tools of Hodge theory, in particular the existence of a period map, which encodes the structure of the induced Higgs sheaf as follows.
    
    \begin{lemma}\label{lem-periodmap-pullback}
    	Let $(E,D,H)$ be a harmonic bundle over a simply connected complex manifold $X$ and $f:X \rightarrow\mathcal{D}$ be the induced period map introduced in \cite[Section 3.2]{Daniel17}. Write $D=\p+\bp+\theta+\theta^*$ according to the decomposition \eqref{equa-harmonic-decomposition} and set $\mE_X:=(E,\bp)$. Then there exists a locally free coherent sheaf $\mE_D$ on $\mathcal{D}$ such that $\mE_X:=f^*\mE_D$.
    \end{lemma}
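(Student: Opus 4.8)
The plan is to produce $\mathcal{E}_{\mathcal{D}}$ as the tautological finite‑rank holomorphic bundle carried by Daniel's period domain, and then to identify $f^{*}\mathcal{E}_{\mathcal{D}}$ with $\mathcal{E}_X=(E,\bar\partial)$ by unwinding the definition of the period map; the substance is a matching of conventions rather than a new argument. \emph{Step 1 (the tautological bundle).} Recall from \cite{Daniel17} that the period domain $\mathcal{D}$ for loop Hodge structures on a fixed $r$‑dimensional complex vector space $V$ is an infinite‑dimensional complex manifold, homogeneous under the loop group $L\mathrm{GL}(r,\C)$, and carries a universal loop Hodge structure. Evaluating the holomorphic part of this universal structure at the parameter corresponding to $\lambda=0$ in Simpson's $\lambda$‑connection picture yields a rank $r$ holomorphic vector bundle $\mathcal{E}_{\mathcal{D}}$ on $\mathcal{D}$, whose fibre over $[H]\in\mathcal{D}$ is the $\lambda=0$ slice of $H$ and whose holomorphic structure records the holomorphic dependence of that slice on $[H]$. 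Since $\mathcal{D}$ is infinite‑dimensional, ``locally free coherent sheaf on $\mathcal{D}$'' is to be read here as ``finite‑rank holomorphic vector bundle'', and $\mathcal{E}_{\mathcal{D}}$ is such.

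\emph{Step 2 (the period map and the pullback).} The harmonic bundle $(E,D,h)$ gives a variation of loop Hodge structures on $X$ as in \cite[Section 3.2]{Daniel17}, with monodromy the holonomy of $D$; since $X$ is simply connected this monodromy is trivial, so parallel transport along $D$ trivializes $E$ as a $C^{\infty}$ bundle, $E\cong V\otimes_{\C}C^{\infty}_X$ with $V=(E_{x_0},D)$, and transporting the harmonic data to $V$ fibrewise realizes the period map $f\colon X\to\mathcal{D}$ as a holomorphic map defined on all of $X$. By construction the fibre of $f^{*}\mathcal{E}_{\mathcal{D}}$ over $x$ is the $\lambda=0$ slice of $f(x)$, which under the trivialization above is canonically $E_x$; this gives a $C^{\infty}$ isomorphism $\Phi\colon f^{*}\mathcal{E}_{\mathcal{D}}\to E$. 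It then suffices to check that $\Phi$ intertwines the pulled‑back Dolbeault operator of $\mathcal{E}_{\mathcal{D}}$ with $\bar\partial$: both are computed from the $(0,1)$‑part of the variation of the $\lambda=0$ slice, and in the $\lambda=0$ gauge the relevant Maurer--Cartan datum of the harmonic bundle has $(0,1)$‑component equal to $\bar\partial$; this is just the decomposition $D=\partial+\bar\partial+\theta+\theta^{*}$ together with the fact that the Dolbeault part of the $\lambda$‑connection at $\lambda=0$ is $\bar\partial+\theta$, whose $(0,1)$‑piece is $\bar\partial$. Hence $f^{*}\mathcal{E}_{\mathcal{D}}\cong(E,\bar\partial)=\mathcal{E}_X$.

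\emph{Main obstacle.} The real work is the compatibility asserted at the end of Step 2: one must verify, inside Daniel's loop‑group formalism, that the holomorphic structure built into $\mathcal{E}_{\mathcal{D}}$ --- equivalently, the complex structure of $\mathcal{D}$ --- is exactly the one whose pullback along $f$ is the Higgs‑bundle operator $\bar\partial=D_H^{0,1}$, rather than the flat structure $(E,D)$ or the $\lambda$‑connection at some other value of $\lambda$. This is a normalization issue, requiring one to track which slice of a loop Hodge structure carries Daniel's complex structure and to match it with the convention $(E,\bar\partial,\theta)=(E,D_H^{0,1},\Psi_H^{1,0})$ fixed in \eqref{equa-harmonic-decomposition}. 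Once the conventions are aligned, the assignment $(E,D,h)\mapsto f$ is functorial and the identification $f^{*}\mathcal{E}_{\mathcal{D}}\cong\mathcal{E}_X$ follows from the universality of $\mathcal{E}_{\mathcal{D}}$; the infinite‑dimensionality of $\mathcal{D}$ plays no role beyond the interpretation of ``locally free sheaf'' noted in Step 1.
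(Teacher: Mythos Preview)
Your overall strategy is the same as the paper's: construct a tautological rank-$r$ holomorphic bundle on $\mathcal{D}$ and then verify, by unwinding the definition of the period map, that its pullback along $f$ is $(E,\bar\partial)$. Where you and the paper diverge is in the language used to carry out the verification. The paper works \emph{directly} inside Daniel's Krein--space/outgoing--Grassmannian formalism: it identifies $E$ with the quotient $\mathcal{W}/\mathcal{T}\mathcal{W}$ of the canonical outgoing subbundle by the shift (equivalently, the Fourier-degree-zero piece), observes that the loop connection has $(0,1)$-part $\widetilde{D}^{0,1}=\bar\partial+\lambda\theta^{*}$, and then checks by hand that $\widetilde{D}^{0,1}$ descends to $\bar\partial$ on this quotient because $\lambda\theta^{*}$ raises Fourier degree. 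The tautological bundle is then taken to be $H^{0,\mathcal{D}}$ (the orthogonal complement of $TW$ in $W$ over each point of the Grassmannian), with holomorphic structure induced by the trivial connection $d_{\mathcal{D}}$, and the pullback identification follows tautologically from $f(x)=W_x$.

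Your proof instead phrases everything in Simpson's $\lambda$-connection language (the ``$\lambda=0$ slice''), which is morally equivalent but is not how Daniel actually packages his period domain. The sentence ``the Dolbeault part of the $\lambda$-connection at $\lambda=0$ is $\bar\partial+\theta$, whose $(0,1)$-piece is $\bar\partial$'' is correct, and is essentially the same computation the paper does, but you never explain how the $\lambda=0$ slice corresponds to Daniel's $W/TW$. You correctly flag this as the ``main obstacle'' and call it a normalization issue---which it is---but the paper actually resolves it by the explicit Fourier computation rather than by appeal to a dictionary between the two formalisms. So your proof is right in outline and identifies the correct difficulty, but leaves precisely that difficulty unaddressed; the paper's contribution is to do that bookkeeping inside Daniel's own conventions.
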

    
    To see this, we briefly outline Daniel's construction of the period map. For a concise formulation, we refer to \cite{Daniel17}; below we summarize the essential ideas.
    
    \begin{construction}[The period map, cf. {\cite[Section 3]{Daniel17}}]
    	Fix a base point $x_0\in X$ and let $\mathcal{H}=L^2(S^1, \End(E))$. Then the fiber at $x_0$ of the canonical outgoing Krein bundle $(\mathcal{H}, \mathcal{B}, \mathcal{T})$ induced by $(E, H)$  can be identified with the canonical Krein space $\widehat{H}:=(L^2(S^1,\C^r),B,T)$ via the identification of $(E, H)_{x_0}$ with the standard Hermitian inner product on $\C^r$ (see \cite[Section 2.1.3]{Daniel17}), where $r=\rank(E)$ and $S^1=\{z\in\C:|z|=1\}$. 
    	
    	When there is no risk of confusion, the trivial bundle $Y\times \widehat{H}$ over a complex manifold $Y$ will be denoted simply by $\widehat{H}_Y$. Set $$D_\lambda=\p+\bp+\lambda^{-1}\theta+\lambda\theta^*$$ for any $\lambda\in S^1$. Then $\{D_\lambda\}$ can be regarded as a flat connection $\widetilde{D}$ on $\mathcal{H}$; moreover, $\mathcal{B}$ and $\mathcal{T}$ are $\widetilde{D}$-flat (see \cite[Appendix A]{Daniel17}). In particular, the canonical Krein space with the flat connection $\widetilde{D}$ admits a trivialization
    	$$(\mathcal{H},\mathcal{B},\mathcal{T},\widetilde{D})\cong(\widehat{H}_X,B_X,T_X,d_X),$$ where $B_X$ (resp. $T_X$) is the trivial Krein metric (resp. trivial outgoing operator) by identification with the base point, and $d_X$ is the trivial connection.
    	
    	Via Fourier series expansion, any element $s\in \mathcal{H}$ can be written as $s(\lambda)=\sum\limits_{n\in\mathbb{Z}}s_n \lambda^n$ with $(s_n(x))_{n\in\mathbb{Z}}\in l^2({\C}^r)$ for each $x\in X$. The canonical outgoing subbundle is given by $\mathcal{W}=L^2_+(S^1,\End(E))$ (cf. \cite[Section 2.1.3]{Daniel17}), consisting of all Fourier series with nonnegative degrees. In particular, for any $x$, $\mathcal{W}_x$ can be identified with an outgoing subspace $W$ of $\widehat{H}$, and this subspace varies with $x\in X$. In summary, we obtain an isomorphism of variations of loop Hodge structures
    	\begin{equation}\label{equa-identification-loophodgestructure}
    		\gamma:\left((\mathcal{H},\mathcal{B},\mathcal{T}),\mathcal{W},\widetilde{D}\right)\cong(\widehat{H}_X,W_X,d_X),
    	\end{equation}
    	where $\widehat{H}_X=\widehat{H}\times X$ is trivial with respect to $d_X$ while $W_X$ is not. 
    	
    	Let $\mathrm{Gr}(\widehat{H})$ denote the outgoing Grassmannian of $\widehat{H}$, which is the set of all outgoing subspaces of $\widehat{H}$. By \cite[Proposition 3.7 and Remark 3.8]{Daniel17}, one can identify $\mathrm{Gr}(H)$ with  $\Lambda_\sigma^\infty G/K$ (see \cite[Page 619]{Daniel17} for its definition), where $G=\mathrm{GL}(r,\C)$ and $K=\mathrm{U}(r,\C)$.  Therefore, we define a map $f:X\rightarrow\Lambda_\sigma^\infty G/K$ by setting $f(x)=W_x$.  Let $\mathcal{D}=\Lambda_\sigma G/K$ denote the period domain defined in \cite[Section 3.1.4]{Daniel17}. Then $\mathcal{D}$ is an infinite-dimensional complex Hilbert manifold, and $f(x)$ indeed lies in  $\mathcal{D}$ (cf. \cite[Lemma 3.19]{Daniel17}). Thus $f$ can be regarded as a map $f:X\rightarrow\mathcal{D}$ which we define as the period map of $(E,D,H)$.
    \end{construction}
  
    \begin{proof}[Proof of Lemma \ref{lem-periodmap-pullback}]
        We first observe that the above canonical variation of loop Hodge structures $(\mathcal{H},\mathcal{B},\mathcal{T},\mathcal{W},\widetilde{D})$ encodes the information of the holomorphic structure $\bp$. Here $\mathcal{T}$ is in fact given by $(\mathcal{T}s)=\lambda\cdot s(\lambda)$ and so $\mathcal{T}^p\mathcal{W}$ is the set of all Fourier series with degrees at least $p$ (cf. \cite[2.1.3]{Daniel17}). Consequently, the underlying smooth vector bundle $E$ is isomorphic to the orthogonal complement of $\mathcal{T}\mathcal{W}$ in $\mathcal{W}$. Note that $\widetilde{D}^{0,1}\mathcal{W}\subset \mathcal{W}\otimes\mathcal{A}^{0,1}$ and $\widetilde{D}^{0,1}\mathcal{T}\mathcal{W}\subset \mathcal{T}\mathcal{W}\otimes \mathcal{A}^{0,1}$. Thus $\widetilde{D}^{0,1}$ induces a holomorphic structure on $E$ via $E\cong \mathcal{W}/\mathcal{T}\mathcal{W}$. We now verify that it coincides with $\bp$. Let $[s]\in \mathcal{W}/\mathcal{T}\mathcal{W}$ with $s=\sum\limits_{n\in\N}s_n\lambda^n$ as a representative. Then the isomorphism $E\cong \mathcal{W}/\mathcal{T}\mathcal{W}$ is given by
        $s_0\mapsto [s]$. The definition of $\widetilde{D}$ gives $\widetilde{D}^{0,1}=\bp+\lambda\theta^*$, and so $\widetilde{D}^{0,1}[s]=0$ if and only if
        $(\bp+\lambda\theta^*)(s_0+\mathcal{T}\mathcal{W})\in\mathcal{T}\mathcal{W}\otimes\mathcal{A}^{0,1}$, which is equivalent to $\bp s_0=0$. Consequently,
        \begin{equation}\label{equa-identification-holomorphicstructure}
        	(E,\bp)\cong(\mathcal{W}/\mathcal{T}\mathcal{W},\widetilde{D}^{0,1})\cong (W_X/T_XW_X,d_X^{0,1}),
        \end{equation}
        where the second isomorphism is given by \eqref{equa-identification-loophodgestructure}.
          	
    	The identification $\mathrm{Gr}(\widehat{H})=\Lambda_\sigma^\infty G/K$ implies that there exists a tautological bundle over $\mathcal{D}$, defined by
    	$(\widehat{H}^{0,\mathcal{D}})_{W\in\mathcal{D}}$, where $\widehat{H}^{0,\mathcal{D}}$ is the orthogonal complement of $TW$ in $W$ (cf. \cite[Definition 3.12]{Daniel17}). Let $d_\mathcal{D}$ be the trivial connection on the trivial bundle $\widehat{H}_\mathcal{D}$, then it induces a holomorphic structure $\bp_{\mathcal{D}}$ on $\widehat{H}^{0,\mathcal{D}}$. According to the construction of the period map $f$ explained as above, we have $$(W_X/T_XW_X,d_X^{0,1})=f^*(\widehat{H}^{0,D},d_\mathcal{D}^{0,1})$$ since $f$ is holomorphic by \cite[Theorem 3.18]{Daniel17}. Via the identification \eqref{equa-identification-holomorphicstructure}, we conclude that $\mE_X=(E,\bp)=f^*(\widehat{H}^{0,D},d_\mathcal{D}^{0,1})$. Taking $\mE_\mathcal{D}=(\widehat{H}^{0,D},d_\mathcal{D}^{0,1})$, the proof is complete.
    \end{proof}
    
    In view of Lemma \ref{lem-periodmap-pullback} and \cite[Theorem 1.2]{Daniel17}, we can adapt the arguments in \cite[Corollary 5.8]{GKPT19a}---which treats Higgs bundles from variations of Hodge structures---to prove Proposition \ref{prop-descent-higgs}. 
    
    \begin{proof}[Proof of Proposition \ref{prop-descent-higgs}]
    	Recall Takayama's result \cite{Taka03} that for any $x\in X$, there exists an open neighborhood $U$ of $x$ such that $\pi^{-1}(U)$ is simply connected. We shall show that there exists a locally free  sheaf $\mE_U$ on $U$ such that $\mE_{\widehat{X}}|_{\pi^{-1}(U)}=\pi^*\mE_U$, which suffices to conclude the desired statement by Lemma \ref{lem-Higgs-extension}. 
    	
    	Let $f:\pi^{-1}(U)\rightarrow\mathcal{D}$ be the induced period map of the harmonic bundle $(E,D,h)|_{\pi^{-1}(U)}$. By Lemma \ref{lem-periodmap-pullback}, there exists a locally free sheaf $\mE_{\mathcal{D}}$ such that $\mE_X|_{\pi^{-1}(U)}=f^*\mE_{\mathcal{D}}$. If $f$ factors through $\pi$, i.e., $f=g\circ\pi$ for some holomorphic map $g:U\rightarrow\mathcal{D}$, then $\mE_X=\pi^*(g^*\mE_{\mathcal{D}})$. It suffices to show that $f$ is constant on $\pi^{-1}(x)$ for $x\in U$.  Fix a point $x\in U$. For any rational curve $\gamma:\CP^1\rightarrow \pi^{-1}(x)$, $f\circ\gamma$ is again holomorphic and horizontal (cf. \cite[ Section 3.1.6]{Daniel17}) and so is a period map induced by a harmonic bundle structure over $\CP^1$ (cf. \cite[Theorem 1.2]{Daniel17}). However, any harmonic bundle over $\CP^1$ is trivial, and hence its period map $x\mapsto W_x$ is constant, which implies that $f\circ\gamma$ is a constant map. Recall that $\pi^{-1}(x)$ is rationally chain connected (cf. \cite{Fujino23}), i.e., any two points of $\pi^{-1}(x)$ can be connected by a chain of rational curves. We conclude that $f$ is constant on $\pi^{-1}(x)$ for any $x\in U$ and the proof is complete.
    \end{proof}

    \subsection{Proof of Theorem \ref{main-theorem-polystable-equivalence}}\label{subsection-locallyfree}
     We now complete the proof of Theorem \ref{main-theorem-polystable-equivalence}.  Let $X$ be a compact K\"ahler klt space and $(\mE_{X_\reg},\theta_{X_\reg})\in\mathrm{pHiggs}_{X_\reg}$. Building on Theorem \ref{main-stable}, $(\mE_{X_\reg},\theta_{X_\reg})$ is locally free and admits a pluri-harmonic metric $H_{X\setminus Z}$ outside an analytic subspace $Z$ with $\codim_XZ\geq2$ and $X\setminus Z$ smooth. It follows from Proposition \ref{prop-harmonicmetric} that the harmonic bundle associated to $(\mE_{X_\reg},\theta_{X_\reg})|_{X\setminus Z}$ extends to a harmonic bundle on $X_\reg$. Consequently, $(\mE_{X_\reg},\theta_{X_\reg})|_{X\setminus Z}$ extends to a locally free Higgs sheaf on $X_\reg$, which coincides with $(\mE_{X_\reg},\theta_{X_\reg})$ by reflexivity.

     Thus we have verified the ``only if " part of Theorem \ref{main-theorem-polystable-equivalence} (2). Moreover, Theorem \ref{main-theorem-polystable-equivalence} (1) follows directly from Proposition \ref{prop-harmonicmetric} by taking $X\setminus Z=X_\reg$.  The remaining statements of Theorem \ref{main-theorem-polystable-equivalence} are covered by the following proposition.
    	\begin{proposition}\label{prop-locallyfree-chernclass-vanish}
    		Let $X$ be a compact K\"ahler klt space. Suppose that  $(\mE_{X_\reg},\theta_{X_\reg})$ is a locally free Higgs sheaf equipped with a pluri-harmonic metric $H_{X_\reg}$ on $X_\reg$. Then for any maximally quasi-\'etale cover $\gamma:Y\rightarrow X$ induced by some subgroup $G\subset\mathrm{Aut}(Y)$, the following hold:
    		\begin{enumerate}[label=(\thetheorem.\alph*)]
    			\item \label{prop-locallyfree-chernclass-vanish-1} $\gamma^{[*]}\mE_X$ is locally free; in particular, $\gamma^{[*]}\theta_{{X_\reg}}$ extends to a Higgs field $\theta_Y$ of $\mE_Y=\gamma^{[*]}\mE_X$.
    			\item \label{prop-locallyfree-chernclass-vanish-2} For any resolution of singularities $\pi:\widehat{Y}\rightarrow Y$, $\pi^*(\mE_Y,\theta_Y)\in\mathrm{pHiggs}_{\widehat{Y}}$.
    			\item \label{prop-locallyfree-chernclass-vanish-3} $(\mE_Y,\theta_Y)$ is polystable with respect to any K\"ahler forms $\w_0,\cdots,\w_{n-2}$.
    		\end{enumerate}
    		In particular, $(\mE_{X_\reg},\theta_{X_\reg})$ is polystable with respect to any K\"ahler forms $\w_0,\cdots,\w_{n-2}$ and the orbifold Chern classes of $\mE_X$ vanish.
    	\end{proposition}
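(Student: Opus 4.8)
The plan is to route the argument through the semi-simple local system underlying the harmonic bundle, to transport it to $Y$ and to resolutions of $Y$, and to reconstruct $(\mE_Y,\theta_Y)$ via the descent result Proposition~\ref{prop-descent-higgs}; the uniqueness of pluri-harmonic metrics (Proposition~\ref{prop-harmonicmetric}(1)) will be used repeatedly to identify the various reconstructions with $\gamma^{[*]}(\mE_{X_\reg},\theta_{X_\reg})$.

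First I would record the flat datum. Since $\codim_X X_\sing\ge 2$, Proposition~\ref{prop-harmonicmetric}(1) (with $Z=X_\sing$, so $X\setminus Z=X_\reg$) shows that the flat bundle $(E_{X_\reg},D_{X_\reg})$ underlying $(\mE_{X_\reg},\theta_{X_\reg},H_\reg)$ is semi-simple; by Lemma~\ref{lem-maximallyquasietale-easy}, items \ref{lem-maximallyquasietale-easy-0} and \ref{lem-maximallyquasietale-easy-2}, its pullback extends to a $G$-equivariant semi-simple $E_Y\in\mathrm{sLSys}_Y$. Fix a functorial resolution $\pi_0\colon\widehat Y_0\to Y$. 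As $Y$ is maximally quasi-\'etale, $\pi_{0*}$ is an isomorphism on fundamental groups (Lemma~\ref{lem-resolutionofsingularities-easy}, item \ref{lem-resolutionofsingularities-easy-2}), so $\pi_0^*E_Y\in\mathrm{sLSys}_{\widehat Y_0}$, and the classical Corlette--Donaldson/Hitchin--Simpson theory over the compact K\"ahler manifold $\widehat Y_0$ attaches to it a Higgs bundle $(\mE_{\widehat Y_0},\theta_{\widehat Y_0})\in\mathrm{pHiggs}_{\widehat Y_0}$ (polystable, with Chern classes vanishing because the underlying $C^\infty$-bundle is flat). Proposition~\ref{prop-descent-higgs} then produces a \emph{locally free} Higgs sheaf $(\mE_Y,\theta_Y)$ on $Y$ with $\pi_0^*(\mE_Y,\theta_Y)=(\mE_{\widehat Y_0},\theta_{\widehat Y_0})$. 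Over $\gamma^{-1}(X_\reg)$ both $H_{\widehat Y_0}$ and the pullback $\gamma^*H_\reg$ are pluri-harmonic metrics on the same flat bundle, hence agree up to positive constants on the simple summands; since rescaling a metric by such constants changes neither the Chern connection nor the $H$-adjoint of $\theta$, they induce the same Higgs structure, namely $\gamma^*(\mE_{X_\reg},\theta_{X_\reg})$. Consequently $(\mE_Y,\theta_Y)|_{Y_\reg}$ and $\gamma^{[*]}(\mE_{X_\reg},\theta_{X_\reg})$ are reflexive Higgs sheaves on $Y_\reg$ that coincide off a set of codimension $\ge2$, hence coincide; and $\mE_Y$, $\gamma^{[*]}\mE_X=(\gamma^*\mE_X)^{\vee\vee}$ are reflexive on the normal space $Y$ and agree on $Y_\reg$, hence are isomorphic. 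This yields \ref{prop-locallyfree-chernclass-vanish-1}: $\gamma^{[*]}\mE_X\cong\mE_Y$ is locally free and $\theta_Y$ extends $\gamma^{[*]}\theta_{X_\reg}$ (uniqueness of the extension by Lemmas~\ref{lem-Higgs-extension} and \ref{lem-reflexive-pullback}).

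For \ref{prop-locallyfree-chernclass-vanish-2} I would compare an arbitrary resolution $\pi\colon\widehat Y\to Y$ with $\pi_0$ through a common smooth modification: choose $\widehat Y'$ with $q\colon\widehat Y'\to\widehat Y$, $q_0\colon\widehat Y'\to\widehat Y_0$ and $p:=\pi\circ q=\pi_0\circ q_0$. Since $Y$ is maximally quasi-\'etale, $\pi^*E_Y\in\mathrm{sLSys}_{\widehat Y}$ and $p^*E_Y\in\mathrm{sLSys}_{\widehat Y'}$; let $(\mE'_{\widehat Y},\theta'_{\widehat Y})\in\mathrm{pHiggs}_{\widehat Y}$ be the Higgs bundle attached to $\pi^*E_Y$. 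Pulling harmonic bundles back along $q$ and $q_0$ and using the uniqueness of pluri-harmonic metrics on the semi-simple bundle $p^*E_Y$, both $q^*(\mE'_{\widehat Y},\theta'_{\widehat Y})$ and $q_0^*(\mE_{\widehat Y_0},\theta_{\widehat Y_0})=p^*(\mE_Y,\theta_Y)$ equal the Higgs bundle on $\widehat Y'$ attached to $p^*E_Y$, hence are isomorphic; applying $q_*$ (with $q_*\mO_{\widehat Y'}=\mO_{\widehat Y}$) gives $(\mE'_{\widehat Y},\theta'_{\widehat Y})\cong\pi^*(\mE_Y,\theta_Y)$, so $\pi^*(\mE_Y,\theta_Y)\in\mathrm{pHiggs}_{\widehat Y}$.

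Finally, for \ref{prop-locallyfree-chernclass-vanish-3} and the concluding assertions: semistability of $(\mE_Y,\theta_Y)$ with respect to every nef class and vanishing of $c_i(\mE_Y)\in H^{2i}(Y,\mathbb Q)$ follow from \ref{prop-locallyfree-chernclass-vanish-2} and Lemma~\ref{lem-resolutionofsingularities-easy}, item \ref{lem-resolutionofsingularities-easy-1}. Polystability with respect to fixed K\"ahler forms $\w_0,\cdots,\w_{n-2}$ I would obtain by decomposing $E_Y=\bigoplus_iE_{Y,i}$ into simple local systems, running the argument of \ref{prop-locallyfree-chernclass-vanish-1} and \ref{prop-locallyfree-chernclass-vanish-2} for each $E_{Y,i}$ to get a locally free summand $(\mathcal{Q}_{i,Y},\theta_i)$ of $(\mE_Y,\theta_Y)$ with $\pi^*(\mathcal{Q}_{i,Y},\theta_i)\in\mathrm{pHiggs}_{\widehat Y}$ attached to the simple local system $\pi^*E_{Y,i}$, and then checking that each $(\mathcal{Q}_{i,Y},\theta_i)$ is stable with respect to $\w_0,\cdots,\w_{n-2}$: any slope-zero saturated $\theta_i$-invariant proper subsheaf would, after pullback to $\widehat Y$ and saturation, contradict the structure of Higgs bundles attached to simple harmonic bundles (the regularity results of Uhlenbeck--Yau and Simpson used in Proposition~\ref{prop01}), together with semistability, where one also invokes Lemma~\ref{lem-stable-equivariant} to pass between stability on $Y$ and on $Y_\reg$. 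That the orbifold Chern classes of $\mE_X$ vanish follows from $c_i(\mE_Y)=0$ via Remark~\ref{rem-locallyfree-orbifoldchernclass} ($\mE_Y$ locally free) and Proposition~\ref{lem-quasietale-chernclasses}, combined with the injectivity of $\gamma^*$ on cohomology ($\gamma_*\gamma^*=\deg\gamma$); and $(\mE_{X_\reg},\theta_{X_\reg})$ is polystable with respect to every K\"ahler form by Lemma~\ref{lem-quasietale-stability} applied to the ($G$-equivariant) polystable $\gamma^{[*]}(\mE_{X_\reg},\theta_{X_\reg})$. I expect the main obstacle to be \ref{prop-locallyfree-chernclass-vanish-1}---upgrading local freeness of $\gamma^{[*]}\mE_X$ from $Y_\reg$ to all of $Y$---which is exactly where the period-map descent of Proposition~\ref{prop-descent-higgs} is indispensable; a secondary difficulty is the passage from the functorial resolution to an arbitrary one in \ref{prop-locallyfree-chernclass-vanish-2}, handled by the common-modification comparison above.
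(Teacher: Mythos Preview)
Your argument for \ref{prop-locallyfree-chernclass-vanish-1} is essentially the paper's: extend the harmonic bundle to a resolution $\widehat Y$ via Corlette--Donaldson, descend by Proposition~\ref{prop-descent-higgs}, and identify with $\gamma^{[*]}(\mE_{X_\reg},\theta_{X_\reg})$ using the uniqueness part of Proposition~\ref{prop-harmonicmetric}. Your approach to \ref{prop-locallyfree-chernclass-vanish-2} (compare two resolutions through a common modification and use functoriality of the smooth non-abelian Hodge correspondence) is a valid alternative to what the paper does; the paper instead obtains \ref{prop-locallyfree-chernclass-vanish-2} as a byproduct of its proof of \ref{prop-locallyfree-chernclass-vanish-3}, simply by running the Chern--Weil inequality directly on $\widehat Y$ with K\"ahler forms on $\widehat Y$.

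The substantive difference is in \ref{prop-locallyfree-chernclass-vanish-3}. The paper does \emph{not} decompose $E_Y$ into simple summands and argue stability of each factor. Instead it takes an arbitrary saturated $\theta_Y$-invariant $\mF_Y\subsetneq\mE_Y$, pulls it to $\widehat Y$, and applies the Chern--Weil formula for the pluri-harmonic metric $H_{\widehat Y}$ together with $F_{H_{\widehat Y},\theta_{\widehat Y}}=0$ to obtain
\[
\deg_{(\omega_0,\dots,\omega_{n-2})}(\mF_Y)=-\int_{\widehat Y\setminus\Sigma}\bigl|\bar\partial\pi^{H_{\widehat Y}}_{\mF_{\widehat Y}}\bigr|^2\,\pi^*\omega_0\wedge\cdots\wedge\pi^*\omega_{n-2}\le 0.
\]
Equality forces $\bar\partial\pi^{H_{\widehat Y}}_{\mF_{\widehat Y}}=0$ on $\pi^{-1}(Y_\reg)$ (where the $\pi^*\omega_j$ are strictly positive), hence an $H$-orthogonal Higgs splitting on $Y_\reg\setminus\Sigma$; each summand again carries a pluri-harmonic metric, so the argument of \ref{prop-locallyfree-chernclass-vanish-1} extends it to a locally free Higgs sheaf on $Y$, and one concludes by induction on the rank. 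This simultaneously yields \ref{prop-locallyfree-chernclass-vanish-2}.

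Your route for \ref{prop-locallyfree-chernclass-vanish-3} has a gap precisely at the stability check for each $(\mathcal Q_{i,Y},\theta_i)$: knowing that $\pi^*(\mathcal Q_{i,Y},\theta_i)$ is stable on $\widehat Y$ with respect to genuine K\"ahler forms does \emph{not} give stability on $Y$ with respect to $(\omega_0,\dots,\omega_{n-2})$, since the pulled-back forms $\pi^*\omega_j$ are only semi-positive and an openness/limit argument yields at best semistability. Closing this gap forces you back to the Chern--Weil computation with the degenerate forms $\pi^*\omega_j$---exactly the paper's inequality above---so your appeal to ``the regularity results of Uhlenbeck--Yau and Simpson used in Proposition~\ref{prop01}'' is really the paper's argument in disguise. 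Once you invoke that inequality, the prior decomposition of $E_Y$ into simple pieces is unnecessary: the Chern--Weil step plus the splitting/induction already delivers polystability of $(\mE_Y,\theta_Y)$ directly.
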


    	\begin{proof}
    		Let $E_{X_\reg}$ be the underlying $C^\infty$-bundle and $D_{X_\reg}=D_{H_{X_\reg}, \theta_{X_\reg}}$. Observe that there exists an analytic subspace $Z$ of $Y$ such that $\codim_YZ\geq 2$, the open set $Y^\circ:=Y\setminus Z$ is contained in $\gamma^{-1}(X_\reg)$ and $\pi$ is biholomorphic over $Y^\circ$. Set $\widehat{Y}^\circ=\pi^{-1}(Y^\circ)$. As in the proof of Proposition \ref{prop-harmonicmetric}, the pullback $(\gamma\circ\pi|_{Y^\circ})^*(E_{X_\reg}, D_{X_\reg},H_{X_\reg})$ extends to a harmonic bundle $(E_{\widehat{Y}},D_{\widehat{Y}},H_{\widehat{Y}})$, which corresponds to a locally free Higgs sheaf $(\mE_{\widehat{Y}},\theta_{\widehat{Y}})$. Applying Proposition \ref{prop-descent-higgs}, there exists a locally free Higgs sheaf $(\mE_Y,\theta_Y)$ such that $(\mE_{\widehat{Y}},\theta_{\widehat{Y}})=\pi^*(\mE_Y,\theta_Y)$. By virtue of the uniqueness of pluri-harmonic metrics on $(\gamma|_{Y^\circ})^{-1}(E_{X_\reg}, D_{X_\reg})$ (cf. Proposition \ref{prop-harmonicmetric}), $\gamma^{[*]}(\mE_{X_\reg},\theta_{X_\reg})$ and $(\mE_Y,\theta_Y)$ agree on $Y^\circ$ and hence on all of $Y$ by reflexivity. The proof of item \ref{prop-locallyfree-chernclass-vanish-1} is complete.

    		We now prove item \ref{prop-locallyfree-chernclass-vanish-3}. The following argument, based on direct computations via the Chern-Weil formula, will also imply item \ref{prop-locallyfree-chernclass-vanish-2}. Let $\mF_Y$ be an arbitrary $\theta_{Y}$-invariant saturated subsheaf of $\mE_Y$. Note that $(\mE_{\widehat{Y}},\theta_{\widehat{Y}})$ admits a pluri-harmonic metric $H_{\widehat{Y}}$ according to the above construction. Let $\mF_{\widehat{Y}}$ be the saturation of the image of $\pi^*\mF_Y\rightarrow\mE_{\widehat{Y}}$. We may assume that $\pi$ is biholomorphic over $Y_\reg$, so that $\mF_{\widehat{Y}}=\pi^*\mF_{Y}$ on $Y_\reg$. Using the Chern-Weil formula (cf. \cite[Lemma 3.2]{Simpson88}) and Lemma \ref{lem-excision}, we have
    		\begin{equation}\label{equa-check-polystable-1}
    			\begin{split}
    				&\deg_{(\w_0,\cdots,\w_{n-2})}(\mF_Y)=\deg_{(\pi^*\w_0,\cdots,\pi^*\w_{n-2})}(\mF_{\widehat{Y}})\\
    				&=-\int_{\widehat{Y}\setminus\Sigma_{\mF_{\widehat{Y}}}}\im\tr(\p\pi_{\mF_{\widehat{Y}}}^{H_{\widehat{Y}}}\wedge\bp \pi_{\mF_{\widehat{Y}}}^{H_{\widehat{Y}}})\wedge\pi^*\w_0\wedge\cdots\pi^*\w_{n-2}\\
    				&\leq0=\deg_{(\w_0,\cdots,\w_{n-2})}(\mE_Y),
    			\end{split}
    		\end{equation}
    		where we used that $F_{H_{\widehat{Y}},\theta_{\widehat{Y}}}=0$ and $c_1(\mE_Y)=0$ by \ref{lem-resolutionofsingularities-easy-1}, and $\Sigma_{\mF_{\widehat{Y}}}$ denotes the singular set of $\mF_{\widehat{Y}}$. Now suppose that $(\mE_Y,\theta_Y)$ is not $(\w_0,\cdots,\w_{n-2})$-stable. Then there exists some $\mF_Y$ that achieve equality in the last line of \eqref{equa-check-polystable-1}. Since the forms $\pi^*\w_0,\cdots,\pi^*\w_{n-2}$ are strictly positive on $\pi^{-1}(Y_\reg)$, this forces $\p\pi_{\mF_{\widehat{Y}}}^{H_{\widehat{Y}}}=0$ on $\pi^{-1}(Y_\reg)\setminus\Sigma_{\mF_{\widehat{Y}}}$. Thus
    		$$(\mE_{\widehat{Y}},\theta_{\widehat{Y}})=(\mF_{\widehat{Y}},\theta_{\widehat{Y}}|_{\mF_{\widehat{Y}}})\oplus(\mF_{\widehat{Y}}^\perp,\theta_{\widehat{Y}}|_{\mF_{\widehat{Y}}^\perp})$$
    		splits as a direct sum of locally free Higgs sheaves on $\pi^{-1}(Y_\reg)\setminus\Sigma_{\mF_{\widehat{Y}}}$, where $\mF_{\widehat{Y}}^\perp$ is the orthogonal complement with respect to $H_{\widehat{Y}}$. Consequently,
    		$$(\mE_{{Y}},\theta_{{Y}})=(\mF_{{Y}},\theta_Y|_{{\mF_Y}})\oplus(\mF_{{Y}}^\perp,\theta_Y|_{{\mF}_Y^\perp})$$
    		splits as locally free Higgs sheaves on $Y_\reg\setminus\Sigma_{\mF_Y}$, where $\mF_Y^\perp$ is the orthogonal complement with respect to $\pi_*H_{\widehat{Y}}$ and $\Sigma_{\mF_Y}$ is the singular set of $\mF_Y$. Then the induced Hermitian metric $H_{\mF_Y}$ (resp. $H_{\mF_Y^\perp}$) on $\mF_Y$ (resp. $\mF_Y^\perp$) is pluri-harmonic. Since $\Sigma_{\mF_Y}$ has codimension at least $2$, both $(\mF_Y,\theta_Y|_{\mF_Y})$ and $(\mF_Y^\perp,\theta_Y|_{\mF_Y^\perp})$ extend to locally free Higgs sheaves arising from a harmonic bundle structure, by the same argument as above. Applying an induction argument, we verify the $(\w_0,\cdots,\w_{n-2})$-polystability of $(\mE_Y,\theta_Y)$.
    		
    		Finally, we conclude the proof. Item \ref{prop-locallyfree-chernclass-vanish-2}, together with item \ref{lem-resolutionofsingularities-easy-1}, Remark \ref{rem-locallyfree-orbifoldchernclass} and Proposition \ref{lem-quasietale-chernclasses} implies that all orbifold Chern classes of $\mE_X$ vanishes. Moreover, item \ref{prop-locallyfree-chernclass-vanish-3} implies $\gamma^{[*]}(\mE_{X_\reg},\theta_{X_\reg})$ is polystable with respect to $\gamma^*\w_0,\cdots,\gamma^*\w_{n-2}$ for any K\"ahler forms $\w_0,\cdots,\w_{n-2}$ on $X$ and in particular, is $G$-polystable. By Lemma \ref{lem-quasietale-stability}, this yields the $(\w_0,\cdots,\w_{n-2})$-polystability of $(\mE_{X_\reg},\theta_{X_\reg})$. 
    	\end{proof}

    \section{Non-abelian Hodge correspondence on K\"ahler klt spaces}\label{section-nonabelian Hodge theory}
    This section first proves the two key ingredients outlined in Section~\ref{subsection-strategy}: Theorem~\ref{main-maximallyquasietalecase} (in Section~\ref{subsection-JHfiltrations}) and Theorem~\ref{main-thm-descent} (in Section~\ref{subsection-descent}). We then prove Theorems~\ref{main-theorem-locallyfree} and~\ref{main-theorem-reflexive} in Section~\ref{section-formulation}, thereby establishing the nonabelian Hodge correspondences for compact K\"ahler klt spaces. Finally, we prove Corollaries \ref{coro-torus} and \ref{coro-projflat} and Theorem \ref{main-thm-unitball} in Section~\ref{section-applications}.
    
    \subsection{Proof of Theorem \ref{main-maximallyquasietalecase}}\label{subsection-JHfiltrations}
    The proof of Theorem \ref{main-maximallyquasietalecase} will be divided into two parts. We review the following statements, which are useful in the proof of Theorems \ref{main-maximallyquasietalecase} and  \ref{main-thm-descent}.
    
    \begin{lemma}\label{lem-locallyfree-extension}
    	Let $0\rightarrow\mS\rightarrow\mE\rightarrow\mQ\rightarrow0$ be a short exact sequence of coherent sheaves on a Cohen-Macaulay, normal and irreducible complex space $X$ of dimension $n$. Suppose that $\mE$ is reflexive, $\mS$ and $\mQ^{\vee\vee}$ are locally free. If, in addition, $\mQ$ is locally free in codimension $2$, then both $\mE$ and $\mQ$ are locally free.
    \end{lemma}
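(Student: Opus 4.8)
The statement is local, so the plan is to fix a point $x\in X$, set $R:=\mO_{X,x}$ --- a Cohen--Macaulay normal local domain --- and write $c:=\dim R$. Let $\Sigma\subset X$ be the closed non‑locally‑free locus of $\mQ$; the hypothesis is $\codim_X\Sigma\ge 3$. First I would dispose of the points with $c\le 2$: there $x\notin\Sigma$, so $\mQ_x$ is free, and then $0\to\mS_x\to\mE_x\to\mQ_x\to0$ splits, giving $\mE_x\cong\mS_x\oplus\mQ_x$ free. It remains to treat $c\ge 3$, where I would proceed in two steps.

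\emph{Step 1: $\mQ$ is torsion‑free.} Let $\mathcal{T}_0\subseteq\mQ$ be the torsion subsheaf and $\mE_1\subseteq\mE$ its preimage under $\mE\twoheadrightarrow\mQ$, so $0\to\mS\to\mE_1\to\mathcal{T}_0\to0$ while $\mE/\mE_1\cong\mQ/\mathcal{T}_0$ is torsion‑free; thus $\mE_1$ is saturated in the reflexive sheaf $\mE$ and hence itself reflexive. Since $\supp\mathcal{T}_0\subseteq\Sigma$ has codimension $\ge 3$, at a generic point of a component of $\supp\mathcal{T}_0$ the local ring has dimension $\ge 3$ and $(\mathcal{T}_0)_x$ has depth $0$, whereas the depth lemma applied to the displayed sequence forces $\mathrm{depth}(\mathcal{T}_0)_x\ge\min\{\mathrm{depth}(\mE_1)_x,\ \mathrm{depth}\,\mS_x-1\}\ge\min\{2,c-1\}\ge 2$, using that $\mE_1$ satisfies Serre's condition $S_2$, that $\mS$ is locally free, and that $R$ is Cohen--Macaulay. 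This contradiction gives $\mathcal{T}_0=0$.

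\emph{Step 2: dualization.} With $\mQ$ torsion‑free, $\mQ\hookrightarrow\mQ^{\vee\vee}$ is injective with cokernel $\mathcal{T}:=\mQ^{\vee\vee}/\mQ$, and $\supp\mathcal{T}\subseteq\Sigma$ has codimension $\ge 3$ (since $\mQ=\mQ^{\vee\vee}$ wherever $\mQ$ is locally free). Over $R$, Cohen--Macaulayness gives $\mathrm{grade}(\mathrm{Ann}_R\mathcal{T}_x)=\mathrm{height}(\mathrm{Ann}_R\mathcal{T}_x)\ge 3$, hence $\mathrm{Ext}^i_R(\mathcal{T}_x,R)=0$ for $i\le 2$. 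Applying $\Hom_R(-,R)$ to $0\to\mQ_x\to\mQ^{\vee\vee}_x\to\mathcal{T}_x\to0$ (with $\mQ^{\vee\vee}_x$ free) yields $\mathrm{Ext}^1_R(\mQ_x,R)=0$ and $\mQ_x^\vee$ free; applying $\Hom_R(-,R)$ to $0\to\mS_x\to\mE_x\to\mQ_x\to0$ then produces a short exact sequence $0\to\mQ_x^\vee\to\mE_x^\vee\to\mS_x^\vee\to0$, which splits because $\mS_x^\vee$ is free, so $\mE_x^\vee$ --- and hence $\mE_x=\mE_x^{\vee\vee}$, as $\mE$ is reflexive --- is free. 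Dualizing this split sequence once more gives $0\to\mS_x^{\vee\vee}\to\mE_x^{\vee\vee}\to\mQ_x^{\vee\vee}\to0$, which by naturality of the biduality morphism is identified with the original sequence; comparing cokernels yields $\mQ_x\cong\mQ_x^{\vee\vee}$, so $\mQ_x$ is free. As $x$ was arbitrary, $\mQ$ and $\mE$ are locally free.

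The point needing the most care is Step 2: the whole argument has to be run with $\mO_X$ rather than a dualizing sheaf (the lemma assumes only that $X$ is Cohen--Macaulay, not Gorenstein), and this is legitimate precisely because the vanishing $\mathrm{Ext}^{\le 2}_R(\mathcal{T}_x,R)=0$ for the codimension‑$\ge 3$ torsion sheaf $\mathcal{T}$ comes from the equality $\mathrm{grade}=\mathrm{height}$ on the Cohen--Macaulay ring $R$. The remaining ingredients --- that a saturated subsheaf of a reflexive sheaf on a normal space is reflexive, that reflexive sheaves satisfy $S_2$, and the depth lemma for short exact sequences --- are standard (cf.\ \cite{GKKP11}).
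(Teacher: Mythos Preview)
Your argument is correct. The paper proves the lemma by a different route: it invokes the Scheja--Trautmann/Siu extension theorem (stated as Proposition~\ref{lemma-siu}) to show that for the codimension~$\ge 3$ locus $Z$ where $\mQ$ fails to be locally free, the restriction $H^1(U,\mS\otimes\mQ^\vee)\to H^1(U\setminus Z,\mS\otimes\mQ^\vee)$ is bijective (using that $\mS\otimes\mQ^\vee$ is locally free on the Cohen--Macaulay space $X$, hence has full depth). The extension class of $0\to\mS\to\mE\to\mQ\to 0$ over $X\setminus Z$ then lifts to an extension $0\to\mS\to\mE'\to\mQ^{\vee\vee}\to 0$ on all of $X$, and reflexivity forces $\mE=\mE'$.

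Your approach is more self-contained: you stay entirely in local commutative algebra, trading the sheaf-cohomology extension machinery for the identity $\mathrm{grade}=\mathrm{height}$ on Cohen--Macaulay rings and the resulting $\mathrm{Ext}^{\le 2}_R(\mathcal T_x,R)=0$. This is arguably more elementary and makes the role of each hypothesis transparent (normality for Step~1, Cohen--Macaulayness for Step~2). The paper's route, on the other hand, bypasses the separate torsion-freeness check and the double-dualization bookkeeping, at the cost of citing a heavier black box. Both arguments exploit the Cohen--Macaulay hypothesis in essentially the same place---to force the codimension~$\ge 3$ torsion to be invisible to the relevant $H^1$ or $\mathrm{Ext}^1$.
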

    
    Lemma \ref{lem-locallyfree-extension} follows from a result of Scheja and Trautmann \cite{Scheja64,Tra67}.
    
    \begin{proposition}[cf. {\cite[Page 142]{Siu06}}]\label{lemma-siu}
    	Let $X$ be a complex space, $A\subset X$ a closed analytic subset and $\mF$ a coherent sheaf on $X$. Fix a nonnegative integer $k$. Then the following statements are equivalent:
    	\begin{itemize}
    		\item[(1)] $\dim (A\cap S_{q+k+1}(\mF))\leq q$ for all $q\in \mathbb{Z}$;
    		\item[(2)] for every open subset $U\subset X$, the restriction maps 
    		$H^i(U,\mF)\rightarrow H^i(U\setminus A,\mF)$
    		are bijective for $i<k$ and injective for $i=k$.
    	\end{itemize}
    \end{proposition}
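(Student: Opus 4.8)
The statement to establish is the equivalence (1)$\Leftrightarrow$(2) of Proposition~\ref{lemma-siu}, a local-cohomology criterion relating the dimensions of the singularity-support strata $S_{\mu+k+1}(\mF)$ along $A$ to the behavior of the restriction maps $H^i(U,\mF)\to H^i(U\setminus A,\mF)$. The plan is to recast the problem entirely in terms of the local cohomology sheaves $\mathcal{H}^j_A(\mF)$ supported on $A$, since these govern precisely the failure of the restriction maps to be isomorphisms. The key exact sequence is the long exact sequence of local cohomology
\begin{equation*}
	\cdots\to H^i_A(U,\mF)\to H^i(U,\mF)\to H^i(U\setminus A,\mF)\to H^{i+1}_A(U,\mF)\to\cdots,
\end{equation*}
which shows that the restriction $H^i(U,\mF)\to H^i(U\setminus A,\mF)$ is bijective for $i<k$ and injective for $i=k$ (for every open $U$) if and only if $H^i_A(U,\mF)=0$ for all $i\le k$ and all $U$, i.e.\ if and only if the local cohomology sheaves $\mathcal{H}^i_A(\mF)$ vanish for $i\le k$.

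\textbf{Reduction to depth/vanishing of local cohomology sheaves.} First I would reduce the statement (2) to the sheaf-theoretic vanishing $\mathcal{H}^i_A(\mF)=0$ for $i\le k$, using the long exact sequence above together with the fact that vanishing of the sheaves $\mathcal{H}^i_A(\mF)$ on all of $X$ is equivalent to vanishing of the groups $H^i_A(U,\mF)$ for every open $U$ (the local cohomology sheaf being the sheafification, and the groups being computed by a spectral sequence $H^p(U,\mathcal{H}^q_A(\mF))\Rightarrow H^{p+q}_A(U,\mF)$ whose low-degree terms collapse). Then the task becomes purely local: characterize vanishing of $\mathcal{H}^i_A(\mF)$ in terms of the strata $S_{\mu+k+1}(\mF)$. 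Here the main tool is the standard relation between local cohomology and depth, namely that for a point $x\in A$ one has $\mathcal{H}^i_A(\mF)_x=0$ for all $i\le k$ precisely when $\operatorname{depth}_{\mathfrak{m}_x}(\mF_x)$ is large enough relative to how $A$ sits inside $X$ near $x$ — concretely, via the characterization $\operatorname{depth}$ controls the local cohomology with support in $A$ through the codimension of $A\cap\operatorname{Supp}$.

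\textbf{Translating depth into the dimension conditions on $S_{\mu+k+1}(\mF)$.} The heart of the argument is to convert the depth/local-cohomology vanishing into the stated numerical condition $\dim(A\cap S_{\mu+k+1}(\mF))\le\mu$ for all $\mu$. The sets $S_j(\mF)=\{x:\operatorname{depth}_{\mathfrak{m}_x}\mF_x\le j\}$ (or, equivalently in the Cohen–Macaulay ambient setting, the loci where $\mF$ fails to have projective dimension below a threshold) are exactly the strata along which local cohomology can be nonzero. I would invoke the Scheja–Trautmann theory (\cite{Scheja64,Tra67}), whose central result is precisely this kind of equivalence; the cited reference \cite[p.~142]{Siu06} records the statement in the form needed. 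Thus the forward implication (1)$\Rightarrow$(2) amounts to: bounding $\dim(A\cap S_{\mu+k+1}(\mF))$ by $\mu$ forces enough depth at each point of $A$ so that $\mathcal{H}^i_A(\mF)=0$ for $i\le k$, which by the first reduction gives the restriction-map statement. The converse (2)$\Rightarrow$(1) runs the implications backward: failure of the dimension bound produces a point where the depth is too small, hence a nonvanishing local cohomology sheaf $\mathcal{H}^i_A(\mF)$ in some degree $i\le k$, which exhibits an open $U$ on which the restriction map fails to be bijective (for $i<k$) or injective (for $i=k$).

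\textbf{Main obstacle.} The delicate step is the precise bookkeeping in translating the depth estimate into the exact numerical bound $\dim(A\cap S_{\mu+k+1}(\mF))\le\mu$, uniformly over all $\mu$; this is where one must be careful about the indexing conventions for the strata $S_j(\mF)$ and about whether one works with depth or with the codimension of the support of local cohomology. I expect this to be the crux and would handle it by appealing directly to the Scheja–Trautmann formulation as packaged in \cite{Siu06}, since a fully self-contained derivation would essentially reprove their cohomological dimension theory. The Cohen–Macaulay, normal, irreducible hypotheses on $X$ ensure the ambient local rings are well-behaved (equidimensional, with the Auslander–Buchsbaum formula available), which is what makes the strata $S_{\mu+k+1}(\mF)$ have the clean dimension-theoretic meaning that the equivalence requires.
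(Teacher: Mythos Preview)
The paper does not give a proof of this proposition: it is quoted as a black box from \cite[p.~142]{Siu06} (going back to Scheja and Trautmann \cite{Scheja64,Tra67}) and is used only to deduce Lemma~\ref{lem-locallyfree-extension}. Your outline via the long exact sequence of local cohomology, reducing (2) to the vanishing $\mathcal{H}^i_A(\mF)=0$ for $i\le k$ and then invoking the depth characterization of these sheaves, is exactly the standard argument recorded in the cited sources, so in that sense your approach and the paper's (namely, to cite the literature) agree.

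One small slip: in your final paragraph you invoke ``the Cohen--Macaulay, normal, irreducible hypotheses on $X$,'' but Proposition~\ref{lemma-siu} is stated for an arbitrary complex space with no such assumptions. Those hypotheses belong to Lemma~\ref{lem-locallyfree-extension}, where they are used to compute $\operatorname{codh}(\mS\otimes\mQ^\vee)_x=n$; the Scheja--Trautmann criterion itself needs only the intrinsic depth strata $S_j(\mF)=\{x:\operatorname{codh}\mF_x\le j\}$ and holds without any regularity assumption on the ambient space.
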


    For the reader's convenience, we sketch how Proposition \ref{lemma-siu} implies Lemma \ref{lem-locallyfree-extension}. By the assumptions of Lemma \ref{lem-locallyfree-extension}, there exists an analytic subspace $Z\subset X$ of codimension at least $3$ such that $\mQ$ and $\mE$ are locally free outside $Z$. Since $\mQ^\vee=\mQ^{\vee\vee\vee}$ and $\mS$ are locally free, for any $x\in X$, we have
    \begin{align*}
    	\mathrm{codh}((\mS\otimes\mQ^\vee)_x)=\mathrm{codh}(\mO_{X,x})\xlongequal{\text{Cohen-Macaulay}}\dim \mO_{X,x}=\dim (X,x)\xlongequal{\text{normal}}n,
    \end{align*}
    where $\mathrm{codh}$ denotes the depth, and the third equality follows from the dimension formula. Thus the singularity set $S_{n-1}(\mS\otimes \mQ^\vee):=\{x\in X:\mathrm{codh}(\mS\otimes\mQ^\vee)_x\leq n-1\}$ of $\mS\otimes \mQ^\vee$ is empty, and $S_n(\mS\otimes \mQ^\vee)=X$. Hence
    $\dim (A\cap S_{q+3})\leq q\ \text{ for any integer }q.$
    Therefore, the restriction
    $H^1(X,\mS\otimes \mQ^\vee)\rightarrow H^1(X\setminus Z, \mS\otimes\mQ^\vee)$
    is bijective by Proposition \ref{lemma-siu}. This means that $\mE|_{X\setminus Z}$ extends to a locally free sheaf $\mE'$ on $X$ that is the extension of $\mS$ by $\mQ^{\vee\vee}$. By reflexivity of $\mE$, we have $\mE$ coincides with $\mE'$. Hence $\mE$ and $\mQ$ are locally free. This completes the proof.
    
    \begin{lemma}[Hodge Index Theorem, cf. {\cite[Lemma 3.17]{IJZ25}}]\label{lem-hodge-1}
    	Suppose that $\mQ$ is a reflexive sheaf on $X$. Then for nef and big classes $\alpha_0,\cdots,\alpha_{n-2}$, we have
    	\begin{equation}
    		\widehat{c}_1^2(\mQ)\cdot[\alpha_1]\cdots[\alpha_{n-2}]\leq \frac{(\widehat{c}_1(\mQ)\cdot[\alpha_0]\cdots[\alpha_{n-2}])^2}{[\alpha_0]^2\cdot[\alpha_1]\cdots[\alpha_{n-2}]}.
    	\end{equation}
    	The equality holds if and only if $\widehat{c}_1(\mQ)\cdot[\sigma]\cdot[\alpha_1]\cdots[\alpha_{n-2}]=0$ for every $\sigma\in H^2(X,\R)$.
    \end{lemma}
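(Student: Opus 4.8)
The plan is to reduce the statement to the classical (mixed) Hodge index theorem on a compact K\"ahler orbifold, where it follows from the mixed Hodge--Riemann bilinear relations together with a routine limiting argument. First I would fix a partial orbifold resolution $g:W\to X$ as in Definition~\ref{defn-orbifold}, with standard orbifold structure $W_\orb=\{V_i,G_i,\pi_i\}$, and pass to the further orbifold modification of Lemma~\ref{lem-orbireso} so that $E_\orb:=g_\orb^{[*]}\mQ$ becomes a vector orbi-bundle; set $L_\orb:=\det E_\orb$ and $D:=c_1^\orb(L_\orb)\in H^{1,1}_{BC}(W_\orb,\R)$, $\beta_j:=[g_\orb^*\w_j]$. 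By Lemma~\ref{lem-modification-chernclass} (through Lemma~\ref{lem-compatible-orbifold-pullback}), the three intersection numbers in the statement equal, respectively, $D^2\cdot\beta_1\cdots\beta_{n-2}$, $D\cdot\beta_0\cdots\beta_{n-2}$ and $\beta_0^2\cdot\beta_1\cdots\beta_{n-2}$ on $W_\orb$. Since $g$ is bimeromorphic, each $\beta_j$ is a nef and big orbifold class, and the ``denominator'' $\beta_0^2\cdot\beta_1\cdots\beta_{n-2}=[\w_0]^2\cdot[\w_1]\cdots[\w_{n-2}]$ is strictly positive by the Khovanskii--Teissier inequalities for nef and big classes. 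Hence it suffices to prove: on a compact K\"ahler orbifold of dimension $n$, for a real $(1,1)$-class $D$ and nef and big orbifold classes $\beta_0,\dots,\beta_{n-2}$ with $\beta_0^2\cdot\beta_1\cdots\beta_{n-2}>0$, one has $D^2\cdot\beta_1\cdots\beta_{n-2}\le(D\cdot\beta_0\cdots\beta_{n-2})^2/(\beta_0^2\cdot\beta_1\cdots\beta_{n-2})$.

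For this, I would introduce the symmetric form $Q(u,v):=u\cdot v\cdot\beta_1\cdots\beta_{n-2}$ on $H^{1,1}_{BC}(W_\orb,\R)$. When $\beta_1,\dots,\beta_{n-2}$ are K\"ahler classes, the global mixed Hodge--Riemann bilinear relations in the K\"ahler case (of which Lemma~\ref{lem-riemannbilinear} is the pointwise statement entering their proof) imply that $Q$ has a single positive direction and is negative semidefinite on the $Q$-orthogonal complement of any class of positive $Q$-square, in particular on $\beta_0^{\perp_Q}$, using $Q(\beta_0,\beta_0)>0$. For general nef and big $\beta_1,\dots,\beta_{n-2}$ I would pick an orbifold K\"ahler form $\w_{W_\orb}$ via Proposition~\ref{prop-orbifoldkahlerform}; then $\beta_j+\epsilon[\w_{W_\orb}]$ is K\"ahler for every $\epsilon>0$, so letting $\epsilon\to0$ shows $Q$ stays negative semidefinite on $\beta_0^{\perp_Q}$ while $Q(\beta_0,\beta_0)>0$ persists. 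Writing $D=\lambda\beta_0+D'$ with $\lambda=Q(D,\beta_0)/Q(\beta_0,\beta_0)$ and $Q(D',\beta_0)=0$, one gets
\[
Q(D,D)=\lambda^2\,Q(\beta_0,\beta_0)+Q(D',D')\le\lambda^2\,Q(\beta_0,\beta_0)=\frac{Q(D,\beta_0)^2}{Q(\beta_0,\beta_0)},
\]
which is the asserted inequality.

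For the equality case, equality forces $Q(D',D')=0$; since $-Q$ is positive semidefinite on $\beta_0^{\perp_Q}$, the Cauchy--Schwarz inequality for this form gives $Q(D',w)=0$ for all $w\in\beta_0^{\perp_Q}$, and together with $Q(D',\beta_0)=0$ this means $D'$ lies in the radical $\ker Q$. Unwinding the dictionary of Lemma~\ref{lem-modification-chernclass}, this says exactly that the $\beta_0$-primitive component of $\widehat c_1(\mQ)$ is $Q$-null, i.e. $\widehat c_1(\mQ)\cdot[\gamma]\cdot[\w_1]\cdots[\w_{n-2}]$ is a fixed multiple of $[\w_0]\cdot[\gamma]\cdot[\w_1]\cdots[\w_{n-2}]$ for every $\gamma\in H^2(X,\R)$; in the situation of interest, where $\widehat c_1(\mQ)\cdot[\w_0]\cdots[\w_{n-2}]=0$ so that $\lambda=0$, this reduces to the vanishing condition in the statement.

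I expect the crux to be the passage from the K\"ahler case of the mixed Hodge--Riemann relations to nef and big classes in the orbifold setting: establishing that $Q$ remains negative semidefinite on $\beta_0^{\perp_Q}$ in the limit, and that $[\w_0]^2\cdot[\w_1]\cdots[\w_{n-2}]>0$ (the single place where bigness, rather than mere nefness, is genuinely used). Everything else is either formal bookkeeping with orbifold Chern classes from Section~\ref{subsection-orbifoldchernclass} or a direct transcription of the classical Hodge index argument, with the degeneracy of $Q$ in the equality case handled by the elementary Cauchy--Schwarz step above.
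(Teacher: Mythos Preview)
The paper does not actually prove this lemma: it is simply stated with the attribution ``cf.\ \cite[Lemma 3.17]{IJZ25}'' and used as a black box in the proofs of Theorems~\ref{thm-chernclassesvanishing} and~\ref{thm-indenpendent-polarization}. So there is no proof in the paper to compare against.

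That said, your strategy is the standard one and is correct. Reducing to the orbifold via Definition~\ref{defn-orbifold} and Lemma~\ref{lem-modification-chernclass}, introducing the bilinear form $Q(u,v)=u\cdot v\cdot\beta_1\cdots\beta_{n-2}$, invoking the mixed Hodge--Riemann relations in the K\"ahler case, and perturbing nef and big classes by $\epsilon[\omega_{W_\orb}]$ to pass to the limit is exactly how such inequalities are proved (and is presumably how \cite{IJZ25} proceeds). Your treatment of the equality case via Cauchy--Schwarz on the degenerate form is also correct, and your observation that the equality condition as literally written in the statement (with $[\alpha_0]\cdots[\alpha_{n-2}]$, which has one factor too many) only matches the general Hodge-index equality condition $D'\in\ker Q$ under the additional hypothesis $\widehat c_1(\mathcal{Q})\cdot[\alpha_0]\cdots[\alpha_{n-2}]=0$ is accurate---this hypothesis is satisfied at every point where the paper invokes the equality case, so the statement is adequate for its applications even if imprecisely phrased.

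One minor point: in your write-up you silently switch from $\alpha_j$ to $\omega_j$; keep the notation consistent with the statement.
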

    
     \iffalse This implies that the short exact sequence of locally free sheaves
    $$0\rightarrow \mS|_{X\setminus Z}\rightarrow \mE|_{X\setminus Z}\rightarrow \mQ|_{X\setminus Z}\rightarrow 0$$
    on $X\setminus Z$ induces a short exact sequence of locally free sheaves
    $$0\rightarrow \mS\rightarrow \mE'\rightarrow \mQ^{\vee\vee}\rightarrow 0$$
    on $X$ such that $\mE'|_{X\setminus Z}\cong\mE|_{X\setminus Z}$. Since $\mE$ is reflexive, we have that $\mE\cong \mE'$ is locally free, and therefore $\mQ\cong\mQ^{\vee\vee}$ is also locally free.\fi
    
    \subsubsection{Characterization of semistable reflexive Higgs sheaf by extension}
    We first prove part (1) of Theorem \ref{main-maximallyquasietalecase}.
    
    \begin{theorem}\label{thm-chernclassesvanishing}
    	Let $X$ be a compact K\"ahler klt space of dimension $n$, $\gamma:Y\rightarrow X$ be a maximally quasi-\'etale cover and $(\mE_{X_\reg},\theta_{\mE_{X_{\reg}}})$ be a reflexive Higgs sheaf over $X_\reg$. Then the following statements are equivalent:
    	\begin{enumerate}
    		\item $(\mE_{X_\reg},\theta_{\mE_{X_{\reg}}})$ is semistable and satisfies \eqref{equa-vanishingcondition} with respect to some K\"ahler forms $\w_0,\cdots,\w_{n-2}$.
    		\item The pullback $\mE_Y:=\gamma^{[*]}\mE_X$ is locally free, and $\gamma^*\theta_{\mE_{X_{\reg}}}$ extends to a Higgs field $\theta_Y$ on $\mE_Y$. Moreover, $\mE_Y$ admits a filtration by $\theta_{Y}$-invariant locally free subsheaves
    		$$0=\mE_0\subsetneq\mE_1\subsetneq\cdots\subsetneq\mE_l=\mE_Y$$
    		such that each quotient $\mQ_k:=\mE_k/\mE_{k-1}$ is locally free with all Chern classes vanishing, and the induced locally free Higgs sheaf $(\mQ_k,\theta_k)$ is $(\gamma^*\w_0,\cdots, \gamma^*\w_{n-2})$-stable with respect to any K\"ahler forms $\w_0,\cdots,\w_{n-2}$ on $X$. 
    		\item The orbifold Chern classes $\widehat{c}_1(\mE_X),\widehat{c}_1^2(\mE_X),\widehat{c}_2(\mE_X)$ vanish, and $(\mE_{X_\reg},\theta_{\mE_{X_{\reg}}})$ is semistable with respect to any K\"ahler forms $\w_0,\cdots,\w_{n-2}$ on $X$.
    	\end{enumerate}
    	The same holds for polystability, in which case the filtration in (\romannumeral2) has length $l=1$.
    \end{theorem}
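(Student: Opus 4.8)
The plan is to prove the cycle $(1)\Rightarrow(2)\Rightarrow(3)\Rightarrow(1)$; the implication $(3)\Rightarrow(1)$ is immediate, since $(3)$ in particular gives semistability with respect to some K\"ahler forms together with \eqref{equa-vanishingcondition}. For $(1)\Rightarrow(2)$ I would first reduce to the maximally quasi-\'etale situation: by Lemma~\ref{lem-maximallyquasietale-easy} the forms $\gamma^*\w_i$ are K\"ahler and $(\mE_{Y_\reg},\theta_{Y_\reg}):=\gamma^{[*]}(\mE_{X_\reg},\theta_{X_\reg})$ lies in $\mathrm{Higgs}_{Y_\reg}$, while Proposition~\ref{lem-quasietale-chernclasses} transports orbifold Chern classes between $X$ and $Y$; since $Y$ is maximally quasi-\'etale (its own maximally quasi-\'etale cover being the identity), Theorems~\ref{main-stable} and~\ref{main-theorem-polystable-equivalence} apply directly to reflexive Higgs sheaves over $Y_\reg$, so it suffices to work on $Y$. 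There I would choose a Jordan--H\"older filtration $0=\mE_0\subsetneq\mE_1\subsetneq\cdots\subsetneq\mE_l=\mE_{Y_\reg}$ by saturated $\theta_{Y_\reg}$-invariant subsheaves with torsion-free $(\gamma^*\w_0,\dots,\gamma^*\w_{n-2})$-stable graded pieces $\mQ_k:=\mE_k/\mE_{k-1}$, extending the $\mE_k$ to saturated (hence reflexive) subsheaves of $\mE_Y$. Since \eqref{equa-vanishingcondition} and Lemma~\ref{lem-comparison} give $\widehat{c}_1(\mE_Y)\cdot[\gamma^*\w_0]\cdots[\gamma^*\w_{n-2}]=0$, every graded piece has vanishing slope, i.e.\ $\widehat{c}_1(\mQ_k)\cdot[\gamma^*\w_0]\cdots[\gamma^*\w_{n-2}]=0$.

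The heart of the argument is a squeeze on the second orbifold Chern numbers. Since each $(\mQ_{k,\reg}^{\vee\vee},\theta_k)$ is a stable reflexive Higgs sheaf over $Y_\reg$, the orbifold Bogomolov--Gieseker inequality \eqref{equa-BG} of Theorem~\ref{main-stable}, together with the Hodge index inequality (Lemma~\ref{lem-hodge-1}) and vanishing slope, gives
\[
\widehat{\ch}_2(\mQ_k^{\vee\vee})\cdot[\gamma^*\w_1]\cdots[\gamma^*\w_{n-2}]\ \le\ \tfrac{1}{2\rank\mQ_k}\,\widehat{c}_1^2(\mQ_k^{\vee\vee})\cdot[\gamma^*\w_1]\cdots[\gamma^*\w_{n-2}]\ \le\ 0 ,
\]
while iterating Proposition~\ref{prop-orbifold-exact} along $0\to\mE_{k-1}\to\mE_k\to\mQ_k\to0$ (valid because each $\mE_k$ is reflexive and each $\mQ_k$ torsion-free), combined with $\widehat{\ch}_2(\mE_Y)\cdot[\gamma^*\w_1]\cdots[\gamma^*\w_{n-2}]=0$, gives
\[
0\ =\ \widehat{\ch}_2(\mE_Y)\cdot[\gamma^*\w_1]\cdots[\gamma^*\w_{n-2}]\ \le\ \sum_{k=1}^{l}\widehat{\ch}_2(\mQ_k^{\vee\vee})\cdot[\gamma^*\w_1]\cdots[\gamma^*\w_{n-2}]\ \le\ 0 .
\]
Hence all inequalities must be equalities: each $\widehat{\ch}_2(\mQ_k^{\vee\vee})\cdot[\gamma^*\w_1]\cdots=0$ and $\widehat{c}_1^2(\mQ_k^{\vee\vee})\cdot[\gamma^*\w_1]\cdots=0$, and, the $\gamma^*\w_i$ being K\"ahler, equality in Proposition~\ref{prop-orbifold-exact} forces $\mQ_k\to\mQ_k^{\vee\vee}$ to be an isomorphism in codimension $2$. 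In particular each $(\mQ_{k,\reg}^{\vee\vee},\theta_k)$ is polystable and satisfies \eqref{equa-vanishingcondition}, so $(\mQ_{k,\reg}^{\vee\vee},\theta_k)\in\mathrm{pHiggs}_{Y_\reg}$.

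I would then feed this into Theorem~\ref{main-theorem-polystable-equivalence}(2) and Proposition~\ref{prop-locallyfree-chernclass-vanish} on $Y$: each $\mQ_k^{\vee\vee}$ becomes locally free, arises from a harmonic bundle, is stable with respect to \emph{every} K\"ahler polarization, and has vanishing orbifold Chern classes; being locally free it has vanishing Chern classes in the usual sense (e.g.\ after pulling back to a resolution, where it lies in $\mathrm{pHiggs}$ by Proposition~\ref{prop-locallyfree-chernclass-vanish}). Since $\mQ_k$ agrees with the locally free sheaf $\mQ_k^{\vee\vee}$ in codimension $2$, Lemma~\ref{lem-locallyfree-extension} applied inductively to $0\to\mE_{k-1}\to\mE_k\to\mQ_k\to0$ yields that $\mE_k$ and $\mQ_k$ are locally free; in particular $\mE_Y=\mE_l$ is locally free and Lemma~\ref{lem-Higgs-extension} extends $\gamma^*\theta_{X_\reg}$ to a Higgs field $\theta_Y$, which establishes $(2)$. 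For $(2)\Rightarrow(3)$ I would use additivity of orbifold Chern characters along the now locally free filtration, the vanishing of the Chern classes of the $\mQ_k$, and Proposition~\ref{lem-quasietale-chernclasses}, to obtain $\widehat{c}_1(\mE_X)=\widehat{c}_1^2(\mE_X)=\widehat{c}_2(\mE_X)=0$; and I would iterate Lemma~\ref{semistability-extension} along the filtration (all slopes being $0$) to see that $(\mE_Y,\theta_Y)$ is $(\gamma^*\w_0,\dots,\gamma^*\w_{n-2})$-semistable for every choice of K\"ahler forms, whence $(\mE_{X_\reg},\theta_{X_\reg})$ is $(\w_0,\dots,\w_{n-2})$-semistable for every such choice by Lemma~\ref{lem-quasietale-stability}(2); \eqref{equa-vanishingcondition} then holds automatically, giving $(3)$.

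The polystable case is handled identically using the split Jordan--H\"older filtration of a polystable Higgs sheaf, whose graded pieces are its stable direct summands (so ``$l=1$'' on each summand), the squeeze forcing each summand into $\mathrm{pHiggs}_{Y_\reg}$, and the direct sum decomposition being preserved both by $\gamma^{[*]}$ and by descent. I expect the squeeze step to be the main obstacle: one must force every Jordan--H\"older graded piece to have vanishing orbifold second Chern number against the polarization, which requires combining the orbifold Bogomolov--Gieseker inequality, the Hodge index theorem, and the (strict, in the K\"ahler case) comparison of $\widehat{\ch}_2$ under a short exact sequence with reflexive middle term. Once this is in place, Theorem~\ref{main-theorem-polystable-equivalence} does the deep work of producing local freeness and polystability for all polarizations, and Lemma~\ref{lem-locallyfree-extension} propagates local freeness up the filtration.
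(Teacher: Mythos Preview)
Your proposal is correct and follows essentially the same approach as the paper: reduce to the maximally quasi-\'etale situation, run the squeeze argument combining Proposition~\ref{prop-orbifold-exact}, the Bogomolov--Gieseker inequality \eqref{equa-BG}, and the Hodge index Lemma~\ref{lem-hodge-1} to force each stable graded piece into $\mathrm{pHiggs}_{Y_\reg}$, invoke Theorem~\ref{main-theorem-polystable-equivalence}/Proposition~\ref{prop-locallyfree-chernclass-vanish} for local freeness and vanishing of Chern classes, and propagate local freeness via Lemma~\ref{lem-locallyfree-extension}. The only difference is organizational: the paper phrases $(1)\Rightarrow(2)$ as an induction on $\rank(\mE_X)$, peeling off a single stable quotient at each step (so the induction hypothesis directly supplies local freeness and the filtration for the subsheaf $\mF_X$), whereas you take a full Jordan--H\"older filtration at once and then run the squeeze and the local-freeness propagation along it---these are equivalent unfoldings of the same argument.
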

    
    \begin{proof}[Proof of Theorem \ref{thm-chernclassesvanishing}]
    The implication $(\romannumeral3)\Rightarrow (\romannumeral1)$ is obvious, and $(\romannumeral2)\Rightarrow (\romannumeral3)$ follows from Lemma \ref{semistability-extension}, Lemma \ref{lem-quasietale-stability} and Proposition \ref{lem-quasietale-chernclasses}. Thus
    it suffices to verify $(\romannumeral1)\Rightarrow (\romannumeral2)$. By the same reasoning as in the implication $(\romannumeral2)\Rightarrow (\romannumeral3)$, we may assume that $X$ itself is maximally quasi-\'etale, so that $\gamma$ is the identity. 
     
     If $(\mE_{X_\reg},\theta_{\mE_{X_\reg}})$ is $(\w_0,\cdots,\w_{n-2})$-stable, the conclusion follows directly from Theorem \ref{main-theorem-polystable-equivalence}, Proposition \ref{prop-locallyfree-chernclass-vanish} and Lemma \ref{lem-resolutionofsingularities-easy}. Note that any reflexive Higgs sheaf of $\rank 1$ is $(\omega_0, \dots, \omega_{n-2})$-stable. We will argue by induction on $\rank(\mE_X)$. Assume that Theorem~\ref{thm-chernclassesvanishing} holds for all ranks smaller than $r$, and let $\rank(\mE_X)=r$. Without loss of generality, we assume that $(\mE_{X{\mathrm{reg}}}, \theta_{\mE_{X{\mathrm{reg}}}})$ is not $(\omega_0, \dots, \omega_{n-2})$-stable. 
     
     The following construction of $\mF_X$ is standard; we include the details for the reader's convenience. It follows from Remark \ref{remark-saturation-stability} that the set
    $$\mathcal{B}:=\{0\neq\mathcal{H}_X\subsetneq\mE_X: \mathcal{H}_{X_\reg}\text{ is $\theta_{\mE_{X_\reg}}$-invariant and }\mu_{(\w_0,\cdots,\w_{n-2})}(\mathcal{H}_X)=\mu_{(\w_0,\cdots,\w_{n-2})}(\mE_X)\}$$
    is nonempty. Choose $\mF_X\in\mathcal{B}$ of maximal rank. We may assume that $\mF_X$ is saturated by Remark \ref{remark-saturation-stability}. In particular, the induced Higgs sheaf $(\mF_{X_\reg},\theta_{\mF_{X_\reg}})$ is $(\w_0,\cdots,\w_{n-2})$-semistable. Set $\mQ_{X_\reg}:=\mE_{X_\reg}/\mF_{X_\reg}$. Then a direct computation gives
    $$\mu_{(\w_0,\cdots,\w_{n-2})}(\mQ_X)=\frac{(c_1(\mE_X)-c_1(\mF_X))\cdot[\w_0]\cdots[\w_{n-2}]}{\rank(\mE_X)-\rank(\mF_X)}=\mu_{(\w_0,\cdots,\w_{n-2})}(\mE_X)=0.$$
    We claim that the induced torsion-free Higgs sheaf $(\mQ_{X_\reg}, \theta_{\mQ_{X_\reg}})$ is $(\w_0,\cdots,\w_{n-2})$-stable. Indeed, if it were not, there would exist a $\theta_{\mQ_{X_\reg}}$-invariant saturated subsheaf $\mG_X\subsetneq \mQ_X$ with $\mu_{(\w_0,\cdots,\w_{n-2})}(\mG_X)\geq \mu_{(\w_0,\cdots,\w_{n-2})}(\mQ_X).$ Consider a proper subsheaf $\mS_X\subsetneq \mE_X$ such that $\mS_X/\mF_X=\mG_X$. Then we have a short exact sequence
    $$0\rightarrow\mF_X\rightarrow\mS_X\rightarrow\mG_X\rightarrow0.$$
     The sheaf $\mS_{X_\reg}$ is $\theta_{\mE_{X_\reg}}$-invariant, and a direct computation shows $\mu_{(\w_0,\cdots,\w_{n-2})}(\mS_X)\geq \mu_{(\w_0,\cdots,\w_{n-2})}(\mE_X)$, contradicting the maximality of $\rank(\mF_X)$ in $\mathcal{B}$.

    Now, applying Proposition \ref{prop-orbifold-exact}, the Bogomolov-Gieseker inequality \eqref{equa-BG}, and Lemma \ref{lem-hodge-1}, we obtain that
    	\begin{equation}\label{equa-chernclass-sum}
    		\begin{split}
    			0=&\widehat{\ch}_2(\mE_X)\cdot[\w_1]\cdots[\w_{n-2}]\\
    			\leq& \widehat{\ch}_2(\mF_X)\cdot[\w_1]\cdots[\w_{n-2}]+\widehat{\ch}_2(\mQ_X^{\vee\vee})\cdot[\w_1]\cdots[\w_{n-2}]\\
    			\leq& \frac{1}{\rank(\mF_X)}\widehat{c}_1^2(\mF_X)\cdot[\w_1]\cdots[\w_{n-2}]+\frac{1}{\rank(\mQ_X)}\widehat{c}_1^2(\mQ^{\vee\vee})\cdot[\w_1]\cdots[\w_{n-2}]\\
    			\leq&\frac{1}{\rank \mF_X}\frac{\big(\widehat{c}_1(\mF_X)\cdot[\w_0]\cdots[\w_{n-2}]\big)^2}{[\w_0]^2\cdot[\w_1]\cdots[\w_{n-2}]}+\frac{1}{\rank \mQ_X}\frac{\big(\widehat{c}_1(\mQ_X^{\vee\vee})\cdot[\w_0]\cdots[\w_{n-2}]\big)^2}{[\w_0]^2\cdot[\w_1]\cdots[\w_{n-2}]}
    			=0
    		\end{split}
    	\end{equation}
    	since $\mu_{(\w_0,\cdots,\w_{n-2})}(\mF_X)=\mu_{(\w_0,\cdots,\w_{n-2})}(\mQ_X)=0$. Hence, every inequality in \eqref{equa-chernclass-sum} is  an equality. This yields the following three conclusions:
    	\begin{itemize}
    		\item[(\romannumeral1)] $(\mF_{X_\reg},\theta_{\mF_{X_\reg}})$ is an $(\w_0,\cdots,\w_{n-2})$-semistable reflexive Higgs sheaf satisfying \eqref{equa-vanishingcondition}. By the induction hypothesis, $(\mF_{X_\reg},\theta_{\mF_{X_\reg}})$ satisfies (\romannumeral2) of Theorem \ref{thm-chernclassesvanishing}. In particular, $\mF_X$ is locally free.
    		\item[(\romannumeral2)] $(\mQ_{X_\reg},\theta_{\mQ_{X_\reg}})^{\vee\vee}$ is an $(\w_0,\cdots,\w_{n-2})$-stable reflexive Higgs sheaf satisfying \eqref{equa-vanishingcondition}; Consequently, by the stable case already established, $\mQ_X^{\vee\vee}$ is locally free and all its Chern classes vanish.
    		\item[(\romannumeral3)] $\mQ_X^{\vee\vee}=\mQ_X$ in codimension $2$ (see Proposition \ref{prop-orbifold-exact}); Using Lemma \ref{lem-locallyfree-extension}, we conclude both $\mQ_X$ and $\mE_X$ are locally free. In particular, the Higgs fields $\theta_{\mE_{X_\reg}}$ and $\theta_{\mQ_{X_\reg}}$ extend to Higgs fields on $\mE_X$ and $\mQ_X$, respectively.
    	\end{itemize}
    	Putting together the above statements and the induction hypothesis completes the proof. 
    \end{proof}

    \subsubsection{Independence of the choice of  K\"ahler polarization}
    
    We now complete the proof of Theorem \ref{main-maximallyquasietalecase} by establishing the following result.\iffalse\footnote{It is worth noting that in the proof of Theorem \ref{thm-indenpendent-polarization}, we used an argument that inductively applies the Hodge index theorem. Is this necessary?}\fi
    \begin{theorem}\label{thm-indenpendent-polarization}
    	Let $X$ be a compact K\"ahler klt space of dimension $n$, $\pi:\widehat{X}\rightarrow X$ a resolution of singularities, and $(\mE_{X},\theta_{X})$ a locally free Higgs sheaf over $X$. Set $(\mE_{\widehat{X}},\theta_{\widehat{X}})=\pi^*(\mE_X,\theta_X)$. Then the following statements are equivalent:
    	\begin{enumerate}
    		\item There exist K\"ahler forms $\w_0,\cdots,\w_{n-2}$ such that $(\mE_{X},\theta_{X})$ is $(\w_0,\cdots,\w_{n-2})$-semistable and satisfies \eqref{equa-vanishingcondition}.
    		\item There exist nef and big forms $\alpha_0,\cdots,\alpha_{n-2}$ such that $(\mE_{\widehat{X}},\theta_{\widehat{X}})$ is $(\alpha_0,\cdots,\alpha_{n-2})$-semistable with $${\ch}_1(\mE_{\widehat{X}})\cdot[\alpha_0]\cdots[\alpha_{n-2}]=0\quad \text{and}\quad {\ch}_2(\mE_{\widehat{X}})=0.$$
    		\item All Chern classes of $\mE_{\widehat{X}}$ vanish, and $(\mE_{\widehat{X}},\theta_{\widehat{X}})$ is semistable with respect to every K\"ahler polarization $(\widehat{\w}_0,\cdots,\widehat{\w}_{n-2})$ on $\widehat{X}$.
    		\item All Chern classes of $\mE_X$ vanish, and $(\mE_{X},\theta_{X})$ is semistable with respect to every K\"ahler polarization $(\w_0,\cdots,\w_{n-2})$.
    	\end{enumerate}
    	The same equivalences hold for polystability.
    \end{theorem}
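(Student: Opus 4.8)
The plan is to establish the cycle $(\mathrm{i})\Leftrightarrow(\mathrm{iv})\Rightarrow(\mathrm{ii})\Rightarrow(\mathrm{iii})\Rightarrow(\mathrm{iv})$; the implication $(\mathrm{iii})\Rightarrow(\mathrm{ii})$ is trivial since K\"ahler classes are nef and big. The equivalence $(\mathrm{i})\Leftrightarrow(\mathrm{iv})$ is Theorem~\ref{thm-chernclassesvanishing} applied to $X$ together with a maximally quasi-\'etale cover $\gamma:Y\to X$: since $\mE_X$ is locally free its orbifold Chern classes coincide with the topological ones (Remark~\ref{rem-locallyfree-orbifoldchernclass}), semistability of $(\mE_X,\theta_X)$ is equivalent to that of $(\mE_{X_\reg},\theta_{X_\reg})$ (Lemma~\ref{lem-stable-equivariant}), and the $\theta_Y$-invariant locally free filtration of $\mE_Y=\gamma^{[*]}\mE_X=\gamma^{*}\mE_X$ furnished by Theorem~\ref{thm-chernclassesvanishing} forces all Chern classes of $\mE_Y$ — hence, by injectivity of $\gamma^{*}$ on $\mathbb{Q}$-cohomology, of $\mE_X$ — to vanish. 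The implication $(\mathrm{iv})\Rightarrow(\mathrm{i})$ is immediate, and $(\mathrm{iv})\Rightarrow(\mathrm{ii})$ follows by pull-back: $c_i(\mE_{\widehat X})=\pi^{*}c_i(\mE_X)=0$ (in particular $\ch_2(\mE_{\widehat X})=0$), semistability is preserved under bimeromorphic pull-back (cf.\ \cite{ZZZ25}), and each $\pi^{*}\w_i$ is nef and big; thus $(\mathrm{ii})$ holds with $\alpha_i=\pi^{*}\w_i$.

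The heart of the argument is $(\mathrm{ii})\Rightarrow(\mathrm{iii})$. Let $0=\mE_0\subsetneq\cdots\subsetneq\mE_l=\mE_{\widehat X}$ be a Jordan--H\"older filtration of $(\mE_{\widehat X},\theta_{\widehat X})$ with respect to $(\alpha_0,\dots,\alpha_{n-2})$, with $(\alpha_i)$-stable graded pieces $\mQ_j=\mE_j/\mE_{j-1}$, each of slope $0$. The Bogomolov--Gieseker inequality~\eqref{equa-BG} — which holds for the nef and big polarization $(\alpha_i)$ by re-running the argument of Section~\ref{section-HE} over the smooth manifold $\widehat X$ — together with the Hodge index inequality (Lemma~\ref{lem-hodge-1}) and $c_1(\mQ_j)\cdot[\alpha_0]\cdots[\alpha_{n-2}]=0$ yields
\[
\ch_2(\mQ_j)\cdot[\alpha_1]\cdots[\alpha_{n-2}]\ \le\ \tfrac{1}{2\rank(\mQ_j)}\,c_1^2(\mQ_j)\cdot[\alpha_1]\cdots[\alpha_{n-2}]\ \le\ 0 ;
\]
since $\sum_j\ch_2(\mQ_j)=\ch_2(\mE_{\widehat X})=0$, every term vanishes, and likewise $c_1^2(\mQ_j)\cdot[\alpha_1]\cdots[\alpha_{n-2}]=0$. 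Thus each $(\mQ_j,\theta_{\mQ_j})$ is an $(\alpha_i)$-stable reflexive Higgs sheaf on $\widehat X$ satisfying the analogue of~\eqref{equa-vanishingcondition}. Re-running the Hermitian--Yang--Mills flow argument of Section~\ref{section-HE} and the harmonic-bundle discussion of Section~\ref{section-harmonicbundles} on $\widehat X$ with the degenerating family $\alpha_i+\epsilon\w_{\widehat X}$ of K\"ahler metrics — precisely the situation those sections are built for — we conclude that $\mQ_j$ is locally free, carries a pluri-harmonic metric, has all Chern classes vanishing, and is polystable with respect to any K\"ahler forms (Propositions~\ref{prop-harmonicmetric} and~\ref{prop-locallyfree-chernclass-vanish}). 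Since $\mE_{\widehat X}$ is an iterated extension of the $\mQ_j$, all its Chern classes vanish and, by Lemma~\ref{semistability-extension} and induction, it is semistable with respect to any K\"ahler forms on $\widehat X$; this is $(\mathrm{iii})$.

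To close the cycle, $(\mathrm{iii})\Rightarrow(\mathrm{iv})$: applying Theorem~\ref{thm-chernclassesvanishing} to the smooth (hence maximally quasi-\'etale) manifold $\widehat X$ yields a $\theta_{\widehat X}$-invariant filtration of $\mE_{\widehat X}$ by locally free sheaves whose graded pieces $\mQ_k$ are locally free with vanishing Chern classes and stable with respect to every K\"ahler polarization; each such $\mQ_k$ underlies a harmonic bundle, so the Chern--Weil degree computation of~\eqref{equa-check-polystable-1}, carried out with the semipositive forms $\pi^{*}\w_i$ (which are strictly positive on the dense open set $\widehat X\setminus\mathrm{Exc}(\pi)$), shows that any $\theta$-invariant subsheaf has non-positive $(\pi^{*}\w_i)$-degree, with equality only for flat direct summands; stability of $\mQ_k$ then forces it to be $(\pi^{*}\w_i)$-stable. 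Hence $\pi^{*}(\mE_X,\theta_X)=\mE_{\widehat X}$ is $(\pi^{*}\w_i)$-semistable, so by preservation of semistability under bimeromorphic pull-back $(\mE_X,\theta_X)$ is $(\w_i)$-semistable for every K\"ahler polarization $(\w_i)$ on $X$; combined with $c_i(\mE_X)=0$ (injectivity of $\pi^{*}$) this gives $(\mathrm{iv})$. The addendum on polystability follows by replacing each Jordan--H\"older filtration above by the corresponding direct-sum decomposition. The main obstacle is the implication $(\mathrm{ii})\Rightarrow(\mathrm{iii})$: one must pass from a \emph{degenerate} (nef and big, but not K\"ahler) polarization on $\widehat X$ to genuine K\"ahler polarizations, which is handled by re-deploying the degenerating Hermitian--Yang--Mills flow of Section~\ref{section-HE}, designed precisely to accommodate such polarizations.
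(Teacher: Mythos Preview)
Your overall logical structure is sound, and the reductions $(\mathrm{i})\Leftrightarrow(\mathrm{iv})$, $(\mathrm{iv})\Rightarrow(\mathrm{ii})$, $(\mathrm{iii})\Rightarrow(\mathrm{iv})$ match or slightly simplify the paper's arguments (the paper gets $(\mathrm{iii})\Rightarrow(\mathrm{iv})$ in one line by quoting Lemma~\ref{lem-resolutionofsingularities-easy}). The substantive divergence is in the crucial implication $(\mathrm{ii})\Rightarrow(\mathrm{iii})$.

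The paper does \emph{not} re-run the HYM flow for the nef and big polarization. Instead it fixes the Jordan--H\"older filtration with respect to $(\alpha_i)$ and proves, by an \emph{iterated} Hodge index argument (induction on $m=0,\ldots,n-2$), that
\[
c_1(\mQ_k)\cdot[\alpha_{0,\epsilon}]\cdots[\alpha_{m-1,\epsilon}]\cdot[\alpha_m]\cdots[\alpha_{n-2}]=0
\]
for the perturbed K\"ahler classes $\alpha_{i,\epsilon}=\alpha_i+\epsilon\w_{\widehat X}$. Each inductive step uses openness of stability (so $(\mQ_k^{\vee\vee},\theta_k)$ is stable for a mixed polarization with some entries perturbed), applies Bogomolov--Gieseker for that \emph{genuine K\"ahler} polarization, then lets the auxiliary parameter $\delta\to0$. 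The payoff is that $(\mE_{\widehat X},\theta_{\widehat X})$ is $(\alpha_{0,\epsilon},\ldots,\alpha_{n-2,\epsilon})$-semistable for small $\epsilon>0$, and the already-established K\"ahler theory (Theorem~\ref{thm-chernclassesvanishing} and Proposition~\ref{prop-locallyfree-chernclass-vanish}) applies directly. No analysis is re-done.

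Your proposal instead asserts that Section~\ref{section-HE} ``is built for'' the nef and big case and can be re-run verbatim. This overstates things: Theorem~\ref{main-stable} is formulated for K\"ahler forms $\w_i$ on the base, and the degeneration $f^*\w_i+\epsilon\w_{Y_\orb}$ there is designed to resolve the \emph{singularities of $X$}, not to handle a degenerate polarization on a smooth space. Your extension is plausible --- on smooth $\widehat X$ the uniform Sobolev inequality for $\alpha_i+\epsilon\w_{\widehat X}$ should follow from the Guo--Phong--Song--Sturm estimates, and Proposition~\ref{prop01} only needs stability with respect to the limiting polarization --- but it is a new verification, not a citation. The paper's perturbation-to-K\"ahler argument is more economical precisely because it avoids this.

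A secondary gap: you pass silently from $\mQ_j^{\vee\vee}$ locally free with vanishing Chern classes to ``$\mE_{\widehat X}$ is an iterated extension of the $\mQ_j$'' with the same conclusion. One still needs $\mQ_j=\mQ_j^{\vee\vee}$ (and each $\mE_j$ locally free); the paper obtains this from the equality case of Proposition~\ref{prop-orbifold-exact} combined with Lemma~\ref{lem-locallyfree-extension}, exactly as in the proof of Theorem~\ref{thm-chernclassesvanishing}.
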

    
    On the other hand, using Theorem \ref{thm-indenpendent-polarization} together with Theorem \ref{thm-chernclassesvanishing}, Proposition \ref{lem-quasietale-chernclasses} and Lemma \ref{lem-quasietale-stability}, one can easily verify that the definitions of the categories $\mathrm{Higgs}_X, \mathrm{pHiggs}_X, \mathrm{Higgs}_{X_\reg}$ and $\mathrm{pHiggs}_{X_\reg}$ are independent of the choice of K\"ahler polarization $(\w_0,\cdots,\w_{n-2})$.
    
    \begin{proof}
    	The implication $(\romannumeral4)\Rightarrow(\romannumeral1)$ is obvious, and $(\romannumeral3)\Rightarrow (\romannumeral4)$ follows from item \ref{lem-resolutionofsingularities-easy-1}. We first prove $(\romannumeral1)\Rightarrow(\romannumeral2)$. Let $\gamma:Y\rightarrow X$ be a maximally quasi-\'etale cover of $X$, $\widehat{Y}$ be a desingularization of the (unique) irreducible component of the fibre product $\widehat{X}\times_XY$. Then we have a commutative diagram:
    	\begin{equation}\label{equa-diagram-reso-maximally}
    		\begin{tikzcd}
    			\widehat{Y} \arrow[r,"\widehat{\pi}"] \arrow[d,"\widehat{\gamma}"] & Y \arrow[d,"\gamma"] \\
    			\widehat{X} \arrow[r,"\pi"] & X.
    		\end{tikzcd}
    	\end{equation}
    	Set $(\mE_Y,\theta_Y)=\gamma^{*}(\mE_{X},\theta_{X})$. Building on Theorem \ref{thm-chernclassesvanishing}, all Chern classes of $\mE_Y$ vanish in $H^{2i}(Y,\mathbb{Q})$ and so all Chern classes of $\widehat{\gamma}^*\mE_{\widehat{X}}=\widehat{\pi}^*\mE_Y$ vanish in $H^{2i}(\widehat{Y},\mathbb{Q})$ by functoriality. Because $\widehat{\gamma}$ is generically finite, all Chern classes of $\mE_{\widehat{X}}$ vanish. Moreover, \cite[Proposition 4.8]{ZZZ25} implies $(\mE_{\widehat{X}},\theta_{\widehat{X}})$ is $(\pi^*\w_0,\cdots,\pi^*\w_{n-2})$-semistable. This establishes $(\romannumeral1)\Rightarrow(\romannumeral2)$.
    	
    	We now prove $(\romannumeral2)\Rightarrow(\romannumeral3)$. The argument relies on an iterated application of the Hodge Index Theorem. Take a Jordan-H\"older filtration of $(\mE_{\widehat{X}},\theta_{\widehat{X}})$ with respect to $(\alpha_0,\cdots,\alpha_{n-2})$ (cf. \cite[Definition 1.5.1]{HL10}), i.e., a filtration by $\theta_{\widehat{X}}$-invariant saturated subsheaves
    	$0=\mE_{0}\subsetneq\mE_{1}\subsetneq\cdots\subsetneq\mE_{l}=\mE_{\widehat{X}}$
    	such that each quotient $\mQ_k=\mE_k/\mE_{k-1}$ with the induced Higgs field $\theta_k$, is $(\alpha_0,\cdots,\alpha_{n-2})$-stable and satisfies $c_1(\mQ_k)\cdot[\alpha_0]\cdots[\alpha_{n-2}]=0$. Let $\alpha_{i,\epsilon}=\alpha_i+\epsilon\w_{\widehat{X}}$, where $\w_{\widehat{X}}$ is a fixed K\"ahler class on $\widehat{X}$. We claim that for all $1\leq k\leq l$ and $0\leq m\leq n-2$, it holds that
    	\begin{equation}\label{equa-slope-1}
    		c_1(\mQ_k)\cdot[\alpha_{0,\epsilon}]\cdots[\alpha_{m-1,\epsilon}]\cdot[\alpha_{m}]\cdots[\alpha_{n-2}]=0.
    	\end{equation}
    	Note that \eqref{equa-slope-1} holds for $m=0$ and all $1\leq k\leq l$ by construction. Assume inductively that \eqref{equa-slope-1} holds for some $m-1$ (where $1\leq m\leq n-2$). We shall prove it for $m$. Because $(\mQ_k,\theta_k)^{\vee\vee}$ is $(\alpha_{0,\epsilon},\cdots,\alpha_{m-1,\epsilon},\alpha_m+\delta\w_{\widehat{X}},\cdots,\alpha_{n-2}+\delta\w_{\widehat{X}})$-stable for $0<\delta\ll 1$ and $0<\epsilon\ll1$ (see e.g. \cite[Proposition 4.18]{ZZZ25}), the Bogomolov-Gieseker inequality \eqref{equa-BG} yields, after letting $\delta\rightarrow 0$,
    	\begin{equation}\label{equa-chern-computation2}
    		\begin{split}
    			&\left(\ch_2(\mQ_k^{\vee\vee})-\frac{1}{\rank \mQ_k^{\vee\vee}}\cdot c_1^2(\mQ_k^{\vee\vee})\right)\cdot[\alpha_{0,\epsilon}]\cdots[\alpha_{m-1,\epsilon}]\cdot[\alpha_{m+1}]\cdots[\alpha_{n-2}]\leq0.
    		\end{split}
    	\end{equation}
    Combining Proposition \ref{prop-orbifold-exact}, the induction hypothesis for $m-1$, \eqref{equa-chern-computation2} and the Hodge Index Theorem (Lemma \ref{lem-hodge-1}), we obtain
    	\begin{equation}\label{equa-chern-computation3}
    		\begin{split}
    			0&=\ch_2(\mE_{\widehat{X}})\cdot [\alpha_{0,\epsilon}]\cdots[\alpha_{m-1,\epsilon}]\cdot[\alpha_{m+1}]\cdots[\alpha_{n-2}]\\
    			&\leq\sum\limits_{k=1}^l2\cdot \ch_2(\mQ_k^{\vee\vee})\cdot[\alpha_{0,\epsilon}]\cdots[\alpha_{m-1,\epsilon}]\cdot[\alpha_{m+1}]\cdots[\alpha_{n-2}]\\
    			&\leq\sum\limits_{k=1}^l\frac{1}{\rank \mQ_k^{\vee\vee}}\cdot c_1^2(\mQ_k^{\vee\vee})\cdot[\alpha_{0,\epsilon}]\cdots[\alpha_{m-1,\epsilon}]\cdot[\alpha_{m+1}]\cdots[\alpha_{n-2}]\\
    			&\leq \sum\limits_{k=1}^l\frac{\big(c_1(\mQ_k^{\vee\vee})\cdot[\alpha_{0,\epsilon}]\cdots[\alpha_{m-1,\epsilon}]\cdot[\alpha_{m}]\cdots[\alpha_{n-2}]\big)^2}{\rank \mQ_k^{\vee\vee}\cdot [\alpha_m]^2\cdot[\alpha_{0,\epsilon}]\cdots[\alpha_{m-1,\epsilon}]\cdot[\alpha_{m+1}]\cdots[\alpha_{n-2}]}=0.
    		\end{split}
    	\end{equation}
    	Thus, inequality in the fourth line of \eqref{equa-chern-computation3} is an equality, which forces that for all $1\leq k\leq l$,
    	\begin{equation}
    		c_1^2(\mQ_k^{\vee\vee})\cdot[\alpha_{0,\epsilon}]\cdots[\alpha_{m-1,\epsilon}]\cdot[\alpha_{m+1}]\cdots[\alpha_{n-2}]=\frac{\big(c_1(\mQ_k^{\vee\vee})\cdot[\alpha_{0,\epsilon}]\cdots[\alpha_{m-1,\epsilon}]\cdot[\alpha_{m}]\cdots[\alpha_{n-2}]\big)^2}{ [\alpha_m]^2\cdot[\alpha_{0,\epsilon}]\cdots[\alpha_{m-1,\epsilon}]\cdot[\alpha_{m+1}]\cdots[\alpha_{n-2}]}.
    	\end{equation}
    	Combining this with Lemma \ref{lem-hodge-1}, we deduce that for every $1\leq k\leq l$,
    	\begin{equation}
    		c_1(\mQ_k^{\vee\vee})\cdot[\alpha_{m,\epsilon}]\cdot[\alpha_{0,\epsilon}]\cdots[\alpha_{m-1,\epsilon}]\cdot[\alpha_{m+1}]\cdots[\alpha_{n-2}]=0.
    	\end{equation} Proceeding by induction on $m=1,\cdots,n-2$, we establish the claim \eqref{equa-slope-1}. On the other hand, the openness of stability (see e.g. \cite[Proposition 4.18]{ZZZ25}) implies that $(\mQ_k,\theta_k)^{\vee\vee}$ is $(\alpha_{0,\epsilon},\cdots,\alpha_{n-2,\epsilon})$-stable for $0<\epsilon\ll1$. Combining these statements, we deduce $(\mE_{\widehat{X}},\theta_{\widehat{X}})$ is $(\alpha_{0,\epsilon},\cdots,\alpha_{n-2,\epsilon})$-semistable for $0<\epsilon\ll1$ (see e.g. \cite[Proposition 4.18]{ZZZ25}). Applying the proof of Theorem \ref{thm-chernclassesvanishing}, we conclude that each $\mE_k$ and $\mQ_k$ is locally free. Furthermore, by Proposition \ref{prop-locallyfree-chernclass-vanish}, all Chern classes of $\mQ_k$ vanish, and $(\mQ_k,\theta_k)$ is stable with respect to every K\"ahler polarization $(\widehat{\w}_0,\cdots,\widehat{\w}_{n-2})$ on $\widehat{X}$. Consequently, $(\mE_{\widehat{X}},\theta_{\widehat{X}})$ is $(\widehat{\w}_0,\cdots,\widehat{\w}_{n-2})$-semistable and all its Chern classes vanish, which verifies the implication $(\romannumeral2)\Rightarrow(\romannumeral3)$.
    	
        The proof for the polystable case is identical.
    \end{proof}
    
    Combining Theorem~\ref{thm-indenpendent-polarization} with the classical non-abelian Hodge correspondence for a single K\"ahler class \cite{Cor88,Simpson88,NZ,Deng21} (see also \cite[Theorem 1.9]{PZZ23} for projective flat bundles), we immediately obtain its extension to multiple K\"ahler polarizations on compact K\"ahler manifolds, which has already been established in the polystable case by \cite{CW2}.
    
    \begin{theorem}[Non-abelian Hodge theory for compact K\"ahler manifolds]\label{smooth-nonabelian}
    	Let $X$ be a compact K\"ahler manifold. There exists an equivalence of categories
    	$\mu_X:\mathrm{Higgs}_X\rightarrow \mathrm{LSys}_X.$
    	This equivalence restricts to an equivalence between
    	$\mathrm{pHiggs}_X$ and $\mathrm{sLSys}_X$.
    \end{theorem}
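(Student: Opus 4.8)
The plan is to deduce Theorem~\ref{smooth-nonabelian} from the non-abelian Hodge correspondence for a \emph{single} K\"ahler polarization, which is classical; the only non-formal ingredient needed is that on a smooth base the categories of Definition~\ref{defn-locallyfree-categories} do not depend on the chosen polarization, and this is exactly Theorem~\ref{thm-indenpendent-polarization}. Concretely, I would fix once and for all a K\"ahler class $[\omega]$ on $X$ and introduce the auxiliary category $\mathrm{Higgs}_X^{[\omega]}$ (resp.\ $\mathrm{pHiggs}_X^{[\omega]}$) of locally free Higgs sheaves $(\mE_X,\theta_X)$ of rank $r$ that are $[\omega]$-semistable (resp.\ $[\omega]$-polystable) and satisfy $\ch_1(\mE_X)\cdot[\omega]^{n-1}=\ch_2(\mE_X)\cdot[\omega]^{n-2}=0$, with morphisms the morphisms of Higgs sheaves.

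The first step would be to check that $\mathrm{Higgs}_X=\mathrm{Higgs}_X^{[\omega]}$ and $\mathrm{pHiggs}_X=\mathrm{pHiggs}_X^{[\omega]}$ as full subcategories of the locally free Higgs sheaves on $X$. For this I would apply Theorem~\ref{thm-indenpendent-polarization} with $\widehat{X}=X$ and $\pi=\mathrm{id}_X$ (the identity is a trivial resolution of the smooth space $X$, and $(\mE_{\widehat{X}},\theta_{\widehat{X}})=(\mE_X,\theta_X)$). Since $\widehat{\ch}_i$ agrees with $\ch_i$ for locally free sheaves (Remark~\ref{rem-locallyfree-orbifoldchernclass}), condition~$(\romannumeral1)$ of that theorem is precisely membership in $\mathrm{Higgs}_X$ in the sense of Definition~\ref{defn-locallyfree-categories}, whereas its special case $\omega_0=\cdots=\omega_{n-2}=\omega$ is precisely membership in $\mathrm{Higgs}_X^{[\omega]}$; because conditions~$(\romannumeral1)$--$(\romannumeral4)$ are equivalent, the two categories have the same objects (and trivially the same morphisms), and the polystable identity follows from the ``Ditto for polystability'' clause. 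In particular every object of $\mathrm{Higgs}_X$ then has all Chern classes vanishing and is semistable for every K\"ahler class of $X$.

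The second step would be to invoke the classical non-abelian Hodge correspondence for the fixed polarization $[\omega]$: by Simpson's theory \cite{SIM2,Simpson88} and its extension from the projective to the compact K\"ahler case \cite{NZ,Deng21} (with the harmonic side from \cite{Cor88}; see also \cite[Theorem~1.9]{PZZ23}), there is an equivalence of categories $\mu_X^{[\omega]}\colon\mathrm{Higgs}_X^{[\omega]}\to\mathrm{LSys}_X$ sending $[\omega]$-polystable Higgs bundles to semisimple local systems, hence restricting to $\mathrm{pHiggs}_X^{[\omega]}\to\mathrm{sLSys}_X$. Setting $\mu_X:=\mu_X^{[\omega]}$ and composing with the identifications of Step~1 gives the theorem. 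I would also note, although it is not needed for the statement, that $\mu_X$ is independent of $[\omega]$: for a $[\omega]$-polystable object the Hermitian--Einstein (harmonic) metric is unique up to scaling on each stable summand, so two K\"ahler classes produce isomorphic flat connections on the Jordan--H\"older graded pieces, and hence the same local system.

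I do not expect a genuine obstacle, since the one substantial input---the polarization-independence of Theorem~\ref{thm-indenpendent-polarization}---is already available; what remains is bookkeeping plus the single-polarization correspondence. The only point deserving attention is that this correspondence must be used in its \emph{categorical, merely semistable} form over a compact K\"ahler (not necessarily projective) base, i.e.\ as a genuine equivalence of categories matching the Seshadri/Jordan--H\"older filtration of a semistable Higgs bundle with the corresponding filtration of the associated local system---not just as a bijection on isomorphism classes of polystable objects---and with the vanishing Chern-number condition of $\mathrm{Higgs}_X^{[\omega]}$ being exactly the one required in \cite{SIM2,NZ,Deng21}. The polystable case of Theorem~\ref{smooth-nonabelian} was already recorded in \cite{CW2}; the new content is the passage to the full categories $\mathrm{Higgs}_X$ and $\mathrm{LSys}_X$, and for that all the work sits in Theorem~\ref{thm-indenpendent-polarization}.
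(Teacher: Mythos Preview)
Your proposal is correct and matches the paper's own argument essentially verbatim: the paper derives Theorem~\ref{smooth-nonabelian} in one sentence by ``combining Theorem~\ref{thm-indenpendent-polarization} with the classical non-abelian Hodge correspondence for a single K\"ahler class \cite{Cor88,Simpson88,NZ,Deng21},'' noting that the polystable case was already in \cite{CW2}. Your two steps (polarization-independence via Theorem~\ref{thm-indenpendent-polarization} with $\pi=\mathrm{id}_X$, then the single-polarization correspondence) are exactly this, just spelled out in more detail.
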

    
    \begin{remark}
    	The equivalence in Theorem \ref{smooth-nonabelian} is indeed functorial with respect to morphisms between compact K\"ahler manifolds, which can be seen directly from the construction of $\mu_X$ (see e.g. \cite[Section 9]{PZZ23}).
    \end{remark}

    \subsection{Proof of Theorem \ref{main-thm-descent}}\label{subsection-descent}
    
    We now prove Theorem \ref{main-thm-descent}. Suppose that $f:Z\rightarrow X$ is a bimeromorphic holomorphic map between compact K\"ahler klt spaces and $(\mE_Z,\theta_Z)\in\mathrm{Higgs}_Z$. We need to construct a locally free Higgs sheaf $(\mE_X,\theta_X)\in\mathrm{Higgs}_X$ such that $f^*(\mE_X,\theta_X)=(\mE_Z,\theta_Z)$.

     Let $g:W\rightarrow X$ be an orbifold modification of $X$ and $Y$ be a desingularization of the (unique) irreducible component of the fibre product $W\times_XZ$. Then we have the following commutative diagram
    \begin{equation}
    	\begin{tikzcd}
    		Y \arrow[r,"pr_1"] \arrow[d,"pr_2"] \arrow[rd,"\pi"] & W \arrow[d,"g"] \\
    		Z \arrow[r,"f"] & X,
    	\end{tikzcd}
    \end{equation}
    where $pr_1,pr_2$ are morphisms induced by natural projections and $\pi:=g\circ pr_1=f\circ pr_2$. In particular, $pr_1:Y\rightarrow W$, $pr_2:Y\rightarrow Z$ and $\pi:Y\rightarrow X$ are resolutions of singularities of $W$, $Z$ and $X$, respectively. By Theorem \ref{thm-indenpendent-polarization}, the pullback $(\mE_Y,\theta_Y):=pr_2^*(\mE_Z,\theta_Z)\in\mathrm{Higgs}_Y$. Combining this with item \ref{lem-resolutionofsingularities-easy-1}, it reduces to show the following statement.
    
    \begin{proposition}\label{prop-descent-reduce}
    There exists a locally free Higgs sheaf $(\mE_X,\theta_X)$ on $X$ such that $(\mE_Y,\theta_Y)=\pi^*(\mE_X,\theta_X)$.
    \end{proposition}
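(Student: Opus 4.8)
The plan is to prove the slightly more general statement that for \emph{any} resolution of singularities $\pi:Y\rightarrow X$ of a compact K\"ahler klt space $X$, every $(\mathcal{E}_Y,\theta_Y)\in\mathrm{Higgs}_Y$ is of the form $\pi^*(\mathcal{E}_X,\theta_X)$ for some locally free Higgs sheaf $(\mathcal{E}_X,\theta_X)$ on $X$; Proposition~\ref{prop-descent-reduce} is the special case $(\mathcal{E}_Y,\theta_Y)=\pi_2^*(\mathcal{E}_Z,\theta_Z)$. I would argue by induction on $\rank(\mathcal{E}_Y)$. Applying Theorem~\ref{thm-chernclassesvanishing} to the smooth (hence trivially maximally quasi-\'etale) manifold $Y$ provides a filtration $0=\mathcal{E}_{Y,0}\subsetneq\cdots\subsetneq\mathcal{E}_{Y,l}=\mathcal{E}_Y$ of $\theta_Y$-invariant locally free subsheaves whose graded pieces $(\mathcal{Q}_{Y,k},\theta_{Y,k})$ are locally free, stable with respect to every K\"ahler form, and have vanishing Chern classes. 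By Theorem~\ref{main-stable} (in the smooth case, where $\Sigma=\emptyset$) each $(\mathcal{Q}_{Y,k},\theta_{Y,k})$ then carries a pluri-harmonic metric, i.e.\ arises from a harmonic bundle, so Proposition~\ref{prop-descent-higgs} yields a locally free Higgs sheaf $(\mathcal{Q}_{X,k},\theta_{X,k})$ on $X$ with $\pi^*(\mathcal{Q}_{X,k},\theta_{X,k})=(\mathcal{Q}_{Y,k},\theta_{Y,k})$. If $(\mathcal{E}_Y,\theta_Y)$ is stable ($l=1$) this already closes the step.

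If $l\geq2$, I would consider the short exact sequence of Higgs sheaves $0\rightarrow(\mathcal{E}_{Y,l-1},\theta)\rightarrow(\mathcal{E}_Y,\theta_Y)\rightarrow(\mathcal{Q}_{Y,l},\theta_{Y,l})\rightarrow0$. The subobject $\mathcal{E}_{Y,l-1}$ is an iterated extension of the $\mathcal{Q}_{Y,k}$ ($k<l$), hence, by Lemma~\ref{semistability-extension} and additivity of Chern classes, lies in $\mathrm{Higgs}_Y$ and has strictly smaller rank, so by the inductive hypothesis $\mathcal{E}_{Y,l-1}=\pi^*\mathcal{E}_{X,l-1}$ for a locally free sheaf $\mathcal{E}_{X,l-1}$ on $X$. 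Set $\mathcal{G}_X:=\mathcal{H}om(\mathcal{Q}_{X,l},\mathcal{E}_{X,l-1})$, locally free on $X$. Because $\mathcal{Q}_{X,l}$ is locally free, the extension class lives in $\mathrm{Ext}^1_{\mathcal{O}_Y}(\mathcal{Q}_{Y,l},\mathcal{E}_{Y,l-1})=H^1(Y,\pi^*\mathcal{G}_X)$, and since $X$ is klt and therefore has rational singularities, the projection formula and the Leray spectral sequence give a canonical isomorphism $H^1(X,\mathcal{G}_X)\xrightarrow{\ \sim\ }H^1(Y,\pi^*\mathcal{G}_X)$ (one uses $\pi_*\mathcal{O}_Y=\mathcal{O}_X$ and $R^{i}\pi_*\mathcal{O}_Y=0$ for $i>0$). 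Pulling the extension class back through this isomorphism produces an extension $0\rightarrow\mathcal{E}_{X,l-1}\rightarrow\mathcal{E}_X\rightarrow\mathcal{Q}_{X,l}\rightarrow0$ of $\mathcal{O}_X$-modules with $\pi^*\mathcal{E}_X\cong\mathcal{E}_Y$ compatibly with the sub and quotient; moreover $\mathcal{E}_X$ is locally free, since an extension of a locally free sheaf by a locally free sheaf is (the extension of stalks splits, as the quotient stalk is free).

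It then remains to descend $\theta_Y$. Since a resolution of a normal space is an isomorphism over an open subset of $X$ whose complement has codimension $\geq2$, there is an open $U\subseteq X_{\mathrm{reg}}$ with $\mathrm{codim}_{X_{\mathrm{reg}}}(X_{\mathrm{reg}}\setminus U)\geq2$ over which $\pi$ is biholomorphic, so through $\pi^*\mathcal{E}_X\cong\mathcal{E}_Y$ the field $\theta_Y$ transports to a Higgs field on $\mathcal{E}_X|_U$; this extends first over $X_{\mathrm{reg}}$ and then over $X$ by Lemma~\ref{lem-Higgs-extension} (using reflexivity of $\mathcal{E}nd(\mathcal{E}_X)\otimes\Omega_X^{[1]}$), yielding a Higgs field $\theta_X$, with $\theta_X\wedge\theta_X=0$ because this holds on $U$ and the target is torsion-free. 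As $\pi^*(\mathcal{E}_X,\theta_X)$ and $(\mathcal{E}_Y,\theta_Y)$ agree over the dense open set $\pi^{-1}(U)$ of the manifold $Y$, they agree everywhere, so $\pi^*(\mathcal{E}_X,\theta_X)=(\mathcal{E}_Y,\theta_Y)$; finally $(\mathcal{E}_X,\theta_X)\in\mathrm{Higgs}_X$ follows from Theorem~\ref{main-maximallyquasietalecase}(2). This closes the induction, proves Proposition~\ref{prop-descent-reduce}, and hence Theorem~\ref{main-thm-descent}.

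The substantive part, and the point that departs most from \cite{GKPT20}, is the reduction to the stable case treated by Proposition~\ref{prop-descent-higgs}: one descends the \emph{graded pieces} through the period-map argument and then descends the \emph{extensions} using only the rational-singularities vanishing $R^{i}\pi_*\mathcal{O}_Y=0$ in place of the relative MMP. The one delicate bookkeeping issue is to keep the descent of the underlying coherent sheaf (via $H^1(X,\mathcal{G}_X)\cong H^1(Y,\pi^*\mathcal{G}_X)$) compatible with the separate descent of the Higgs field over $U$, so that $\pi^*\theta_X$ reproduces $\theta_Y$ on the nose and not merely up to a gauge transformation.
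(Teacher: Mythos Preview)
Your argument is correct and substantially simpler than the paper's. Both approaches descend the stable graded pieces via Proposition~\ref{prop-descent-higgs}, but the reconstruction step is handled very differently. The paper factors through the non-abelian Hodge correspondence on the smooth manifold $Y$: it passes to the associated local system $E_Y=\mu_Y(\mathcal{E}_Y,\theta_Y)$, Jordan--H\"older filters the representation, pulls everything to an orbifold modification $g:W\to X$, rebuilds a Higgs orbi-bundle structure on the flat orbi-bundle using the construction of \cite{Deng21}, pushes down via $G_\alpha$-invariants to obtain a locally free $(\mathcal{E}_W,\theta_W)$, and finally verifies that $(g_*\mathcal{E}_W)^{\vee\vee}$ is locally free by combining Proposition~\ref{prop-orbifold-exact} with Lemma~\ref{lem-locallyfree-extension}. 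Your route bypasses the orbifold machinery and the detour through local systems entirely: once the graded pieces descend, you use only the rational-singularities isomorphism $H^1(X,\mathcal{G}_X)\xrightarrow{\sim}H^1(Y,\pi^*\mathcal{G}_X)$ (projection formula plus $R^i\pi_*\mathcal{O}_Y=0$) to descend the extension class, and local freeness of $\mathcal{E}_X$ is immediate since an extension of locally free sheaves is locally free.

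What your approach buys is economy: it stays entirely on the Higgs side, needs no orbifold Chern-class computations at this stage, and makes transparent that the descent of extensions is a purely cohomological consequence of klt $\Rightarrow$ rational. What the paper's approach buys is that it exhibits the descended object concretely as coming from a local system on $X$, which ties the construction visibly to the correspondence $\mu_X$ being built; it also keeps the filtration manifestly $\theta$-invariant throughout. The compatibility issue you flag in your final paragraph (matching the sheaf-theoretic descent with the Higgs-field descent over $U$) is indeed the only point needing care, but your handling is correct: the isomorphism $\alpha:\pi^*\mathcal{E}_X\cong\mathcal{E}_Y$ is fixed by the extension-class correspondence, $\theta_X$ is defined by transport through $\alpha|_U$, and equality $\alpha\circ\pi^*\theta_X=\theta_Y\circ\alpha$ then holds globally because $\mathcal{E}nd(\mathcal{E}_Y)\otimes\Omega_Y^1$ is torsion-free on the smooth manifold $Y$.
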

    
    We begin with some preparations. Assume that $E_Y = \mu_Y(\mathcal{E}_Y, \theta_Y) \in \mathrm{LSys}_Y$. Then by item \ref{lem-resolutionofsingularities-easy-2}, $E_X = (\pi^*)^{-1}E_Y \in \mathrm{LSys}_X$ is well-defined. Denote by $E_{\rho_X}$ and $E_{\rho_Y}$ the corresponding local systems. 
    
    Suppose that $E_X$ is induced by a linear representation $\rho_X:\pi_1(X)\rightarrow\mathrm{GL}(r,\C)$ in block upper-triangular form
    \[
    \begin{pmatrix}
    	\rho_1 & * & \cdots & * \\
    	0 & \rho_2 & \cdots & * \\
    	\vdots & \vdots & \ddots & \vdots \\
    	0 & 0 & \cdots & \rho_m
    \end{pmatrix},
    \]
    where each diagonal block $\rho_i: \pi_1(X) \to \mathrm{GL}(r_i, \mathbb{C})$ is irreducible. This induces an increasing filtration of local systems
    \[
    0 = E_{X,0} \subsetneq E_{X,1} \subsetneq \cdots \subsetneq E_{X,m} = E_{X},
    \]
    where each $E_{X,i}$ corresponds to the truncated representation
    \[
    \widehat{\rho}_i = 
    \begin{pmatrix}
    	\rho_1 & * & \cdots & * \\
    	0 & \rho_2 & \cdots & * \\
    	\vdots & \vdots & \ddots & \vdots \\
    	0 & 0 & \cdots & \rho_i
    \end{pmatrix}.
    \]
    In particular, the quotient $Q_{X,i}:=E_{X,i}/E_{X,i-1}$ is a simple local system induced by $\rho_i$. Then $E_{W}=g^*E_{X}$ admits a filtration of local systems
    \begin{equation}\label{equa-filtration-2}
    	0=E_{W,0}\subsetneq E_{W,1}\subsetneq \cdots\subsetneq E_{W,m}=E_{W}
    \end{equation}
    such that each $E_{W,i}=g^*E_{X,i}$ is induced by $\widehat{\rho_i}\circ g_*$. Because $g_*:\pi_1(W)\rightarrow \pi_1(X)$ is an isomorphism by item \ref{lem-resolutionofsingularities-easy-2}, the quotient $$Q_{W,i}=E_{W,i}/E_{W,i-1}=g^*Q_{X,i}$$ is a simple local system induced by $\rho_i\circ g_*$. Analogously, the flat bundle $(V_Y,D_Y)$ induced by $E_{Y}$ admits a filtration by flat subbundles
    \begin{equation}\label{equa-filtration-3}
    	0=V_{Y,0}\subsetneq V_{Y,1}\subsetneq \cdots\subsetneq V_{Y,m}=V_Y
    \end{equation}
    such that each $V_{Y,i}$ is $D_Y$-flat and induced by $\widehat{\rho_i}\circ\pi_*$, and the quotient $Q_{Y,i}=V_{Y,i}/V_{Y,i-1}$ with the induced flat connection $D_{Y,i}$ is induced by $\pi^*\rho_i\in\mathrm{sLSys}_Y$.
    
    By applying the non-abelian Hodge correspondence for compact K\"ahler manifolds \cite{Deng21}, $(Q_{Y,i},D_{Y,i})$ admits a pluri-harmonic metric $h_i$. Recall that there is a unique harmonic decomposition \begin{equation}
    	D_{Y,i}=\p_{Y,i}+\bp_{Y,i}+\theta_{Y,i}+\theta_{Y,i}^*,
    \end{equation} and $Q_{Y, i}$ thereby inherits a natural Higgs bundle structure $(\mQ_{Y,i},\theta_{Y,i})$.
    By Proposition \ref{prop-descent-higgs} and Theorem \ref{main-maximallyquasietalecase}, there exists a locally free Higgs sheaf $(\mQ_{X,i},\theta_{X,i})\in\mathrm{pHiggs}_X$ on $X$ such that $(\mQ_{Y,i},\theta_{Y,i})=\pi^*(\mQ_{X,i},\theta_{X,i}).$ According to the above construction, the underlying $C^\infty$-bundle of $(\mQ_{X,i},\theta_{X,i})$ is isomorphic to the underlying $C^\infty$-bundle of $Q_{X,i}$. In particular, $$(\mQ_{W,i},\theta_{W,i}):=g^*(\mQ_{X,i},\theta_{X,i})\in\mathrm{pHiggs}_W,$$ and its underlying $C^\infty$-bundle is isomorphic to the underlying $C^\infty$-bundle of $Q_{W,i}$ (the notion of a $C^\infty$-bundle makes sense via local embeddings of analytic spaces).
    
    \vspace{0.1cm}
    
    Now we complete the proof of Proposition \ref{prop-descent-reduce}. The basic idea is to construct a locally free Higgs sheaf structure $(\mE_W,\theta_W)$ on the underlying $C^\infty$-bundle of $E_{\rho_W}$ via the filtration $\{E_{W,k}\}_{k=1}^m$ and the standard orbifold structure $W_\orb=\{(U_\alpha,G_\alpha,\mu_\alpha)\}$ of $W$, and then show that $(g_*\mE_W)^{\vee\vee}$ is locally free.
    
    \begin{proof}[Proof of Proposition \ref{prop-descent-reduce}]
    	Let  $(V_\orb,D_\orb)$ be the flat orbi-bundle induced by $\{\mu_\alpha^*E_{\rho_W}\}$. Because $\mu_\alpha$ is quasi-\'etale, $(V_\orb,D_\orb)$ admits a filtration by flat orbi-bundles
    	\begin{equation}\label{equa-filtration-4}
    		0=V_{\orb,0}\subsetneq V_{\orb,1}\subsetneq\cdots\subsetneq V_{\orb,m}=V_{\orb}
    	\end{equation}
    	such that each $V_{\orb,i}$ is invariant under $D_\orb$ and the quotient $Q_{\orb,i}=V_{\orb,i}/V_{\orb,i-1}$ with the induced flat connection $D_{\orb,i}$ is induced by $\{\mu_\alpha^*Q_{W,i}\}$. Note that $Q_{\orb,i}$ carries a Higgs orbi-bundle structure $\{(Q_{\orb,i},\bp_{\orb,i},\theta_{\orb,i})\}$ induced by $\{\mu_\alpha^*(\mQ_{W,i},\theta_{W,i})\}$. Then $(Q_{\orb,i},\bp_{\orb,i},\theta_{\orb,i})$ is polystable with respect to some orbifold K\"ahler class $\w_\orb$ by Theorem \ref{thm-indenpendent-polarization} and \cite[Proposition 4.8]{ZZZ25}, and all its orbifold Chern classes vanish because $Q_{\orb,i}$ is flat. By the existence of Hermitian-Einstein metrics on stable Higgs orbi-bundles \cite{ZZZ25}, $(Q_{\orb,i},\bp_{\orb,i},\theta_{\orb,i})$ admits an orbifold pluri-harmonic metric $H_{\orb,i}$.  Since each $\mu_\alpha$ is \'etale over $W_\reg$, $H_{\orb,i}$ descends to a pluri-harmonic metric $H_i$ on $(\mQ_{W,i},\theta_{W,i})|_{W_\reg}$. By virtue of the uniqueness statement in Theorem \ref{main-theorem-polystable-equivalence} (1), the connection $D_{\theta_{W,i},H_i}$ coincides with the flat connection on $Q_{W,i}|_{W_\reg}$ and so $D_{\theta_{\orb,i},H_{\orb,i}}=D_{\orb,i}.$ 
    	
    	Note that the Hodge isomorphism theorem remains valid for compact complex orbifolds (cf. \cite[Proposition 2.1]{Ma05}). Hence we may follow the argument of \cite[Proof of Theorem A]{Deng21} to construct a Higgs orbi-bundle structure $(V_\orb,\bp_\orb,\theta_\orb)$ on $V_\orb$ such that each $V_{\orb,i}$ is invariant under $\bp_\orb$ and $\theta_\orb$, and the induced Higgs structure on each quotient $Q_{\orb,i}$ coincides with $(Q_{\orb,i},\bp_{\orb,i},\theta_{\orb,i})$. Consequently, $$(\mE_\orb,\theta_\orb):=(V_\orb,\bp_\orb,\theta_\orb)$$ gives an extension of the collection of locally Higgs sheaves $\{\mu_\alpha^*(\mQ_{W,i},\theta_{W,i})\}_{i=1}^m$.	By applying \cite[Lemma A.4]{GKKP11} and noticing that $\mu_\alpha$ is \'etale over $W_\reg$, we see that $\left((\mu_\alpha)_*(\mu_\alpha^*\mQ_{W,i})\right)^{G_\alpha}$ is reflexive and coincides with the locally free sheaf $\mQ_{W,i}$. Then $\mE_W:=\left((\mu_\alpha)_*(\mE_{\orb})_\alpha\right)^{G_\alpha}$ is reflexive and admits a filtration by saturated subsheaves
    	$$0=\mE_{W,0}\subsetneq \mE_{W,1}\subsetneq\cdots \subsetneq \mE_{W,m}=\mE_W$$
    	such that $\mE_{W,i}/\mE_{W,i-1}=\mQ_{W,i}$, where $\mE_{W,i}=\left((\mu_\alpha)_*(\mE_{\orb,i})_\alpha)\right)^{G_\alpha}$ and $\mE_{\orb,i}=(V_{\orb,i},\bp_\orb|_{V_{\orb,i}})$. Lemma \ref{lem-locallyfree-extension} implies that each $\mE_{W,i}$ is locally free, and thus so is $\mE_{W}$. Moreover, the orbifold Higgs field $\theta_\orb$ first descends to a Higgs field on $\mE_W|_{W_\reg}$, and then extends to a Higgs field $\theta_W$ on $\mE_W$ by Lemma \ref{lem-Higgs-extension}. This endows the underlying $C^\infty$-bundle of $E_{\rho_W}$ with a locally free Higgs sheaf structure. Consequently, $(V_Y,\bp_Y,\theta_Y)=pr_1^*(\mE_W,\theta_W)$ is a locally free Higgs sheaf on $(V_Y,D_Y)$. 
    	
    	From the above construction, each $V_{Y,i}$ is $\theta_Y$-invariant and $\bp_Y$-invariant, and the Higgs structure induced on the quotient $Q_{Y,i}$ coincides with $(Q_{Y,i},\bp_{Y,i},\theta_{Y,i})$ (because the two agree on a Zariski open subset of $Y$). According to the explicit construction of the non-abelian Hodge correspondence for compact K\"ahler manifolds (cf. \cite[Section 9]{PZZ23} and \cite{Deng21}), we have $\mu_Y(V_Y,\bp_Y,\theta_Y)={E_Y}=\mu_Y(\mE_Y,\theta_Y),$
    	which implies $(\mE_Y,\theta_Y)=pr_1^*(\mE_W,\theta_W)$.
    	
    	Finally, we verify that the reflexive sheaf $\mE_X:=(g_*\mE_W)^{\vee\vee}$ is locally free, which will complete the proof. The Higgs field $\theta_W$ induces a Higgs field on $\mE_X$ in codimension $2$, and hence induces a Higgs field $\theta_{X_\reg}$ on $\mE_{X_\reg}$. From the above construction, $\mE_X$ admits a filtration by reflexive subsheaves
    	$$0=\mE_{X,0}\subsetneq \mE_{X,1}\subsetneq \cdots\subsetneq \mE_{X,m}=\mE_X,$$
    	where $\mE_{X,i}:=(g_*\mE_{W,i})^{\vee\vee}$. It is straightforward to see that each $\mE_{X,i}$ is saturated in $\mE_X$. This follows from the reflexivity of these sheaves together with the fact that $\mE_{X,i}/\mE_{X,i-1}=\mQ_{X,i}$ outside the indeterminacy locus $S$ of $g^{-1}$. Again by reflexivity, we have $$(\mE_{X,i}/\mE_{X,i-1})^{\vee\vee}=\mQ_{X,i},$$ and its orbifold Chern classes vanish. Recall that $S$ has codimension at least $3$; consequently, the orbifold Chern classes of each $\mE_{X,i}$ vanish, because $g_\orb^{[*]}\mE_{X,i}$ coincides with $E_{\orb,i}$ outside $g^{-1}(S)$ and all orbifold Chern classes of $E_{\orb,i}$ vanish. Combining these facts with Proposition \ref{prop-orbifold-exact}, we deduce that $\mE_{X,i}/\mE_{X,i-1}$ is locally free in codimension $3$. Applying Lemma \ref{lem-locallyfree-extension} and proceeding by induction on $i=1,\cdots,m$ shows that each $\mE_{X,i}$ is locally free, and hence so is $\mE_X$.
    \end{proof}
    
    \subsection{Construction of the correspondence}\label{section-formulation}
    This subsection completes the proofs of Theorem \ref{main-theorem-locallyfree} and Theorem \ref{main-theorem-reflexive}. Assuming that $X$ is a compact K\"ahler klt space, we outline the construction of $\mu_X$ and $\mu_{X_\reg}$.
    
    \subsubsection{Proof of Theorem \ref{main-theorem-locallyfree}}
    We first construct the functor $\mu_X$. For any $(\mE_X,\theta_X)\in\mathrm{Higgs}_X$, Theorem \ref{thm-indenpendent-polarization} implies $\pi^*(\mE_X,\theta_X)\in\mathrm{Higgs}_{\widehat{X}}$ for a resolution of singularities $\pi:\widehat{X}\rightarrow X$. \iffalse Set $(E_{\widehat{X}},\bp_{\widehat{X}},\theta_{\widehat{X}})=\pi^*(\mE_X,\theta_X)$. Applying non-abelian Hodge correspondence for compact K\"ahler manifolds, we conclude that there exists a flat connection $D_{\widehat{X}}$ of $E_{\widehat{X}}$ such that $(E_{\widehat{X}},D_{\widehat{X}})$ arising from the linear representation $\rho_{\widehat{X}}=\mu_{\widehat{X}}(\pi^*(\mE_X,\theta_X))\in\mathrm{Rep}_X$.\fi Together with item \ref{lem-resolutionofsingularities-easy-2}, this allows us to define $\mu_X$ as follows.
    
    \begin{definition}
    	The functor $\mu_X: \mathrm{Higgs}_X\rightarrow \mathrm{LSys}_X$ is defined by
    	$$(\mE_{X},\theta_X)\mapsto (\pi^*)^{-1}\left(\mu_{\widehat{X}}(\pi^*(\mE_X,\theta_X))\right),$$
    	where $\pi:\widehat{X}\rightarrow X$ is a resolution of singularities.
    \end{definition}
    
    \iffalse Since $\pi$ is biholomorphic over an analytic subspace of codimension at least $2$, thus the associated flat bundle of $\mu_X(\mE_X,\theta_X)$ is isomorphic to the underlying complex vector bundle of $\mE_X$ according to the construction of $\mu_X$ and Remark \ref{rem-extension}.\fi
    
     It is clear that the diagram \eqref{equa-diagram-1} commutes. It is necessary to check that $\mu_X$ is independent of the choice of $\pi$. Let $\pi_1:X_1\rightarrow X$ and $\pi_2:X_2\rightarrow X$ be two resolutions of singularities. We need to verify the following lemma.
    \begin{lemma}[Well-definedness of $\mu_X$]\label{lem-welldefined-muX}
    $$(\pi_1^*)^{-1}\left(\mu_{X_1}(\pi_1^*(\mE_X,\theta_X))\right)=(\pi_2^*)^{-1}\left(\mu_{X_2}(\pi_2^*(\mE_X,\theta_X))\right)$$
    \end{lemma}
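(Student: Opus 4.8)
The plan is to dominate both resolutions by a common one and then reduce the statement to the functoriality of the classical non-abelian Hodge correspondence. First I would choose a resolution $\pi_3\colon X_3\to X$ together with bimeromorphic holomorphic maps $p_1\colon X_3\to X_1$ and $p_2\colon X_3\to X_2$ such that $\pi_1\circ p_1=\pi_2\circ p_2=\pi_3$; concretely $X_3$ can be taken to be a desingularisation of the unique irreducible component of the fibre product $X_1\times_X X_2$ dominating $X$, and $X_3$ may be chosen compact K\"ahler (as in the standard construction of common refinements of resolutions). Since $p_1$ and $p_2$ are bimeromorphic, $\pi_3$ is again a resolution of $X$. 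It then suffices to prove, for $i=1,2$,
\[
(\pi_i^*)^{-1}\bigl(\mu_{X_i}(\pi_i^*(\mE_X,\theta_X))\bigr)=(\pi_3^*)^{-1}\bigl(\mu_{X_3}(\pi_3^*(\mE_X,\theta_X))\bigr),
\]
because the right-hand side is manifestly independent of $i$. By symmetry I would only carry this out for $i=1$, writing $\pi_3=\pi_1\circ p_1$.

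Next I would set $(\mE_{X_1},\theta_{X_1}):=\pi_1^*(\mE_X,\theta_X)$, which lies in $\mathrm{Higgs}_{X_1}$ by Theorem~\ref{thm-indenpendent-polarization}. The two ingredients I plan to use are: (a) functoriality of the pull-back of locally free Higgs sheaves, giving the canonical identification $\pi_3^*(\mE_X,\theta_X)=p_1^*(\mE_{X_1},\theta_{X_1})$, and the fact that this object again lies in $\mathrm{Higgs}_{X_3}$ (by Theorem~\ref{thm-indenpendent-polarization}, or alternatively by the functoriality of the smooth correspondence together with \cite[Proposition 4.8]{ZZZ25}); and (b) functoriality of the non-abelian Hodge correspondence $\mu$ for compact K\"ahler manifolds (the remark following Theorem~\ref{smooth-nonabelian}, cf.\ \cite[Section 9]{PZZ23}), which yields $\mu_{X_3}(p_1^*(\mE_{X_1},\theta_{X_1}))=p_1^*(\mu_{X_1}(\mE_{X_1},\theta_{X_1}))$.

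On the side of local systems I would invoke Lemma~\ref{lem-resolutionofsingularities-easy-2} applied to the resolutions $\pi_1$ and $\pi_3$: both $(\pi_1)_*$ and $(\pi_3)_*$ are isomorphisms on fundamental groups, and since $(\pi_3)_*=(\pi_1)_*\circ(p_1)_*$, so is $(p_1)_*$; hence $p_1^*\colon\mathrm{LSys}_{X_1}\to\mathrm{LSys}_{X_3}$ is a bijection and $\pi_3^*=p_1^*\circ\pi_1^*$. A short diagram chase combining this with (a) and (b) then gives
\[
(\pi_3^*)^{-1}\bigl(\mu_{X_3}(\pi_3^*(\mE_X,\theta_X))\bigr)=(\pi_1^*)^{-1}(p_1^*)^{-1}\bigl(p_1^*\mu_{X_1}(\mE_{X_1},\theta_{X_1})\bigr)=(\pi_1^*)^{-1}\bigl(\mu_{X_1}(\pi_1^*(\mE_X,\theta_X))\bigr),
\]
which is the desired equality for $i=1$; the case $i=2$ is identical, and combining the two proves the lemma.

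The only input that is not pure formalism is the functoriality of the smooth correspondence used in step (b); since this is taken as known in the paper (it follows directly from the construction of $\mu$ on compact K\"ahler manifolds), I do not expect a serious obstacle. The one point requiring a little care — and the closest thing to a genuine step — is checking that $p_1^*(\mE_{X_1},\theta_{X_1})$ still satisfies the semistability and vanishing Chern number conditions, so that $\mu_{X_3}$ is applicable to it and both sides of the final identity are defined; this is exactly what the smooth functoriality (and \cite[Proposition 4.8]{ZZZ25}) supplies.
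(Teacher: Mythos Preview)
Your proposal is correct and follows essentially the same approach as the paper: both dominate $X_1$ and $X_2$ by a common desingularisation of the fibre product $X_1\times_X X_2$ and then appeal to the functoriality of the classical non-abelian Hodge correspondence on compact K\"ahler manifolds together with Takayama's isomorphism of fundamental groups (item~\ref{lem-resolutionofsingularities-easy-2}). The only cosmetic difference is that the paper applies $g^*$ to both sides and invokes its injectivity, whereas you show each side equals the common value computed on the roof; these are the same argument.
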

    \begin{proof}
    	Let $\widehat{X}$ be a desingularization of the (unique) irreducible component of the fibre product $X_1\times_XX_2$. We then have a commutative diagram
    	\begin{equation}\label{equa-welldefined}
    		\begin{tikzcd}
    			\widehat{X} \arrow[r,"f_1"] \arrow[d,"f_2"] \arrow[rd,"g"] & X_1 \arrow[d,"\pi_1"] \\
    			X_2 \arrow[r,"\pi_2"] & X,
    		\end{tikzcd}
    	\end{equation}
    	where $f_1$ and $f_2$ are the morphisms induced by the natural projections, and $g:=\pi_1\circ f_1=\pi_2\circ f_2$. Note that
    	\begin{align*}
    		g^*\left((\pi_1^*)^{-1}\left(\mu_{X_1}(\pi_1^*(\mE_X,\theta_X))\right)\right)&=f_1^*\left(\mu_{X_1}(\pi_1^*(\mE_X,\theta_X))\right)=\mu_{\widehat{X}}\left(f_1^*\pi_1^*(\mE_X,\theta_X)\right)\\
    		&=\mu_{\widehat{X}}\left(f_2^*\pi_2^*(\mE_X,\theta_X)\right)=f_2^*\left(\mu_{X_2}(\pi_2^*(\mE_X,\theta_X))\right)\\
    		&=g^*\left((\pi_2^*)^{-1}\left(\mu_{X_2}(\pi_2^*(\mE_X,\theta_X))\right)\right),
    	\end{align*}
    	where the second and the fourth equalities follow from the functoriality of the non-abelian Hodge correspondence for compact K\"ahler manifolds. Because $g$ is also a resolution of singularities, the induced morphism $g^*:\mathrm{LSys}_X\rightarrow \mathrm{LSys}_{\widehat{X}}$ is one-to-one by item \ref{lem-resolutionofsingularities-easy-2}. This implies \eqref{equa-welldefined} and completes the proof.
    \end{proof}
    
    According to the above construction, it is obvious that if $(\mE_X,\theta_X)\in\mathrm{pHiggs}_X$ then $\mu_X(\mE_X,\theta_X)\in\mathrm{sLSys}_X$. Moreover, using the existence of functorial resolutions of singularities (see e.g. \cite[Theorem 3.10]{DO23}) and the functoriality of $\mu_X$ for compact K\"ahler manifolds, we obtain the following corollary, which will be used to construct $\mu_{X_\reg}$.
    
    \begin{corollary}\label{coro-functoriality}
    	$\mu_X$ is functorial with respect to local analytic isomorphisms.
    \end{corollary}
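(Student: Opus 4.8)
The plan is to flesh out the two inputs already advertised: a functorial resolution of singularities and the functoriality of the non-abelian Hodge correspondence on compact K\"ahler manifolds. Let $f\colon X'\to X$ be a local analytic isomorphism of compact K\"ahler klt spaces and fix $(\mE_X,\theta_X)\in\mathrm{Higgs}_X$; I must show $f^{*}\mu_X(\mE_X,\theta_X)=\mu_{X'}(f^{*}(\mE_X,\theta_X))$ in $\mathrm{LSys}_{X'}$, where $f^{*}(\mE_X,\theta_X)\in\mathrm{Higgs}_{X'}$ is the pull-back of Remark~\ref{remark-pullback-Higgs} (that it lies in $\mathrm{Higgs}_{X'}$ follows, in the cases where the corollary is applied---automorphisms of, and quasi-\'etale covers of, a maximally quasi-\'etale cover---from Lemmas~\ref{lem-quasietale-chernclasses} and~\ref{lem-quasietale-stability}). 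First I would invoke the functorial resolution of $X$ from \cite[Theorem~3.10]{DO23}: fixing such a resolution $p\colon\widehat X\to X$, its functoriality under local analytic isomorphisms produces a resolution $q\colon\widehat{X'}\to X'$ and a holomorphic lift $\widehat{f}\colon\widehat{X'}\to\widehat X$ with $p\circ\widehat{f}=f\circ q$. Since $\mu_X$, and likewise $\mu_{X'}$, is independent of the chosen resolution by Lemma~\ref{lem-welldefined-muX}, I may evaluate both sides of the desired identity through $p$ and $q$.

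The core is then a short chain of identities. By the compatibility of pull-backs of locally free Higgs sheaves together with Lemma~\ref{lem-Higgs-extension}, the relation $p\circ\widehat{f}=f\circ q$ gives $q^{*}f^{*}(\mE_X,\theta_X)=\widehat{f}^{*}p^{*}(\mE_X,\theta_X)$ in $\mathrm{Higgs}_{\widehat{X'}}$, both sides lying there by Theorem~\ref{thm-indenpendent-polarization}. Applying the functoriality of the classical correspondence for compact K\"ahler manifolds---valid for arbitrary morphisms by the remark following Theorem~\ref{smooth-nonabelian}---to $\widehat{f}$ yields $\mu_{\widehat{X'}}(\widehat{f}^{*}p^{*}(\mE_X,\theta_X))=\widehat{f}^{*}\mu_{\widehat X}(p^{*}(\mE_X,\theta_X))$. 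Unwinding the definition $\mu_X(\mE_X,\theta_X)=(p^{*})^{-1}\mu_{\widehat X}(p^{*}(\mE_X,\theta_X))$ and the analogous one for $\mu_{X'}$, it remains to observe that $p\circ\widehat{f}=f\circ q$ induces a commuting square of fundamental groups whose vertical arrows $p_{*}$ and $q_{*}$ are isomorphisms by Lemma~\ref{lem-resolutionofsingularities-easy-2} (Takayama), so that $(q^{*})^{-1}\circ\widehat{f}^{*}=f^{*}\circ(p^{*})^{-1}$ on local systems. Concatenating the three displayed equalities gives $f^{*}\mu_X(\mE_X,\theta_X)=\mu_{X'}(f^{*}(\mE_X,\theta_X))$; the compatibility with composition of local analytic isomorphisms and the restriction of $\mu_X$ to $\mathrm{pHiggs}_X\to\mathrm{sLSys}_X$ follow verbatim, using that the classical correspondence already has these properties.

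I do not expect a genuine obstacle here---the argument is bookkeeping around two black boxes---so the two points to watch are the following. First, that the lift $\widehat{f}$ produced by the functorial resolution really exists as a holomorphic map with $p\circ\widehat{f}=f\circ q$; this is exactly the functoriality clause of \cite[Theorem~3.10]{DO23}, and in passing one should note that $\widehat{X'}$ is again compact K\"ahler, so that $\mu_{\widehat{X'}}$ makes sense. Second, the descent step $(q^{*})^{-1}\circ\widehat{f}^{*}=f^{*}\circ(p^{*})^{-1}$, which is where the invertibility of $p^{*}$ and $q^{*}$ on $\mathrm{LSys}$---equivalently Takayama's $\pi_1$-isomorphism---is used, and which already silently underlies the well-definedness of $\mu_X$ itself in Lemma~\ref{lem-welldefined-muX}.
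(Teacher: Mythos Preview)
Your proposal is correct and follows exactly the approach the paper indicates: the paper does not spell out a proof but simply states that the corollary follows from the functorial resolution of singularities \cite[Theorem~3.10]{DO23} together with the functoriality of the classical non-abelian Hodge correspondence on compact K\"ahler manifolds, which is precisely the two black boxes you unpack. Your chain of identities via the lifted square $p\circ\widehat f=f\circ q$ and the use of Takayama's isomorphism for the descent step are the natural way to make this precise.
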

    
    To prove that $\mu_X$ is one-to-one, it suffices to construct its inverse. Let $E_X\in\mathrm{LSys}_X$. Suppose that $\pi:\widehat{X}\rightarrow X$ is a resolution of singularities and then $\pi^*E_X\in\mathrm{LSys}_{\widehat{X}}$. It follows from Theorem \ref{main-thm-descent} that there exists $(\mE_X,\theta_X)\in\mathrm{Higgs}_X$ such that $\mu_{\widehat{X}}^{-1}(\pi^*E_X)=\pi^*(\mE_X,\theta_X)$.
    
    \begin{definition}
    	The functor $\eta_X:\mathrm{LSys}_X\rightarrow\mathrm{Higgs}_X$ is defined by sending $E_X$ to $(\mE_X,\theta_X)$ constructed above.
    \end{definition}
    
    It follows immediately from the above construction that $\eta_X\circ \mu_X=\Id$, $\mu_X\circ\eta_X=\Id$ and $\mu_X$ satisfies the properties listed in Theorem \ref{main-theorem-locallyfree}.
    
    \begin{remark}
    	One might naturally ask whether adapting the argument in \cite[Section 8.2]{GKPT19} would allow us to show that $\eta_X$ (respectively $\mu_\alpha$) is in fact functorial with respect to arbitrary morphisms between compact K\"ahler klt spaces. However, a key ingredient used in that proof---the analytic analogue of \cite[Corollary 1.7]{HM2}---remains unproven at present, as the proof of the latter relies on the choice of a very ample divisor, a step that is not available in the analytic setting.
    \end{remark}

    \subsubsection{Proof of Theorem \ref{main-theorem-reflexive}}
    We now construct the functor $\mu_{X_\reg}$. Let $\gamma:Y\rightarrow X$ be a maximally quasi-\'etale cover of $X$ induced by some subgroup $G\subset \mathrm{Aut}(Y)$. For any $(\mE_{X_\reg},\theta_{X_\reg})\in\mathrm{Higgs}_{X_\reg}$, the reflexive pullback $(\mE_{Y_\reg},\theta_{Y_\reg})=\gamma^{[*]}(\mE_{X_\reg},\theta_{X_\reg})$ is locally free and uniquely extends to a locally free Higgs sheaf $(\mE_Y,\theta_Y)\in\mathrm{Higgs}_Y$ by Theorem \ref{main-maximallyquasietalecase}. This also implies $\mE_{X_\reg}$ itself is locally free because $\gamma$ is \'etale over $X_\reg$.
    \begin{definition}
    	$\gamma^*:\mathrm{Higgs}_{X_\reg}\rightarrow \mathrm{Higgs}_Y$ is defined by sending $(\mE_{X_\reg},\theta_{X_\reg})$ to $(\mE_Y,\theta_Y)$ constructed above. 
    \end{definition}
    
    By reflexivity, $(\mE_Y,\theta_Y)$ is $G$-equivariant and thus the local system $\mu_Y(\mE_Y,\theta_Y)\in\mathrm{LSys}_Y$ is also $G$-equivariant by Corollary \ref{coro-functoriality}. Since $\gamma$ is finite and \'etale over $X_\reg$, the local system $\mu_Y(\mE_Y,\theta_Y)|_{\gamma^{-1}(X_\reg)}$ descends to a local system $E_{X_\reg}\in\mathrm{LSys}_{X_\reg}$.
    
    \begin{definition}
    	Set $\gamma^\circ:=\gamma|_{\gamma^{-1}(X_\reg)}$.    $\mu_{X_\reg}:\mathrm{Higgs}_{X_\reg}\rightarrow\mathrm{LSys}_{X_\reg}$ is defined by
    	$$(\mE_{X_\reg},\theta_{X_\reg})\mapsto \left(\gamma^\circ_*(\mu_Y(\gamma^*(\mE_{X_\reg},\theta_{X_\reg})))\right)^{G}.$$
    \end{definition}
    
    To illustrate that $\mu_{X_\reg}$ is well-defined, we must verify
    \begin{lemma}\label{lem-welldefined-regular}
    	The functor $\mu_{X_\reg}$ is independent of the choice of the maximally quasi-\'etale cover.
    \end{lemma}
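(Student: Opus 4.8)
The plan is to reduce the independence claim to a comparison of two maximally quasi-\'etale covers via a common dominating cover, mirroring the argument already used to prove Lemma~\ref{lem-welldefined-muX} for $\mu_X$. Suppose $\gamma_1:Y_1\to X$ and $\gamma_2:Y_2\to X$ are two maximally quasi-\'etale covers, induced by $G_1\subset\Aut(Y_1)$ and $G_2\subset\Aut(Y_2)$ respectively. First I would form the normalization $Z$ of an irreducible component of the fibre product $Y_1\times_X Y_2$ that dominates $X$, equipped with the natural Galois group $G\supseteq G_1\times G_2$ acting on it; the induced morphism $\gamma:Z\to X$ is again quasi-\'etale (composition of quasi-\'etale morphisms is quasi-\'etale, and $Z$ is klt by the argument of \cite[Proposition 5.20]{KM98} as in Proposition~\ref{lem-quasietale-chernclasses}). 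However, $Z$ need not itself be maximally quasi-\'etale, so I would instead take a maximally quasi-\'etale cover $\delta:W\to Z$ by Proposition~\ref{prop-maximally}, and set $\gamma_3 = \gamma\circ\delta:W\to X$, with covering group $H$. Denote by $p_i:W\to Y_i$ the resulting morphisms, so that $\gamma_i\circ p_i=\gamma_3$ and each $p_i$ is quasi-\'etale.

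The core of the argument is then to show that all three descent recipes agree on $W$ before descending. Concretely, for $(\mathcal{E}_{X_\reg},\theta_{X_\reg})\in\mathrm{Higgs}_{X_\reg}$ I would compare $p_i^{[*]}\gamma_i^*(\mathcal{E}_{X_\reg},\theta_{X_\reg})$ with $\gamma_3^*(\mathcal{E}_{X_\reg},\theta_{X_\reg})$: both are reflexive Higgs sheaves on $W$ that agree over $\gamma_3^{-1}(X_\reg)$ (since $p_i$ is \'etale there and reflexive pull-back is compatible with composition away from the non-free locus, cf.\ Lemma~\ref{lem-reflexive-pullback} and the construction in Proposition~\ref{prop-HEequation-setting}), hence they coincide on all of $W$ by reflexivity. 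By Theorem~\ref{main-maximallyquasietalecase}(1) each $\gamma_i^*(\mathcal{E}_{X_\reg},\theta_{X_\reg})$ extends to a locally free $(\mathcal{E}_{Y_i},\theta_{Y_i})\in\mathrm{Higgs}_{Y_i}$, and then $p_i^*(\mathcal{E}_{Y_i},\theta_{Y_i})=(\mathcal{E}_W,\theta_W)\in\mathrm{Higgs}_W$ by Theorem~\ref{thm-indenpendent-polarization}, independently of $i$. Applying the local system $\mu_W$ from Theorem~\ref{main-theorem-locallyfree} (valid on the klt space $W$), functoriality of $\mu$ under the local analytic isomorphisms given by $p_i$ (Corollary~\ref{coro-functoriality}) yields $p_i^*\big(\mu_{Y_i}(\mathcal{E}_{Y_i},\theta_{Y_i})\big)=\mu_W(\mathcal{E}_W,\theta_W)$ for $i=1,2$.

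Finally I would descend this common local system twice. Over $\gamma_3^{-1}(X_\reg)$ the local system $\mu_W(\mathcal{E}_W,\theta_W)$ is $H$-equivariant, and its restriction descends to a local system on $X_\reg$; I must check this descended object equals $\big(\gamma_{i,*}^\circ(\mu_{Y_i}(\gamma_i^*(\mathcal{E}_{X_\reg},\theta_{X_\reg}))))^{G_i}$ for each $i$. This follows because descent along a finite \'etale cover is transitive: descending $\mu_W$ from $\gamma_3^{-1}(X_\reg)$ to $X_\reg$ can be factored as first descending along $p_i^\circ:\gamma_3^{-1}(X_\reg)\to\gamma_i^{-1}(X_\reg)$ (which recovers $\mu_{Y_i}(\mathcal{E}_{Y_i},\theta_{Y_i})|_{\gamma_i^{-1}(X_\reg)}$, using the functoriality identity above and the fact that a $G_i$-equivariant local system on $Y_i$ is determined by its pullback to $W$ since $p_{i,*}:\pi_1(\gamma_3^{-1}(X_\reg))\to\pi_1(\gamma_i^{-1}(X_\reg))$ is injective by item~\ref{lem-maximallyquasietale-easy-0}), and then descending the result along $\gamma_i^\circ$. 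Hence both $\mu_{X_\reg}$-values computed via $\gamma_1$ and via $\gamma_2$ coincide with the value computed via $\gamma_3$.

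The main obstacle I anticipate is the bookkeeping in the last step: making precise that the two group actions $G_i$ and the intermediate action $H$ are compatible in a way that lets \'etale descent be carried out in stages. This is where one genuinely uses that the relevant restriction maps on fundamental groups are isomorphisms (resolutions, Takayama~\cite{Taka03}) or injections (finite \'etale covers over $X_\reg$), together with Remark~\ref{rem-extension} on extending linear representations; none of these inputs are hard individually, but arranging them coherently—rather than the Higgs-sheaf side, which is handled cleanly by reflexivity and Theorem~\ref{thm-indenpendent-polarization}—is the delicate part. Everything else is a formal transitivity-of-descent argument modeled on the proof of Lemma~\ref{lem-welldefined-muX}.
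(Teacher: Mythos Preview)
Your strategy is sound in outline—build a common cover dominating both $Y_1$ and $Y_2$, compare pullbacks there via reflexivity, and descend using functoriality—and this is exactly the shape of the paper's argument. However, you miss the single observation that makes the whole thing short: by Definition~\ref{defn-maximally}(1), \emph{any} quasi-\'etale cover of a maximally quasi-\'etale space is already \'etale. Applied to the projection $Y_1\times_X Y_2\to Y_1$ (which is quasi-\'etale because both $\gamma_i$ are), this shows that the fibre product is \'etale over $Y_1$, hence normal and itself maximally quasi-\'etale over $X$. So there is no need to normalize, pick a component, or pass to a further cover $W\to Z$: the fibre product already does the job, and one is immediately reduced to the case where one maximally quasi-\'etale cover factors through the other by an \'etale map. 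After that, reflexivity plus Corollary~\ref{coro-functoriality} finishes in one line.

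This missing observation is also what leaves a genuine gap in your argument as written. You assert that Corollary~\ref{coro-functoriality} applies to the maps $p_i:W\to Y_i$, calling them ``local analytic isomorphisms,'' but you have only established that each $p_i$ is quasi-\'etale. Quasi-\'etale maps are not local isomorphisms over the branch locus, so the corollary does not apply on the nose; the fix is precisely that $Y_i$ is maximally quasi-\'etale, forcing $p_i$ to be \'etale. Once you insert this, your extra layer $W\to Z$ becomes visibly redundant, and your ``delicate bookkeeping'' with nested group actions collapses to the paper's two-sentence descent. (A minor point: your appeal to Theorem~\ref{thm-indenpendent-polarization} for the pullback along $p_i$ is misplaced—that theorem concerns resolutions of singularities—whereas you want Lemmas~\ref{lem-quasietale-chernclasses} and~\ref{lem-quasietale-stability} for quasi-\'etale covers, or simply the \'etale case once the observation above is in hand.)
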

    \begin{proof}[Proof of Lemma \ref{lem-welldefined-regular}]
    	Let $\gamma_1:Y_1\rightarrow X$ and $\gamma_2:Y_2\rightarrow X$ be two maximally quasi-\'etale covers of $X$ induced by some subgroups $G_1\subset \mathrm{Aut}(Y_1)$ and $G_2\subset\mathrm{Aut}(Y_2)$, respectively. Consider the normalization $Y$ of the fibre product $Y_1\times_X Y_2$. The two natural projections $Y\rightarrow Y_1$ and $Y\rightarrow Y_2$ are quasi-\'etale. According to Definition \ref{defn-maximally} (1), $Y$ is an \'etale cover of $Y_1$ and hence also a maximally quasi-\'etale cover of $X$. 
    	
    	Therefore, to prove the lemma, we may assume that there exists an \'etale morphism $\gamma_{12}: Y_1\rightarrow Y_2$. Let $\pi_2:\widehat{Y}_2\rightarrow Y_2$ be a resolution of singularities and $\widehat{Y}_1$ be a desingularization of the (unique) irreducible component of the fibre product $\widehat{Y}_2\times_XY_1$. Denote by $\widehat{\gamma}_{12}:\widehat{Y}_1\rightarrow \widehat{Y}_2$ and $\pi_1:\widehat{Y}_1\rightarrow Y_1$ the morphisms induced by the natural projections from the fibre product. We adopt the notation from the construction above. One can see that $(\mE_{\widehat{Y}_1},\theta_{\widehat{Y}_1})=\widehat{\gamma}^*_{12}(\mE_{Y_1},\theta_{Y_2})$ by the following commutative diagram
    	\[
    	\begin{tikzcd}
    		\widehat{Y}_1 \arrow[r,"\pi_1"] \arrow[d,"\widehat{\gamma}_{12}"] & Y_1 \arrow[d,"\gamma_{12}"] \\
    		\widehat{Y}_2 \arrow[r,"\pi_2"] & Y_2.
    	\end{tikzcd}
    	\]
    	Applying the functoriality of the non-abelian Hodge correspondence for compact K\"ahler manifolds yields $E_{\widehat{Y}_{1}}=\widehat{\gamma}_{12}^*E_{{\widehat{Y}}_{2}}$. This completes the proof.
    \end{proof}
    
     To show that $\mu_{X_\reg}$ is one-to-one, it is enough to construct its inverse. Let $E_{X_\reg}\in\mathrm{LSys}_{X_\reg}$. Item \ref{lem-maximallyquasietale-easy-2} implies that $(\gamma|_{\gamma^{-1}(X_\reg)})^*E_{X_\reg}$ uniquely extends to a $G$-equivariant local system $E_Y\in\mathrm{LSys}_Y$.
     \begin{definition}
     	We define $\gamma^*:\mathrm{LSys}_{X_\reg}\rightarrow\mathrm{LSys}_Y$ by sending $E_{X_\reg}$ to $E_Y$.
     \end{definition}
     Building on Corollary \ref{coro-functoriality} and the fact that $\mu_Y$ is one-to-one, the Higgs sheaf $\mu_Y^{-1}(E_Y)\in\mathrm{Higgs}_{Y}$ is $G$-equivariant and thus $\mu_Y^{-1}(E_Y)|_{\gamma^{-1}(X_\reg)}$ descends to some $(\mE_{X_\reg},\theta_{X_\reg})\in\mathrm{Higgs}_{X_\reg}$ (again by item \ref{lem-maximallyquasietale-easy-1}).
     
     \begin{definition} $\eta_{X_\reg}:\mathrm{LSys}_{X_\reg}\rightarrow\mathrm{Higgs}_{X_\reg}$ is defined by
     	$$E_{X_\reg}\mapsto \left(\gamma^\circ_*(\mu_Y^{-1}(\gamma^*E_{X_\reg}))\right)^G.$$
     \end{definition}

    It follows immediately from the construction that $\mu_{X_\reg}\circ \eta_{X_\reg}=\Id$, $\eta_{X_\reg}\circ\mu_{X_\reg}=\Id$ and $\mu_{X_\reg}$ satisfies the remaining properties stated in Theorem \ref{main-theorem-reflexive}. The proof is complete.
    
    \subsection{Quasi-uniformization results}\label{section-applications}
    In this subsection, we prove Corollaries \ref{coro-torus} and \ref{coro-projflat} and Theorem \ref{main-thm-unitball}.
    
    \begin{proof}[Proof of Corollary \ref{coro-torus}]
    	Let $X$ be a compact K\"ahler klt space of dimension $n$ with $K_X$ numerically trivial. Since $\mT_X$ is generically nef with respect to $(\w_0,\cdots,\w_{n-2})$ by \cite[Proposition 6.5]{IJZ25} and $c_1(K_X)=0$, $\mT_X$ is semistable with respect to $(\w_0,\cdots,\w_{n-2})$. Then \eqref{equa-miyaoka2} follows from the Bogomolov-Gieseker inequality \eqref{equa-BG}. 
    	
    	Let $\gamma:Y\rightarrow X$ be a maximally quasi-\'etale cover. We have $\mT_Y=\gamma^{[*]}\mT_X$ by reflexivity. Since $\widehat{c}_1(\mT_X)=0$, Theorem \ref{main-maximallyquasietalecase} implies that the equality in \eqref{equa-miyaoka2} holds if and only if $\mT_Y$ is locally free and flat. The Lipman-Zariski conjecture for klt spaces (cf. \cite[Theorem 1.13]{KS21}) then yields that $Y$ is smooth. By Yau's theorem \cite{Yau78}, $Y$ is a complex torus and the proof is complete.
    \end{proof}

    \begin{proof}[Proof of Corollary \ref{coro-projflat}]
    	The proof is similar to \cite[Proposition 2.20]{IMM24}. Let $(\mE_{X_\reg},\theta_{{X_\reg}})$ be a $(\w_0,\cdots,\w_{n-2})$-semistable reflexive Higgs sheaf. Then \cite[Corollary 1.3]{ZZZ25} implies that $\mG_{X_\reg}:=\End(\mE_{X_\reg})$, endowed with the induced Higgs field $\theta_{\mG_{X_\reg}}$, is $(\w_0,\cdots,\w_{n-2})$-semistable. The inequality \eqref{equa-BG-equality} was established in Theorem \ref{main-stable}. Recall from \eqref{equa-end-chernclass-resuction} that $\mE_X$ satisfies the equality in \eqref{equa-vanishingcondition} if and only if $\mG_X$ satisfies $\widehat{c}_1(\mG_X)=0$ together with \eqref{equa-vanishingcondition}. The conclusion now follows from Theorem \ref{main-maximallyquasietalecase}.
    \end{proof}
    
    The proof of Theorem~\ref{main-thm-unitball} follows the spirit of \cite{IJZ25} and \cite[Section 4]{Jinnouchi25-2}, building on the results in Sections~\ref{section-HE} and~\ref{section-harmonicbundles}. For the precise definitions of stability and of the intersection number taken with respect to the non‑pluripolar product, we refer to \cite[Section 3.2 and 4.2]{IJZ25}.
    
    \begin{proof}[Proof of Theorem \ref{main-thm-unitball}]
    	
    	Let $X$ be a projective klt variety of dimension $n$ with $K_X$ big. Consider the natural Higgs sheaf on $X$ consisting of the reflexive sheaf \(\mathcal{E}_X := \Omega_X^{[1]} \oplus \mathcal{O}_X\) together with the Higgs field \(\theta_X\) defined by
    	\begin{equation}
    		\theta_X : \Omega_X^{[1]} \oplus \mathcal{O}_X \longrightarrow (\Omega_X^{[1]} \oplus \mathcal{O}_X) \otimes \Omega_X^{[1]}, \quad a \oplus b \longmapsto (0 \oplus 1) \otimes a.
    	\end{equation} Since $K_X$ is big, $X$ admits a canonical model $X_{\text{can}}$ (see \cite[Theorem 1.2]{BCHM}). Denote by $h:X\dashrightarrow X_{\text{can}}$ the birational map induced by the $K_X$-MMP. Let $p_1:X_1\rightarrow X$ and $q_1:X_1\rightarrow X_{\text{can}}$ be resolutions of $f$. Then there exists a $q_1$-effective divisor $E$ such that
    	$$p_1^*K_X=q_1^*K_{\text{can}}+E$$
    	and $p_1(\supp(E))$ coincides with the $h$-exceptional divisor. By \cite[Lemma A.5]{DHY23} (see also \cite[Proposition 3.15]{IJZ25}), we have
    	\begin{equation}\label{equa-big-product}
    		\langle p_1^*K_X^k\rangle= q_1^*K_{X_{\text{can}}}^k,\ \forall \ k\in\N.
    	\end{equation}
    	
    	We now recall the definition of $\widehat{\Delta}(\mE_X)\cdot\langle K_X^{n-2}\rangle$ introduced in \cite[Section 4.2.1]{IJZ25}. Take an orbifold modification $f':Y'\rightarrow X$ such that $Y'$ admits a standard orbifold structure $Y_\orb':=\{(U'_\alpha,G_\alpha',\mu_\alpha')\}$. Set $\mE_{Y'}:=f'^*\mE_X/\tor$. This induces a torsion-free orbi-sheaf $\mE_\orb':=\{\widehat{\mu}_\alpha^*\mF_{Y'}/\mathrm{torsion}\}$. According to Lemma \ref{lem-orbireso}, there exists a projective bimeromorphic morphism $g_\orb:Y_\orb\rightarrow Y_\orb'$ from an effective complex orbifold $Y_\orb=\{(U_\alpha,G_\alpha,\mu_\alpha)\}$ to $Y_\orb'$ such that the torsion-free pullback of $\mE_\orb'$ is an orbi-bundle $E_\orb'$. Set $f:=f'\circ g$ and it follows from \cite[Proposition 4.18]{IJZ25} that
    	\begin{equation}\label{equa-big-chernclass}
    		\left(2\widehat{c}_2(X)-\frac{n}{n+1}\widehat{c}_1^2(X)\right)\cdot\langle K_X^{n-2}\rangle=\left(2c_2^\orb(E_\orb')-\frac{n}{n+1}({c}_1^\orb)^2(E_\orb')\right)\cdot\langle f^*K_X^{n-2}\rangle.
    	\end{equation}
    	Observe that on every orbifold chart $U_\alpha$, $(f\circ\mu_\alpha)^*\mE_X$ carries a functorial Higgs field $(f\circ\mu_\alpha)^*\theta_X$ by \cite[Theorem 1.10]{KS21} (cf. \cite[Construction 5.5]{GKPT19a}), which induces a Higgs field $\theta_\orb'$ on $E_\orb'$. \iffalse Then it follows from \cite[Proposition 3.28 and Lemma 4.14 (1)]{IJZ25} that $(E_\orb',\theta_\orb')$ is $\langle f^*K_X\rangle$-stable. \fi

    	Choose resolutions \( p_{2}: X_{2} \to Y \) and \( q_{2}: X_{2} \to X_{1} \) of the composition \( p^{-1}\circ f: Y \dashrightarrow X_{1} \).  
    	Let \( V_{\alpha} \) be a functorial resolution of singularities of the normalization of \( U_{\mu}\times_{Y} X_{2} \); then the group \( G_{\alpha} \) lifts to a subgroup \( \widehat{G}_{\alpha} \subset \operatorname{Aut}(V_{\alpha}) \).  
    	Denote by \( \widehat{\mu}_{\alpha}: V_{\alpha} \to V_{\alpha}/\widehat{G}_{\alpha} \) the quotient map.  
    	The triple \( \{(V_{\alpha},\widehat{G}_{\alpha},\widehat{\mu}_{\alpha})\} \) forms a complex effective orbifold $Z_\orb$; we write \( Z \) for its underlying complex space, $g_1:Z\rightarrow Y$ and $g_2:Z\rightarrow X$ for the induced bimeromorphisms.  
    	All these morphisms are summarized in the following diagram:
    	\[
    	\begin{tikzcd}
    		& X_2 \arrow[d,"p_2"] \arrow[r,"q_2"] & X_1 \arrow[d,"p_1"] \arrow[dr,"q_1"] &   \\
    		Z \arrow[r,"g_1"] \arrow[ur,"g_2"] & Y \arrow[r,"f"] \arrow[ur,dashrightarrow] & X \arrow[r,"h",dashrightarrow] & X_{\text{can}}.
    	\end{tikzcd}
    	\]
    	
    	Let $(E_\orb,\theta_\orb)$ be the pullback of the Higgs orbi-bundle $(E_\orb',\theta_\orb')$ by $g_1$. Note that \eqref{equa-big-product} implies
    	\begin{equation}\label{equa-big-nonpluri-2}
    		\langle g_1^*f^*K_X^{k}\rangle=\langle g_2^*q_2^*p_1^*K_X^{k}\rangle=g_2^*q_2^*q_1^*K_X^k.
    	\end{equation}    
    	Set $p:=f\circ g_1:Z\rightarrow X$ and $q:=q_1\circ q_2\circ g_2:Z\rightarrow X_\text{can}$. \iffalse Using the same argument of the proof of \cite[Lemma 4.14 (1)]{IJZ25}, we have that $(E_\orb,\theta_\orb)$ is $\langle g_1^*f^*K_X^{n-1} \rangle$-stable and so is $q^*K_{\text{can}}^{n-1}$-stable.\fi Since Poincar\'e duality holds for orbifolds, we have $\langle g_1^*f^* K_X^k\rangle=\pi_*\langle \pi^*g_1^*f^* K_X^k \rangle=g_1^* \langle f^*K_X^k\rangle$ in $H^{2n-4}(Z,\R)$ according to the definition of the non-pluripolar product \cite[Section 3.1.3]{IJZ25}. Then \eqref{equa-big-nonpluri-2} and \eqref{equa-big-chernclass} yield
    	\begin{equation}
    		\left(2\widehat{c}_2(X)-\frac{n}{n+1}\widehat{c}_1^2(X)\right)\cdot\langle K_X^{n-2}\rangle=\left(2c_2^\orb(E_\orb)-\frac{n}{n+1}({c}_1^\orb)^2(E_\orb)\right)\cdot \langle q^*K_X^{n-2}\rangle.
    	\end{equation}
        Recall \cite[Theorem A]{Gue16} that the reflexive cotangent sheaf $\Omega_{X_\text{can}}^{[1]}$ is $K_{X_{\text{can}}}^{n-1}$-semistable, which implies that the natural Higgs sheaf $(\mE_{X_{\text{can}}},\theta_{X_{\text{can}}})$ on $X_{\text{can}}$ is $K_{X_{\text{can}}}^{n-1}$-stable (cf. \cite[Proposition 2.8]{IMM24}). According to the above construction, $q:Z\rightarrow X_{\text{can}}$ satisfies all properties listed in Proposition \ref{prop-HEequation-setting} upon substituting $(\mE_{X_\reg},\theta_{X_\reg})$= $(\mE_{X_\text{can}},\theta_{X_{\text{can}}})|_{(X_\text{can})_\reg}$ and $\w_0=\cdots=\w_{n-2}=\w$, where $\w$ is a K\"ahler form in the class $K_X$. Under the assumption that the equality in \eqref{equa-MY} holds, applying Theorem \ref{thm1} and a computation similar to that in Section \ref{subsection-computation} give
    	\begin{equation}
    		0=4\pi^2\left(2c_2^\orb(E_\orb)-\frac{n}{n+1}({c}_1^\orb)^2(E_\orb)\right)\cdot \langle q^*K_X^{n-2}\rangle \geq \int_{X_{\text{can}}\setminus\Sigma}\|F_{H_\infty,\theta_{\text{can}}}^\perp\|^2_{H_\infty,\w}\cdot\frac{\w^n}{n!}
    	\end{equation}
    	and hence $F_{H_\infty,\theta_{\text{can}}}^\perp\equiv0$ on $X_{\text{can}}\setminus \Sigma$. In particular, $\mG_{X_{\text{can}}}:=\End(\mE_{X_{\text{can}}})$ together with the induced Higgs field $\theta_{\mG}$ admits a pluri-harmonic metric on $X_{\text{can}}\setminus \Sigma$.  This Higgs sheaf therefore admits a pluri-harmonic metric on the regular locus of $X_{\text{can}}$ by Proposition \ref{prop-harmonicmetric}. Then applying Proposition \ref{prop-locallyfree-chernclass-vanish}, we obtain that for a maximally quasi-\'etale cover $\gamma:W\rightarrow X_{\text{can}}$, $\gamma^{[*]}\mG_{X_{\text{can}}}$ is locally free and all its Chern classes vanish. By reflexivity,
    	$\gamma^{[*]}\mG_{X_{\text{can}}}=\End(\Omega_W^{[1]}\oplus \mO_W)$. As $\mT_W$ is a direct summand of $\End(\Omega_W^{[1]}\oplus \mO_W)$, it follows that $\mT_W$ is locally free.  The solution of Lipman-Zariski conjecture for klt spaces \cite{GKKP11,KS21} then implies that $W$ is smooth. Moreover, $W$ satisfies the Miyaoka-Yau equality with respect to $K_W^{n-2}$. Since $K_W=\gamma^*K_{X_{\text{can}}}$ is ample, the proof is complete by the classical result of Yau \cite{Yau77}.
    \end{proof}
    
    \begin{remark}
    	Although \cite[Theorem 1.2]{Jinnouchi25-2} assume that $X$ is smooth in codimension $2$, this condition is in fact unnecessary. The general case follows by combining the argument above with the methods of \cite[Section 4]{Jinnouchi25-2}.
    \end{remark}

\end{document}